\title{Understanding Fermat's Last Theorem's Proofs}
\author{Alex Qiu, Tanish Sarathy, Spencer Nicklin, Michael Sun}
\date{22nd August 2025}
\newtheorem{theorem}{Theorem}[section]
\newtheorem{thm}{Theorem}[section]
\newtheorem{conj}{Conjecture}[section]
\newtheorem{cor}{Corollary}[section]
\newtheorem{lemma}[theorem]{Lemma}
\newtheorem{lem}[theorem]{Lemma}
\newtheorem{proposition}{Proposition}[section]
\newtheorem{prop}{Proposition}[section]
\theoremstyle{definition}
\newtheorem{definition}{Definition}[section]
\theoremstyle{definition}
\newtheorem{defn}{Definition}[section]
\theoremstyle{definition}
\newtheorem{eg}{Example}[section]
\newcommand{\SLZ}{\operatorname{SL}_2(\mathbb{Z})}
\newcommand{\mat}{\operatorname{Mat}}
\newcommand{\gl}{\operatorname{GL}}
\newcommand{\GL}{\operatorname{GL}}
\newcommand{\SL}{\operatorname{SL}}
\newcommand{\gal}{\operatorname{Gal}}
\newcommand{\Gal}{\operatorname{Gal}}
\newcommand{\tr}{\operatorname{tr}}
\newcommand{\Aut}{\operatorname{Aut}}
\newcommand{\tate}{\operatorname{Tate_{\ell}}}
\newcommand{\galq}{\operatorname{Gal(\overline{\Q},\Q)}}
\newcommand{\rad}{\operatorname{rad}}
\newcommand{\Frp}{\operatorname{Frob_{p}}}
\newcommand{\Frob}{\operatorname{Frob}}
\newcommand{\C}{\mathbb{C}}
\newcommand{\Z}{\mathbb{Z}}
\newcommand{\Q}{\mathbb{Q}}
\newcommand{\F}{\mathbb{F}}
\newcommand{\N}{\mathbb{N}}
\newcommand{\A}{\mathbb{A}}
\newcommand{\R}{\mathbb{R}}
\newcommand{\Mabcd}{\begin{pmatrix} a&b\\c&d \end{pmatrix}}
\newcommand{\Tr}{\operatorname{Tr}}
\begin{document}

\maketitle

\begin{abstract}
We take the perspective of an advanced high school student trying to understand the proof of Fermat's Last Theorem for the first time. We collect definitions and statements needed to summarise how Fermat's Last Theorem was first proved in 1995 as well as to see how the argument has simplified since then. We include a current timeline, outlines of proofs, background material and recent developments, as well as to organise the content in a way that is beginner friendly, offering a preview of what students may expect to learn more deeply in the future.
\end{abstract}

\tableofcontents

\section{Introduction}





The statement of Fermat's Last Theorem (FLT) is that for any integer $n>2$, there are no integers $x,y,z\neq0$ such that
$$x^n+y^n=z^n.$$
Despite its simplicity, FLT remained unproved for over 350 years, becoming one of the longest-standing and most infamous unsolved problems in the history of mathematics. It was finally resolved in 1995 by Andrew Wiles \cite{Wiles}, building on the work of generations of leading mathematicians.
\newline\newline
One is naturally curious about how such an infamous problem was finally resolved. Yet, to the frustration of any amateur attempting to read Wiles’s paper, they are confronted with an almost alien language, where every clarification seems to introduce ten times as many unfamiliar terms, each appearing more technical and contrived than the particular terms which are supposedly being clarified. Thus, one is quickly overwhelmed by a sense that there are infinitely many unknowns and obstacles to understanding what has happened. 
\newline\newline
In this article, we convert our despair into a summary of the key definitions and statements needed to get an initial handle into what is going on with the proof of FLT. We do not claim to understand the proofs of the main statements and in fact we replace the proof environment that usually follows them by further clarifications of the statement. We attempt to present all of the necessary definitions as well and try to understand them, offering a chronological order of where a student might first encounter them in their mathematical journey.
\newline\newline
We do not shy away from unravelling the definitions associated with the three main, and enormous, topics surrounding FLT: Elliptic Curves, Modular Forms and Galois representations. We are certainly not experts in any of these topics and will not try to hide our ignorance. When there is heavy detail, we will often selectively explain the proofs and theorems to match our own comprehension.
\newline\newline
We often hear that one needs to know all of these things and so much more before even beginning on such a quest to understand FLT, which while in principle sounds great, we know from experience it is idealistic and impractical. To begin to understand FLT one mustn't be a hostage to being logically prepared but also needs the right mix of motivation, hope and encouragement throughout their preparation.
\newline\newline
Concrete achievements for this article include being able to engage with the introduction of Wiles' paper where previously unfathomable, as well as to be able to understand the main statements that went into the first proof of FLT and updated proofs since then.

In the next Sections we present a timeline and summarise a logical outline of proofs of FLT before diving into the details in the Sections that follow.

\section{Timeline for proving Fermat's Last Theorem}

There are three main components to our timeline:

Classical contributions from 1667, the first proof by Wiles in 1995, and subsequent developments. The rest of this article after the timeline will largely ignore the classical contributions.

\subsection{Classical Contributions}

These might be considered the initial direct attacks on the problem that have led to many algebraic foundations that have since exploded in depth and abstraction.

\begin{itemize}
    \item 
    1667: $n=4$ proved by Fermat. (Proof in Appendix \ref{n4})
    
\item   1753-1770: $n=3$ was proved by Euler with a gap that was filled by Legendre. (Another proof is in Appendix \ref{n3})

\item 1820 Germain: If $n=p$ is a prime such that $2p+1$ is also a prime there are no solutions for $x,y,z$ where $n$ does not divide any of the three numbers. Note that to this day it is still not known if there are infinitely many such primes. (We explain later on why it suffices to consider $n$ prime.)

\item 1825: Legendre and Dirichlet proved $n=5$.

\item 1838: Lam\'{e} proved $n=7$ and makes instructive errors in further proof attempts. Many developments in algebraic number theory at this time. 

\item 1850: Kummer proves that if $n$ is part of a specific set of primes, then no solution exists. This set includes all  primes up to 100 except 5, yet the size of this set is not known to be finite or infinite.

\item 1952: Vandiver proves up to $n=2000$; some review of Kummer's work played a role.

\item 1983: Faltings proves Mordell's conjecture, which when applied to FLT shows that for all $n$ there are only finitely many solutions such that $\gcd(x,y,z)=1$.

\item 1985: The ABC conjecture is formulated by Oesterlé–Masser which would imply FLT to be true for sufficiently large $n$. The statement can be found in the Appendix \ref{abc}.

\item 1985: Fermat's Last Theorem is still not known to be true for infinitely many primes $n$.

\item 1990 Buhler, Crandall, Ernvall, Metsanklya verified up to $n=4000000$ with computer assistance.

\end{itemize}


References for some of the above (and below) can be found in \cite{DDT}.
\newline\newline
We will need the $n=3,4$ cases for Wiles' proof and their proofs are in our Appendix (\ref{n4} and \ref{n3}). The $n=4$ proof is shorter, while $n=3$ is longer and takes more investment. We also state the ABC conjecture in the Appendix \ref{abc} and examine how it can solve FLT for all but finitely many cases of $n$.
\newline\newline
Aside from these, we ignore the classical contributions for the remainder of the article. Some higher cases of $n$ may be needed to start the updated version of Wiles' proof.


\subsection{Wiles' Proof in 1995}

Here we intertwine a timeline for Wiles' proof with another timeline for an updated proof of FLT in blue but due to technical requirements we have separated them into two. We do not assume the reader is familiar with the following key terms and treat them as black boxes for clarity:
\newline\newline
\textbf{Key Terms:} Taniyama–Shimura–Weil Conjecture (TSW), Elliptic Curve, (Epsilon) $\epsilon$-conjecture, Ribet’s Theorem.

\begin{itemize}

\item \textbf{1955}: Taniyama proposes a surprising connection between elliptic curves and modular forms. 

\item \textbf{1957}: Shimura refines the conjecture with Taniyama, giving it a more rigorous mathematical formulation.

\item \textbf{1958}: Taniyama commits suicide at age 31.

\item \textbf{1963}: Andrew Wiles aged 10 reads about FLT in Bell's \emph{The Last Problem} and decides he would be the first to solve it!

\item \textbf{1967}: Weil provides supporting evidence and reformulations, solidifying the conjecture’s credibility and making more people aware of it. It becomes known as the \emph{Taniyama–Shimura–Weil} conjecture (TSW).

\item \textbf{1969}: Hellegouarch uses hypothetical solution to Fermat's equation to define an Elliptic curve. Serre also mentions Hurwitz 1886 as a pioneer of this idea.

\item \textbf{1982}: Frey suggests that TSW implies FLT using a curve like that of Hellegouarch.


\item \textbf{1985}: Serre rigorously formalises Frey's suggestion and proves it up to a statement he called $\epsilon$ or the (epsilon) $\epsilon$-conjecture \cite{Serre}. 

Serre showed that if the $\epsilon$-conjecture is true, then TSW implies FLT.

\item \textbf{1986}: Ribet proves the (epsilon) $\epsilon$-conjecture \cite{Ribet}. He recounts that he proved a special case and consulted Barry Mazur about the general case, to which Mazur observed that Ribet's argument for the special case did not lose generality and also works for the general case. This became known as Ribet's Theorem. Therefore, TSW implies FLT.



\item \textbf{1986}:
Wiles upon learning this, immediately begins work on TSW.

\item \textbf{1993}: Wiles announces proof of FLT via a part of TSW.

\item \textbf{1994}: A gap was found in Wiles' proof by Nick Katz.

\item \textbf{1995}:
Wiles plugs the gap with his former student Richard Taylor there to assist \cite{Wiles}. Fermat's Last Theorem is proved!

\item Foundational work and helpful contributors that are mentioned by Wiles' introduction \cite{Wiles} include:  Mazur, Hida, Langlands-Tunnel, Eichler, Poitou, Tate, Iwasawa, Kunz, Flack, Ramakrishna, Rubin, Kolyvagin, and we acknowledge we may have missed some key people due to our ignorance.


\end{itemize}

\subsection{Updates to Proof via Serre’s Modularity Conjecture}

This component mainly concerns the developments related to Serre's modularity conjecture, which played a key role in both the original proof and updated proofs. It is quite likely to be incomplete.
\begin{itemize}

\item \textbf{1916}: Ramanujan publishes several conjectures regarding the so called $\tau$-function, some involving the prime 691. Dies in 1920 at age 33. 

\item \textbf{1970s}: Deligne, building on Serre and Swinnerton-Dyer, proves some of Ramanujan’s more difficult $\tau$ conjectures using Galois representations.

\item \textbf{1973-1975}: Serre introduces his modularity conjecture, relating Galois representations to modular forms, in a letter to Tate. Tate replies with the proof of the first cases. Serre makes conjecture public.

 The $\epsilon$-conjecture is a part of Serre's modularity conjecture in same paper.

\item \textbf{1986}: Serre also notes that Colmez observed that Serre's modularity conjecture implies the Taniyama-Shimura-Weil conjecture.

   \item \textbf{1999}: Taniyama-Shimura-Weil conjecture proof completed by Breuil, Conrad, Diamond and Taylor building on work of Wiles, Wiles-Taylor. Now known as the Modularity Theorem \cite{BCDT}. FLT proof updated.

\item \textbf{2004-2008}: Serre's modularity conjecture was proved by Khare and Wintenberger \cite{SMC}. FLT proof updated.

\item \textbf{2019}: Wintenberger dies at age 65. Shimura dies at age 89. Tate dies aged 94.

\item \textbf{2024-2025-}: On-going project led by Buzzard to formalise a modern proof of FLT in Lean with a 5 year EPSRC grant.

\end{itemize}

\section{Outline of 1st Proof of Fermat's Last Theorem}
\subsection{Initial Summary}

We do not assume the reader is familiar with the following key terms, which will be introduced here: Frey's curve, Elliptic curve, Modular.
\newline\newline
We will first give an outline of the structure of the first proof of FLT  without elaborating on the term \emph{Modular} nor mentioning \emph{Galois Representations}.

\begin{itemize}
\item Cases $n=3,4$ are done using elementary methods
\item Assume FLT is false and define Frey's curve.

\item Ribet proved that Frey's curve is not modular.

\item Wiles proved that Frey's curve is modular.
\item Contradiction, hence Fermat's Last Theorem is true.
\end{itemize}

\subsubsection{Cases $n=3,4$}

The proofs of these are in the Appendix (\ref{n3},\ref{n4}). 


We explain why it now suffices to consider the case when $n$ is an odd prime $P\geq5$:
\begin{theorem}
    If Fermat's Last Theorem is proved true when $n$ is an odd prime $P\geq5$ or $4$ and $a,b,c$ are pairwise coprime, it must also be true in general.
\end{theorem}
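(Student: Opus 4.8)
The plan is to show that any counterexample to Fermat's Last Theorem for a general exponent $n>2$ can be converted into a counterexample for an exponent that is either $4$ or an odd prime $P\geq 5$, and moreover one may assume the three numbers are pairwise coprime. First I would observe that every integer $n>2$ is divisible either by $4$ or by some odd prime $p$. If $n$ is divisible by an odd prime $p$, write $n=pm$; then a solution $x^n+y^n=z^n$ gives $(x^m)^p+(y^m)^p=(z^m)^p$, a solution for exponent $p$. If $n$ is not divisible by any odd prime, then $n$ is a power of $2$, and since $n>2$ we have $4\mid n$; writing $n=4m$ turns a solution into $(x^m)^4+(y^m)^4=(z^m)^4$, a solution for exponent $4$. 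So it suffices to handle $n=4$ and $n=p$ an odd prime. The case $p=3$ is covered separately in the Appendix (Theorem in \ref{n3}), so the remaining odd primes are exactly $P\geq 5$.

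Next I would reduce to the pairwise coprime case. Given a nonzero solution $(x,y,z)$ for some exponent $e\in\{4\}\cup\{\text{odd primes}\}$, let $d=\gcd(x,y,z)$ and write $x=dx'$, $y=dy'$, $z=dz'$. Dividing the equation $x^e+y^e=z^e$ by $d^e$ yields $x'^e+y'^e=z'^e$ with $\gcd(x',y',z')=1$. Finally I would check that $\gcd(x',y',z')=1$ together with the equation forces the numbers to be pairwise coprime: if a prime $q$ divided two of them, say $x'$ and $y'$, then from $z'^e=x'^e+y'^e$ it would divide $z'^e$ and hence $z'$, contradicting $\gcd(x',y',z')=1$; the other two cases are symmetric. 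Thus a general counterexample produces a pairwise coprime counterexample with exponent $4$ or an odd prime $\geq 5$, so proving FLT in those cases proves it in general.

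There is no real obstacle here — the argument is entirely elementary. The only point requiring a little care is the logical structure: one must phrase it contrapositively (a general counterexample yields a reduced counterexample) rather than trying to build up from special cases, and one must not forget the power-of-$2$ exponents, which is precisely why the exponent $4$ has to appear alongside the odd primes in the statement. One should also note in passing that nonzero-ness is preserved throughout, since $x,y,z\neq 0$ implies $x',y',z'\neq 0$ and the $m$-th powers of nonzero integers are nonzero.
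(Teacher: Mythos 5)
Your proposal is correct and follows essentially the same route as the paper: split $n>2$ into the power-of-two case (reduce to exponent $4$) and the case of an odd prime divisor (reduce to that prime), invoke the separately proved $n=3$ case to restrict to $P\geq5$, and strip common factors to reach pairwise coprimality. Your handling of the coprimality step (dividing by $\gcd(x,y,z)$ once and then deducing pairwise coprimality from the equation) is a minor, equally valid variant of the paper's iterative removal of shared prime factors.
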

\begin{proof}
    We show a solution for $n>2$ gives a solution for $n=4$ or $n$ and odd prime.
    
    We will do two cases: $n$ is a power of $2$ and $n$ is not a power of $2$: 
    \newline
   \begin{enumerate}
       \item Suppose that $n=2^k$ satisfies Fermat's equation for some $k\in \Z_{\geq2}$. We have $$a^{2^k}+b^{2^k}=c^{2^k}$$ $$\iff (a^{2^{k-2}})^4+(b^{2^{k-2}})^4=(c^{2^{k-2}})^4.$$ But the case $n=4$ has no non-zero solutions.
       \item Now suppose that $n=pr$ satisfies Fermat's equation for some $r\in \Z^+$ and odd prime $p$. Then $$a^{pr}+b^{pr}=c^{pr}$$ $$\iff (a^r)^p+(b^r)^p=(c^r)^p$$ which is only true if $n=p$ satisfies Fermat's equation. 
   \end{enumerate}
   Since we know the $n=3$ case, we can assume $P\geq 5$.
   \newline\newline
   Now if any pair of $a,b,c$ satisfying the equation share a prime factor, then so does the third and we can just divide the $n$-th power of this prime from the equation. We can do this until no such common factors exist.
   \end{proof}

\subsubsection{Frey's Curve and its Properties}
Keywords: Discriminant, Non-Singular, Good/Bad/Additive/Multiplicative Reduction, Semistable. 
\newline\newline
Assume FLT is false:

That is, there exists $a,b,c$ pairwise coprime, and a prime $P\geq 5$ prime, such that
$$a^P+b^P=c^P.$$ 

Frey's curve \cite{Frey} is then defined from the equation
$$y^2=x(x-a^P)(x+b^P),$$
and is an example of an \emph{Elliptic Curve}.
\newline\newline
We can temporarily take as a definition of an elliptic curve those equations of the form $y^2$ equals a cubic in $x$ with integer coefficients.
\newline\newline
The reader may be familiar with the concept of a discriminant, especially in regard to quadratic equations. If we have a quadratic expression: $$(x-r_1)(x-r_2)=x^2-(r_1+r_2)x+r_1r_2$$ then the discriminant is $$(r_1+r_2)^2-4r_1r_2=r_1^2-2r_1r_2+r_2^2=(r_1-r_2)^2$$ which is the square of the difference of the roots. This is defined for cubics by:
\begin{definition}[Discriminant]\label{discdef}
Suppose a cubic polynomial has roots $r_1,r_2, r_3$, with leading coefficient 1, then its discriminant is given by 

$$\Delta=(r_1-r_2)^2(r_1-r_3)^2(r_2-r_3)^2.$$
\end{definition}

   We also take this to be the \emph{discriminant} of the elliptic curve 
   $$y^2=(x-r_1)(x-r_2)(x-r_3).$$
\newline\newline
$\Delta$ detects whether or not the roots coincide. Part of the definition of an elliptic curve is that it is \emph{non-singular}, which means that none of these roots coincide or equivalently 
$$\Delta\neq0.$$
\begin{prop}
    Frey's curve is non-singular. Moreover, 
    $$\Delta=(abc)^{2p}.$$
\end{prop}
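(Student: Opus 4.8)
The plan is to compute the discriminant of the cubic $f(x) = x(x-a^P)(x+b^P)$ directly from Definition~\ref{discdef}, reading off the three roots as $r_1 = 0$, $r_2 = a^P$, $r_3 = -b^P$, and then simplify the resulting expression using the Fermat relation $a^P + b^P = c^P$. Non-singularity will follow immediately once we verify $\Delta \neq 0$, which amounts to checking that the three roots are distinct — and this uses that $a,b,c$ are all nonzero and pairwise coprime (so in particular $a^P \neq 0$, $b^P \neq 0$, and $a^P \neq -b^P$, the last because $a^P + b^P = c^P \neq 0$).

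\medskip

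\textbf{Key steps.} First I would substitute the roots into the formula
$$\Delta = (r_1 - r_2)^2 (r_1 - r_3)^2 (r_2 - r_3)^2 = (0 - a^P)^2\,(0 + b^P)^2\,(a^P + b^P)^2.$$
The first two factors give $a^{2P}$ and $b^{2P}$ directly. For the third factor, I would apply the hypothesis $a^P + b^P = c^P$ to rewrite $(a^P + b^P)^2 = (c^P)^2 = c^{2P}$. Multiplying, $\Delta = a^{2P} b^{2P} c^{2P} = (abc)^{2P}$, which is the claimed formula (note the exponent should read $2P$ in the notation of the preceding theorem, matching the statement's $2p$). Second, for non-singularity: since $a, b, c$ are nonzero integers, $a^{2P}, b^{2P}, c^{2P}$ are all nonzero, hence $\Delta \neq 0$, so no two roots coincide and Frey's curve is non-singular.

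\medskip

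\textbf{Main obstacle.} Honestly, there is very little obstacle here — the computation is a one-line substitution. The only subtlety worth flagging explicitly is \emph{where} the Fermat relation enters: it is precisely the identification $(a^P + b^P)^2 = c^{2P}$ that converts the ``raw'' discriminant $a^{2P} b^{2P} (a^P+b^P)^2$ into the symmetric form $(abc)^{2P}$, and it is this symmetric form — together with the factorization properties of $abc$ — that makes Frey's curve so useful downstream (its conductor and its behaviour under reduction at each prime are controlled by $\rad(abc)$). I would also remark that one should be slightly careful about the standard normalization: the ``naive'' discriminant of the Weierstrass equation defined via the roots as above differs from the minimal discriminant of the elliptic curve by a bounded power of $2$, but for the purpose of this proposition we are using exactly the root-based definition given in Definition~\ref{discdef}, so no adjustment is needed.
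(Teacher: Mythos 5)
Your proposal is correct and follows exactly the paper's own argument: substitute the roots $0$, $a^P$, $-b^P$ into the discriminant formula and use $a^P+b^P=c^P$ to obtain $(abc)^{2P}\neq 0$. Your version is slightly more careful in spelling out why $\Delta\neq 0$ (namely that $a,b,c$ are nonzero), which the paper leaves implicit.
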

\begin{proof}
  For Frey's Curve, $r_1=0,r_2=a^P,r_3=-b^P$. Hence
  $$\Delta=(a^P-0)^2(-b^P-0)^2(-b^P-a^P)^2=(abc)^{2P}\neq0.$$  
\end{proof}

In analysing such equations it can be helpful to reduce it modulo a prime number $p$, i.e. setting $p=0$ which turns this into a finite problem that can be understood explicitly.
Given a prime $p$, one can consider the \emph{reduction at $p$} of the Elliptic Curve by reducing all the coefficients of the curve modulo $p$. 
One issue that arises is that the $\Delta$ might be divisible by $p$ and the new equation becomes singular.

\begin{definition}\label{reduction}
    
If $p$ does not divide $\Delta$,  the discriminant is non-zero modulo $p$, which we call \emph{good reduction at $p$}. If $p \mid \Delta$,  the discriminant is $0$ modulo $p$, which we call \emph{bad reduction at $p$}. There are two types of bad reduction: \emph{multiplicative} reduction is if exactly two roots coincide and \emph{additive} reduction is if all three roots coincide.

\end{definition}
\begin{definition} [Semi-stable]
We call an elliptic curve \emph{semi-stable} if it has only good or multiplicative reduction at every prime, or equivalently it never has additive reduction at a prime.
\end{definition}

\begin{prop}
    Frey's curve is semistable.
\end{prop}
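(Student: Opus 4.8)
The plan is to use the root–coincidence description of reduction type from Definition \ref{reduction}: for a curve $y^2=(x-r_1)(x-r_2)(x-r_3)$, additive reduction at a prime means all three roots collapse to one value modulo that prime, while multiplicative reduction means exactly two of them collapse. For Frey's curve the roots are $r_1=0$, $r_2=a^P$, $r_3=-b^P$. So I would fix an arbitrary prime $\ell$ and work out precisely which roots coincide modulo $\ell$: one finds $r_1\equiv r_2\pmod\ell \iff \ell\mid a$, then $r_1\equiv r_3\pmod\ell \iff \ell\mid b$, and finally $r_2\equiv r_3\pmod\ell \iff \ell\mid a^P+b^P$, which by the hypothesis $a^P+b^P=c^P$ is equivalent to $\ell\mid c$.

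Now the arithmetic input does all the work. Since $a,b,c$ are pairwise coprime, the prime $\ell$ divides at most one of them, so at most one of the three congruences above can hold, hence at most one pair of roots coincides modulo $\ell$. In particular the three roots are never simultaneously congruent modulo any prime, so Frey's curve never has additive reduction and is therefore semi-stable by definition. The same bookkeeping gives the sharper picture: at a prime $\ell\mid abc$ exactly one pair of roots collides, so the reduction there is multiplicative, while at $\ell\nmid abc$ we have $\ell\nmid\Delta=(abc)^{2P}$ and hence good reduction (so the curve's conductor, once that notion is available, is the squarefree integer $\rad(abc)$).

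The step I expect to be the main obstacle is the prime $\ell=2$, where the model $y^2=x(x-a^P)(x+b^P)$ need not be a minimal Weierstrass model, so the criterion of Definition \ref{reduction} is not literally the definition of the reduction type there. (At every odd prime there is no difficulty: the coefficient of $x^2$, namely $b^P-a^P$, is forced by coprimality to be a unit, which already makes the displayed model minimal.) To handle $\ell=2$ rigorously one first normalises by permuting $a,b,c$ and adjusting signs so that, say, $b$ is even and $a\equiv-1\pmod 4$, and then applies an explicit change of Weierstrass coordinates to produce a model that is minimal at $2$, with the $2$-adic valuation of the discriminant dropping accordingly. A short run of Tate's algorithm on that minimal model, or direct inspection modulo $2$, then shows the reduction at $2$ is multiplicative rather than additive, completing the proof. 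I would isolate this $2$-adic computation as the one genuinely technical point and otherwise rely on the clean coprimality argument above.
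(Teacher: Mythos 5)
Your core argument is the same as the paper's: since $a,b,c$ are pairwise coprime, any prime divides at most one of them, so at most one pair of the roots $0,\,a^P,\,-b^P$ can coincide modulo that prime, and hence the reduction is never additive. Your case analysis (which pair collides under which divisibility condition, using $a^P+b^P=c^P$ to translate $r_2\equiv r_3$ into $\ell\mid c$) is a more carefully spelled-out version of the one-line proof in the text.

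Where you go beyond the paper is the prime $2$. You are right that the root-coincidence criterion is only the definition of reduction type for a model that is minimal at the prime in question, and that the displayed Frey model is generally not minimal at $2$; the standard treatment normalises so that $b$ is even and $a\equiv -1\pmod 4$ and passes to a model whose discriminant at $2$ is $(abc)^{2P}/2^8$, after which one checks multiplicative reduction there. The paper sidesteps this entirely by working with its simplified Definition \ref{reduction}, which makes no reference to minimality, so within the paper's own framework the short coprimality argument is complete; but your flagged $2$-adic point is the genuine technical content needed for the statement to hold with the standard definitions, and isolating it as you do is the right instinct.
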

\begin{proof}
    Since $a,b,c$ are coprime, if a prime divides $a^P$, it does not divide $b^P$. This means when reducing it any prime the roots $a^P$ and $b^P$ never coincide, hence the reduction at any prime is not additive.
\end{proof}

\subsubsection{Ribet and Wiles}
\begin{thm}[Ribet \cite{Ribet} Corollary 1.2]
     If every semistable elliptic curve is modular then Fermat's Last Theorem is true.
\end{thm}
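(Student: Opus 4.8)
The plan is to argue by contradiction, feeding everything already established about Frey's curve into Ribet's Theorem (the $\epsilon$-conjecture), which we use as a black box. Suppose FLT is false. By the reduction theorem proved above it suffices to take a solution with $n=P$ an odd prime, $P\geq 5$, and $a,b,c$ pairwise coprime; from it form Frey's curve $E:y^2=x(x-a^P)(x+b^P)$. The earlier propositions tell us that $E$ is a non-singular, semistable elliptic curve with $\Delta=(abc)^{2P}$, so its primes of bad reduction are exactly those dividing $abc$ and at each of them the reduction is multiplicative. By the hypothesis of the theorem, $E$ is modular: there is a weight-$2$ newform $f$ of level $N$ equal to $\rad(abc)$ (up to a harmless factor at the prime $2$, after the usual normalisation of Frey's curve) whose $L$-function matches that of $E$.

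Next I would pass to the mod-$P$ Galois representation $\bar\rho=\bar\rho_{E,P}$ on the $P$-torsion of $E$. Because $E$ is semistable and $P\geq 5$, this representation is irreducible (a theorem of Mazur), and because $E$ has multiplicative reduction at every bad prime $\ell$, one has $v_\ell(\Delta)=2P\,v_\ell(abc)\equiv 0\pmod P$ for every odd $\ell\mid abc$. This divisibility is precisely the local hypothesis under which Ribet's level-lowering theorem applies: it produces a newform of strictly smaller level giving rise to the same $\bar\rho$, and iterating strips every odd prime out of the level, leaving a weight-$2$ newform of level $2$.

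Finally I would invoke the elementary fact that $S_2(\Gamma_0(2))=0$ — there are no nonzero cusp forms of weight $2$ and level $2$, equivalently the modular curve $X_0(2)$ has genus $0$. Hence no such newform exists, a contradiction, and therefore FLT holds.

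The main obstacle is the middle step: invoking Ribet's Theorem correctly, i.e. verifying that the computed invariants of Frey's curve (multiplicative reduction at every bad prime, together with $P\mid v_\ell(\Delta)$) are exactly the input that level-lowering requires to bring the level all the way down to $2$, and that the irreducibility of $\bar\rho$ is genuinely available for semistable $E$ and $P\geq 5$. Since we treat Ribet's Theorem as a black box, in practice this amounts to carefully matching the data attached to Frey's curve against the hypotheses of that theorem.
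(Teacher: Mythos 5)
Your proposal is correct and follows essentially the same route the paper sketches in its ``Ribet's Proof Revisited'' discussion: assume FLT fails, form the semistable Frey curve, use the hypothesis to get a weight-$2$ form of level equal to the conductor $\rad(abc)$, apply Ribet's level-lowering (with the condition $P\mid v_\ell(\Delta)$ at the odd bad primes and Mazur's input for irreducibility and for removing $P$) to descend to level $2$, and contradict $S_2(\Gamma_0(2))=\{0\}$. Your version is simply a more explicit matching of the Frey curve's invariants against the hypotheses of Ribet's theorem than the paper's outline.
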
 

The corollary in \cite{Ribet} states the assumption as all elliptic curves are modular but it suffices to just have semistable elliptic curves be modular.

\begin{thm}[Wiles \cite{Wiles}]
     Every semistable elliptic curve over $\Q$ is modular.
\end{thm}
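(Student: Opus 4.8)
The plan is to follow Wiles' original strategy, which establishes modularity of a semistable elliptic curve $E/\Q$ by studying its associated mod $p$ and $p$-adic Galois representations on torsion points and proving that the $p$-adic representation is modular. The first step is to attach to $E$ the family of Galois representations $\rho_{E,p} \colon \galq \to \GL_2(\Z_p)$ arising from the action on the $p$-adic Tate module, together with their reductions $\bar\rho_{E,p} \colon \galq \to \GL_2(\F_p)$. The goal ``$E$ is modular'' is then reformulated (via Eichler--Shimura and the Modularity/TSW dictionary) as the assertion that some $\rho_{E,p}$ is isomorphic to the representation coming from a weight $2$ newform. The key organizing principle is a \emph{modularity lifting theorem}: if $\bar\rho_{E,p}$ is modular (i.e. arises from a modular form mod $p$) and satisfies suitable local hypotheses --- irreducibility, the right behaviour at $p$ (e.g. flat or ordinary), and semistability at the bad primes --- then $\rho_{E,p}$ itself is modular, hence $E$ is modular.

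Second, I would prove the modularity lifting theorem by Wiles' ``numerical criterion'' for an isomorphism of local rings. One sets up a deformation ring $R$ parametrizing all lifts of $\bar\rho_{E,p}$ with the prescribed local conditions, and a Hecke ring $\mathbb{T}$ acting on the relevant space of modular forms; there is a natural surjection $R \twoheadrightarrow \mathbb{T}$ because modular forms give rise to such deformations. The heart of the argument is to show this surjection is an isomorphism of complete intersections, which Wiles reduces to an inequality of sizes of cotangent spaces / congruence modules, and this in turn is proved by the Taylor--Wiles patching method: one augments the level by auxiliary primes $q \equiv 1 \pmod{p^n}$, builds a compatible system of such $R_Q \twoheadrightarrow \mathbb{T}_Q$ over a growing power series ring, and patches them in the limit to deduce that $R \cong \mathbb{T}$ is a complete intersection. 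This ``$R = \mathbb{T}$'' step, especially the Taylor--Wiles patching and the commutative-algebra input about complete intersections, is the main obstacle and is precisely where the 1993 gap occurred and was repaired in 1995.

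Third, to actually apply the lifting theorem one needs the residual representation $\bar\rho_{E,p}$ to be modular for at least one prime $p$, and here the $3$--$5$ switch is essential. One first tries $p = 3$: by the Langlands--Tunnell theorem, $\bar\rho_{E,3}$ (being odd and, when irreducible, having solvable image in $\GL_2(\F_3) \cong$ a group related to $S_4$) arises from a weight $1$ modular form, which after multiplying by an Eisenstein series and applying Deligne--Serre can be taken to be weight $2$ mod $3$; so $\bar\rho_{E,3}$ is modular and, if it is irreducible, the lifting theorem (in the flat/semistable case, using that $E$ is semistable) gives that $E$ is modular. If instead $\bar\rho_{E,3}$ is reducible, one picks an auxiliary semistable elliptic curve $E'$ with $\bar\rho_{E',5} \cong \bar\rho_{E,5}$ and $\bar\rho_{E',3}$ irreducible; then $E'$ is modular by the $p=3$ argument, hence $\bar\rho_{E,5} \cong \bar\rho_{E',5}$ is modular, and since $\bar\rho_{E,5}$ is automatically irreducible for semistable $E$ (a theorem of Mazur on rational isogenies rules out the reducible case at $5$), the $p=5$ lifting theorem finishes the job.

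Finally I would assemble these pieces: reduce to $E$ semistable (given), split on reducibility of $\bar\rho_{E,3}$, invoke Langlands--Tunnell plus weight-raising to get residual modularity, and then invoke the modularity lifting theorem in the appropriate local case to conclude that $\rho_{E,p}$ is modular and therefore $E$ is modular. The routine but lengthy verifications I would not grind through here are: the precise local deformation conditions at primes of bad (multiplicative) reduction and at $p$, the compatibility of Hecke operators with the patching, and the explicit Eisenstein-series manipulation turning a weight $1$ form into a weight $2$ form mod $3$; the conceptually hard and historically delicate step remains the Taylor--Wiles patching that yields $R = \mathbb{T}$.
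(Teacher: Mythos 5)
Your outline is an accurate account of Wiles' actual strategy and matches the route this paper itself sketches (in its ``Wiles' Use of Galois Representations'' subsection): attach the mod $3$ and $3$-adic representations, get residual modularity from Langlands--Tunnell, handle reducible $\bar\rho_{E,3}$ by the $3$--$5$ switch using Mazur's irreducibility result at $5$, and lift via $R=\mathbb{T}$ and Taylor--Wiles patching. Note that the paper states this theorem as a black box with no proof, and your proposal likewise defers all of the genuine content (the patching argument, the complete-intersection criterion, Langlands--Tunnell) to citation, so it is a correct roadmap rather than a proof --- which is the most that can reasonably be asked here.
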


Note that Wiles' theorem is independent of the Frey curve, whereas the statement given for Ribet is another general theorem applied to the Frey curve. This summary is about as clean as you can get without expanding further on key concepts like modular forms and Galois representations. In particular, the major contributions of Ribet and Wiles are just short statements for now.
\newline\newline
Modular is mentioned but not elaborated in any detail for the time being, one word of warning is that the first definition of modular form one finds is actually not the definition of modular but usually just of level 1. So this initial summary is a proof outline up to a definition of the word modular.


\subsection{Role of Modular Forms}

Here we expand on what's behind the word modular.

Key Terms: Conductor, Weight, Level.

\subsubsection{Conductor and Point Counts of Semistable Elliptic Curves}
To keep track of the curve’s reduction behavior across all primes, we have:
\begin{definition}The \emph{conductor} $N$ is defined for semistable elliptic curves as the product of all primes where the elliptic curve has bad reduction.
\end{definition}
A more general definition exists when the curve is not semistable (which can be found in the appendix), from which, $N$ isn't necessarily square-free and it being square-free (i.e. $N$ being the product of distinct primes) is equivalent to the curve being semistable. 
\newline\newline
Let $\rad(n)$ denote the product of distinct prime factors of $n$. Note $\rad(n)$ is square-free for all $n$, this notation allows the nice expression:

\begin{prop} The conductor of the Frey curve is given by
$$N=\rad(abc).$$    
\end{prop}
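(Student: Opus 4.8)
The plan is to combine the previous proposition (the discriminant of the Frey curve is $\Delta=(abc)^{2P}$) with the definition of the conductor for semistable curves as the product of primes of bad reduction, plus the fact, already established, that the Frey curve is semistable. Since bad reduction at $p$ happens exactly when $p\mid\Delta$, the set of primes of bad reduction is precisely the set of primes dividing $\Delta=(abc)^{2P}$, which is the same as the set of primes dividing $abc$. Taking the product of these primes gives $\rad(abc)$ by definition of the radical, so $N=\rad(abc)$.

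In more detail, first I would recall from Definition \ref{reduction} that a prime $p$ is a prime of bad reduction for an elliptic curve precisely when $p\mid\Delta$. Applying this to the Frey curve, whose discriminant is $(abc)^{2P}$, the primes of bad reduction are exactly the primes dividing $(abc)^{2P}$. Second, I would note that a prime divides $(abc)^{2P}$ if and only if it divides $abc$ (raising to a positive power does not change the set of prime divisors), so these are exactly the prime factors of $abc$. Third, since the Frey curve is semistable (established in the earlier proposition), the conductor is by definition the product of all primes of bad reduction, i.e. the product of the distinct prime factors of $abc$, which is exactly $\rad(abc)$.

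One subtlety worth flagging: the naive discriminant written down from the factored cubic, namely $(abc)^{2P}$, need not be the \emph{minimal} discriminant of the curve, and in general the conductor is read off from the minimal model and from more refined local data (Tate's algorithm) rather than from an arbitrary Weierstrass equation. For the Frey curve one should check that no prime $p$ with $p\mid\Delta$ actually has good reduction after passing to a minimal model — that is, that the primes dividing the given $\Delta$ genuinely are the primes of bad reduction. I expect this to be the main point requiring care: one uses the coprimality of $a,b,c$ (and a standard normalisation of the Frey curve, sometimes twisting so that the model is minimal at $2$) to verify that the reduction type at each prime dividing $abc$ is multiplicative and hence that the prime really does divide the conductor. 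Once that is in hand, the identification $N=\rad(abc)$ is immediate from the definitions, and the coprimality also re-confirms (as in the earlier proposition) that the reduction is never additive, so $N$ is genuinely square-free.
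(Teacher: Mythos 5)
Your argument is the same as the paper's: the paper simply declares the result ``immediate from the definition of the conductor'' for semistable curves, which is exactly your chain of reasoning (primes of bad reduction are the primes dividing $\Delta=(abc)^{2P}$, i.e.\ the primes dividing $abc$, and semistability makes the conductor their product). Your closing caveat about the minimal Weierstrass model is a genuine subtlety that the paper's one-line proof silently skips, so flagging it is a point in your favour rather than a divergence in method.
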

\begin{proof}
 This is immediate from the definition of the conductor above since Frey's curve is semistable.   
\end{proof}
So, the conductor of the Frey curve is then the product of all primes dividing $abc$. Note that $N$ is even as one of $a,b,c$ is even. 
\newline \newline Now if we let $E_p$ be the curve defined by the reduction of the Elliptic Curve modulo $p\nmid\Delta$, i.e. where the curve has good reduction, define $$A_p=p+1-|E_p|$$ where $|E_p|$ denotes the number of solutions $(x,y)$ which satisfy the equation of the Elliptic Curve modulo $p$. If the curve has bad reduction at $p$ there are 2 cases:
\begin{itemize}
    \item Additive reduction: $A_p=0$,
    \item Multiplicative reduction: $A_p=\pm 1$, sign known in each case.
\end{itemize}
We further extend this to the prime powers. Define $A_1=1$, and for the prime powers $A_{p^k}$ can be defined by the recurrence relation for $k\geq2$
$$A_{p^k}=\begin{cases}
A_pA_{p^{k-1}}&\text{if $p$ is of bad reduction}\\
A_pA_{p^{k-1}}-pA_{p^{k-2}}&\text{if $p$ is of good reduction}
\end{cases}$$

We can extend this definition to a general $n$ multiplicatively, e.g. for coprime $p,q$, we define $A_{pq}=A_pA_q$.

\subsubsection{Preliminary Definition of Modular}

The word modular refers to the ability to associate a modular form.

This amounts to taking the sequence $A_n$ and defining the following complex valued function
$$f(z)=\sum_{n=0}^{\infty} A_n e^{2\pi i nz},$$

for all $z\in\C$ with strictly positive imaginary part. Let us write $\mathcal{H}$ for the set of all such $z$. This definition necessitates the series to converge in $\mathcal{H}$.
\begin{itemize}
    \item It is called a \emph{cusp form} if $A_0=0$.
\end{itemize}

We also see that such a function is necessarily periodic, that is $\forall z$ replacing $z$ with $z+1$ changes every exponent by a multiple of $2\pi i$ and hence:
$$f(z+1)=f(z).$$
This is a special case of the more general requirement that there exist integers $k,N$ such that
$$f\left(\frac{az+b}{cz+d}\right)=(cz+d)^kf(z)$$
for all $z\in \mathcal{H}$ where $a,b,c,d\in\Z$, $c=0\mod N$, and  $ad-bc=1$.
\newline \newline
We see for example, if we take $a=1,b=1,c=0,d=1$, we recover the condition of being periodic.
\newline \newline
The $N$ is called the \emph{level} and $k$ the \emph{weight}. Other parts of the definition will be introduced in the modular forms section.

\subsubsection{Ribet's Proof Revisited}
You might notice that the letter $N$ is already taken, but this is explained by the following elaboration of Ribet's contribution:
\begin{itemize}
    \item It was known that if an elliptic curve is modular, then the level of its modular form, is of the same value, $N$, as the conductor of the elliptic curve.

    \item Ribet showed that for odd primes $p\neq P$, if $p\mid N$ and $p^2 \nmid N$ then we can replace our modular form of level $N$ by another modular form of level $N/p$ and weight 2. 

    \item Since for the Frey curve, $N$ is square free and even, this process can be iterated until $N=2$ or $N=2P$ with weight 2. If $P\mid N$, Ribet removes it using a result of Mazur. So we are left with a level two weight two cusp form.

    \item It was previously known that there are no non-zero cusp forms of level 2 and weight 2. 

\end{itemize}

We see that revealing the role of modular forms expands the elliptic curve story a little bit more and unlocks a chunk of disproving the Frey curve is modular, while the part about elliptic curves being modular is still basically locked. You might also be wondering how the $P$ is used. To really unlock these parts of the story, we will need to talk about Galois representations.

\subsection{Role of Galois Representations}

Keywords (we don't assume you already know): Galois Representation, $l$-adic, Point at Infinity, Abelian Group, Torsion Points
\newline\newline
We now start mentioning Galois representations somewhat like how modular forms were mentioned in the previous or first subsection, with proper definitions deferred to another section.
\newline \newline
The word Galois refers to Galois theory, which in turn is named after the mathematician Galois who famously died in a duel. We will use Galois representations as a black box in this subsection and try to introduce its origins, the two main types we will consider, as well as their place in the proof.




\subsubsection{Every Elliptic Curve has a Galois Representation}

We try to introduce the origins of Galois representations from elliptic curves.
\newline\newline
Given the points of an elliptic curve, there is a meaningful way in which we can add any two points $A$ and $B$ among them:
\begin{enumerate}
    \item Take the line $AB$ and intersect it with the elliptic curve at a third point $C$.
    \item Then take the vertical line through $C$ and intersect it again with the elliptic curve at $C'$.
\end{enumerate}
 This final intersection point is defined to be $A+B=C'.$ This can be seen in the diagram below with the elliptic curve in green, line $AB$ in black, and $A+B$ as the second intersection of the vertical line through C with the elliptic curve.
\newline\newline
  \includegraphics[scale=0.3]{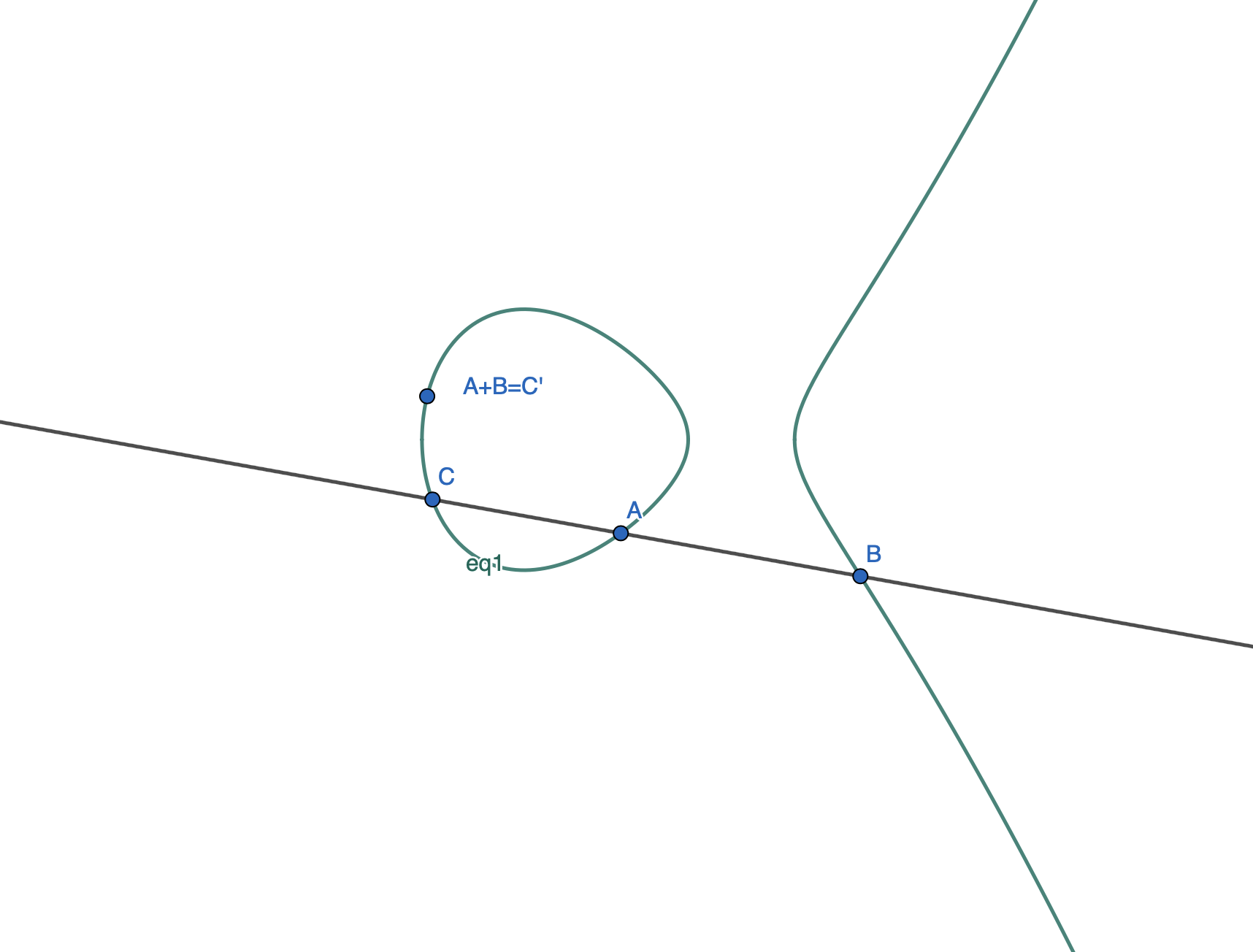}

You may notice that when both points $A$ and $B$ line on a vertical line, there is no third point of intersection with the elliptic curve, disrupting the definition of their sum. 
\newline\newline
One way to account for this is to add an extra point $\mathcal{O}$ called \emph{a point at infinity} and define the sum of any two vertical points to equal this point. 
\newline\newline
In this way we see that the point $\mathcal{O}$ behaves like zero in that for all points on the curve
$$Q+\mathcal{O}=Q.$$

We also see that two points $Q,R$ on a vertical line are negatives of each other in the sense:

$$Q+R=\mathcal{O}$$

We are now able to give a working definition for elliptic curve:

\begin{defn}\label{y2cubic}
    An elliptic curve $E$ over $\Q$ are all the rational pairs $(x,y)$ satisfying some equation $y^2=x^3+a_2x^2+a_4x+a_6$ with rational constants $a_2,a_4,a_6$, together with an extra point $\mathcal{O}$, called the point at infinity, such that $\Delta\neq0$.
\end{defn}


Any time an addition is defined, we get a notion of scaling by an integer as repeated addition or repeated subtraction:
$$mQ=\underbrace{Q+Q+\dots+Q}_{m\text{ times}}.$$

We are now ready to define the source of Galois representations:

\begin{defn}[$m$-Torsion Points]\label{torsionpt}
$$E[m]=\{Q\in E: m Q=\mathcal{O}\}.$$
\end{defn}

This torsion is the source of our Galois representations in a similar sense to the sequence of point counts being the raw data for our modular form. This is also where the $P$ from our initial assumption comes in as we specifically look at the set of $P$ torsion points, denoted by $E[P]$.

\subsubsection{The Two Types of Galois Representations}

Looking at all the powers of a prime $l$ at once leads to $l$-adic Galois representations, while just looking at $l$ torsion leads to mod $l$ Galois representation.

\subsubsection{Ribet and Wiles revisited}
 Serre's full conjecture was that Galois representations under certain conditions were modular and further specified how to get a minimal level and weight.

 The part about the minimal level and weight is called the epsilon conjecture.

\begin{itemize}
\item Ribet proves the epsilon conjecture and applies it to the mod $P$ Galois representation of Frey's curve.

\item Wiles proves that the $3$-adic Galois representation is modular for semi stable elliptic curves and uses known results to conclude that the elliptic curve is modular in general. Part of this was already having the mod $3$ Galois representation modular within reach from known results.
\end{itemize}

\subsubsection{Collection of Key Statements}

Here are some examples of the theorems in the actual papers whose statements we hope to understand. We put some here upfront to set our goals for which definitions to include.

\begin{eg}[Ribet's theorem]\label{Ribet}
Ribet typo in actual paper, no one read to end, though at the start there is a typo $\rho$ that's supposed to be a $p$

    \begin{center}
        \includegraphics[width=0.8\linewidth]{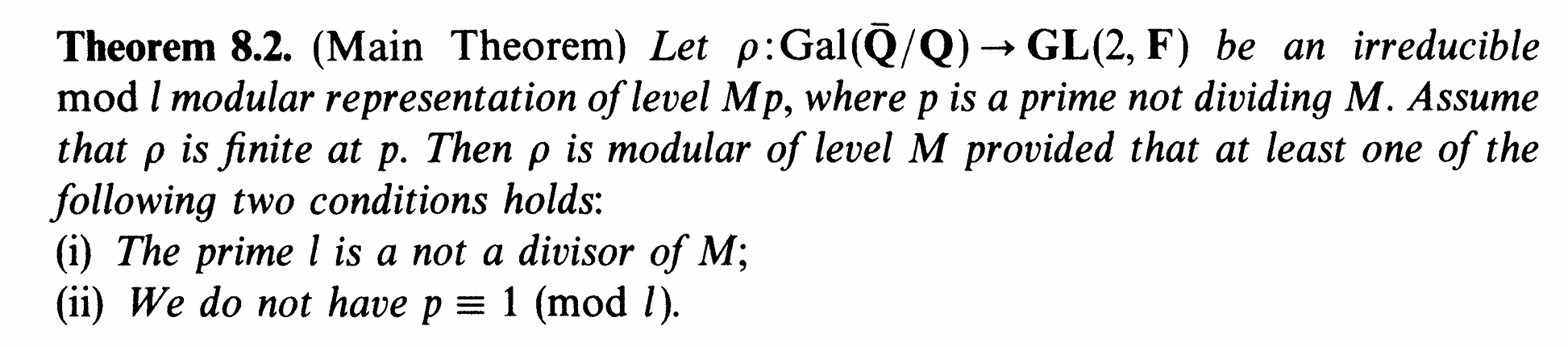}
    \end{center}

        We will be able to define everything in this statement by the end except for finite, which is defined in Serre \cite{Serre} and also showed to be satisfied when needed.
\end{eg}

\begin{eg}[Theorems in Wiles \cite{Wiles}]
Highlighted in the introduction of \cite{Wiles}
\begin{center}
    \includegraphics[width=0.8\linewidth]{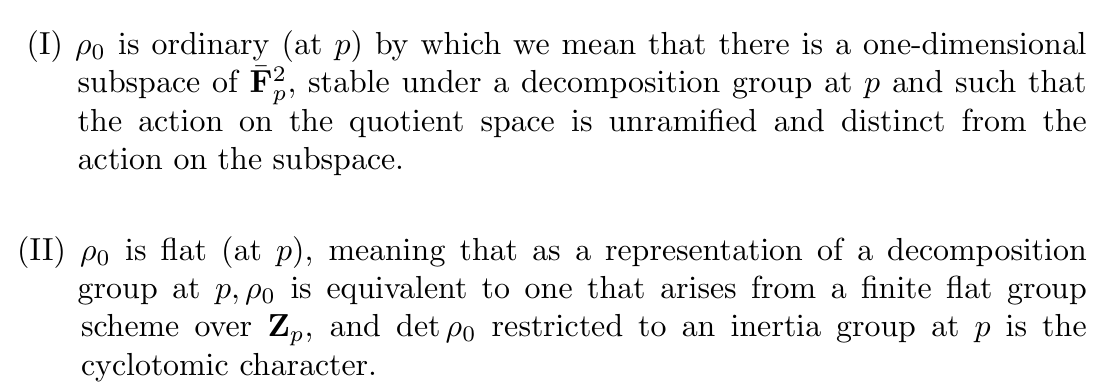}

    \includegraphics[width=0.5\linewidth]{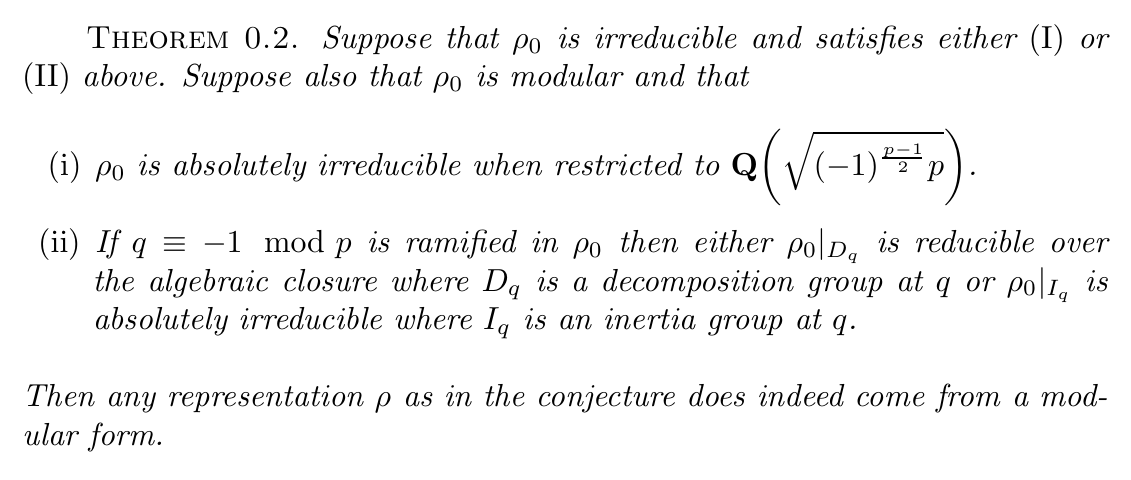}\includegraphics[width=0.5\linewidth]{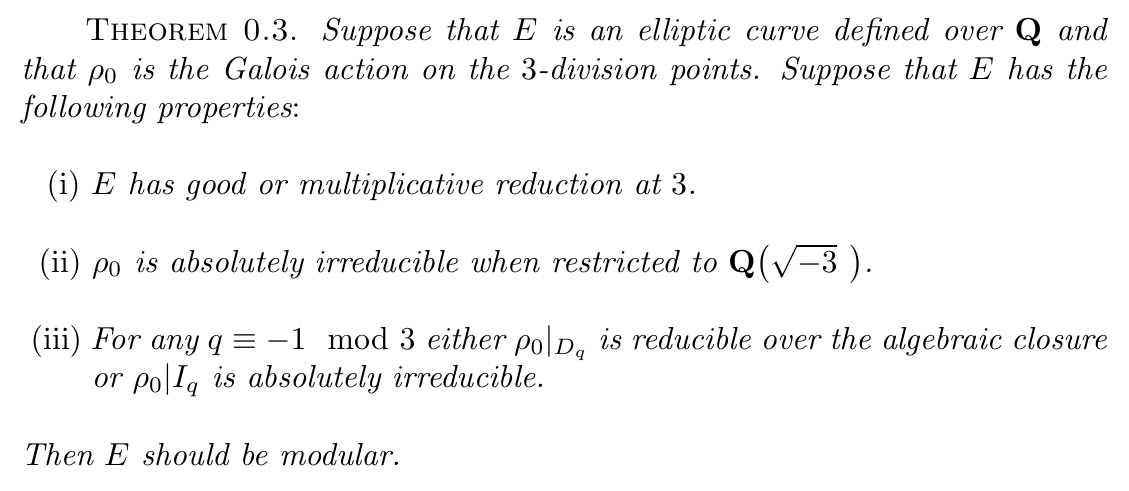}
    \end{center}

   We can explain almost all of the definitions needed to state these, except currently for ordinary, finite flat group scheme, we have not yet discussed characters. We are far from a deeper understanding of these statements.
\end{eg}

\section{Outline of a modern Proof of FLT}




\begin{itemize}
    \item Everything up to the Frey curve and its mod $P$ Galois representation is the same, possibly needing a few more base cases than just $n=3.4$.

    \item Serre shows in \cite{Serre} that Serre's modularity conjecture imples FLT.
    
    He does this by showing the Galois representation from the Frey curve satisfies the Serre conjecture.

     This then shows that the Galois representation from the Frey curve leads to a cusp form of level 2 and weight 2, which is the same contradiction as before.
    
    \item Khare-Wintenberger prove Serre's modularity conjecture \cite{SMC}.

\end{itemize}

\subsection{Serre's modularity Conjecture} Serre's Conjecture implies both the $\epsilon$-conjecture and the TSW Conjecture but instead of using those two statements, the Serre conjecture gives a contradiction to prove FLT directly. The contradiction is ultimately the same one with no level 2 weight 2 cusp forms. 

\subsubsection{Collection of Statements}

\begin{eg}[Serre's Conjecture and Translation \cite{Serre}]

\begin{center}

    \includegraphics[width=0.5\linewidth]{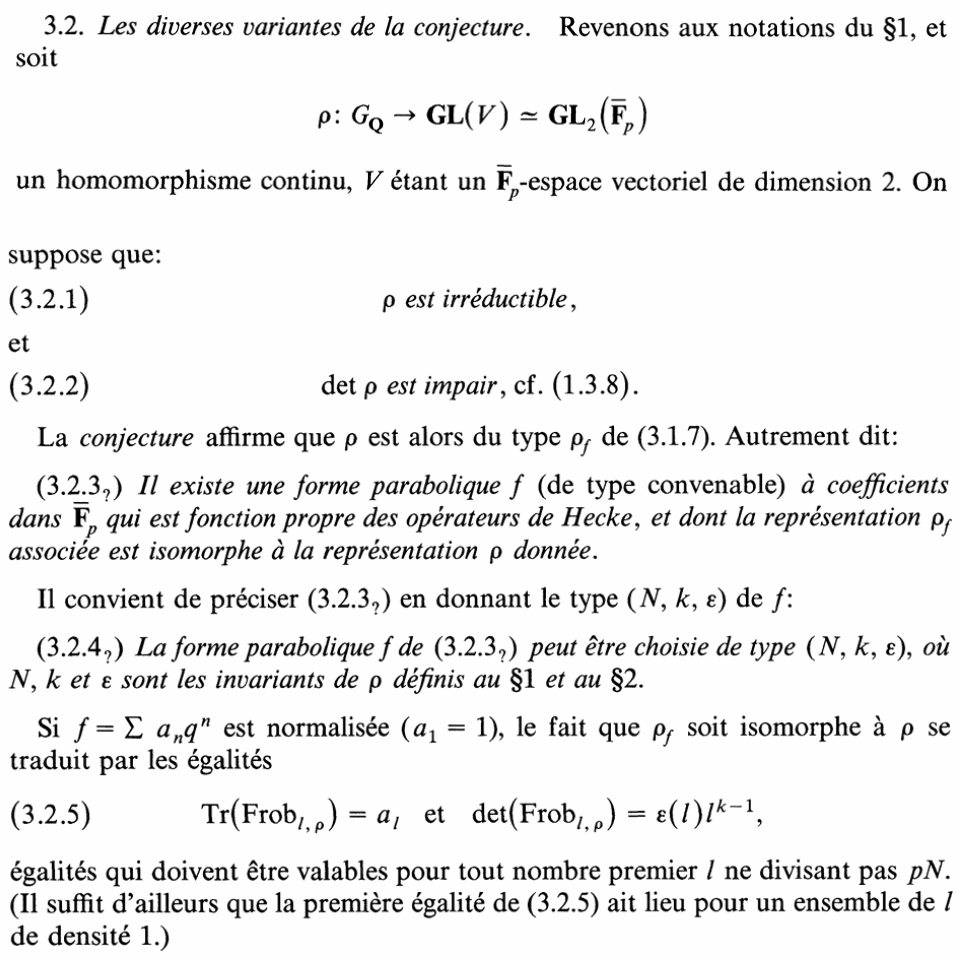}\includegraphics[width=0.5\linewidth]{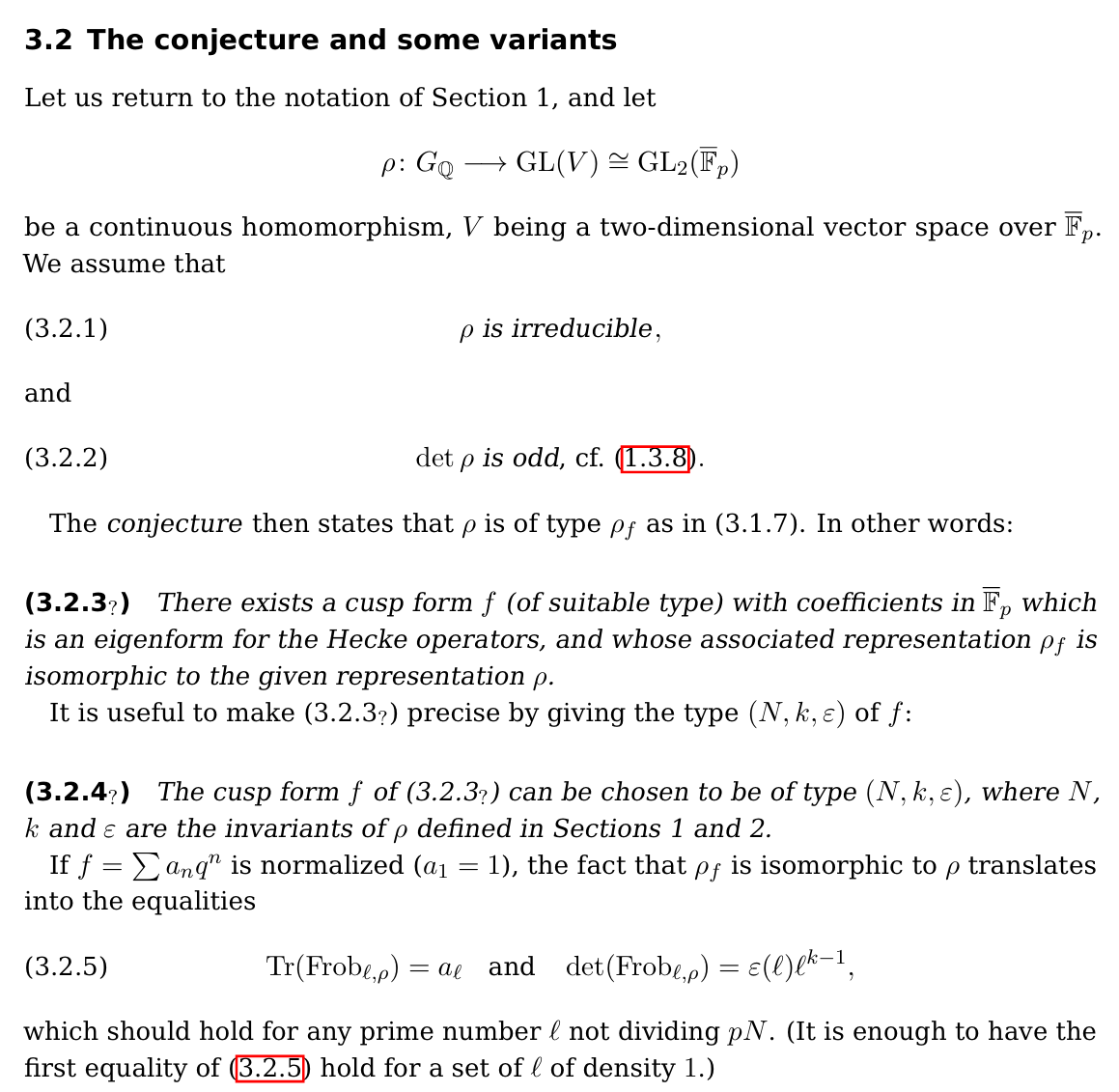}
\end{center}
\end{eg} 


\section{Structure of Article}
The goal of this section is to outline the structure of the rest of the article, especially with regard to the preliminary sections and appendices.

\subsection{Preliminaries}

The reader is assumed to be familiar with basic notions and common notation for functions, modular arithmetic and complex numbers.
\newline\newline
The preliminary sections introduce standard content that you might find in undergraduate courses that we do not assume the reader has completed. It is content we believe every mathematician will encounter at some point in their education. We try to spread these out as needed rather than all in one place, so that one comes across them as they require the information for the following section. The later preliminaries will build on the earlier preliminaries.
\newline\newline
 While much of preliminary content is not central to our discussion, it is fundamental mathematics that is needed to proceed. Students should get an idea of what is ahead in their education and will also likely be able to master some components as needed, just as some of the authors have. 





\subsection{Main Sections}

The main sections are Elliptic curves, Modular Forms, and Galois Representations in that order. Their difficulty and dependencies increase dramatically with each section. Some content that might usually go into preliminaries, we put into the main sections if it is more specialised, advanced or central to the discussion.

\subsection{Appendices}

Information that take us too far afield from the main discussion in terms of length or prerequisites are explored in the Appendices. They are not necessarily a systematic treatment of the topics, but focus on the specific explorations that the authors embarked on.

\subsection{Acknowledgements}

This project was initiated by Tanish at the start of the school year with a deadline set for completion by the July school holidays. At the outset, none of us fully anticipated the scale or depth of what lay ahead. Spencer joined midway through the project after completing his UK Team Selection Tests in Cambridge. Both Spencer and Tanish are currently in Year 12, and it is particularly admirable that they chose to undertake this work in the midst of a demanding HSC graduation from high school year.
\newline\newline
Alex, a member of last year’s Australian IMO team, is now entering undergraduate studies and is set to begin at Cambridge at the end of the year. All of us shared a strong desire to understand the mathematics behind Fermat’s Last Theorem, and I was especially motivated to explore whether the proof could be made accessible to high school students. Our goal was to distil the essence of the argument and consider how much of it could be regarded as “standard,” noting that the boundary between elementary and advanced mathematics has shifted significantly—even in the last ten years.
\newline\newline
The project grew rapidly beyond our expectations. In a matter of months, we produced over 100 pages of written material, alongside a collection of video recordings and supporting references. It has been a deeply rewarding and exciting experience for all involved.
\newline\newline
We are especially grateful to our families for their support throughout this journey, in particular, Shikha, Laura, Kathy, Pauline and Lisa. Finally, we sincerely thank Dr. Michael Sun’s School of Maths for fostering a strong supportive community environment—one in which students can engage with mathematics not in competition, but in collaboration and shared discovery.

\subsubsection{Email addresses}

\begin{itemize}
\item 
alexqiu2007@outlook.com
    \item tanish.s@icloud.com
    \item 
spencerhuxtablenicklin@gmail.com
    \item drmichaelsun@outlook.com
\end{itemize}

\section{Preliminaries I}
In this section we introduce the following terms for the first time so that we can use them in later sections, especially for elliptic curves.

\subsection{Abstract Algebra}
Here are some terms that one might come across traditionally in second or third year of undergraduate courses but could learn earlier without much needed prior knowledge. We do not go into much theory but even just packaging the words can be very useful for later.
\newline\newline
Keywords: Abelian Group, Subgroup, Torsion Subgroup, Commutative Ring, Field, Characteristic, Vector Space, Module, Direct Sum  

\subsubsection{Groups, Abelian Groups}
We first define a group without going into group theory

\begin{definition} [Group]
    A group is a set $G$ with a binary operation, $*$, such that the operation is \emph{associative}:
    $$(a*b)*c=a*(b*c)$$
    for all $a,b,c\in G$, it has an \emph{identity element} $e$:
    $$a*e=a=e*a$$ 
    for all $a\in G$, and finally, every $a\in G$ has an \emph{inverse element} $b$, such that
    
    $$a*b=e=b*a.$$ 
\end{definition}
The main property of emphasis in a group is that everything is invertible. For example this means that if we can add, we can also subtract, and if we can multiply, we can also divide. Associativity is in some sense present in everything, while even if something doesn't have an identity, one can usually just add one artificially.

\begin{definition}[Subgroup]
    A \emph{subgroup} $H$ of a group $G$ is a subset that is closed under the operation of that whole group:

    $$h_1h_2\in H$$
for all $h_1,h_2\in H$ and is itself a group under this restricted operation.

\end{definition}

\begin{eg}
    Given a set $X$, the set of invertible functions $f:X\to X$ with operation as composition of functions is a key example of a group.
\end{eg}
We see that in this example there is exactly one identity function and every function has a unique inverse. This is true in general:
\begin{lem}
    The identity and inverses are uniquely defined.
\end{lem}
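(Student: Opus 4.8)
The plan is to prove the two assertions separately: first that the identity element of a group is unique, and then that, given this, the inverse of each element is unique. Both are standard short arguments that rely only on the group axioms (associativity, existence of an identity, existence of inverses), and both follow the same flavour of trick: introduce a second candidate object and use the defining equations to force it to coincide with the first.

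For uniqueness of the identity, I would suppose $e$ and $e'$ are both identity elements of $G$. Since $e$ is an identity, $e * e' = e'$; since $e'$ is an identity, $e * e' = e$. Comparing the two expressions gives $e = e'$. Note this step does not even require associativity or inverses — it uses only the two-sided identity property, applied to the single product $e * e'$.

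For uniqueness of inverses, fix $a \in G$ and suppose $b$ and $b'$ both satisfy $a*b = e = b*a$ and $a*b' = e = b'*a$, where $e$ is \emph{the} identity (already shown unique). Then consider $b * (a * b')$. On one hand, $a * b' = e$, so this equals $b * e = b$. On the other hand, by associativity $b * (a * b') = (b * a) * b' = e * b' = b'$. Hence $b = b'$. The key ingredient here, beyond the identity axiom, is associativity, which lets us regroup $b*a*b'$ in two ways; without it the argument collapses.

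There is no real obstacle in this proof — it is essentially a two-line computation in each case — so the main thing to be careful about is bookkeeping: making sure the identity is proved unique \emph{first} so that when we say ``the identity'' in the inverse argument the phrase is well-defined, and being explicit about exactly which axiom (identity vs.\ associativity) is invoked at each equality. One could also remark that one-sided versions of these statements require a bit more care, but since the paper's definition builds in two-sided identities and two-sided inverses, the clean arguments above suffice.
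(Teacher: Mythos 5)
Your proof is correct and follows essentially the same route as the paper's: the identity is pinned down by evaluating $e*e'$ both ways, and uniqueness of inverses comes from regrouping $b*(a*b')=(b*a)*b'$ via associativity, which is exactly the paper's chain $c=c*e=c*(b*d)=(c*b)*d=e*d=d$ with different letters. Your added remarks about which axiom is used where, and about proving uniqueness of the identity first, are accurate but not substantively different.
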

\begin{proof}
\textbf{Identity is Unique:}

Suppose $e$ and $e'$ are identities.

\[
e = e * e' = e'.
\]

\textbf{Inverse is Unique:}

Suppose $b$ has inverse $c$, and another inverse $d$.

\[
c = c * e = c * (b * d) = (c * b) * d = e * d = d.
\]

\end{proof}
If the operation can be carried out in either order then we have an \emph{Abelian} group:

\begin{definition}[Abelian Group]
    An \emph{Abelian} group is a group $G,*$ whose operation $*$ is \emph{commutative}:

$$a*b=b*a$$
    for all $a,b\in G$. It is common for Abelian groups to have $*$ written as $+$ and the operation called addition with identity $0$.
\end{definition}

\begin{eg}
   The set of integers   $\Z$ together with addition is a key example of an Abelian group.
\end{eg}

\begin{eg}  The set integers modulo $n$, $\Z/n\Z$, together with addition is another key example of an Abelian group. It is an example of a quotient group of $\Z$ by $n\Z$, which can be thought of as a group formed by setting the elements of the subgroup $n\Z$ to zero in $\Z$ along with its consequences (for example, if two numbers differ by $n$, then they are now equal). A similar construction can be made for any abelian group and one of its subgroups.
\end{eg}

\begin{eg}
The set of symmetries of an equilateral triangle (ways of rotating and reflecting it which preserve its appearance) form a group under the operation of composition, i.e. doing one symmetry after another, but it is not an abelian group. For instance, horizonally reflecting the triangle and then rotating it $120^\circ$ clockwise is not the same symmetry as rotating it $120^\circ$ clockwise and then horizontally reflecting it, so the operation is not commutative.
\end{eg}

\begin{definition}[Cyclic Group]
    A finite cyclic group is a group with a finite number of elements which can all be reached by repeatedly applying the group operation on one element, known as the \emph{generator}. For example, the set of integers modulo n: $\{0,1,2,...,n-1\}$ denoted $\mathbb{Z}/n\mathbb{Z}$ is a finite cyclic group under addition with generator $1$, while $\Z,+$ is an example of an infinite cyclic group.
\end{definition}

\begin{itemize}
    \item One of our key checkpoints is to show that the sum of points defined in the outline makes an elliptic curve into an Abelian group.
     \item Notice $\Z/n\Z$ has the property that adding $1$ $n$ times gets back to $0$, this is an example of \emph{torsion}, which can be defined for any Abelian group: 
\end{itemize}

\begin{definition} [Scalar Multiplication by Integers]
     Let $G$ be an Abelian group. For $n \in\Z$ and $g\in G$, define 
     $ng$ by
     $$ng = \underbrace{g + g + \cdots + g}_{n\ \text{times}}$$
     when $n\geq0$ and $(-n)(-g)$ when $n<0$.
\end{definition}%

Now we have the following definition:
\begin{definition}[Torsion Subgroup] Given an Abelian group $G$, $+$ and an integer $n$, the set 
$$G[n]=\{g\in G:ng=0\}$$
is a group under the same addition, called the $n$-torsion subgroup. The \emph{torsion subgroup} of $G$ is 
$$G=\bigcup_n G[n],$$
which consists of all the torsion elements of the group.
\newline\newline
Our key milestone for encountering a torsion subgroup is when we get the torsion points of an elliptic curve.
    
\end{definition}

Here is a simple construction to build bigger Abelian groups:

\begin{definition}
    Give two Abelian groups $A$ and $B$, define an Abelian group $A\oplus B$ by taking the set $A\times B$ of ordered pairs $(a,b)$ with $a\in A$ and $b\in B$ and define addition \emph{pointwise}:
    $$(a_1,b_1)+(a_2,b_2)=(a_1+a_2,b_1+b_2)$$
    for all $a_1,a_2\in A$ and $b_1,b_2\in B$.
\end{definition}

We will use this to show many key groups associated with elliptic curves are just two copies of the same group, for example $\Z/n\oplus \Z/n$.

\subsubsection{Rings and Fields}
We can also get two operations on the same set:

\begin{definition} [Ring]
    A ring is a set $R$ with two operations, usually called addition $+$ and multiplication $\times$, satisfying several properties:

    $R, +$ is an Abelian group.

    $R, \times$ is associative, and together \emph{distribute}:
    $$a\times (b+c)=a\times b+a\times c,$$
    $$(a+b)\times c=a\times c+b\times c$$
    for all $a,b,c\in R.$ If the multiplication is commutative then the ring is called a \emph{commutative ring}. Often rings will also have an identity element for multiplication included as part of the definition.
\end{definition}

In other words, rings have a notion of addition, subtraction and multiplication but not necessarily division. When there is a notion of division, they are called division rings and when the multiplication is commutative, we get a \emph{field}:

\begin{definition} [Field]
    A field $F$ is a commutative ring with the added property that every non-zero element has a multiplicative inverse.
\end{definition}

In other words, $F,+$ is an Abelian group and $F\setminus\{0\}, \times$ is an Abelian group, with distributivity holding them together.

\begin{definition}
    [Characteristic of a Field] The \emph{characteristic} of a field is the smallest $n$ for which scaling $1$ by $n$ equals zero, that is $n(1)=0$. The characteristic is defined to be zero if no $n$ exists.
\begin{proposition}
    If the characteristic is non-zero, then it is prime. 
\end{proposition}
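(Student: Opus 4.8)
The plan is to prove the contrapositive of a suitable restatement: if the characteristic $n$ is a positive integer that is \emph{not} prime, we derive a contradiction with the minimality of $n$. So suppose $n>0$ is the characteristic, meaning $n$ is the least positive integer with $n(1)=0$ in $F$. If $n$ were composite we could write $n=ab$ with $1<a<n$ and $1<b<n$, both integers.

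The key step is the identity $n(1) = (ab)(1) = (a(1))\cdot(b(1))$ in the field $F$, where the dot is the field multiplication. This follows from the distributive law: expanding $a(1)\cdot b(1)$ as a sum of $a$ copies of $1$ multiplied by a sum of $b$ copies of $1$ gives $ab$ copies of $1\cdot 1 = 1$, i.e.\ $(ab)(1)$. Since $n(1)=0$, we get $(a(1))\cdot(b(1)) = 0$ in $F$.

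Now I invoke the defining property of a field: it has no zero divisors, because if $xy=0$ with $x\neq 0$ then multiplying by $x^{-1}$ forces $y=0$. (This "integral domain" property is an immediate consequence of the field axioms recalled in the excerpt, and I would state it as a one-line lemma first if I wanted to be fully self-contained.) Applying this to $x=a(1)$ and $y=b(1)$, we conclude that either $a(1)=0$ or $b(1)=0$. But $0<a<n$ and $0<b<n$, so either case contradicts the minimality of $n$ as the smallest positive integer killing $1$. Hence $n$ cannot be composite, and since $n>1$ (as $1(1)=1\neq 0$ in any field, the $0\neq 1$ convention being part of the field definition), $n$ is prime.

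The main obstacle — really the only subtle point — is making precise the "exponent arithmetic" identity $(a(1))\cdot(b(1)) = (ab)(1)$, i.e.\ that scalar multiplication by integers is compatible with the ring multiplication. This is a routine but slightly fiddly induction using distributivity (first show $m(1)\cdot(1) = m(1)$, then $m(1)\cdot k(1) = (mk)(1)$ by induction on $k$), and I would either relegate it to the reader or dispatch it in a sentence rather than grinding through the double induction. Everything else is a direct appeal to the field axioms already laid out.
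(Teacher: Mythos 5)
Your proof is correct and follows essentially the same route as the paper: write the characteristic as $ab$, use the fact that nonzero elements of a field are invertible (equivalently, that a field has no zero divisors) to force $a(1)=0$ or $b(1)=0$, and contradict minimality. If anything, your version is more careful than the paper's, since you make explicit the identity $(ab)(1)=(a(1))\cdot(b(1))$ and you do not tacitly assume $a(1)$ is invertible before knowing it is nonzero.
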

    
    \begin{proof}
        Suppose for the sake of contradiction that we have a field $F$ of characteristic $ab$ where $a,b\in \N_{\geq2}$. We know that every element in $F$ has a multiplicative inverse. Suppose $a^{-1}$ is the inverse of $a$. Therefore, $1= aa^{-1}$ from which multiplying by $b$ implies that $b= baa^{-1}=0$. This is impossible because $0=b<ab$ which is assumed to be the smallest value that can scale $1$ to become $0$. This contradiction necessitates that the characteristic cannot be composite if it is nonzero.
    \end{proof}

\end{definition}

\begin{eg}
    $\Z$ with addition and multiplication is a commutative ring and so is $\Z/n\Z$ with addition and multiplication modulo $n$.
\end{eg}

\begin{eg} $\Q$ the set of rational numbers together with addition and multiplication is a field of characteristic zero.
\end{eg}

\begin{eg}  $\C$ the set of complex  numbers together with addition and multiplication is a field of characteristic zero.
\end{eg} 

\begin{eg} $\Z/p\Z$ the set of integers modulo a prime $p$ together with addition and multiplication modulo $p$ is a field. In this context, we will rewrite this as $\mathbb{F}_p$ to emphasise it is a finite field of size $p$. It is a field of characteristic $p$.
\end{eg} 
\subsubsection{Vector Spaces and Modules}

The torsion subgroup of an Abelian group seems to come from more than just an Abelian group since we defined a notion of scalar multiplication.
\newline\newline
This subsection explores the interaction between scalar multiplication and addition:

\begin{definition}[R-module] Let $R$ be a ring with a multiplicative identity of $1$. A (left) $R$-module $V$ is an Abelian group $V,+$ together with a pairing
$$R\times V\to V$$
denoted
$$(\lambda, v)\mapsto \lambda v$$
called scaling by $R$ that satisfies several natural compatibility properties: 
\begin{itemize}
    \item $\lambda\cdot(v_1+v_2)=\lambda\cdot v_1+\lambda\cdot v_2$
    \item $(r_1+r_2)\cdot \lambda=r_1\lambda+r_2\lambda$
    \item $(r_1r_2)\cdot v=r_1\cdot(r_2\cdot v)$
    \item $1\cdot v= v$
\end{itemize}
\end{definition}

\begin{definition} [$F$-Vector Space]
    A vector space over a field $F$ is an $F$-module.
    
\end{definition}

\begin{eg} Every abelian group is a $\Z$-module where  scaling by $\Z$ is defined as above  as repeated addition or subtraction.
\end{eg}
\begin{eg} Every commutative ring $R$ is an $R$-module, in particular,  $\Z/n\Z$ is a $\Z/n$-module and $\F_p$ is a $\F_p$-vector space.
\end{eg}

\begin{defn}
    Given two $R$-modules $U$ and $V$, define an $R$-module $U\oplus V$ by taking the Abelian group $U\oplus V$ and define scaling by
    $$\lambda(u,v)=(\lambda u,\lambda v)$$
    for all $\lambda\in R,u\in U,v\in V$.
\end{defn}
\begin{eg} $\F_p\oplus \F_p$ is a $\F_p$ vector space. 
\end{eg}
\begin{eg} $\Z/n\Z\oplus\Z/n\Z$ is a $\Z/n\Z$-module
\end{eg}

We start by looking at the $P$-torsion points as an Abelian group but gradually change our perspective to first see a $\Z$-module and then a $\Z/p$-module, which is the same as an $\Z/p$-vector space.

\subsection{Morphisms in Abstract Algebra}

So far, we've introduced several key \textit{objects} in abstract algebra: \textbf{groups}, \textbf{rings}, \textbf{fields}, and \textbf{modules}. Each of these structures is defined by a set equipped with operations satisfying certain axioms. However, we’ve yet to discuss the concept of \textbf{morphisms} — the functions between these structures that preserve their defining properties.
\newline\newline
Understanding morphisms is important because they allow us to compare algebraic objects meaningfully. They also provide powerful shortcuts: instead of verifying all properties directly, we can often reason about structures via the morphisms between them.
\newline\newline
Before diving into the details, getting the big idea can save a lot of effort, because to write everything in every detail can be difficult to get right completely, and it can be overwhelming sometimes when you're just starting out.
\newline\newline
Morphisms that we discuss are functions between objects of the same type that preserve their defining properties. Even if it appears they are defined to not preserve some properties, they usually are able to be shown as a consequence of the preserved ones that are included.

\subsubsection{Group Homomorphisms}

\begin{definition}[Group Homomorphisms]

Given two groups \( G \) and \( H \), a \emph{group homomorphism} is a function
\[
\varphi : G \to H
\]
that satisfies
\[
\varphi(ab) = \varphi(a)\varphi(b) \quad \text{for all } a, b \in G.
\]
\end{definition}
\begin{definition}[Group Isomorphism]
    A \emph{group isomorphism} is a bijective homomorphism whose inverse is also a homomorphism. If such a map exists, we say that \( G \) and \( H \) are \emph{isomorphic} as groups — essentially, they are the same group structure up to renaming of elements. \( G \) and \( H \) being \emph{isomorphic} is denoted by  \( G \) $\cong$ \( H \).
\end{definition}
 
 \begin{definition}[Automorphism]
     An \emph{automorphism} is an isomorphism of a group to itself.
 \end{definition}

You may wonder if $\varphi$ should send the identity of $G$ to the identity of $H$ but this is a consequence of preserving the operation.

\begin{eg}Group homomorphisms $f$ between Abelian groups might be written
$$f(a+b)=f(a)+f(b),$$
which is just an additive function. We know that $f(0)=0$ and $f(-a)=-a$ are consequences of additivity.
    
\end{eg}




\subsubsection{Homomorphisms of Rings and Modules}

\begin{itemize}
  \item For \textbf{rings}, a ring homomorphism \( \varphi : R \to S \) must preserve both addition and multiplication:
  \[
  \varphi(a + b) = \varphi(a) + \varphi(b), \quad \varphi(ab) = \varphi(a)\varphi(b),
  \]
  and often (but not always), \( \varphi(1_R) = 1_S \).

\begin{eg}
  An illustrative example is the ring homomorphism $\phi:\mathbb{Z}\to \mathbb{Z}/n\mathbb{Z}$. This holds as addition and multiplication are preserved in modular arithmetic. 
\end{eg}

\begin{eg}
    The most familiar example of a field automorphism on $\C$ might be that of complex conjugation:

    $$z=x+yi\mapsto \bar{z}=x-yi$$
\end{eg}  
\begin{definition} Given a field $F$,
    the set of all field automorphisms is denoted $\Aut(F)$ and is itself a group under function composition.
\end{definition}
  \item For \textbf{modules}, a module homomorphism between two \( R \)-modules \( M \) and \( N \) is a function \( f : M \to N \) that preserves addition and scalar multiplication:
  \[
  f(x + y) = f(x) + f(y), \quad f(rx) = rf(x) \quad \text{for all } x, y \in M,\, r \in R.
  \]

  \begin{definition} This is also an example but is used later too: an $F$-vector space $V$ is \emph{two dimensional} if
$$V\cong F\oplus F.$$ 
  \end{definition}


    
\end{itemize}

\section{Elliptic Curves}

The goal for this section is to define the Tate module from the elliptic curve, and show that it is two dimensional, while gaining a deeper understanding of elliptic curves along the way.
\newline\newline
Keywords: Short Weierstrass Equation, Conductor, $j$-invariant, Singularities, Cusp, Node, Tate Module, Two Dimensional, Inverse Limit, $\ell$-adic. 
\newline\newline
The study of elliptic curves is closer to core topics in graduate school such as algebraic geometry but not always singled out on its own. It has now made its way as a chapter in the classic book \cite{HW} 6th Edition after FLT was proved.

\subsection{Defining Equations}

While the definition of an elliptic curve given in the outline is sufficient for much of what we will discuss, there are further variations in definitions that we explore both for better understanding and so that various definitions can be applied in different proofs to simplify them.
\newline\newline
Here is one definition of an elliptic curve; as solutions to more general equations in $x$ and $y$:
\begin{definition}[Weierstrass Equation] Given a field $F$ and constants $a_1,a_2,a_3,a_4,a_6\in F$, the Weierstrass equation (of an elliptic curve) is given by
$$y^2+a_1xy+a_3y=x^3+a_2x^2+a_4x+a_6$$

\end{definition}
However for simplicity consider the following transformation into the \emph{Short Weierstrass Equation}:

\begin{prop}[Short Weierstrass Equation]The transformation $x\mapsto x$, and  $y\mapsto y-a_1x/2-a_3/2$ takes the Weierstrass equation 

$$y^2+a_1xy+a_3y=x^3+a_2x^2+a_4x+a_6$$
to
    $$y^2=x^3+(a_1^2/4+a_2)x^2+(a_1a_3/2+a_4)x+a_3^2/4+a_6.$$
\end{prop}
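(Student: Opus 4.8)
The plan is to verify the claimed substitution directly by expanding both sides and matching coefficients of like powers of $x$. This is a purely computational proposition: we are told to apply $x \mapsto x$ and $y \mapsto y - a_1 x/2 - a_3/2$ to the general Weierstrass equation and check we land on the stated Short Weierstrass form. So there is no conceptual obstacle — the only risk is an algebra slip, and the work divides cleanly into two pieces.

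First I would substitute $y \mapsto y - a_1 x/2 - a_3/2$ into the left-hand side $y^2 + a_1 x y + a_3 y$. Writing $u = y - a_1 x/2 - a_3/2$ for brevity, I expand $u^2 + a_1 x u + a_3 u = u^2 + (a_1 x + a_3) u$. Note that $a_1 x + a_3 = 2(a_1 x/2 + a_3/2)$, so if I set $w = a_1 x/2 + a_3/2$ then $u = y - w$ and the left side is $(y-w)^2 + 2w(y-w) = y^2 - 2wy + w^2 + 2wy - 2w^2 = y^2 - w^2$. Hence the left-hand side collapses to $y^2 - (a_1 x/2 + a_3/2)^2 = y^2 - a_1^2 x^2/4 - a_1 a_3 x/2 - a_3^2/4$. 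This is the slickest route and avoids a messy three-term expansion.

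Next I move the unwanted terms to the right-hand side: the equation becomes
$$y^2 = x^3 + a_2 x^2 + a_4 x + a_6 + \frac{a_1^2}{4}x^2 + \frac{a_1 a_3}{2}x + \frac{a_3^2}{4}.$$
Collecting the coefficient of $x^2$ gives $a_1^2/4 + a_2$, the coefficient of $x$ gives $a_1 a_3/2 + a_4$, and the constant term gives $a_3^2/4 + a_6$, which is exactly the asserted Short Weierstrass equation. I would also remark that the substitution is invertible (it is just a shift of $y$ by a polynomial in $x$, with inverse $y \mapsto y + a_1 x/2 + a_3/2$), so it is a genuine change of coordinates and the two curves are identified; strictly this requires $2$ to be invertible in $F$, i.e. $\operatorname{char} F \neq 2$, which I would flag as the implicit hypothesis.

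The main (and really only) obstacle is bookkeeping: keeping the halves and quarters straight and not dropping the cross term $a_1 a_3 x / 2$ coming from $(a_1 x/2 + a_3/2)^2$. Using the $u = y - w$ completing-the-square trick described above minimizes this risk, since it reduces the whole computation to the single identity $(y-w)^2 + 2w(y-w) = y^2 - w^2$ plus one expansion of $w^2$. I would present it in that compressed form rather than grinding through the term-by-term substitution.
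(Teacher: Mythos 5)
Your proof is correct and is essentially the same direct completing-the-square computation as the paper's, which performs the shift in two successive stages ($y\mapsto y+a_3/2$, then $y\mapsto y+a_1x/2$) while you do the combined shift $y\mapsto y-w$ with $w=a_1x/2+a_3/2$ in one step via $(y-w)^2+2w(y-w)=y^2-w^2$. Your remark that characteristic $\neq 2$ is required is also made in the paper immediately after the proof.
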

\begin{proof}
   
$$(y+a_3/2)^2+ a_1x(y+a_3/2)=x^3+a_2x^2+(a_4+a_1a_3/2)x+a_3^2/4+a_6$$
Replace $y+a_3/2$ with $y$:
$$y^2+a_1xy=x^3+a_2x^2+(a_4+a_1a_3/2)x+a_3^2/4+a_6$$
$$y(y+a_1x)=x^3+a_2x^2+(a_4+a_1a_3/2)x+a_3^2/4+a_6$$
$$((y+a_1x/2)-a_1x/2)((y+a_1x/2)+a_1x/2)=x^3+a_2x^2+(a_4+a_1a_3/2)x+a_3^2/4+a_6$$
Replace $y+a_1x/2$ with $y$:
$$(y-a_1x/2)(y+a_1x/2)=x^3+a_2x^2+(a_4+a_1a_3/2)x+a_3^2/4+a_6$$
$$y^2-(a_1x/2)^2=x^3+a_2x^2+(a_4+a_1a_3/2)x+a_3^2/4+a_6$$
$$y^2=x^3+(a_1^2/4+a_2)x^2+(a_1a_3/2+a_4)x+a_3^2/4+a_6.$$
 
\end{proof}

We see that $F$ must not be characteristic $2$ since we are dividing by $2$.
The above equation can then be put in the following form, (by replacing $y$ with $y/2$ and multiplying both sides by 4):
 $$y^2=4x^3+b_2x^2+2b_4x+b_6$$ 
 where $b_2=a_1^2+4a_2,b_4=a_1a_3+2a_4, b_6=a_3^2+4a_6$.
\newline\newline
For all the above calculations we see that $F$ must not be characteristic $2$ since we are dividing by $2$.
\newline\newline
The $x^2$ term can also be removed: 

\begin{theorem}If the characteristic of $F$ is not $2$ or $3$, then using a change of variables one can transform the equation into the form
$$y^2=4x^3-c_4x-c_6$$
where $c_4=\frac{b_2^2-24b_4}{12}$ and $c_6=\frac{-b_2^3+36b_2b_4-216b_6}{216}$. 
\end{theorem}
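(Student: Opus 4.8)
The plan is to start from the reduced equation $y^2 = 4x^3 + b_2 x^2 + 2b_4 x + b_6$ that was derived just above the theorem statement (valid since the characteristic is not $2$), and then eliminate the $x^2$ term by a Tschirnhaus-type shift in the $x$-variable. Concretely, I would substitute $x \mapsto x - \tfrac{b_2}{12}$, which is permissible precisely because $12 = 4 \cdot 3$ is invertible when the characteristic is neither $2$ nor $3$ — this is exactly where the extra hypothesis on the characteristic is consumed. Expanding $4\bigl(x - \tfrac{b_2}{12}\bigr)^3 + b_2\bigl(x - \tfrac{b_2}{12}\bigr)^2 + 2b_4\bigl(x - \tfrac{b_2}{12}\bigr) + b_6$ and collecting terms, the coefficient of $x^2$ is $4 \cdot 3 \cdot \bigl(-\tfrac{b_2}{12}\bigr) + b_2 = -b_2 + b_2 = 0$, so the quadratic term vanishes by design.

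First I would carry out the cubic expansion carefully: the coefficient of $x$ becomes $4 \cdot 3 \cdot \tfrac{b_2^2}{144} + b_2 \cdot \bigl(-\tfrac{b_2}{6}\bigr) + 2b_4 = \tfrac{b_2^2}{12} - \tfrac{b_2^2}{6} + 2b_4 = -\tfrac{b_2^2}{12} + 2b_4 = -\tfrac{b_2^2 - 24 b_4}{12} = -c_4$, matching the claimed value of $c_4$. Then I would compute the constant term, which is $4 \cdot \bigl(-\tfrac{b_2}{12}\bigr)^3 + b_2 \bigl(-\tfrac{b_2}{12}\bigr)^2 + 2b_4 \bigl(-\tfrac{b_2}{12}\bigr) + b_6 = -\tfrac{b_2^3}{432} + \tfrac{b_2^3}{144} - \tfrac{b_2 b_4}{6} + b_6$; putting everything over $432$ gives $\tfrac{-b_2^3 + 3b_2^3 - 72 b_2 b_4 + 432 b_6}{432} = \tfrac{2b_2^3 - 72 b_2 b_4 + 432 b_6}{432} = \tfrac{-b_2^3 + 36 b_2 b_4 - 216 b_6}{216} \cdot (-1)$, so the constant term equals $-c_6$ with $c_6$ as stated. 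Hence the transformed equation is $y^2 = 4x^3 - c_4 x - c_6$, as required. Throughout I would note that the composite change of variables (the earlier $y$-shift and rescaling, then this $x$-shift) is invertible over $F$, so it genuinely identifies the two curves rather than merely mapping one onto the other.

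The only real obstacle here is bookkeeping: keeping the rational coefficients straight through a cubic expansion and making sure the denominators $12$, $216$, $432$ combine correctly, together with being explicit that dividing by $3$ (hence the $12$ and $216$ in the denominators) is exactly what forces the hypothesis $\operatorname{char}(F) \neq 3$, while the earlier division by $2$ already forced $\operatorname{char}(F) \neq 2$. There is no conceptual difficulty — it is a single linear substitution chosen to kill the $x^2$ coefficient — so I would present the substitution, state that expansion yields the displayed coefficients, and verify the two coefficient identities, leaving the fully routine algebra to the reader or relegating it to a single displayed computation that does not break across a blank line.
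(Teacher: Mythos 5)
Your proposal is correct and follows essentially the same route as the paper: the substitution $x\mapsto x-\tfrac{b_2}{12}$ applied to $y^2=4x^3+b_2x^2+2b_4x+b_6$, followed by direct expansion, and your coefficient computations for $-c_4$ and $-c_6$ check out. The only addition beyond the paper's argument is your (welcome) remark pinpointing where the hypothesis $\operatorname{char}(F)\neq 2,3$ is actually used.
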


\begin{proof}

    In the following elliptic curve consider replacing $x$ with $x-\frac{b_2}{12}$:
    
    $$y^2=4x^3+b_2x^2+2b_4x+b_6$$
    $$y^2=4(x-\frac{b_2}{12})^3+b_2(x-\frac{b_2}{12})^2+2b_4(x-\frac{b_2}{12})+b_6$$
    $$y^2=4(x^3-\frac{b_2x^2}{4}+\frac{b_2^2x}{48}-\frac{b_2^3}{1728})+b_2(x^2-\frac{b_2x}{6}+\frac{b_2^2}{144})+2b_4(x-\frac{b_2}{12})+b_6$$
    $$y^2=4x^3-b_2x^2+\frac{b_2^2x}{12}-\frac{b_2^3}{432}+b_2x^2-\frac{b_2^2x}{6}+\frac{b_2^3}{144}+2b_4x-\frac{b_2b_4}{6}+b_6$$
    $$y^2=4x^3-(\frac{b_2^2}{12}-2b_4)x-(-\frac{b_2^3}{216}+\frac{b_2b_4}{6}-b_6)$$
    $$y^2=4x^3-c_4x-c_6$$
\end{proof}
This is a form more familiar to some people.

\begin{theorem}If the characteristic of F is not 2 or 3, then
using a change of variables one can transform the above equation into the form: $$y^2=x^3+Ax+B$$ where $A=-4c_4$ and $B=-16c_6$.   
\end{theorem}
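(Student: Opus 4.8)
The statement to prove is a clean change-of-variables fact: starting from
$$y^2 = 4x^3 - c_4 x - c_6,$$
a substitution produces $y^2 = x^3 + Ax + B$ with $A = -4c_4$ and $B = -16c_6$. The plan is to find the right scaling substitution and verify it gives exactly these coefficients.

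\medskip

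First I would look for a substitution of the form $x \mapsto u^{-2} x$ and $y \mapsto u^{-3} y$ for a suitable constant $u$, since this is the natural scaling that preserves the shape of a Weierstrass equation (the exponents $2$ and $3$ are forced by the requirement that $x^3$ and $y^2$ transform the same way). Substituting $x = u^{-2}x'$, $y = u^{-3}y'$ into $y^2 = 4x^3 - c_4 x - c_6$ gives $u^{-6}y'^2 = 4u^{-6}x'^3 - c_4 u^{-2} x' - c_6$; multiplying through by $u^6$ yields $y'^2 = 4x'^3 - c_4 u^4 x' - c_6 u^6$. To kill the factor of $4$ in front of $x'^3$ I instead need a slightly different normalisation: take $x \mapsto x/(4u^2)$-type scaling, or more simply first substitute $x \mapsto x$, $y \mapsto 2y$ is wrong direction — rather, set $x = X/2^{?}$. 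The cleanest route: substitute $x \mapsto x/4$ is not homogeneous; the correct move is $(x,y) \mapsto (x/ (\sqrt[3]{4})^2, \dots)$ which is ugly, so instead I would do it in two steps.

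\medskip

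Concretely, the two-step plan: (i) replace $x$ by $x$ and $y$ by $y/2$ — no; better, multiply the whole equation by $16$ and write $(4y)^2 = (4x)^3 - 4c_4(4x) - 16 c_6$. That is, set $Y = 4y$ and $X = 4x$; then $Y^2 = 16 y^2 = 16(4x^3 - c_4 x - c_6) = 64 x^3 - 16 c_4 x - 16 c_6 = (4x)^3 - 16 c_4 x - 16 c_6 = X^3 - 4c_4 X - 16 c_6$, using $16 c_4 x = 4 c_4 (4x) = 4c_4 X$. Renaming $X,Y$ back to $x,y$ gives exactly $y^2 = x^3 + Ax + B$ with $A = -4c_4$ and $B = -16c_6$. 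So the substitution is simply $(x,y) \mapsto (x/4, y/4)$ (equivalently $x \mapsto 4x$, $y \mapsto 4y$ viewed the other way), and I would just carry out this short computation line by line.

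\medskip

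There is no real obstacle here — the only subtlety is bookkeeping with the powers of $4$, making sure the $x^3$ coefficient becomes $1$ while tracking what happens to the linear and constant terms, and noting (as the earlier theorems do) that we are implicitly assuming the characteristic of $F$ is not $2$ so that division by $4$ is legitimate; characteristic $3$ is not actually needed for this last step but was needed to reach the previous form. The ``hard part'' is thus purely cosmetic: presenting the chain of equalities cleanly. I would present it as a three- or four-line display verifying $y^2 = x^3 + Ax + B$ directly from the substitution, then remark that injectivity of the substitution (it is invertible over $F$ since $4 \neq 0$) means the two curves are genuinely the same elliptic curve.
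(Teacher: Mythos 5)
Your final computation is correct and is exactly the paper's substitution: setting $X=4x$, $Y=4y$ (equivalently replacing $x$ by $x/4$ and $y$ by $y/4$) turns $y^2=4x^3-c_4x-c_6$ into $y^2=x^3-4c_4x-16c_6$, which is the paper's three-line verification. The exploratory detours in your write-up are harmless, and your side remark that only characteristic $\neq 2$ is actually needed for this particular step is accurate.
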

\begin{proof}
    In the following equation consider replacing $x$ with $\frac{x}{4}$ and $y$ with $\frac{y}{4}$:
    $$y^2=4x^3-c_4x-c_6$$
    $$\frac{y^2}{16}=\frac{x^3}{16}-\frac{c_4x}{4}-c_6$$
    $$y^2=x^3-4c_4x-16c_6$$
    $$y^2=x^3+Ax+B$$
\end{proof}

Recall the discriminant defined in Definition \ref{discdef}.

 This can be represented purely in terms of the coefficients:

\begin{lemma}
   The discriminant of $y^2=x^3+Ax+B$ is $$\Delta=-16(4A^3+27B^2).$$
\end{lemma}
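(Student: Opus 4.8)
The plan is to prove the formula $\Delta = -16(4A^3 + 27B^2)$ for the discriminant of $y^2 = x^3 + Ax + B$ by working directly from Definition \ref{discdef}: if $r_1, r_2, r_3$ are the three roots of the cubic $f(x) = x^3 + Ax + B$, then $\Delta = (r_1-r_2)^2(r_1-r_3)^2(r_2-r_3)^2$, and I want to express this symmetric function of the roots in terms of the elementary symmetric functions, which for this depressed cubic are $e_1 = r_1+r_2+r_3 = 0$, $e_2 = r_1r_2+r_1r_3+r_2r_3 = A$, and $e_3 = r_1r_2r_3 = -B$ by Vieta's formulas.

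First I would recall (or recompute) the standard identity expressing the discriminant of a monic cubic $x^3 + px^2 + qx + r$ in terms of its coefficients, namely $\Delta = 18pqr - 4p^3r + p^2q^2 - 4q^3 - 27r^2$. Then I would substitute $p = 0$, $q = A$, $r = B$, which kills every term containing $p$ and leaves $\Delta = -4A^3 - 27B^2$. Finally I would account for the factor of $-16$: the point is that the discriminant as packaged in elliptic-curve theory (the quantity that controls singular reduction and matches the earlier propositions) carries a normalising constant, so the claimed identity is really $\Delta = -16(4A^3 + 27B^2) = -64A^3 - 432B^2$, which is $16$ times the naive root-difference product. I would note explicitly that this discrepancy comes from the change of variables carried out in the two preceding theorems (the substitutions $x \mapsto x/4$, $y \mapsto y/4$ and the passage through $y^2 = 4x^3 - c_4x - c_6$), so that "the discriminant of the elliptic curve" is the correctly weighted version rather than the bare polynomial discriminant.

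An alternative, more self-contained route avoids quoting the cubic discriminant formula: use the resultant characterisation $\Delta = -\mathrm{Res}(f, f')$ up to sign and normalisation, where $f' = 3x^2 + A$. Computing $\mathrm{Res}(f,f')$ via the Sylvester determinant (a $5\times 5$ determinant in the coefficients) is a finite, mechanical calculation that yields $4A^3 + 27B^2$ directly, and then one appends the constant $-16$ from the curve normalisation. Either way the mathematical content is a single polynomial-identity verification; I would present the symmetric-function approach as the main line since it connects transparently to Definition \ref{discdef}.

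The main obstacle is not difficulty but bookkeeping: getting the sign and the constant $16$ right. The root-difference product $(r_1-r_2)^2(r_1-r_3)^2(r_2-r_3)^2$ for the depressed cubic equals $-4A^3 - 27B^2$ exactly, with no extra factor, so one must be careful to explain that the "$-16$" in the statement reflects the standard Weierstrass normalisation tracked through the earlier transformations rather than something arising from the roots of $x^3 + Ax + B$ themselves. I would state this caveat clearly, verify the core identity $(r_1-r_2)^2(r_1-r_3)^2(r_2-r_3)^2 = -4A^3 - 27B^2$ by the symmetric-function substitution above, and then multiply through by $16$ with a one-line justification pointing back to the coordinate changes in the preceding theorems.
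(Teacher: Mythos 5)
The paper states this lemma without proof, so there is nothing to compare against on that side; your symmetric-function computation of the bare cubic discriminant is correct and is the natural verification. Substituting $e_1=0$, $e_2=A$, $e_3=-B$ into the standard identity indeed gives $(r_1-r_2)^2(r_1-r_3)^2(r_2-r_3)^2=-4A^3-27B^2$, and you are right to flag that this disagrees with the stated $\Delta=-16(4A^3+27B^2)$ by exactly a factor of $16$ — in fact, taken literally, Definition \ref{discdef} yields $-(4A^3+27B^2)$, so the lemma as stated is using a different normalisation than the paper's own definition.

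The genuine gap is in your explanation of where the $-16$ comes from. Attributing it to the change of variables $x\mapsto x/4$, $y\mapsto y/4$ in the preceding theorems does not check out: rescaling $x$ by $4$ multiplies each root difference by $4$ and hence the product $(r_1-r_2)^2(r_1-r_3)^2(r_2-r_3)^2$ by $4^6=4096$, not $16$, and in any case the lemma is about the final equation $y^2=x^3+Ax+B$, to which Definition \ref{discdef} applies directly with no further substitution. The actual source of the $16$ is the standard convention for the discriminant of a general Weierstrass equation, $\Delta=-b_2^2b_8-8b_4^3-27b_6^2+9b_2b_4b_6$; for $y^2=x^3+Ax+B$ one has $b_2=0$, $b_4=2A$, $b_6=4B$, so $\Delta=-8(2A)^3-27(4B)^2=-16(4A^3+27B^2)$. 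If you want your write-up to be airtight, either cite that convention explicitly as the definition being used in the lemma, or note that the lemma's $\Delta$ is $16$ times the root-difference discriminant of Definition \ref{discdef} and that the two notions agree on the only property the paper uses, namely vanishing exactly when the cubic has a repeated root.
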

 

If we have an elliptic curve over $\mathbb{Q}$ (here $F=\Q$), we can always put it in the form of the Weierstrass equation with integer coefficients. Such an equation would have an integer discriminant. If we attempt to take the coefficients of this equation mod $p$ for some prime $p$, then a possible issue that may arise it that many of the coefficients might be divisible by $p$ and become $0$. Thus we consider  the equation for an elliptic curve that has minimal power of $p$ dividing $\Delta$, and call this the \emph{minimal Weierstrass equation}.
\newline\newline
In general, there is an issue with different equations defining what we might consider to be the same elliptic curve. As such the following definition is supposed to distinguish between elliptic curves that are the same or not.






\begin{definition}
    The $j$-invariant of an elliptic curve is defined by
    $$j=\frac{1728c_4^3}{\Delta}$$
    where $c_4$ is as defined above and $\Delta$ is the discriminant.
\end{definition}

We do not explore nor define the notion of isomorphism between two elliptic curves here but it is said that two elliptic curves are isomorphic if and only if they have the same $j$-invariant. 






\subsection{Elliptic Curve as an Abelian group}

We prove that the addition of points on an elliptic defined in the outline forms an Abelian group:

\begin{thm} Let $E$ be an elliptic curve and $O\in E$ the point at infinity. For $R\in E$, define $-R\in E$ to be $O$ if $R=O$ and otherwise a point with the same $x$ value but negative $y$ value to $R$. The operation $+$, the addition of points, is defined for $P,Q,R\in E$ by
$$P+Q=R$$
if and only $P, Q, -R$ are collinear with some clarifications: in the case that $P=Q=O$, $R=O$, a line through $O$ and another point $P$ is a vertical line through $P$, if $P=Q\neq O$ then the line is tangent at $P$. This addition is well-defined and makes $E,+$ an Abelian group with zero element $O$ and $-R$ the negative of $R$.

\end{thm}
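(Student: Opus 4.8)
The plan is to verify the group axioms for $(E, +)$ one at a time, with the understanding that the real content is associativity. First I would record the easy facts. The operation is commutative essentially by definition, since the collinearity condition ``$P, Q, -R$ are collinear'' is symmetric in $P$ and $Q$. The identity axiom $P + O = P$ follows from the convention that the line through $P$ and $O$ is vertical, meeting $E$ again at $-P$, so that $-(-P) = P$; here one checks that negation is an involution, which is clear from the definition of $-R$ in terms of flipping the sign of the $y$-coordinate (and that $-R$ again lies on $E$, because the Short Weierstrass equation $y^2 = x^3 + Ax + B$ is even in $y$). The inverse axiom $P + (-P) = O$ follows because $P$, $-P$, and $-O = O$ are collinear on the vertical line through $P$. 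So the identity and inverses are as claimed once associativity is in hand.

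Next I would check that $+$ is well-defined, i.e. that for any $P, Q \in E$ there is exactly one $R$ with $P, Q, -R$ collinear. This is the Bezout-type count: a line meets the cubic in exactly three points counted with multiplicity (using the point at infinity to absorb the ``missing'' intersections for vertical and tangent lines), so given $P$ and $Q$ the third intersection point $-R$ is determined, and then $R$ is determined by negation. I would treat the degenerate cases explicitly — $P = Q$ (use the tangent line, invoking non-singularity $\Delta \neq 0$ so the tangent is well-defined), $P$ or $Q$ equal to $O$, and $Q = -P$ — since these are exactly the cases the statement flags with ``some clarifications.'' Writing down the chord-and-tangent formulas for the coordinates of $P + Q$ in the affine chart makes it manifest that the result has coordinates in the base field, so $E$ is closed under $+$.

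The hard part will be associativity: $(P + Q) + S = P + (Q + S)$ for all $P, Q, S \in E$. I would present the standard geometric argument via the Cayley--Bacharach theorem (the $3 \times 3$ lemma): consider the two ``triangles'' of lines $\ell_1, \ell_2, \ell_3$ and $m_1, m_2, m_3$ arranged so that eight of their nine pairwise intersection points lie on the cubic $E$, and conclude that the ninth does too; unwinding which point is which shows that the two bracketings of $P + Q + S$ produce the same point. The subtlety, and where I would be most careful, is the degenerate configurations — coincident points, points at infinity, tangencies, and lines that share a component — where the clean $3 \times 3$ statement needs multiplicity bookkeeping or a perturbation/continuity argument. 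An honest alternative, given the spirit of this article, is to cite that associativity holds (e.g. via the identification of $E$ with its Picard group $\operatorname{Pic}^0(E)$, where the group law is associative for free) rather than grinding through all the cases; I would at least sketch the Picard-group viewpoint as the conceptual reason associativity is true, and note that a fully elementary verification is a finite but tedious case analysis.
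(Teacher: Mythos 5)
Your proposal follows essentially the same route as the paper: commutativity, identity and inverses are checked directly from the geometric definition, and associativity is handled by the Cayley--Bacharach argument on two triples of lines through eight common points of the cubic (which is exactly what the paper defers to its appendix). If anything you are slightly more careful than the paper on well-definedness (the Bezout-style count of the third intersection point and the degenerate cases), whereas the paper only verifies rationality of the coordinates via Vieta; both treatments are sound.
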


\begin{proof}
    First, we must show that we can obtain $-R$ with rational coordinates given $P$ and $Q$ with rational coordinates. As the elliptic curve is a rational cubic with the equation $$y^2=x^3+Ax+B$$ when we intersect with the straight line $y=mx+c$, the result is the cubic equation $$x^3+Ax+B=(mx+c)^2$$ which becomes $$x^3 - m^2x^2 + (A - 2mc)x + (B - c^2)=0.$$ If rational points $P$ and $Q$ satisfy the above equation, the line $mx+c$ passing through them must have rational slope and intercepts, i.e. $m$ and $c$ are rational. So, as $A,B,m,c\in\mathbb{Q}$, i.e. the coefficients of the cubic are rational, by Vieta's formulae it follows that the sum of the three roots is rational. The $x$-coordinates of $P$ and $Q$ are already rational, so it follows that the third root is rational. Using $y=mx+b$, we get that the $y$-value is also rational.
    \newline\newline
    Satisfying Group Axioms:
    \begin{itemize}
        \item The addition of points on the curve satisfies commutativity $P+Q=Q+P$ as the line through $P$ and $Q$ is the same as the line through $Q$ and $P$. 
        \item We now show the point at infinity is the identity. The line through $P$ and $\mathcal{O}$ is the vertical line passing through $P$. As the elliptic curve is symmetric about the $x$-axis, then our $-R$ is the reflection of $P$ about the $x$-axis. Reflecting returns the point $P$ as desired. 
        \item We must also show that every point has an inverse. Given a point $P(x, y)$ on the elliptic curve, its inverse is $-P(x, -y)$ which lies on the same vertical line as $P$. As elliptic curves are symmetric about the $x$-axis, the reflection of a point about the $x$-axis always exists on the curve. Thus, $-P$ is a valid point on the curve, and satisfies $P+(-P)=\mathcal{O}$, completing the requirement for inverses in the group law. 
        \item Finally, the operation must satisfy the associativity axiom $(P+Q)+R=P+(Q+R)$. This is a lengthy proof which can be found in the appendix for those interested. 
 \end{itemize}
\end{proof}
We can also write a formula for the $x$-value of this addition:

\begin{prop}
The $x$ value of $P+Q=R$ on $y^2 = x^3 + Ax + B$ is given by

\[
x_R = \left( \frac{y_Q - y_P}{x_Q - x_P} \right)^2 - x_P - x_Q
\]

\end{prop}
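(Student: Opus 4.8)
The plan is to derive the coordinates of $P+Q=R$ explicitly by chasing the geometric definition of the group law through Vieta's formulae, exactly as in the proof that $E$ is an abelian group. First I would write the line through $P=(x_P,y_P)$ and $Q=(x_Q,y_Q)$ as $y=mx+c$ with slope $m=\frac{y_Q-y_P}{x_Q-x_P}$ (assuming $P\neq Q$ and $x_P\neq x_Q$, which is the generic case the formula addresses), and substitute into $y^2=x^3+Ax+B$ to obtain the cubic $x^3-m^2x^2+(A-2mc)x+(B-c^2)=0$, which already appears in the excerpt. The three roots of this cubic are precisely the $x$-coordinates of the three intersection points of the line with $E$, namely $x_P$, $x_Q$, and $x_{-R}$.

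Next I would invoke Vieta's formula for the sum of the roots: the coefficient of $x^2$ is $-m^2$, so $x_P+x_Q+x_{-R}=m^2$. Hence $x_{-R}=m^2-x_P-x_Q$. The final step is to observe that $-R$ and $R$ share the same $x$-coordinate (since $-R$ is defined as the reflection of $R$ across the $x$-axis, or vice versa), so $x_R=x_{-R}=m^2-x_P-x_Q$. Substituting the value of $m$ gives
\[
x_R=\left(\frac{y_Q-y_P}{x_Q-x_P}\right)^2-x_P-x_Q,
\]
which is the claimed formula.

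There is no serious obstacle here; the only things to be careful about are the case hypotheses. The formula as stated implicitly assumes $P\neq\mathcal{O}$, $Q\neq\mathcal{O}$, $P\neq Q$, and $x_P\neq x_Q$ (otherwise the slope is undefined and one is in the tangent-line or vertical-line cases handled separately in the group-law theorem). I would state these assumptions up front. The one genuinely substantive point — that the third intersection point has rational coordinates and that the line indeed meets $E$ in exactly three points counted with multiplicity — has already been established in the proof of the preceding theorem, so I can simply cite it. Thus the "hard part" is really just bookkeeping: making explicit that we are in the generic case and that $x_R=x_{-R}$ by the definition of negation on $E$.
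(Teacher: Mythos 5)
Your proposal is correct and follows essentially the same route as the paper's proof: substitute the secant line into the curve equation, read off the sum of the roots of the resulting cubic via Vieta, and use $x_R=x_{-R}$. You are somewhat more careful than the paper in stating the generic-case hypotheses ($P,Q\neq\mathcal{O}$, $P\neq Q$, $x_P\neq x_Q$), which is a welcome addition but not a different argument.
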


\begin{proof} 

Let the equation of line through $P,Q,-R$ be $y=mx+b$. It suffices to solve for $x$ as $x_R=x_{-R}$.

\[
y^2 = m^2 x^2 + 2mbx + b^2 = x^3 + Ax + B
\]
\[
m = \frac{y_Q - y_P}{x_Q - x_P}, \quad m^2 = \left( \frac{y_Q - y_P}{x_Q - x_P} \right)^2.
\]

\[
m^2 = x_P + x_Q + x_R
\]
$$x_R=m^2-x_P-x_Q.$$
    
\end{proof}

Recall Definition \ref{torsionpt} of $m$-torsion points $E[m]$. They can also be described as the $m$-torsion subgroup of $E$.
\newline\newline
The main values of $m$ we will be interested in are powers of a prime, in particular the prime itself in the power of the equation in the statement of FLT.
\begin{prop}\label{divpts} Let $p$ be a prime, then
    $E[p]$ with scalar multiplication modulo $p$ is a $\F_p$-vector space. In this case, Call $E[p]$ the $p$-division points.
\end{prop}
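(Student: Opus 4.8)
The plan is to produce the $\F_p$-vector space structure on $E[p]$ in three short stages: first that $E[p]$ is an abelian group, hence canonically a $\Z$-module; then that this $\Z$-action descends to a well-defined $\Z/p\Z$-action because every element of $E[p]$ is killed by $p$; and finally that, since $\F_p=\Z/p\Z$ is a field, an $\F_p$-module is by definition an $\F_p$-vector space.

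For the first stage I would invoke the earlier theorem that $(E,+)$ is an abelian group together with the fact that $E[p]$, being the $p$-torsion subgroup, is a group under the same addition; every abelian group carries its canonical $\Z$-module structure with scaling defined as repeated addition (and repeated subtraction for negative integers), so $E[p]$ is a $\Z$-module. The second stage is the only one genuinely requiring an argument: I would check that the pairing $\Z\times E[p]\to E[p]$, $(m,Q)\mapsto mQ$, factors through the quotient map $\Z\to\Z/p\Z$. If $m\equiv m'\pmod p$, write $m=m'+kp$ for some $k\in\Z$; then for any $Q\in E[p]$ one has $mQ=m'Q+k(pQ)=m'Q+k\mathcal{O}=m'Q$, using $pQ=\mathcal{O}$ from the definition of $E[p]$ and that $\mathcal{O}$ is the identity. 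Hence $\overline{m}\cdot Q:=mQ$ is a well-defined map $\F_p\times E[p]\to E[p]$, and the four module axioms are then inherited verbatim from the corresponding identities for the $\Z$-action, since each side of each axiom depends only on the residue class mod $p$.

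The third stage is immediate from the definitions in the preliminaries: $\F_p=\Z/p\Z$ is a field, so an $\F_p$-module is precisely an $\F_p$-vector space. I do not anticipate a real obstacle anywhere here; the only step demanding any care is the well-definedness of scalar multiplication, and that reduces to the one-line computation above. (The same argument, with $p$ replaced by $p^k$, shows that $E[p^k]$ is a $\Z/p^k\Z$-module, which is the form we will actually need when assembling the Tate module.)
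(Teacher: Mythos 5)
Your argument is correct and complete: the only substantive point is the well-definedness of the $\Z/p\Z$-action, and your computation $mQ=(m'+kp)Q=m'Q+k(pQ)=m'Q$ handles it exactly as needed, with the module axioms then descending verbatim and the field property of $\F_p$ giving the vector-space structure. The paper in fact states Proposition \ref{divpts} without supplying a proof, so there is nothing to contrast your route against; your three-stage argument is the standard one and would serve as the missing proof, and your closing remark that the same reasoning gives $E[p^k]$ a $\Z/p^k\Z$-module structure matches how the paper later uses these objects in assembling the Tate module.
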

    

Fixing a prime $\ell$ we have homomorphisms

$$\varphi_n:E[\ell^{n+1}]\to E[\ell^n]$$
defined by
$$P\mapsto \ell P.$$

Noticing that the $E[\ell^n]$ are related by these homomorphisms, it is natural to want to somehow combine the information from each of them into a single object. The way that this is done is via the following idea:
\newline\newline
Define

$$\lim_{\longleftarrow} E[\ell^n]=\{(e_n)_{n=1}^\infty: e_n=\varphi_n^m(e_{n+m}),\,\, \forall m,n\in\N \},$$
where $\varphi^m$ means applying $\varphi$ `$m$ times'.
\newline\newline
We expect to produce an Abelian group because our inputs are Abelian groups, but we could have produced something else if the inputs were other objects.
\newline\newline
If we try to consider the torsion points as modules, we see that $E[\ell^n]$ is an $\Z/\ell^n\Z$-module but not an $\Z/\ell^{n-1}\Z$ module and so they are not same type of modules, but there is a way to combine these too, using a similar idea of construction

\begin{definition}[$\ell$-adic Integers] \label{l-adic Z} Define
    $$\lim_{\longleftarrow} (\Z/\ell^n\Z)=\{(a_n)\,:\, a_n\in\Z/\ell^n\Z, \,\, a_n=a_{n+m}\mod \ell^n \quad\forall m\in\N\},$$
which is also known as the $\ell$-adic integers, written $\Z_{\ell}$, that is,
$$\Z_\ell=\lim_{\longleftarrow} (\Z/\ell^n\Z).$$
\end{definition}

We now combine the above into one nice definition/theorem:

\begin{definition} [$\ell$-adic Tate Module]
    For an elliptic curve $E$ over $\mathbb{Q}$ and a prime $\ell$, the Tate module of $E$ at $\ell$ is defined to be
    $$\tate(E)=\lim_{\longleftarrow} E[\ell^n]=\{(P_n)_{n\geq1}\mid P_n\in E[\ell^n],\quad \varphi_n(P_{n+1})=P_n\quad
    \forall n\geq1\}.$$

\end{definition}

\begin{prop}$\tate(E)$ is a $\Z_\ell$-module.  
\end{prop}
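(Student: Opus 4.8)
The plan is to put the $\Z_\ell$-module structure on $\tate(E)$ entirely componentwise, so that every axiom reduces to a statement about the modules $E[\ell^n]$. First I would record that $\tate(E)$ is an abelian group under componentwise addition $(P_n)+(Q_n):=(P_n+Q_n)$: each $P_n+Q_n$ lies in the subgroup $E[\ell^n]$, and since $\varphi_n$ is a group homomorphism, $\varphi_n(P_{n+1}+Q_{n+1})=\varphi_n(P_{n+1})+\varphi_n(Q_{n+1})=P_n+Q_n$, so the sum is again a compatible sequence; the zero element is $(\mathcal{O})_{n\geq 1}$, the negative of $(P_n)$ is $(-P_n)$, and associativity and commutativity are inherited termwise. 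In parallel I would note that $\Z_\ell=\lim_{\longleftarrow}(\Z/\ell^n\Z)$ is a commutative ring with identity $(1\bmod\ell^n)_n$, with addition and multiplication performed componentwise — these are well-defined because congruences modulo $\ell^n$ are preserved under $+$ and $\times$.

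Next I would define the action: for $\lambda=(a_n)\in\Z_\ell$ and $(P_n)\in\tate(E)$, set $\lambda\cdot(P_n):=(a_nP_n)_n$. This relies on the fact, proved exactly as in Proposition \ref{divpts}, that $E[\ell^n]$ is a $\Z/\ell^n\Z$-module: scaling $P\in E[\ell^n]$ by a residue class $a\in\Z/\ell^n\Z$ via $aP$ is unambiguous precisely because $\ell^nP=\mathcal{O}$, which kills the difference between any two integer lifts of $a$. The \emph{key point} to verify is that $(a_nP_n)_n$ again lies in $\tate(E)$. Each $a_nP_n$ lies in $E[\ell^n]$ since that group is closed under scaling, and $\varphi_n(a_{n+1}P_{n+1})=\ell(a_{n+1}P_{n+1})=a_{n+1}(\ell P_{n+1})=a_{n+1}P_n$; writing $a_{n+1}=a_n+\ell^n k$ for some integer $k$ (legitimate because $a_{n+1}\equiv a_n \bmod \ell^n$), one gets $a_{n+1}P_n=a_nP_n+k(\ell^nP_n)=a_nP_n$ since $P_n\in E[\ell^n]$. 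Hence the compatibility condition is preserved and the action is well-defined.

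Finally I would check the four module axioms, each of which becomes a termwise identity inside the $\Z/\ell^n\Z$-module $E[\ell^n]$: $a_n(P_n+Q_n)=a_nP_n+a_nQ_n$; $(a_n+b_n)P_n=a_nP_n+b_nP_n$ (using that addition in $\Z_\ell$ is componentwise); $(a_nb_n)P_n=a_n(b_nP_n)$ (using that multiplication in $\Z_\ell$ is componentwise, together with the module structure on $E[\ell^n]$); and $1\cdot(P_n)=(1\,P_n)_n=(P_n)$. The one genuinely delicate step is the well-definedness of the action in the second paragraph: the torsion relation $\ell^nP_n=\mathcal{O}$ must be invoked twice — once so that $a_nP_n$ is independent of the chosen integer representative of $a_n$, and once so that the scaled sequence still satisfies $\varphi_n(a_{n+1}P_{n+1})=a_nP_n$. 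Everything else is a routine componentwise check inherited from the groups $E[\ell^n]$.
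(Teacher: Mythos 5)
Your proof is correct and follows essentially the same route as the paper's: define the scalar action componentwise via the $\Z/\ell^n\Z$-module structure on each $E[\ell^n]$, then verify the compatibility condition using that $\varphi_n$ commutes with scaling and that $a_{n+1}\equiv a_n \bmod \ell^n$. You simply spell out the details (the abelian group structure, the integer-lift argument $a_{n+1}=a_n+\ell^n k$, and the four module axioms) more explicitly than the paper does.
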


\begin{proof}
    Let \( a = (a_n) \in \mathbb{Z}_\ell  \). We define scalar multiplication on the Tate module $\tate(E)$ by:
\[
a \cdot (P_n) := (a_n \cdot P_n),
\]
where \( a_n \cdot P_n \) is just scalar multiplication in the \( \mathbb{Z}/\ell^n\mathbb{Z} \)-module \( E[\ell^n] \).
\newline\newline
We must check that this new sequence \( (a_n \cdot P_n) \) still satisfies the compatibility condition:
\[
\varphi_n(a_{n+1} \cdot P_{n+1}) = a_n \cdot \varphi_n(P_{n+1}) = a_n \cdot P_n,
\]
because:
\begin{itemize}
    \item \( \varphi_n \) is a group homomorphism, so it commutes with scalar multiplication. 
    
    \item  \( a = (a_n) \in \mathbb{Z}_\ell \), so \( a_{n+1} \equiv a_n \pmod{\ell^n} \).
\end{itemize}

Therefore, the result is still a valid element of $\tate(E)$, and hence the Tate module is a $\Z_\ell$-module.

\end{proof}

\subsection{Uniformisation Theorem}

As we transition to the next discussion on modular forms, we introduce a theorem that we cannot prove here called the Uniformisation Theorem.

This allows us to reduce the study of elliptic curves to that of lattices:

\begin{definition}[Lattice $\mathcal{L}$]
   Consider the complex plane and $\omega_1,\omega_2\in\C\setminus\{0\}$ such that they are not multiples of each other. Then the lattice $\mathcal{L}$, is defined by
   $$\mathcal{L}=\mathcal{L}(\omega_1,\omega_2)=\Z\omega_1\oplus\Z\omega_2=\{a\omega_1+b\omega_2\,:\,a,b\in\Z\}$$
\end{definition}
\begin{lem}
    $\mathcal{L}(\omega_1,\omega_2)$ is an Abelian group under addition.
\end{lem}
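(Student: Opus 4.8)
The plan is to exhibit $\mathcal{L}(\omega_1,\omega_2)$ as a subgroup of the abelian group $(\C,+)$, so that all the group axioms are inherited rather than re-proved from scratch. First I would recall that $\C$ with addition is an abelian group, so it suffices to check that $\mathcal{L}$ is a non-empty subset of $\C$ that is closed under addition and under taking negatives. Non-emptiness is immediate: taking $a=b=0$ gives $0 = 0\cdot\omega_1 + 0\cdot\omega_2 \in \mathcal{L}$. For closure under addition, given two elements $a\omega_1+b\omega_2$ and $c\omega_1+d\omega_2$ of $\mathcal{L}$ with $a,b,c,d\in\Z$, their sum is $(a+c)\omega_1+(b+d)\omega_2$, and since $\Z$ is closed under addition we have $a+c,b+d\in\Z$, so the sum lies in $\mathcal{L}$. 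For closure under negatives, the negative of $a\omega_1+b\omega_2$ in $\C$ is $(-a)\omega_1+(-b)\omega_2$, which again lies in $\mathcal{L}$ since $-a,-b\in\Z$. Associativity and commutativity then hold automatically, being inherited from $\C$.

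An alternative, more structural route — closer in spirit to the $\oplus$ construction introduced earlier — is to observe that the map $\Z\oplus\Z \to \mathcal{L}$ sending $(a,b)\mapsto a\omega_1+b\omega_2$ is a surjective group homomorphism (surjectivity is the definition of $\mathcal{L}$, and it respects addition by the computation above). It is moreover injective: if $a\omega_1+b\omega_2 = 0$ with $a,b\in\Z$, then since $\omega_1$ and $\omega_2$ are not (real, hence not rational) multiples of one another they are linearly independent over $\Q$, forcing $a=b=0$. Hence $\mathcal{L}\cong \Z\oplus\Z$ as groups, and since $\Z\oplus\Z$ is abelian (being a direct sum of abelian groups), so is $\mathcal{L}$.

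I do not expect a genuine obstacle here; the statement is a routine subgroup verification. The only point that requires a moment's care is the injectivity in the second approach — equivalently, that each lattice element has a \emph{unique} expression as $a\omega_1+b\omega_2$ with integer coefficients — and this is precisely where the hypothesis that $\omega_1$ and $\omega_2$ are not multiples of each other is used. For the first approach one does not even need that hypothesis: the subgroup check of closure and negation goes through for any choice of $\omega_1,\omega_2$, so I would likely present the first approach as the main argument and mention the isomorphism $\mathcal{L}\cong\Z\oplus\Z$ as a remark, since it will be useful later when discussing the lattice structure underlying the Uniformisation Theorem.
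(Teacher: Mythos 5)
Your proposal is correct and matches the paper's own proof, which likewise checks closure, the identity $0\omega_1+0\omega_2$, inverses $-a\omega_1-b\omega_2$, and inherits associativity and commutativity from addition in $\C$. Your added remark that $\mathcal{L}\cong\Z\oplus\Z$ (using that $\omega_1,\omega_2$ are not multiples of each other) goes slightly beyond what the paper proves here, but is consistent with how the lattice is used later.
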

\begin{proof}
    Consider two elements $z_1=a_1\omega_1+b_1\omega_2$ and $z_2=a_2\omega_1+b_2\omega_2$ which are in $\mathcal{L}$. Therefore, $$z_1+z_2=(a_1+a_2)\omega_1+(b_1+b_2)\omega_2 \in \mathcal{L}.$$ The identity element is $0$, or more visually $0\omega_1+0\omega_2$. For any element $a\omega_1+b\omega_2$, the inverse is $-a\omega-b\omega_2$ which is in $\mathcal{L}$. Since addition of complex numbers is associative and commutative, all the group axioms hold. Thus, $\mathcal{L}(\omega_1,\omega_2)$ is an abelian group under addition.

\end{proof}

If we view the complex numbers $\C$ as an Abelian group under addition, then any lattice $\mathcal{L}(\omega_1,\omega_2)$ is a subgroup.

We can construct the quotient group 
$$\C/\mathcal{L}(\omega_1,\omega_2)$$
as the set where complex numbers are identified via setting elements of $\mathcal{L}$ to zero. That is for $z_1,z_2\in\C$
$$z_1\cong z_2\mod \mathcal{L}\iff z_1-z_2\in\mathcal{L}.$$

\begin{prop}
    Two lattices give isomorphic quotients if and only if one scales to the other. That is 

$$\C/\mathcal{L}\cong\C/\mathcal{L}'\iff \lambda\mathcal{L}=\mathcal{L}' \qquad\exists\lambda\in\C\setminus0$$
\end{prop}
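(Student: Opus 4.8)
The statement concerns isomorphism of the tori $\C/\mathcal{L}$ as complex-analytic groups (equivalently, as compact Riemann surfaces carrying a compatible group law) — the notion relevant to elliptic curves, since as abstract abelian groups all such quotients are isomorphic. The plan is to treat the two implications separately, with the real content in the forward direction.

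The reverse implication is the easy half. If $\mathcal{L}' = \lambda\mathcal{L}$ for some $\lambda \in \C\setminus\{0\}$, then multiplication by $\lambda$ is a $\C$-linear, hence holomorphic, bijection $m_\lambda : \C \to \C$ that carries the subgroup $\mathcal{L}$ onto $\mathcal{L}'$. Therefore it descends to a well-defined map $\C/\mathcal{L} \to \C/\mathcal{L}'$: this descended map is a group homomorphism because $m_\lambda$ is additive, it is bijective because $m_\lambda$ is bijective and sends $\mathcal{L}$ exactly onto $\mathcal{L}'$, and its inverse is the descent of $m_{\lambda^{-1}}$, so it and its inverse are both holomorphic. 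Hence $\C/\mathcal{L} \cong \C/\mathcal{L}'$.

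For the forward implication, suppose $\phi : \C/\mathcal{L} \to \C/\mathcal{L}'$ is an isomorphism of complex tori. First I would lift $\phi$ to the universal covers: write $\pi : \C \to \C/\mathcal{L}$ and $\pi' : \C \to \C/\mathcal{L}'$ for the holomorphic quotient (covering) maps; since $\C$ is simply connected, $\phi\circ\pi$ lifts through $\pi'$ to a holomorphic map $\tilde\phi : \C \to \C$ with $\pi'\circ\tilde\phi = \phi\circ\pi$, and since $\phi$ sends the identity to the identity we have $\tilde\phi(0)\in\mathcal{L}'$, so after subtracting this element we may assume $\tilde\phi(0)=0$. Next, for each $\omega\in\mathcal{L}$ the function $z\mapsto \tilde\phi(z+\omega)-\tilde\phi(z)$ is continuous and takes values in the discrete set $\mathcal{L}'$, hence is constant; differentiating in $z$ shows $\tilde\phi'$ is $\mathcal{L}$-periodic, so it descends to a holomorphic function on the compact torus $\C/\mathcal{L}$, is therefore bounded, and by Liouville's theorem equals a constant $\lambda$. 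With $\tilde\phi(0)=0$ this forces $\tilde\phi(z)=\lambda z$.

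It remains to extract $\lambda\mathcal{L}=\mathcal{L}'$. If $\lambda=0$ then $\phi$ would be constant, contradicting injectivity, so $\lambda\neq 0$. For $\omega\in\mathcal{L}$ we have $\pi'(\lambda\omega)=\pi'(\tilde\phi(\omega))=\phi(\pi(\omega))=\phi(0)=0$, so $\lambda\omega\in\mathcal{L}'$, giving $\lambda\mathcal{L}\subseteq\mathcal{L}'$. Running the same argument for $\phi^{-1}$ produces a holomorphic lift equal to multiplication by some $\mu\in\C\setminus\{0\}$, and since the composite lift of $\phi^{-1}\circ\phi = \mathrm{id}$ fixing $0$ is the identity we get $\mu=\lambda^{-1}$; thus $\lambda^{-1}\mathcal{L}'\subseteq\mathcal{L}$, i.e. $\mathcal{L}'\subseteq\lambda\mathcal{L}$. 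Hence $\lambda\mathcal{L}=\mathcal{L}'$. The main obstacle is this forward direction: it genuinely needs the analytic input — the covering-space lifting, the discreteness argument forcing $\tilde\phi'$ to descend, and Liouville — and one must be careful that ``isomorphic'' is taken in the holomorphic category, since the conclusion fails for mere abstract group isomorphisms.
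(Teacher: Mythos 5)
Your proof is correct and complete, and it goes considerably further than the paper does. The paper's entire argument for this proposition is the single display
$$\C/\mathcal{L}'\cong\C/\lambda\mathcal{L}\cong \lambda\C/\lambda\mathcal{L}\cong \C/\mathcal{L},$$
which only establishes the easy reverse implication (that $\lambda\mathcal{L}=\mathcal{L}'$ forces the quotients to be isomorphic); the forward implication is not proved there at all, and indeed the paper immediately afterwards asks ``What is the nature of this isomorphism? What types of objects are these between?'' without answering. You answer both questions: you correctly observe that the statement is only true if ``isomorphic'' is read in the holomorphic category (as abstract abelian groups all the quotients $\C/\mathcal{L}$ are isomorphic, so the forward direction would be false), and you then supply the standard analytic proof of the hard direction --- lift the isomorphism through the universal covers, use discreteness of $\mathcal{L}'$ to show the difference $\tilde\phi(z+\omega)-\tilde\phi(z)$ is constant, conclude $\tilde\phi'$ is $\mathcal{L}$-periodic hence constant by Liouville, and extract $\lambda\mathcal{L}=\mathcal{L}'$ from both inclusions. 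The one step worth a word more of justification is the claim $\mu=\lambda^{-1}$: the cleanest phrasing is that $z\mapsto\mu\lambda z$ and the identity are both lifts of $\pi=\phi^{-1}\circ\phi\circ\pi$ agreeing at $0$, hence equal by uniqueness of lifts (though for the inclusion $\mathcal{L}'\subseteq\lambda\mathcal{L}$ you do not actually need $\mu=\lambda^{-1}$; $\mu\mathcal{L}'\subseteq\mathcal{L}$ together with $\mu\lambda\in\C\setminus\{0\}$ and the first inclusion already suffices). Your version is the proof the paper should have included.
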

$$\C/\mathcal{L}'\cong\C/\lambda\mathcal{L}\cong \lambda\C/\lambda\mathcal{L}\cong \C/\mathcal{L}$$

The Uniformisation theorem says that elliptic curves over $\C$ can be associated with the plane mod a lattice.

$$E_\C\cong \C/\mathcal{L}$$

\begin{itemize}
    \item What is the nature of this isomorphism? 
    \item What types of objects are these between?
    \item We see that the right hand side is very naturally an Abelian group and so is $E$, so can we hope that this is an isomorphism of Abelian groups?
\end{itemize}

We first note that if $\lambda\in\C\setminus 0$, then

As a consequence of this we prove here that


\begin{thm}\label{EC Iso}
    The set of $n$-torsion points on the Elliptic Curve is isomorphic to the direct sum of two finite cyclic groups, i.e. 
    $$E[n] \cong (\mathbb{Z}/n\mathbb{Z}) \oplus (\mathbb{Z}/n\mathbb{Z}).$$ 
\end{thm}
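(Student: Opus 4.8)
The plan is to exploit the Uniformisation Theorem, which was just stated: over $\C$ we have an isomorphism of abelian groups $E_\C \cong \C/\mathcal{L}$ for some lattice $\mathcal{L} = \Z\omega_1\oplus\Z\omega_2$. Since the $n$-torsion subgroup is preserved by any group isomorphism, it suffices to compute the $n$-torsion of $\C/\mathcal{L}$. So first I would reduce the problem: $E[n] \cong (\C/\mathcal{L})[n]$, and then identify $(\C/\mathcal{L})[n]$ explicitly.

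Next I would carry out the explicit computation. An element $z + \mathcal{L} \in \C/\mathcal{L}$ is $n$-torsion precisely when $nz \in \mathcal{L}$, i.e. $nz = a\omega_1 + b\omega_2$ for some $a,b\in\Z$, equivalently $z \in \tfrac{1}{n}\mathcal{L}$. Hence $(\C/\mathcal{L})[n] = \tfrac{1}{n}\mathcal{L}\big/\mathcal{L}$. Writing a general element of $\tfrac1n\mathcal{L}$ as $\tfrac{a}{n}\omega_1 + \tfrac{b}{n}\omega_2$ with $a,b\in\Z$, the coset modulo $\mathcal{L}$ depends only on $a \bmod n$ and $b \bmod n$ (because $\omega_1,\omega_2$ are $\Z$-independent, so $\tfrac{a}{n}\omega_1 + \tfrac{b}{n}\omega_2 \in \mathcal{L}$ forces $n \mid a$ and $n \mid b$). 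This gives a well-defined bijection $\tfrac1n\mathcal{L}/\mathcal{L} \to (\Z/n\Z)\oplus(\Z/n\Z)$ sending $\tfrac{a}{n}\omega_1 + \tfrac{b}{n}\omega_2 + \mathcal{L} \mapsto (a \bmod n,\, b \bmod n)$, and one checks it is a group homomorphism since addition on both sides is componentwise. Combining, $E[n] \cong (\Z/n\Z)\oplus(\Z/n\Z)$.

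The main obstacle, and the reason this is only a sketch, is that the Uniformisation Theorem is quoted without proof in the excerpt and is genuinely deep; everything downstream of it is routine linear algebra over $\Z$. A secondary subtlety worth flagging: the isomorphism $E_\C \cong \C/\mathcal{L}$ a priori concerns the curve over $\C$, whereas $E[n]$ in Definition \ref{torsionpt} was set up for a curve over $\Q$. One should either interpret $E[n]$ as the full $n$-torsion of $E(\C)$ (the standard convention, since $n$-torsion points need not be rational), or remark that the group-theoretic structure of $E[n]$ is unchanged by passing to $\C$. I would insert a sentence making this explicit, then present the two-line lattice computation above as the body of the proof.
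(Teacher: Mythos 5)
Your proposal is correct and follows essentially the same route as the paper: both invoke the Uniformisation Theorem to identify $E[n]$ with the $n$-torsion of $\C/\mathcal{L}$, then compute that this is $\tfrac{1}{n}\mathcal{L}/\mathcal{L} \cong (\Z/n\Z)\oplus(\Z/n\Z)$ by sending $\tfrac{a}{n}\omega_1+\tfrac{b}{n}\omega_2$ to $(a,b)$ and using the $\Z$-independence of $\omega_1,\omega_2$ for injectivity. Your remark about the distinction between $E$ over $\Q$ and $E(\C)$ is a worthwhile clarification that the paper's own proof leaves implicit.
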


\begin{proof}
  We know that there exists $\omega_1,\omega_2\in\C$ such that $E(\C)$ is isomorphic to $\C/\mathcal{L}$ with $\mathcal{L}=\Z\omega_1+\Z\omega_2$. Any element in $\C/\mathcal{L}$ can be written uniquely as $r_1\omega_1+r_2\omega_2$ where $r_1,r_2\in[0,1)$.  Call this $(r_1,r_2)$. For an $n$-torsion point,  $n(r_1,r_2)=(0,0)$, then $(\{nr_1\},\{nr_2\})=(0,0)$, where $\{x\}$ denotes the fractional part of $x$. But this implies that $nr_i$ is an integer, and hence we may  write: $$r_1\omega_1+r_2\omega_2=\frac{a}{n}\omega_1+\frac{b}{n}\omega_2$$ with $a,b\in \{0,1, \cdots, n-1\}$. Define a function
  $$f:E[n]\to (\mathbb{Z}/n\mathbb{Z}) \oplus (\mathbb{Z}/n\mathbb{Z})$$ by $f(r_1, r_2)=(a,b)$. Then, $f$ is surjective by construction. We want to show that $f$ is injective. Suppose $$f(r_1,r_2)=f(r_1', r_2') \implies(a,b)=(a',b').$$ This means the points $$\frac{a}{n}\omega_1+\frac{b}{n}\omega_2 \equiv \frac{a'}{n}\omega_1+\frac{b'}{n}\omega_2 \quad (\text{mod}\ \mathcal{L})$$
  Then: $$\frac{a-a'}{n}\omega_1+\frac{b-b'}{n}\omega_2\in \mathcal{L}.$$ This implies: $$(a-a')\omega_1+(b-b')\omega_2\in n\mathcal{L}.$$ But $\omega_1, \omega_2$ are linearly independent over $\mathbb{R}$, so the only way this combination lies in $n\mathcal{L}$ is if $a=a'$ and $b=b'$. Thus, $f$ is injective. \newline \newline Therefore, $f$ is a bijection and we have isomorphism of the abelian groups: $$ E[n] \cong (\mathbb{Z}/n\mathbb{Z}) \oplus (\mathbb{Z}/n\mathbb{Z})$$. 
\end{proof}
\begin{cor}
   If $l$ is prime, then $E[l]$ is a two dimensional $\Z/l\Z$-vector space. 
\end{cor}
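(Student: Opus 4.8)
The plan is to deduce this as an immediate consequence of Theorem \ref{EC Iso}, together with Proposition \ref{divpts} and the definition of a two-dimensional vector space given earlier. First I would specialise Theorem \ref{EC Iso} to $n=l$, obtaining a group isomorphism $E[l] \cong (\Z/l\Z)\oplus(\Z/l\Z)$. Since $l$ is prime, $\Z/l\Z$ is the field $\F_l$, so by Proposition \ref{divpts} the abelian group $E[l]$ already carries the structure of an $\F_l$-vector space, with scalar multiplication given by reduction mod $l$ of the integer scaling defined on $E$.

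Next I would observe that the bijection $f$ constructed in the proof of Theorem \ref{EC Iso} is not merely an isomorphism of abelian groups but an isomorphism of $\Z/l\Z$-modules, i.e. of $\F_l$-vector spaces. This is automatic rather than a computation: scalar multiplication by $m\in\Z/l\Z$ on either side is, by definition, $m$-fold addition, and $f$ preserves addition, so $f(m\cdot v)=m\cdot f(v)$ for every $v\in E[l]$; the same reasoning applies to $f^{-1}$, so $f$ is $\F_l$-linear in both directions.

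Finally I would invoke the definition of \emph{two dimensional} from the excerpt, namely that an $F$-vector space $V$ is two dimensional when $V\cong F\oplus F$. Taking $F=\F_l=\Z/l\Z$, the isomorphism $E[l]\cong \F_l\oplus\F_l$ just established is exactly the required statement, so $E[l]$ is a two-dimensional $\Z/l\Z$-vector space.

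I do not expect a genuine obstacle here; the only point needing a word of care is the remark that a group isomorphism between $\Z/l\Z$-modules is automatically module-linear, which holds precisely because the ring $\Z/l\Z$ is generated by $1$ and scalar multiplication is iterated addition. Everything else is a direct appeal to results already proved above.
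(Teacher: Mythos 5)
Your proposal is correct and follows essentially the same route as the paper: both deduce the result by combining Proposition \ref{divpts} (the $\F_l$-vector space structure on $E[l]$) with the group isomorphism $E[l]\cong(\Z/l\Z)\oplus(\Z/l\Z)$ from Theorem \ref{EC Iso}. Your extra remark that the group isomorphism is automatically $\F_l$-linear, because scalar multiplication is iterated addition, is exactly the point the paper leaves implicit with the phrase ``with the same two generators,'' so your write-up is if anything slightly more complete.
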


\begin{proof} We have already explained that this is a vector space known as the $l$-division points (Proposition \ref{divpts}). We see from the theorem above that it is two-dimensional with the same two generators.

\end{proof}

Finally, we get that the Tate module is also two-dimensional:

\begin{thm}
    $$\tate(E)\cong \Z_\ell\oplus\Z_\ell$$
\end{thm}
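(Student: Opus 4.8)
The plan is to mimic the structure of Theorem~\ref{EC Iso}, but now passing to the inverse limit. The key observation is that the isomorphism $E[\ell^n] \cong (\Z/\ell^n\Z) \oplus (\Z/\ell^n\Z)$ from Theorem~\ref{EC Iso} is not merely an isomorphism of abelian groups for each fixed $n$; we need these isomorphisms to be chosen \emph{compatibly} with the transition maps $\varphi_n : E[\ell^{n+1}] \to E[\ell^n]$ (multiplication by $\ell$) on the left, and the reduction maps $\Z/\ell^{n+1}\Z \to \Z/\ell^n\Z$ on the right. So first I would revisit the proof of Theorem~\ref{EC Iso}: using the uniformisation $E(\C) \cong \C/\mathcal{L}$ with $\mathcal{L} = \Z\omega_1 \oplus \Z\omega_2$, the $\ell^n$-torsion consists exactly of the points $\tfrac{a}{\ell^n}\omega_1 + \tfrac{b}{\ell^n}\omega_2$ with $a,b \in \{0,1,\dots,\ell^n-1\}$, giving an explicit isomorphism $f_n : E[\ell^n] \to (\Z/\ell^n\Z)^2$ sending that point to $(a,b)$. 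The point is that multiplication by $\ell$ on $\C/\mathcal{L}$ sends $\tfrac{a}{\ell^{n+1}}\omega_1 + \tfrac{b}{\ell^{n+1}}\omega_2$ to $\tfrac{a}{\ell^n}\omega_1 + \tfrac{b}{\ell^n}\omega_2$, which under $f_n$ and $f_{n+1}$ corresponds precisely to reducing $(a,b) \bmod \ell^{n+1}$ to $(a,b) \bmod \ell^n$. Hence the square
$$
\begin{array}{ccc}
E[\ell^{n+1}] & \xrightarrow{\ f_{n+1}\ } & (\Z/\ell^{n+1}\Z)^2 \\
\big\downarrow{\scriptstyle\varphi_n} & & \big\downarrow{\scriptstyle\bmod\,\ell^n} \\
E[\ell^n] & \xrightarrow{\ f_n\ } & (\Z/\ell^n\Z)^2
\end{array}
$$
commutes.

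Next I would invoke the general principle that taking inverse limits is functorial: a compatible family of isomorphisms between two inverse systems induces an isomorphism of the inverse limits. Concretely, the family $(f_n)$ defines a map
$$
\varprojlim f_n : \tate(E) = \varprojlim E[\ell^n] \;\longrightarrow\; \varprojlim (\Z/\ell^n\Z)^2
$$
by $(P_n)_n \mapsto (f_n(P_n))_n$, which lands in the inverse limit precisely because of the commuting squares above. Its inverse is $\varprojlim f_n^{-1}$, built the same way, so this is a bijection (and clearly a group homomorphism since each $f_n$ is). Finally I would identify the right-hand side: $\varprojlim (\Z/\ell^n\Z)^2 \cong \left(\varprojlim \Z/\ell^n\Z\right)^2 = \Z_\ell \oplus \Z_\ell$, using that the inverse limit of a product of systems is the product of the inverse limits (one checks a sequence of pairs is compatible iff each coordinate sequence is), together with Definition~\ref{l-adic Z} identifying $\varprojlim \Z/\ell^n\Z$ with $\Z_\ell$. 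Chaining these isomorphisms gives $\tate(E) \cong \Z_\ell \oplus \Z_\ell$.

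The main obstacle here is not any single hard computation but rather the \emph{compatibility bookkeeping}: one must be careful that the choice of generators $\omega_1, \omega_2$ is made once and for all (so the same basis is used at every level), and that the transition map on the torsion side really is multiplication by $\ell$ — matching the definition of $\varphi_n$ in the excerpt — while on the arithmetic side it is reduction mod $\ell^n$, matching Definition~\ref{l-adic Z}. If the paper wishes to stay elementary it could alternatively avoid the categorical language and instead argue directly: pick $Q_1, Q_2 \in \tate(E)$ whose first coordinates generate $E[\ell]$ and which are ``$\ell$-divisible all the way down'' along the system, show they are $\Z_\ell$-linearly independent, and show they span by a successive-approximation argument mod $\ell^n$. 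Either route works; the inverse-limit-functoriality route is cleaner but presupposes the reader is comfortable with $\varprojlim$ as a functor, whereas the direct route is more hands-on but requires the lifting argument to be spelled out.
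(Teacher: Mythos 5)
Your proof is correct, and it reaches the conclusion by a genuinely different (and arguably tighter) route than the paper's. The paper establishes compatibility by induction: it notes that each $\varphi_n$ is surjective, starts from generators $P_1,Q_1$ of $E[\ell]$, lifts them step by step to compatible sequences $(P_n),(Q_n)$ claimed to generate each $E[\ell^n]$, and then appeals (as a ``bonus'' proved immediately afterwards) to the fact that inverse limits commute with direct sums. You instead fix the lattice basis $\omega_1,\omega_2$ once and observe that the explicit isomorphisms $f_n$ from the proof of Theorem \ref{EC Iso} are already compatible, because multiplication by $\ell$ on $\tfrac{a}{\ell^{n+1}}\omega_1+\tfrac{b}{\ell^{n+1}}\omega_2$ corresponds exactly to reducing $(a,b)$ modulo $\ell^n$; functoriality of $\varprojlim$ applied to this compatible family of isomorphisms then finishes the argument. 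What your route buys is a cleaner treatment of the step the paper handles most briefly --- the assertion that the lifted pair $P_n,Q_n$ still generates $E[\ell^n]$ ``because their image is not cyclic,'' which as stated needs more care (in $(\Z/\ell^n\Z)^2$ two elements can fail to generate without being multiples of one another) --- since with a fixed basis there is nothing to lift and nothing to re-verify at each level. The paper's route, conversely, is exactly the ``direct, hands-on'' alternative you sketch in your final paragraph, and it isolates the abstract inputs (surjectivity of the transition maps, the group structure of each $E[\ell^n]$) rather than leaning on the complex-analytic coordinates. Both arguments rest on the same foundation, Theorem \ref{EC Iso} via uniformisation, and both use the proposition $\varprojlim(A_n\oplus B_n)\cong\left(\varprojlim A_n\right)\oplus\left(\varprojlim B_n\right)$ that the paper proves right after this theorem.
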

\begin{proof}
We see that from Theorem \ref{EC Iso}, the multiplication by $\ell$ maps are surjective. One way to see this, is to see the image is 
$$\varphi_n(E[\ell^{n+1}])\cong\ell((\Z/\ell^{n+1}\Z)\oplus(\Z/\ell^{n+1}\Z))\cong ((\Z/\ell^n\Z)\oplus(\Z/\ell^n\Z)).$$

Now pick generators for the direct sum so that $\varphi_n$ preserves the direct sum.
\newline\newline
Define $(P_n)_{n\geq1}$ and $(Q_n)_{n\geq1}$ as follows:

Let $P_1$ and $Q_1$ generate $E[\ell]$. For $P_{n-1}$ and $Q_{n-1}$ already defined, let $P_n,Q_n\in E[\ell^n]$ be any elements such that $\varphi_n(P_n)=P_{n-1}$ and $\varphi_n(Q_n)=Q_{n-1}$, which exist by surjectivity of $\varphi_n$. Each pair $P_n,Q_n$ must also generate $E[\ell^n]$ since it suffices to show that they are not multiples of each other. They are not multiples of each other because their image is not cyclic.
\newline\newline
Now that the generators align with $\varphi_n$, we get the desired result. As a bonus We prove a more general result below after introducing some more general notions.
\end{proof}

The constructions of the Tate module and $\ell$-adic integers follow a general construction known as inverse limits.

\begin{definition}[Inverse Limit]
Let $I$ be an infinite set of with a notion of $\leq$ such that for every $i,j\in I$, there exists $k\in I$ with $k\geq i,j$. Consider the family of groups $A_i$ and a family of homomorphisms, $f_{ij}$, where for each $i\leq j$  $f_{ij}$ is a homomorphism from $A_j$ to $A_i$, $f_{ii}$ is the identity, and $f_{ij}=f_{ik}\circ f_{kj}$. The inverse limit is defined as follows:
$$\varprojlim_{i \in I} A_i = \left\{ (a_i)_{i\in I} \;\middle|\; a_i\in A_i,\quad a_i = f_{ij}(a_j),\quad\forall j>i \right\}$$
The homomorphisms $f_{ij}$ and the family of groups together are called an \emph{inverse system}.
\end{definition}

We show that when each component of the inverse limit decomposes as a direct sum, and the connecting maps align with this decomposition, then the inverse limit preserves the decomposition as well.



\begin{prop}

  Suppose we have two inverse systems of abelian groups:
  \begin{itemize}
     \item $\{A_n\}_{n\in \N}$ with transition maps $\varphi^A_n:A_{n+1}\to A_n,$
    \item $\{B_n\}_{n\in \N}$ with transition maps $\varphi^B_n:B_{n+1}\to B_n.$
  \end{itemize}
  Then the product system $\{ A_n \oplus B_n\}$ is an inverse system with transition maps $$
\varphi_n : A_{n+1} \oplus B_{n+1} \longrightarrow A_n \oplus B_n$$ defined component-wise by: $$\varphi_n(a_{n+1}, b_{n+1}) = \left( \varphi_n^A(a_{n+1}), \varphi_n^B(b_{n+1}) \right).$$
Thus we have that

$$\varprojlim (A_n \oplus B_n)=\left( \varprojlim A_n \right) \oplus \left( \varprojlim B_n \right)$$

\end{prop}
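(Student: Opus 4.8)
The plan is to exhibit an explicit isomorphism between the two abelian groups and verify it is a bijective homomorphism, since the whole content here is unwinding the definition of the inverse limit. First I would dispatch the preliminary claim that $\{A_n \oplus B_n\}$ together with the component-wise maps $\varphi_n$ really is an inverse system in the sense defined above: the required identities $f_{ii} = \mathrm{id}$ and $f_{ij} = f_{ik} \circ f_{kj}$ hold because the identity map and composition on a direct sum are computed coordinate by coordinate, so they reduce to the corresponding identities for the $\varphi_n^A$ in the first coordinate and the $\varphi_n^B$ in the second.

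Next I would define the candidate map
$$\Phi : \varprojlim(A_n \oplus B_n) \longrightarrow \left(\varprojlim A_n\right) \oplus \left(\varprojlim B_n\right)$$
by sending a compatible sequence $\big((a_n, b_n)\big)_{n \geq 1}$ to the pair $\big((a_n)_{n\geq 1},\, (b_n)_{n \geq 1}\big)$. The thing to check is that the two output sequences genuinely lie in the respective inverse limits: the compatibility condition $\varphi_n(a_{n+1}, b_{n+1}) = (a_n, b_n)$ holding in $A_{n+1} \oplus B_{n+1}$ says, by the component-wise definition of $\varphi_n$, exactly that $\varphi_n^A(a_{n+1}) = a_n$ and $\varphi_n^B(b_{n+1}) = b_n$, and these are precisely the conditions defining membership in $\varprojlim A_n$ and $\varprojlim B_n$.

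Then I would write down the inverse map $\Psi$, which takes a pair $\big((a_n), (b_n)\big)$ of compatible sequences and forms the single sequence $\big((a_n, b_n)\big)_n$; the same equivalence of conditions shows this sequence is compatible for $\{A_n \oplus B_n\}$, so $\Psi$ indeed lands in $\varprojlim(A_n \oplus B_n)$. It is immediate from the formulas that $\Phi \circ \Psi$ and $\Psi \circ \Phi$ are the respective identity maps, so $\Phi$ is a bijection. Finally, since addition in each of the three inverse limits is inherited term by term — and within each term coordinate-wise on the direct sum — $\Phi$ carries a sum to the sum of the images, hence is a group homomorphism and therefore an isomorphism.

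I do not expect a genuine obstacle here: the entire argument is careful bookkeeping with indices and with the direction of the transition maps. The one place that deserves attention is the very first step, confirming that the component-wise maps satisfy the inverse-system axioms, together with the single observation that powers this whole proof — namely that "being a compatible sequence in the direct-sum system" is literally the conjunction of "being compatible in the $A$-system" and "being compatible in the $B$-system." Once that observation is in hand, well-definedness of $\Phi$ and $\Psi$, their being mutually inverse, and the homomorphism property are all forced.
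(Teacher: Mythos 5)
Your proposal is correct and rests on exactly the same observation as the paper's proof: compatibility of a sequence in the direct-sum system is precisely the conjunction of compatibility in the $A$-system and in the $B$-system. The paper presents this as a one-line chain of set equalities, implicitly identifying $\prod_n (A_n\oplus B_n)$ with $\prod_n A_n \times \prod_n B_n$, whereas you make that identification explicit as a pair of mutually inverse homomorphisms $\Phi$ and $\Psi$ and also verify the inverse-system axioms for the product system — a slightly more careful write-up of the same argument.
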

\begin{proof}
    
\begin{align*}
\varprojlim (A_n \oplus B_n) 
&= \left\{ (a_n, b_n) \in \prod_n (A_n \oplus B_n) \;\middle|\; \varphi_n((a_{n+1}, b_{n+1})) = (a_n, b_n) \text{ for all } n \right\} \\
&= \left\{ (a_n, b_n) \in \prod_n (A_n \times B_n )\;\middle|\; \varphi_n^A(a_{n+1}) = a_n \text{ and } \varphi_n^B(b_{n+1}) = b_n \text{ for all } n \right\} \\
&= \left( \varprojlim A_n \right) \oplus \left( \varprojlim B_n \right).
\end{align*}

\end{proof}



\section{Preliminaries II}

\subsection{Linear Algebra}

We avoid much of linear algebra by introducing two by two matrices explicity. In general, linear algebra is a huge topic that many countries start in high school or first year university.

\subsubsection{Concrete Linear Algebra}

Explicit calculations with matrices might be considered concrete linear algebra and these computations are easily taught to and carried out by high school students.
\newline\newline
We focus here on $2\times 2$ matrices:


\begin{definition}[$2\times2$ Matrices]
    A $2\times2$ ('two by two') matrix over a commutative ring $R$ with $1\in R$ is an array of elements $a,b,c,d\in R$ in 2 rows and columns, written: $$\Mabcd$$
    The set of all such matrices is denoted $\mat_2(R)$. There is also a matrix addition and multiplication that make $\mat_2(R)$ into a ring:
    \begin{itemize}
        \item Addition of two 2 by 2 matrices is defined as summing the values in the corresponding entries of the 2 matrices, evaluating to a matrix with entries in $R$. An example of this for $\mat_2(\Z)$ is as follows: $$\begin{pmatrix} 1&4\\7&8 \end{pmatrix}+\begin{pmatrix} -1&2\\0&1 \end{pmatrix}=\begin{pmatrix} {1-1}&{4+2}\\{7+0}&{8+1}\end{pmatrix}=\begin{pmatrix} 0&6\\7&9\end{pmatrix}$$
        \item Multiplication of two 2 by 2 matrices, say $X$ and $Y$, results in a matrix with entries in $R$. Each entry of the resulting matrix is defined as the dot product of the row in $X$ containing that entry and the column in $Y$ containing that entry, strictly speaking for:$$X=\begin{pmatrix} {a_1}&{b_1}\\{c_1}&{d_1} \end{pmatrix},Y=\begin{pmatrix} {a_2}&{b_2}\\{c_2}&{d_2} \end{pmatrix}$$ $$X Y=\begin{pmatrix} {a_1}&{b_1}\\{c_1}&{d_1} \end{pmatrix}\begin{pmatrix} {a_2}&{b_2}\\{c_2}&{d_2} \end{pmatrix}=\begin{pmatrix} {a_1a_2+b_1c_2}&{a_1b_2+b_1d_2}\\{c_1a_2+d_1c_2}&{c_1b_2+d_1d_2} \end{pmatrix}$$
   \end{itemize}
   Matrix multiplication is usually not commutative.
   The identity matrix for matrix multiplication is: 
   $$\begin{pmatrix} {1}&{0}\\{0}&{1} \end{pmatrix}$$
   and can explicitly be checked, in the above matrix multiplication formula, to satisfy $$X\cdot\begin{pmatrix} {1}&{0}\\{0}&{1} \end{pmatrix}=X=\begin{pmatrix} {1}&{0}\\{0}&{1} \end{pmatrix}\cdot X$$
   A matrix $X$, with entries in the ring $R$, is invertible if there exists another matrix, $Y$, with entries in $R$ such that $XY=\begin{pmatrix} {1}&{0}\\{0}&{1} \end{pmatrix}=YX$.
\end{definition}

\begin{definition}[Trace]

The trace of a 2 by 2 Matrix $X$, denoted as $\Tr(X)$, is the sum of the elements in the diagonal from the top left to bottom right, specifically the trace of X is:
$$\Tr(X)= \Tr\Mabcd=a+d.$$

\end{definition}
\begin{thm}
    For all $X,Y\in\mat_2$,
    $$\Tr(XY)=\Tr(YX).$$
    In particular, if $X^{-1}$ exists, then
    $$\Tr(XYX^{-1})=\Tr(Y).$$
\end{thm}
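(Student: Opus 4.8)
The plan is to prove the first identity $\Tr(XY) = \Tr(YX)$ by direct computation using the explicit $2\times 2$ multiplication formula, and then deduce the conjugation-invariance as an immediate corollary. Since we are working with explicit $2\times 2$ matrices over a commutative ring, this is a finite computation rather than anything requiring structural insight, so the ``main obstacle'' is really just bookkeeping and making sure commutativity of $R$ is used correctly.

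First I would write $X = \begin{pmatrix} a_1 & b_1 \\ c_1 & d_1 \end{pmatrix}$ and $Y = \begin{pmatrix} a_2 & b_2 \\ c_2 & d_2 \end{pmatrix}$, and apply the multiplication formula from the definition of $\mat_2(R)$. The $(1,1)$ entry of $XY$ is $a_1a_2 + b_1c_2$ and the $(2,2)$ entry is $c_1b_2 + d_1d_2$, so $\Tr(XY) = a_1a_2 + b_1c_2 + c_1b_2 + d_1d_2$. Swapping the roles of $X$ and $Y$, the $(1,1)$ entry of $YX$ is $a_2a_1 + b_2c_1$ and the $(2,2)$ entry is $c_2b_1 + d_2d_1$, giving $\Tr(YX) = a_2a_1 + b_2c_1 + c_2b_1 + d_2d_1$. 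Comparing the two expressions term by term and using that multiplication in $R$ is commutative (so $a_1a_2 = a_2a_1$, $b_1c_2 = c_2b_1$, $c_1b_2 = b_2c_1$, $d_1d_2 = d_2d_1$), the two sums are equal, which establishes $\Tr(XY) = \Tr(YX)$.

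For the second part, suppose $X^{-1}$ exists. Apply the first identity with the pair $(XY, X^{-1})$ in place of $(X, Y)$: this gives $\Tr\big((XY)X^{-1}\big) = \Tr\big(X^{-1}(XY)\big)$. Using associativity of matrix multiplication, the right-hand side is $\Tr\big((X^{-1}X)Y\big) = \Tr(IY) = \Tr(Y)$, where $I$ is the identity matrix. Hence $\Tr(XYX^{-1}) = \Tr(Y)$, as claimed. The only subtlety worth flagging is that the cyclic property here is being applied to a product of two matrices $(XY)$ and $(X^{-1})$, which is exactly the case already proven, so no more general ``cyclic invariance of trace for three factors'' is needed — it follows formally from the two-factor case plus associativity.
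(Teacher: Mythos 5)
Your proof is correct and matches the paper's argument essentially exactly: both compute the diagonal entries of $XY$ and $YX$ explicitly, equate the traces using commutativity of $R$, and then derive the conjugation invariance by applying the two-factor identity to the pair $(XY, X^{-1})$ together with associativity. No issues.
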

\begin{proof}
    
$$XY=\begin{pmatrix} {a_1a_2+b_1c_2}&{a_1b_2+b_1d_2}\\{c_1a_2+d_1c_2}&{c_1b_2+d_1d_2} \end{pmatrix}, YX=\begin{pmatrix} {a_2a_1+b_2c_1}&{a_2b_1+b_2d_1}\\{c_2a_1+d_2c_1}&{c_2b_1+d_2d_1} \end{pmatrix}$$

$\Tr(XY)=a_1a_2+b_1c_2+c_1b_2+d_1d_2=a_2a_1+b_2c_1+c_2b_1+d_2d_1=\Tr(YX)$
Thus,
$$ \Tr((XY)X^{-1})=\Tr(X^{-1}(XY))=\Tr(Y).$$
\end{proof}
\begin{definition}[Determinant]
    
The \emph{determinant}, denoted by $\det$, is given by
$$\det\Mabcd=ad-bc.$$

\end{definition}

\begin{thm} 
   For all $X,Y\in\mat_2$,
   $$\det(XY)=\det(X)\det(Y).$$
\end{thm}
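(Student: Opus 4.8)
The plan is to prove the multiplicativity of the determinant, $\det(XY) = \det(X)\det(Y)$ for $2\times 2$ matrices over a commutative ring $R$, by direct computation. Since we are working with $2\times 2$ matrices written out explicitly (as the paper has deliberately chosen to do), the cleanest route is simply to expand both sides using the formulas already established for matrix multiplication and for the determinant, and then verify that the resulting polynomial expressions in the eight entries agree.

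First I would write $X = \begin{pmatrix} a_1 & b_1 \\ c_1 & d_1 \end{pmatrix}$ and $Y = \begin{pmatrix} a_2 & b_2 \\ c_2 & d_2 \end{pmatrix}$, and recall from the definition of matrix multiplication that
$$XY = \begin{pmatrix} a_1a_2+b_1c_2 & a_1b_2+b_1d_2 \\ c_1a_2+d_1c_2 & c_1b_2+d_1d_2 \end{pmatrix}.$$
Then I would apply the determinant formula to this product, obtaining $\det(XY) = (a_1a_2+b_1c_2)(c_1b_2+d_1d_2) - (a_1b_2+b_1d_2)(c_1a_2+d_1c_2)$. Next I would expand both products fully. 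Each expands into four terms; the two cross terms involving $a_1b_1b_2c_2$-type and $c_1d_1\cdots$-type monomials cancel between the two halves, and what survives is $a_1d_1a_2d_2 + b_1c_1b_2c_2 - a_1d_1b_2c_2 - b_1c_1a_2d_2 = (a_1d_1 - b_1c_1)(a_2d_2 - b_2c_2) = \det(X)\det(Y)$, as desired. It is worth noting that commutativity of $R$ is used freely when collecting and comparing these monomials, which is exactly why the hypothesis $R$ commutative (part of the definition of $\mat_2(R)$ in the excerpt) matters.

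There is no real obstacle here: the only thing requiring care is the bookkeeping of the eight monomials on each side and making sure the cancellation is carried out correctly. If one wanted a slicker argument one could invoke exterior algebra or the characterization of the determinant as the unique normalized alternating multilinear form on rows, but that machinery has not been introduced in the paper, so the honest and self-contained approach is the brute-force expansion. I would present the computation on two or three display lines, grouping terms so the cancellation is visible, and conclude by factoring.
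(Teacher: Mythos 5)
Your proposal is correct and matches the paper's proof essentially verbatim: both compute $XY$ explicitly, expand $\det(XY)=(a_1a_2+b_1c_2)(c_1b_2+d_1d_2)-(a_1b_2+b_1d_2)(c_1a_2+d_1c_2)$, cancel the two pairs of cross terms, and factor the remainder as $(a_1d_1-b_1c_1)(a_2d_2-b_2c_2)$. Your remark that commutativity of $R$ is what licenses the rearrangement of monomials is a worthwhile point the paper leaves implicit.
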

\begin{proof}
    
From the above definition of X and Y we have:
$$XY=\begin{pmatrix} {a_1a_2+b_1c_2}&{a_1b_2+b_1d_2}\\{c_1a_2+d_1c_2}&{c_1b_2+d_1d_2} \end{pmatrix}$$
Thus, now we explicitly show $\det(XY)=\det(X)\det(Y)$:
$$\det(XY)=(a_1a_2+b_1c_2)(c_1b_2+d_1d_2)-(a_1b_2+b_1d_2)(c_1a_2+d_1c_2)$$
$$=a_1a_2b_2c_1+a_1a_2d_1d_2+b_1b_2c_1c_2+b_1c_2d_1d_2-a_1b_2c_1a_2-a_1b_2c_2d_1-b_1d_2c_1a_2-b_1d_2d_1c_2$$
$$=a_1a_2d_1d_2+b_1b_2c_1c_2-a_1b_2c_2d_1-b_1d_2c_1a_2$$
$$=(a_1d_1-b_1c_1)(a_2d_2-b_2c_2)=\det(X)\det(Y).$$
\end{proof}

\begin{definition}[General and Special Linear Groups]
   \leavevmode
    \begin{itemize}
        \item The General Linear Group of degree 2 is denoted by $\GL_2(R)$ and is the set of invertible $2 \times 2$ matrices with entries in the ring $R$. It is a group under matrix multiplication. 
        \item The Special Linear Group of degree 2 is denoted by $\SL_2(R)$ and is the set of $2 \times 2$ matrices with entries in $R$ and determinant 1. It is also a group under matrix multiplication.
    \end{itemize}
\end{definition}

Specific Examples of $\GL_2(R)$ and  $\SL_2(R)$:
    In this project, the Special/General Linear Groups of degree 2 will be considered over different rings, e.g. the integers as well as the complex numbers.
    \newline\newline
    For example, the Special Linear Group of 2 by 2 matrices over the integers, defined by $\SLZ$, is the set of all matrices $\Mabcd\in \mat_2(\mathbb{Z})$ such that the determinant $ad-bc=1$, paired with the operation of matrix multiplication.

\subsubsection{Abstract Linear Algebra}

Abstract linear algebra goes from matrices to linear functions on vector spaces. Matrices are essentially linear functions where a choice of 'coordinates' or a choice of 'basis' has been made. It is therefore of interest to know what properties of the matrices are intrinsic to the linear function, instead of depending of the choice of basis. 
\newline\newline
Those introduced in the previous subsubsection such as trace and determinant do not depend on the choice of basis.

The following is a classic problem in linear algebra that is key to establishing invariants that do not depend on a choice of basis.

\begin{definition}[Eigenvectors and Eigenvalues] Given a $F$-vector space $V$ and $T:V\to V$ an $F$-linear function, then for $v\in V\setminus\{0\}$ and $\lambda\in F$, the \emph{eigenvalue problem} amounts to solving the equation 
$$Tv=\lambda v$$
for $v$ and $\lambda$. $\lambda$ is called an \emph{eigenvalue} and $v$ is called a $\lambda$-eigenvector.

\end{definition}

This won't appear until the end of the subsequent Section where Hecke operators are introduced but will also resurface in the Sections after. It is tempting to note now that the trace gives the sum of the eigenvalues and the determinant gives the product, where for a two by two matrix, because these have at most 2 eigenvalues, this gives us the complete picture. In other words, for a two dimensional $V$, we get that the eigenvalues are the roots of the so called characteristic polynomial:
$$\lambda^2-\tr(T)\lambda+\det(T).$$


\subsection{Group Theory}

\subsubsection{Group Actions}
\begin{definition}[Group Action on Sets]
    A group action of a group $G$ on a set $X$ is a rule that lets the elements of $G$ act on the elements of $X$. More formally, an action is a function $$G \times X\to X$$ such that for all $g,h\in G$ and $x\in X$: $$(gh)\cdot x=g\cdot (h\cdot x)$$ and there exists an identity element $e\in G$ such that $$e\cdot x=x.$$
    
\end{definition}

We call a set with a group $G$ acting on it a $G$-set.

\begin{definition}[Orbit]
 Given $x\in X$, the orbit of $x$ under the action of $G$ is the set  to which $x$ can be mapped by the elements of the group $G$: $$Gx=\{g\cdot x \mid g\in G\}$$
\end{definition}
We can see then that orbits partition the set $X$. We say $G$ acts \emph{transitively} on $X$, if there is only one orbit (every element of $X$ can be reached from any other via the group action).
\newline\newline
In the following Section, $G$ being a group of invertible $2\time 2$ matrices will be of particular interest to us. 
\newline\newline
It is hard to talk about actions without talking about the \emph{stabiliser} subgroups and orbit-stabiliser theorem, but we will settle for just defining the stabiliser for now to be used later.
\newline\newline
\begin{definition}[Stabiliser]
    The stabiliser of an element $x\in X$ is a subgroup of $G$ consisting of all the elements of $G$ that fix $x$: $$G_x=\{g\in G \mid g\cdot x=x\}$$
\end{definition}

If we fix an element $g$ of $G$, we get an invertible function $X\to X$ defined by $x\mapsto g\cdot x.$ This way of thinking of elements of $g$ leads to a group homomorphism
from $G$ to the group of invertible functions of $X$.
\newline\newline
Group actions are a way of representing an abstract group as a concrete group of invertible functions. Later, we will try to represent the group as a group of linear functions, giving rise to what is known as a \emph{group representation}.

\subsubsection{Quotients}

Quotients are a construction that allows us to formalise the idea of setting things to the identity. People often use the phrase `modulo' to mimic the case when $n$ is set to zero when working in modular arithmetic modulo $n$.

\begin{definition}
    
If we have a set $X$ with a relation $\sim$, we can write $X/\sim$ to mean the set obtained from identifying every element with its related elements. That is, if $x\sim y$ in $X$, then $x$ and $y$ are represented by the same point in $X/\sim$ and by different points otherwise. For this to work properly the relation needs to be whats called an \emph{equivalence relation}.

\end{definition}

\begin{eg}
    
If a group $G$ acts on a set $X$, we write $X/G$ to denote the set of orbits under $G$, this amounts to setting the group action to the identity, forcing us to identify all the elements in one orbits as the same. In the language of the above we let $\sim$ be defined by
$$x\sim y\iff x\in Gy\iff y\in Gx$$

\end{eg}
\begin{eg}
If we have a vector space $V$ and a subspace $W$, we can set all the elements of $W$ to be zero, which amounts to constructing the quotient space $V/W$. Here
$$u\sim v\iff u-v\in W.$$
The construction is the same for Abelian groups or $R$-modules.
\end{eg}

A key example for us that we have encountered in the last section is the quotient of $\C$ by a lattice $\mathcal{L}$.

\subsection{Complex Analysis} 

Complex analysis will be a significant part of the Section that follows. Complex analysis is an undergraduate topic that is typically taught in the third year. It is the study of complex functions that are complex differentiable, called \emph{holomorphic functions} to emphasise their amazing properties. It is reviewed again in graduate school as a core topic in analysis. 
\newline\newline
The following content can get quite demanding in both its digestion and application. While an understanding of complex analysis is not assumed of the reader, having a more thorough understanding of complex analysis will definitely be a great help to understand the following Section.
\subsubsection{Keywords}
\begin{itemize}

\item {\bf Tier One Keywords:} limit point, uniform/absolute convergence, neighbourhood, open/closed, power series, bounded, zeros, discrete, Bolzano-Weierstrass theorem.

Some of these are known to high school students in Australia but here the emphasis is on complex numbers, whereas previously it was for real numbers.

\item {\bf Tier Two Keywords}: holomorphic, differentiable, analytic, poles, Laurent series, complex integral, simply connected, M\"{o}bius transformations.

\item {\bf Tier Three Keywords}: Cauchy's Theorem, Cauchy's integral formula, argument principle, Louiville's theorem, removable singularity, meromorphic.
\end{itemize}

\subsubsection{Convergence of Complex Functions}

\begin{definition} 
    
A bounded function  $f$ is one for which there exists $B\in\R$ such that $|f(z)|<B$ for all $z$ in the domain of $f$.

A bounded sequence is a sequence, bounded when considered as a function $\N\to\C$.
 \end{definition} 

\begin{theorem}[Bolzano-Weierstrass Theorem]
    A bounded infinite sequence of real numbers has a convergent subsequence.
\end{theorem}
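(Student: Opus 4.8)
The statement to prove is the Bolzano--Weierstrass Theorem: a bounded infinite sequence of real numbers has a convergent subsequence. The plan is to use a bisection (interval-halving) argument, which is the classical approach and requires nothing beyond the completeness of $\R$ and the definition of boundedness already given in the excerpt. The hard part is really just being careful about the logic of extracting a subsequence rather than any deep idea.

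First I would invoke boundedness: since the sequence $(x_n)$ is bounded, there exist reals $a_0 < b_0$ with $x_n \in [a_0, b_0]$ for all $n$. Then I would build a nested sequence of closed intervals $[a_0,b_0] \supseteq [a_1,b_1] \supseteq [a_2,b_2] \supseteq \cdots$ inductively: given $[a_k,b_k]$ containing $x_n$ for infinitely many indices $n$, split it at its midpoint $m = (a_k+b_k)/2$ into $[a_k,m]$ and $[m,b_k]$; at least one of these two halves must contain $x_n$ for infinitely many $n$ (since a union of two sets containing finitely many terms each would contain only finitely many), and I take $[a_{k+1},b_{k+1}]$ to be such a half. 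By construction $b_k - a_k = (b_0-a_0)/2^k \to 0$.

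Next I would extract the subsequence: choose indices $n_1 < n_2 < n_3 < \cdots$ so that $x_{n_k} \in [a_k,b_k]$ for each $k$ — this is possible precisely because each $[a_k,b_k]$ contains infinitely many terms of the sequence, so having picked $n_1,\dots,n_{k-1}$ we can always find a larger index $n_k$ with $x_{n_k}$ in the next interval. Finally I would identify the limit: the sequence $(a_k)$ is nondecreasing and bounded above by $b_0$, so by completeness it converges to some $L$; since $b_k - a_k \to 0$, also $b_k \to L$, and since $a_k \le x_{n_k} \le b_k$, the squeeze principle gives $x_{n_k} \to L$. This completes the proof. The only genuine subtlety to get right is the inductive claim "one of the two halves contains infinitely many terms," which rests on the pigeonhole observation that finitely many plus finitely many is finitely many; everything else is bookkeeping.
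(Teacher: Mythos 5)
Your proof is correct: the bisection argument, the pigeonhole step ("one of the two halves contains infinitely many terms"), the extraction of strictly increasing indices, and the squeeze via the common limit of the nested endpoints are all sound, and the appeal to completeness (monotone convergence of $(a_k)$) is exactly where it should be. The paper itself states the Bolzano--Weierstrass theorem without proof and only proves the complex-number corollary from it, so your argument fills a gap rather than duplicating one; it is the standard classical proof and nothing in it needs repair.
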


\begin{cor}
A bounded infinite sequence of complex numbers has a convergent subsequence.
\end{cor}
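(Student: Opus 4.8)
The plan is to derive the complex case from the real Bolzano--Weierstrass theorem by applying it twice: once to the real parts and once to the imaginary parts. Write the given bounded complex sequence as $z_n = x_n + i y_n$ with $x_n, y_n \in \R$. Since $|z_n| < B$ for all $n$, we get $|x_n| \leq |z_n| < B$ and $|y_n| \leq |z_n| < B$, so both $(x_n)$ and $(y_n)$ are bounded real sequences.

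First I would apply the Bolzano--Weierstrass theorem to the real sequence $(x_n)$ to extract a convergent subsequence $(x_{n_k})$, say with limit $x \in \R$. Then I would consider the corresponding subsequence $(y_{n_k})$ of imaginary parts; this is still a bounded real sequence, so a second application of Bolzano--Weierstrass yields a further subsequence $(y_{n_{k_j}})$ converging to some $y \in \R$. The key point is that passing to a further subsequence does not destroy convergence of the real parts: $(x_{n_{k_j}})$ is a subsequence of the convergent sequence $(x_{n_k})$, so it still converges to $x$.

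Finally I would argue that $z_{n_{k_j}} = x_{n_{k_j}} + i\, y_{n_{k_j}} \to x + i y$ in $\C$, using the fact that a complex sequence converges if and only if its real and imaginary parts converge to the respective real and imaginary parts of the limit; equivalently, $|z_{n_{k_j}} - (x+iy)| \leq |x_{n_{k_j}} - x| + |y_{n_{k_j}} - y| \to 0$. This exhibits a convergent subsequence of $(z_n)$, completing the proof.

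The only mildly delicate step is the bookkeeping with nested subsequences — making sure that the second extraction is performed on the subsequence already chosen, so that both coordinate sequences converge simultaneously. There is no real analytic obstacle here; the whole content is the reduction to the one-dimensional case already assumed, plus the elementary equivalence between convergence in $\C$ and coordinatewise convergence in $\R^2$.
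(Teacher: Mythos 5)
Your proof is correct and follows exactly the same route as the paper's: bound the real and imaginary parts, apply the real Bolzano--Weierstrass theorem twice with nested subsequences, and conclude via coordinatewise convergence. Your version just spells out the bookkeeping more explicitly.
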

\begin{proof}
    If the complex numbers are bounded, then the real and imaginary parts are bounded. Then you can use the Bolzano-Weierstrass Theorem to take a subsequence of this sequence with convergent real part, and then take a subsequence of that sequence with convergent imaginary part. Then both real parts and imaginary parts of this subsequence are convergent, so this subsequence is convergent overall.
\end{proof}

\begin{definition}[Uniform Convergence]
    For a set $S\subset \C$, a sequence of functions $f_i:S\rightarrow\C$ which converges at each point to the function $f:S\rightarrow\C$ uniformly convergence if for every $\varepsilon>0$, there exists $N$ such that for all $i>N$ and $z\in S$, $|f_i(z)-f(z)|<\varepsilon$. Then $f$ is called the uniform limit of $f_i$.
\end{definition}
\begin{thm}
The uniform limit of a sequence of holomorphic functions is also holomorphic.
\end{thm}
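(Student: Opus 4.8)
The plan is to use Morera's theorem, which characterises holomorphic functions as exactly the continuous functions whose integral around every closed contour in a simply connected region vanishes. Let $f_i \to f$ uniformly on $S$, with each $f_i$ holomorphic. First I would reduce to a local statement: holomorphy is a local property, so it suffices to show $f$ is holomorphic on some disc $D$ around each point of $S$, and on a (small enough) closed disc contained in $S$ the convergence is in particular uniform. So without loss of generality we work on a disc $D$.

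Next I would check that $f$ is continuous on $D$: this is the standard fact that a uniform limit of continuous functions is continuous, via the $\varepsilon/3$ argument (split $|f(z)-f(w)| \le |f(z)-f_i(z)| + |f_i(z)-f_i(w)| + |f_i(w)-f(w)|$ and choose $i$ large, then $w$ close to $z$). Then, given any closed triangular contour (or more generally any closed rectifiable loop) $\gamma$ inside $D$, I would write
\[
\oint_\gamma f(z)\,dz = \oint_\gamma \bigl(f(z) - f_i(z)\bigr)\,dz + \oint_\gamma f_i(z)\,dz .
\]
The second integral is $0$ by Cauchy's theorem, since each $f_i$ is holomorphic on the simply connected disc $D$. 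The first integral is bounded in absolute value by $\operatorname{length}(\gamma) \cdot \sup_{z \in \gamma}|f(z) - f_i(z)|$, which tends to $0$ as $i \to \infty$ by uniform convergence. Hence $\oint_\gamma f = 0$ for every such $\gamma$, and since $f$ is continuous, Morera's theorem gives that $f$ is holomorphic on $D$. As the point was arbitrary, $f$ is holomorphic on all of $S$.

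The main obstacle, or rather the point needing the most care, is making sure the hypotheses of the tools line up: Cauchy's theorem needs a simply connected domain (handled by passing to discs), Morera's theorem needs continuity of $f$ (handled by the $\varepsilon/3$ argument), and one should be slightly careful that "uniform convergence on $S$" does give uniform convergence on the chosen subdiscs — which is immediate since they are subsets of $S$. An alternative route, if one prefers, is to invoke Cauchy's integral formula directly: write $f_i(z) = \frac{1}{2\pi i}\oint_{|\zeta - z_0| = r} \frac{f_i(\zeta)}{\zeta - z}\,d\zeta$ and pass to the limit under the integral sign (justified by uniform convergence on the circle, where $|\zeta - z|$ is bounded below), obtaining the same integral representation for $f$, which is manifestly holomorphic in $z$ since one may differentiate under the integral. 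Either way the engine is "uniform convergence lets us swap limit and contour integral," and Cauchy/Morera does the rest.
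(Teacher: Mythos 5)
Your proposal is correct, and in fact the paper states this theorem without any proof at all, so your argument fills a gap rather than duplicating one. The Morera route you describe (reduce to a disc, establish continuity of the limit by the $\varepsilon/3$ argument, swap limit and contour integral using the $\operatorname{length}(\gamma)\cdot\sup|f-f_i|$ bound, conclude by Morera) is the standard textbook proof and every step is sound. One small practical remark: Morera's theorem is not among the classic results the paper lists in its complex-analysis preliminaries, whereas Cauchy's theorem and the integral formula are at least implicitly available; so your alternative route --- writing $f_i(z)=\frac{1}{2\pi i}\oint_{|\zeta-z_0|=r}\frac{f_i(\zeta)}{\zeta-z}\,d\zeta$ and passing to the limit under the integral, which is justified because $|\zeta-z|$ is bounded below on the circle for $z$ in a smaller concentric disc --- fits the paper's stated toolkit slightly better and has the bonus of immediately giving convergence of the derivatives as well, a fact the paper actually uses later when differentiating $\wp$ term by term. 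Either version would be a correct and welcome addition.
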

\begin{definition}[Absolute convergence]
    An infinite sum $\sum_{i=0}^\infty a_i$ of complex numbers is absolutely convergent if the sum $\sum_{i=0}^\infty |a_i|$ has a finite limit.
\end{definition}
\begin{thm}
    An absolutely convergent sum of complex numbers is convergent. Furthermore, changing the order of the complex numbers keeps the sum convergent, and you get the same answer.
\end{thm}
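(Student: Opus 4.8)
The plan is to dispose of the two assertions in turn, in each case pushing everything back to the Cauchy criterion. This criterion is available to us even though completeness of $\C$ has not been stated explicitly: a Cauchy sequence in $\C$ is bounded, hence by the complex Bolzano--Weierstrass corollary has a convergent subsequence, and a Cauchy sequence with a convergent subsequence converges to the same limit.

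For the first assertion I would set $s_n=\sum_{i=0}^{n}a_i$ and $t_n=\sum_{i=0}^{n}|a_i|$. By hypothesis $(t_n)$ is bounded, and it is monotone increasing, so Bolzano--Weierstrass forces it to converge and in particular to be Cauchy. For $m<n$ the triangle inequality gives $|s_n-s_m|=\bigl|\sum_{i=m+1}^{n}a_i\bigr|\le\sum_{i=m+1}^{n}|a_i|=t_n-t_m$, so $(s_n)$ inherits the Cauchy property and therefore converges; call its limit $S$.

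For the rearrangement statement, fix a bijection $\sigma:\N\to\N$. First I would observe that $\sum_i|a_{\sigma(i)}|$ is a rearrangement of the convergent nonnegative series $\sum_i|a_i|$: each of its partial sums is a finite subcollection of the terms $|a_i|$, hence bounded by $\sum_i|a_i|$, so these increasing partial sums converge, and running the same argument with $\sigma^{-1}$ shows the two totals agree. Hence $\sum_i a_{\sigma(i)}$ is absolutely convergent, so by the first part it converges to some $S'$. To identify $S'$ with $S$, fix $\varepsilon>0$, choose $N$ with $\sum_{i>N}|a_i|<\varepsilon$, and then choose $M$ so large that $\{0,1,\dots,N\}\subseteq\{\sigma(0),\dots,\sigma(M)\}$. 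For every $m\ge M$ the finite sum $\sum_{i=0}^{m}a_{\sigma(i)}$ contains all of $a_0,\dots,a_N$ together with finitely many extra terms whose indices all exceed $N$, so $\bigl|\sum_{i=0}^{m}a_{\sigma(i)}-\sum_{i=0}^{N}a_i\bigr|\le\sum_{i>N}|a_i|<\varepsilon$; letting $m\to\infty$ gives $|S'-\sum_{i=0}^{N}a_i|\le\varepsilon$, while directly $|S-\sum_{i=0}^{N}a_i|\le\sum_{i>N}|a_i|<\varepsilon$, so $|S-S'|<2\varepsilon$, and since $\varepsilon$ was arbitrary $S=S'$.

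The analytic content is light: the routine parts are the two triangle-inequality bounds and the reduction of Cauchy-ness to Bolzano--Weierstrass. The step I expect to need the most care is the indexing in the last paragraph — making precise that once $\sum_{i=0}^{m}a_{\sigma(i)}$ is long enough to contain every one of $a_0,\dots,a_N$, the only terms by which it can differ from $\sum_{i=0}^{N}a_i$ come from the tail of the original series, so the mismatch is genuinely controlled by $\sum_{i>N}|a_i|$. That permutation bookkeeping, rather than any estimate about convergence, is the only real obstacle.
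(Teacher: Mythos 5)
Your proof is correct. The paper actually states this theorem without any proof at all (it appears as a black-box preliminary in the complex analysis section), so there is no argument of theirs to compare against; your write-up — deducing the Cauchy property of the partial sums from that of the absolute partial sums, and handling rearrangements by trapping both limits within $\varepsilon$ of the common truncation $\sum_{i=0}^{N}a_i$ once the permuted partial sums absorb the first $N+1$ terms — is the standard argument and fills the gap cleanly.
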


\begin{definition}[$\epsilon$-neighbourhood]
    A neighbourhood of the point $z_0$ is a set of points that are close to $z_0.$ Specifically, the $\epsilon$-neighbourhood of the point $z_0$ is the set of points z such that $|z-z_0|<\epsilon$.
\end{definition}

Two types of subsets of the complex plane are called open and closed subsets, we will define them in what follows: 

\begin{definition}[Open Subset of complex plane]
    An open subset of the complex plane,  $U \subset \mathbb{C}$, is a set of points where $\forall z\in U$, $\exists  r\in\mathbb{R}$ such if $w\in \mathbb{C}$ satisfies $|z-w|<r$ then $w\in U$. In other words, for any $z\in U$, there exists an $\epsilon$-neighbourhood which is entirely contained within $U$.
\end{definition}
\begin{definition}[Limit Point]
    Let $z_0\in\mathbb{C}$, $S\subset \mathbb{C}$. Then $z_0$ is a limit point of $S$ if for every $\epsilon >0$, there is a point in the $\epsilon$ - neighbourhood of $z_0$, excluding $z_0$, that is in $S$.
\end{definition}
\begin{definition}[Closed Subset of Complex Plane]
   $S$ is a closed subset of $\mathbb{C}$ if and only if every limit point of $S$ also lies in $S$.
\end{definition}

\begin{definition}[Complex Differentiable]
    Let a complex function $f$ be defined on an open subset of the complex number plane. Then the complex derivative at a point $z$ is denoted by $f'(z_0)$ and only exists if the following limit exists: $$f'(z_0)=\lim_{z\rightarrow z_0}\frac{f(z)-f(z_0)}{z-z_0}.$$ 
 
\end{definition}
\begin{definition}[Holomorphic]
    A function is holomorphic if its defined on an open subset of the complex plane, and is complex differentiable at every point in the open subset.
\end{definition}

\begin{definition}[Power Series]
    A power series centered at a point \( z_0 \in \mathbb{C} \) is a series of the form:
\[
f(z) = \sum_{n=0}^{\infty} a_n (z - z_0)^n,
\]
where:

\begin{itemize}
  \item \( a_n \in \mathbb{C} \) are coefficients,
  \item \( z \in \mathbb{C} \),
  \item the series converges for \( z \) in some open disc \( |z - z_0| < R \), where the maximum $R$ is called the \textit{radius of convergence}.
\end{itemize}

Within the disc \( |z - z_0| < R \), the power series defines a holomorphic function.

\end{definition}
Every function that is holomorphic on an open set \( U \subset \mathbb{C} \) can be represented by a power series around any point \( z_0 \in U \), converging to the function in some neighbourhood of \( z_0 \). This is one of the most important results in complex analysis. 

\begin{definition}
    A function defined on an open set $U$ that has a power series expansion about every point $z_0\in U$ is said to be \emph{analytic}.
\end{definition}
\begin{thm}
    Every holomorphic function is analytic.
\end{thm}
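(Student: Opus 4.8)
The plan is to invoke Cauchy's integral formula and expand the Cauchy kernel as a geometric series. Fix $z_0\in U$. Since $U$ is open there is $R>0$ with the open disc $\{w:|w-z_0|<R\}\subset U$; fix any $r$ with $0<r<R$, so that $f$ is holomorphic on a neighbourhood of the closed disc of radius $r$ about $z_0$. Let $\gamma$ be the circle $|w-z_0|=r$, traversed once counterclockwise. Then for every $z$ with $|z-z_0|<r$, Cauchy's integral formula gives
$$f(z)=\frac{1}{2\pi i}\oint_\gamma \frac{f(w)}{w-z}\,dw.$$

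Next I would rewrite the kernel. For $w$ on $\gamma$ and $|z-z_0|<r=|w-z_0|$,
$$\frac{1}{w-z}=\frac{1}{(w-z_0)\left(1-\frac{z-z_0}{w-z_0}\right)}=\sum_{n=0}^{\infty}\frac{(z-z_0)^n}{(w-z_0)^{n+1}},$$
the geometric series converging because $\left|\frac{z-z_0}{w-z_0}\right|=\frac{|z-z_0|}{r}<1$. Since $f$ is continuous on the compact set $\gamma$ it is bounded there, so for each such fixed $z$ this series converges uniformly in $w\in\gamma$; hence the sum and the integral may be interchanged. Doing so yields
$$f(z)=\sum_{n=0}^{\infty}a_n(z-z_0)^n,\qquad a_n=\frac{1}{2\pi i}\oint_\gamma \frac{f(w)}{(w-z_0)^{n+1}}\,dw,$$
valid throughout $|z-z_0|<r$. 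Letting $r\uparrow R$ and noting $z_0\in U$ was arbitrary, $f$ has a power series expansion about every point of $U$, with radius of convergence at $z_0$ at least the distance from $z_0$ to $\partial U$; thus $f$ is analytic.

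The main obstacle is really the input rather than the manipulation: the whole argument rests on Cauchy's integral formula, whose proof in turn needs Cauchy's theorem (vanishing of the contour integral of a holomorphic function over a closed curve bounding a region in which it is holomorphic) together with a limiting argument handling the singularity of $\frac{f(w)}{w-z}$ at $w=z$. Granting those, the only delicate point is the justification of term-by-term integration — upgrading the pointwise convergence of the geometric expansion to uniform convergence on $\gamma$, which is exactly where the boundedness of $f$ on the compact circle and the strict inequality $|z-z_0|<r$ enter. Everything else is routine.
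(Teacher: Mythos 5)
Your proof is correct, and it is the standard argument: Cauchy's integral formula on a circle $|w-z_0|=r$ inside $U$, geometric expansion of the kernel $\frac{1}{w-z}$, and term-by-term integration justified by the uniform convergence (in $w$) of the series on the compact circle, which follows from boundedness of $f$ on $\gamma$ together with the fixed ratio $|z-z_0|/r<1$. For comparison, the paper does not prove this theorem at all — it is stated as a black-box classical result in the complex analysis preliminaries, with Cauchy's integral formula itself only listed as a keyword. So your proposal supplies something the paper deliberately omits, and you correctly locate where the real work lies: everything reduces to Cauchy's theorem and the integral formula, after which the expansion is routine. One minor refinement: to conclude that the radius of convergence at $z_0$ is at least the distance to $\partial U$ (rather than merely that \emph{some} neighbourhood works, which is all analyticity requires), you should note that the coefficients $a_n$ are independent of $r$ — by deforming the contour, the integrals over circles of different radii agree — so the single power series obtained is valid on all of $|z-z_0|<R$.
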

Before we list some classic theorems in complex analysis we must first define what it means to take the integral in a complex plane: 
\begin{definition}[Complex Integral]
   Define an integral along a path $\gamma$ parametrised as a (real) differentiable function $\gamma:[t_1,t_2]\rightarrow \mathbb{C}$ be a parametrisation of $\gamma$ that is consistent with its direction.
\newline\newline
   Then the integral of $f$ along $\gamma$ is defined by: $$\int_{\gamma}f(z)dz=\int_{t_1}^{t_2}f(\gamma(t))\gamma'(t)dt.$$
   
\end{definition}

There are other equivalent definitions.

\subsubsection{Classic Theorems for holomorphic functions}
Here are some complex analysis results for holomorphic functions.

\begin{thm}\label{discretezero}
    The zeros of a holomorphic function are discrete. In other words, for any zero of a holomorphic function there exists a neighbourhood of that zero that contains no zero other than itself.
\end{thm}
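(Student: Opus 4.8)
The plan is to use the power series representation of holomorphic functions around a point, together with the factorisation of a power series at a zero. Suppose $f$ is holomorphic on an open set $U$ and $f(z_0)=0$ for some $z_0\in U$. First I would expand $f$ as a power series $f(z)=\sum_{n\geq 0} a_n(z-z_0)^n$ valid in some disc $|z-z_0|<R$ contained in $U$, which exists because every holomorphic function is analytic. Since $f(z_0)=0$ we have $a_0=0$.

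The key dichotomy is whether all the $a_n$ vanish or not. If every $a_n=0$, then $f\equiv 0$ on the whole disc, and in that degenerate case the statement as phrased fails unless we implicitly assume $f$ is not identically zero near $z_0$ (so I would either add that hypothesis or appeal to the identity theorem to say $f\equiv 0$ on the connected component, handling that case separately). Otherwise, let $m\geq 1$ be the smallest index with $a_m\neq 0$. Then I would factor
$$f(z)=(z-z_0)^m\sum_{n\geq 0} a_{m+n}(z-z_0)^n=(z-z_0)^m g(z),$$
where $g(z)=\sum_{n\geq 0}a_{m+n}(z-z_0)^n$ is again given by a convergent power series on the same disc (same radius of convergence), hence holomorphic there, and $g(z_0)=a_m\neq 0$.

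Now the final step: since $g$ is holomorphic at $z_0$ it is continuous there, and $g(z_0)\neq 0$, so there is a neighbourhood $|z-z_0|<\delta$ on which $|g(z)|>|a_m|/2>0$; in particular $g$ has no zero there. On that punctured neighbourhood $0<|z-z_0|<\delta$ we have $(z-z_0)^m\neq 0$ and $g(z)\neq 0$, so $f(z)=(z-z_0)^m g(z)\neq 0$. Thus $z_0$ is the only zero of $f$ in this neighbourhood, which is exactly the claim. The main obstacle is not any hard estimate but rather pinning down the statement correctly: the result is false for the identically-zero function, so the argument genuinely relies on the (standard, implicitly assumed) convention that one works with a holomorphic function that does not vanish identically on the relevant component, and the cleanest fix is to invoke the identity theorem to dispose of that case at the outset.
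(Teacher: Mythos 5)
Your proof is correct and is the standard argument: expand $f$ in a power series about the zero, factor out the lowest-order term $(z-z_0)^m$ to write $f=(z-z_0)^m g$ with $g$ holomorphic and $g(z_0)\neq 0$, and use continuity of $g$ to isolate the zero. The paper states Theorem \ref{discretezero} without any proof at all, so there is no competing argument to compare against; your write-up would serve as the missing proof. Your caveat about the identically-zero function is also well taken: as literally stated the theorem is false for $f\equiv 0$, and either excluding that case explicitly or disposing of it via the identity theorem, as you propose, is exactly the right fix. Note that this matters downstream: the paper repeatedly invokes this theorem for functions like $\wp(z)-x$ and for cusp forms, where it first argues the function is not identically zero, so the non-vanishing hypothesis you flag is indeed the convention in force.
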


\begin{thm}[Cauchy's Theorem]
for a function $f$, holomorphic on a simply connected subset of $\C$ with boundary $C$, 
$$\int_C f(z)dz=0$$
    
\end{thm}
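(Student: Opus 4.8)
The plan is to run the classical Goursat argument: prove the integral vanishes around the boundary of a triangle, use that to build a local primitive, and then globalise over the simply connected region. Throughout I take the standard reading of the statement, namely that $C$ is a closed contour lying inside a region on which $f$ is holomorphic. For \emph{Goursat's lemma} --- if $f$ is holomorphic on an open set containing a solid triangle $T$ then $\int_{\partial T} f(z)\,dz = 0$ --- I would join the midpoints of the three sides to cut $T$ into four congruent sub-triangles; the interior edges are each traversed twice in opposite senses and cancel, so the four boundary integrals sum to $\int_{\partial T} f$. By the triangle inequality one sub-triangle $T_1$ satisfies $\bigl|\int_{\partial T_1} f\bigr| \ge \frac14\bigl|\int_{\partial T} f\bigr|$, and iterating yields nested triangles $T \supset T_1 \supset T_2 \supset \cdots$ with diameter and perimeter each halving and $\bigl|\int_{\partial T_n} f\bigr| \ge 4^{-n}\bigl|\int_{\partial T} f\bigr|$. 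The $T_n$ shrink to a point $z_0$, where complex differentiability lets us write $f(z) = f(z_0) + f'(z_0)(z - z_0) + \eta(z)(z - z_0)$ with $\eta(z) \to 0$ as $z \to z_0$. The affine part has an obvious primitive, so it integrates to $0$ around the closed curve $\partial T_n$, while the remainder is at most $\sup_{T_n}|\eta|\cdot\operatorname{diam}(T_n)\cdot\operatorname{perim}(T_n) = o(4^{-n})$; comparing with the lower bound forces $\int_{\partial T} f = 0$.

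Next I would promote this to a primitive on a disc: for $D \subset U$ with centre $z_0$, set $F(z) = \int_{[z_0, z]} f(w)\,dw$ along the straight segment, apply Goursat's lemma to the triangle $z_0, z, z + h$ to get $F(z+h) - F(z) = \int_{[z, z+h]} f(w)\,dw$, and conclude $F'(z) = f(z)$ from continuity of $f$. Once a primitive exists on $D$, the fundamental theorem of calculus shows $\int_\gamma f = 0$ for every closed curve $\gamma \subset D$.

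The last step, globalisation, is the one I expect to be the main obstacle, because it is topological rather than analytic: one must pass from ``the integral vanishes around loops small enough to sit inside a single disc'' to ``the integral vanishes around an arbitrary closed curve $C$ in the simply connected region.'' The clean route is a homotopy argument --- simple connectivity means $C$ is homotopic within $U$ to a constant curve, and $t \mapsto \int_{C_t} f$ is shown to be locally constant by covering the compact image of the homotopy with finitely many discs (a Lebesgue-number argument on $[0,1]^2$) and matching the local primitives from the previous step on overlaps; since the integral is $0$ at the constant end, it is $0$ for $C$. A much shorter alternative, legitimate here since the excerpt has already recorded that holomorphic functions are analytic and hence $C^\infty$, is Green's theorem: with $f = u + iv$, one has $\int_C f\,dz = \int_C(u\,dx - v\,dy) + i\int_C(v\,dx + u\,dy) = \iint_D(-v_x - u_y)\,dA + i\iint_D(u_x - v_y)\,dA$, and both integrands vanish identically by the Cauchy--Riemann equations $u_x = v_y$ and $u_y = -v_x$. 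I would present the Goursat chain as the primary argument, since it does not lean on the (logically later) smoothness of holomorphic functions, and flag the Green's theorem computation as the quick route.
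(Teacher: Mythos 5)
The paper itself gives no proof of this statement: Cauchy's Theorem is listed among the ``classic theorems for holomorphic functions'' as background to be taken on faith, so there is no argument of the paper's to compare yours against. Your proposal supplies the standard one, and it is correct as a sketch. The Goursat quadrisection is right: the interior edges cancel, one sub-triangle carries at least a quarter of the integral, the nested triangles shrink to a point $z_0$, and the first-order expansion $f(z)=f(z_0)+f'(z_0)(z-z_0)+\eta(z)(z-z_0)$ kills the integral because the affine part has a primitive and the remainder is bounded by $\sup_{T_n}|\eta|\cdot\operatorname{diam}(T_n)\cdot\operatorname{perim}(T_n)=o(4^{-n})$, beating the lower bound $4^{-n}\bigl|\int_{\partial T}f\bigr|$. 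The primitive-on-a-disc step works because the disc is convex, so the triangle $z_0,z,z+h$ stays inside it. You are also right that the globalisation is the genuinely topological step and that the homotopy-invariance argument (Lebesgue number on $[0,1]^2$, finitely many discs, matching local primitives on overlaps) is the clean way to use simple connectivity; this is the one place where your sketch would need real expansion to become a full proof, but the strategy is the correct one. Two further remarks. First, your caution about the Green's theorem shortcut is well placed and worth stating even more firmly in the context of this paper: the paper's assertion that every holomorphic function is analytic is itself classically a consequence of Cauchy's integral formula, hence of Cauchy's theorem, so invoking it to justify the $C^1$ hypothesis of Green's theorem would be circular within the paper's own logical ordering; the Goursat route should be the primary argument, as you propose. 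Second, the theorem as stated in the paper conflates the simply connected region with its boundary and quietly assumes $f$ is holomorphic on (a neighbourhood of) the closure; your reading --- $C$ a closed contour inside an open simply connected set on which $f$ is holomorphic --- is the right repair, and is the version your proof actually establishes.
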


\begin{thm}
    The number of zeros of a holomorphic function on a simply connected subset of $\C$ with boundary $C$ is given by
$$\int_C\frac{f'(z)}{f(z)}dz$$
\end{thm}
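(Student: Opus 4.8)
The plan is to reduce the statement to Cauchy's Theorem by way of the logarithmic derivative $f'/f$, after first getting control of the zeros. (Strictly, the right-hand side should carry a factor $\tfrac{1}{2\pi i}$; what I would actually prove is that $\int_C \frac{f'(z)}{f(z)}\,dz = 2\pi i\,Z$, where $Z$ is the number of zeros of $f$ inside $C$, each counted with its multiplicity.)

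First I would show that there are only finitely many zeros in the region. Since $f$ is holomorphic on an open set containing the closed region bounded by $C$, its zeros are discrete by Theorem~\ref{discretezero}; if there were infinitely many of them in this bounded set, the Bolzano--Weierstrass corollary would produce an accumulation point in the closed region, and every neighbourhood of that point would contain other zeros, contradicting discreteness. List the zeros as $a_1,\dots,a_k$ with multiplicities $m_1,\dots,m_k$. The analytic (power-series) description of a zero of finite order lets us write $f(z)=(z-a_j)^{m_j}h_j(z)$ locally with $h_j$ holomorphic and nonvanishing near $a_j$, and globally this gives a factorisation $f(z)=\big(\prod_{j=1}^{k}(z-a_j)^{m_j}\big)g(z)$ with $g$ holomorphic and nowhere zero on the region.

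Differentiating logarithmically, and using $(uv)'/(uv)=u'/u+v'/v$ repeatedly, yields
$$\frac{f'(z)}{f(z)}=\sum_{j=1}^{k}\frac{m_j}{z-a_j}+\frac{g'(z)}{g(z)}.$$
Because $g$ is holomorphic and nonzero on the simply connected region, $g'/g$ is holomorphic there, so Cauchy's Theorem gives $\int_C g'(z)/g(z)\,dz=0$. It then remains only to evaluate $\int_C \frac{dz}{z-a_j}$ for each $a_j$ inside $C$ and to combine these with weights $m_j$.

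That last integral is where I expect the real work to lie: $1/(z-a_j)$ is holomorphic on $\C\setminus\{a_j\}$, which is not simply connected, so Cauchy's Theorem does not apply directly and the excerpt records no residue theorem. I would handle it with a keyhole argument: delete from the region a small disc about $a_j$ together with a thin corridor joining it to $C$, obtaining a simply connected domain on whose boundary $1/(z-a_j)$ is holomorphic; Cauchy's Theorem then equates $\int_C \frac{dz}{z-a_j}$ with the integral once around a small circle $|z-a_j|=\varepsilon$ (the two sides of the corridor cancel), and the parametrisation $z=a_j+\varepsilon e^{i\theta}$, $\theta\in[0,2\pi]$, evaluates that to $2\pi i$ — equivalently, $1/(z-a_j)$ is the derivative of a branch of $\log(z-a_j)$, which jumps by $2\pi i$ across the slit. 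Assembling everything gives $\int_C \frac{f'(z)}{f(z)}\,dz=2\pi i\sum_{j=1}^{k}m_j$, namely $2\pi i$ times the number of zeros inside $C$ counted with multiplicity. If one is willing to invoke the argument principle already named in the keyword list, a slicker route is to read the integral as the net change of $\log f$ along $C$, i.e.\ $2\pi i$ times the winding number of $f(C)$ about the origin, and then identify that winding number with $\sum m_j$ using the same local factorisation.
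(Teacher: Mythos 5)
Your proposal is correct, and it is worth noting that the paper itself never proves this statement: it appears in the list of ``classic theorems for holomorphic functions'' with no proof environment, so there is nothing to match it against directly. The closest the paper comes is Theorem~\ref{extended-argument-principle} in the appendix, which handles the weighted integrand $zf'(z)/f(z)$ by the same local factorisation $f(z)=(z-z_0)^k g(z)$ that you use, but then finishes by ``applying the residue theorem'' --- a result the paper never states. Your route is therefore both essentially the standard one and more self-contained relative to the tools the paper actually provides: you reduce everything to the stated version of Cauchy's Theorem via a keyhole contour, rather than appealing to an unproved residue theorem. You are also right to flag the missing factor of $\tfrac{1}{2\pi i}$ in the statement as printed; what is true is $\tfrac{1}{2\pi i}\int_C f'/f\,dz = Z$, and your finiteness argument (discreteness of zeros plus Bolzano--Weierstrass on the closed bounded region) and the global factorisation $f=\bigl(\prod_j (z-a_j)^{m_j}\bigr)g$ with $g$ nonvanishing are both sound. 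The only caveat worth recording is one about the statement rather than your argument: for $\int_C f'/f\,dz$ to be defined at all one must assume $f$ has no zeros on $C$ itself and is holomorphic on a neighbourhood of the closed region, assumptions you make implicitly and which any honest version of the theorem needs.
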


\begin{thm}[Liouville's Theorem]

A bounded holomorphic function on $\C$ is constant.
    
\end{thm}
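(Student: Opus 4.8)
The plan is to control the derivative $f'$ at an arbitrary point by a quantity that can be made as small as we like, forcing $f' \equiv 0$, and then to conclude that $f$ is constant on the connected set $\C$.

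First I would establish an integral representation for the derivative. Fix $z_0 \in \C$ and $R > 0$, and let $C_R$ be the circle of radius $R$ centred at $z_0$, traversed once anticlockwise. Since $f$ is holomorphic on all of $\C$, Cauchy's Theorem applies on any disc, and from it one extracts the Cauchy integral formula
$$f(z_0) = \frac{1}{2\pi i}\int_{C_R}\frac{f(z)}{z - z_0}\,dz,$$
obtained by applying Cauchy's Theorem to $\frac{f(z) - f(z_0)}{z - z_0}$ (whose apparent singularity at $z_0$ is removable) together with the computation $\int_{C_R}\frac{dz}{z - z_0} = 2\pi i$. Differentiating this identity in $z_0$, justified by differentiating under the integral sign since the integrand is well behaved away from $z_0$, gives
$$f'(z_0) = \frac{1}{2\pi i}\int_{C_R}\frac{f(z)}{(z - z_0)^2}\,dz.$$

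Next I would use boundedness: let $M$ be such that $|f(z)| \le M$ for all $z$. Parametrising $C_R$ by $z = z_0 + Re^{i\theta}$, so that $|z - z_0| = R$ and the length of $C_R$ is $2\pi R$, the standard estimate of a complex integral by the length of the contour times the supremum of the integrand yields
$$|f'(z_0)| \le \frac{1}{2\pi}\cdot 2\pi R \cdot \frac{M}{R^2} = \frac{M}{R}.$$
Since this holds for every $R > 0$ while the left-hand side does not depend on $R$, letting $R \to \infty$ forces $f'(z_0) = 0$. As $z_0$ was arbitrary, $f' \equiv 0$ on $\C$, and a function with vanishing derivative on the connected set $\C$ is constant.

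The main obstacle is the derivative step: the excerpt states Cauchy's Theorem but not the Cauchy integral formula, so the representations of $f(z_0)$ and especially of $f'(z_0)$ as contour integrals must be built, and the differentiation-under-the-integral-sign argument (or, equivalently, a direct estimate of the difference quotient) needs care. An alternative that sidesteps differentiating under the integral is to use the global power series $f(z) = \sum_{n \ge 0} a_n z^n$, valid on all of $\C$ since $f$ is entire, hence analytic, together with the coefficient formula $a_n = \frac{1}{2\pi i}\int_{C_R} \frac{f(z)}{z^{n+1}}\,dz$; the same length-times-supremum estimate gives $|a_n| \le M/R^n$, and letting $R \to \infty$ kills every $a_n$ with $n \ge 1$, so $f = a_0$ is constant.
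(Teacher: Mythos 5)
Your argument is correct: it is the standard Cauchy-estimate proof of Liouville's theorem (bound $|f'(z_0)|$ by $M/R$ via the Cauchy integral formula for the derivative, let $R\to\infty$, and conclude $f'\equiv 0$ on the connected set $\C$), and the alternative via the coefficient bounds $|a_n|\le M/R^n$ is equally valid. The paper itself states Liouville's theorem without proof, as background from complex analysis, so there is no argument in the text to compare against; either of your two routes would serve as a complete proof, with the power-series version arguably fitting the paper best since it already asserts that every holomorphic function is analytic.
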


\begin{thm}[Riemann's Theorem on Removable Singularities]
    If a holomorphic function is not defined on a complex number but is defined on all the other points in a neighbourhood of that number, and is bounded in that neighbourhood, then the function can be filled in at that point and still be holomorphic.
\end{thm}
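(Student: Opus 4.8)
\subsection*{Proof proposal}

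The plan is to use the standard device of multiplying by $(z-z_0)^2$ to absorb the singularity, and then to invoke the fact stated above that every holomorphic function is analytic. Suppose $f$ is holomorphic on the punctured neighbourhood $0<|z-z_0|<r$ and that $|f(z)|<M$ there for some $M\in\R$. I would define a function $g$ on the \emph{full} disc $|z-z_0|<r$ by $g(z)=(z-z_0)^2f(z)$ for $z\ne z_0$ and $g(z_0)=0$.

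The first step is to check that $g$ is holomorphic on the entire disc, including at $z_0$. Away from $z_0$ this is immediate since $g$ is a product of holomorphic functions. At $z_0$ the difference quotient is
$$\frac{g(z)-g(z_0)}{z-z_0}=(z-z_0)f(z),$$
and since $|(z-z_0)f(z)|<M|z-z_0|$, this tends to $0$ as $z\to z_0$; hence the complex derivative exists at $z_0$ with $g'(z_0)=0$. This is the one place where boundedness is used, and it is the crux of the whole argument.

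Since $g$ is now holomorphic on $|z-z_0|<r$, it is analytic there, so it has a power series expansion $g(z)=\sum_{n\ge0}a_n(z-z_0)^n$ valid near $z_0$. From $g(z_0)=0$ we get $a_0=0$, and from $g'(z_0)=0$ we get $a_1=0$, so $g(z)=(z-z_0)^2\,h(z)$ where $h(z):=\sum_{m\ge0}a_{m+2}(z-z_0)^m$ is a power series with the same radius of convergence and therefore defines a holomorphic function near $z_0$. Comparing with the definition of $g$, for $z\ne z_0$ we have $(z-z_0)^2 f(z)=(z-z_0)^2 h(z)$, hence $f(z)=h(z)$ on the punctured neighbourhood. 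Setting $f(z_0):=h(z_0)=a_2$ therefore extends $f$ to a function that agrees with the holomorphic function $h$ on a whole neighbourhood of $z_0$, so the extended $f$ is holomorphic at $z_0$, as claimed.

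The main obstacle is really just the verification that $g$ is complex-\emph{differentiable} (not merely continuous) at $z_0$; the rest is bookkeeping with power series. A minor point to be careful about is that the factored series $h$ converges on the same disc as that of $g$ — this holds because stripping the two vanishing leading terms and dividing by $(z-z_0)^2$ does not decrease the radius of convergence.
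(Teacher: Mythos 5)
Your proof is correct. Note that the paper itself states this theorem without proof, listing it among the classic results of complex analysis that are treated as black boxes, so there is nothing to compare against on that side; your argument supplies what the paper omits. The proof you give is the standard one: the auxiliary function $g(z)=(z-z_0)^2f(z)$ with $g(z_0)=0$, the difference-quotient estimate $|(z-z_0)f(z)|<M|z-z_0|$ that makes $g$ complex differentiable at $z_0$ (the only place boundedness enters, as you correctly flag), and the power-series factorisation $g=(z-z_0)^2h$ using $a_0=a_1=0$. Your closing remark about the radius of convergence of $h$ is right and worth keeping, since it is the one detail readers tend to wave at; it follows because $\limsup_m|a_{m+2}|^{1/m}=\limsup_n|a_n|^{1/n}$. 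The argument does rely on the earlier (also unproved) statement in the paper that holomorphic functions are analytic, but that is a dependency the paper has already accepted.
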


\begin{definition}[Laurent Series]
    A Laurent Series is a representation of a complex function as a (possibly infinite) series that includes both non-negative and negative powers of $z$. It generalises the power series by allowing terms of the form $\frac{1}{z^n}$ as well.
\end{definition}

\begin{definition}[Pole]
    $z_0$ is a pole of the function $f$ if it is a zero of the function $\frac{1}{f(z)}$, and $\frac{1}{f(z)}$ is holomorphic in some neighbourhood of $z_0$. The multiplicity of the pole is the same as the multiplicity of the zero created.
\end{definition}

\begin{definition}
    A meromorphic function is one that is holomorphic on all but some isolated points in its domain.
\end{definition}

Those isolated points are removable singularities as well as poles.

\begin{thm}[Argument Principle]\label{argprinciple}For a meromorphic function $f$ and $C$ is a closed curve, then the zeros and poles enclosed by $C$ have difference given by
$$\frac{1}{2\pi i}\int_C\frac{f'(z)}{f(z)}dz= zeros-poles$$
counting multiplicity.
\end{thm}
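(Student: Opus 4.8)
\section*{Proof proposal}

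The plan is to localise the integral at the zeros and poles of $f$ lying inside $C$ and to read off each contribution from the local shape of $f$. Throughout I assume, as is implicit in the statement, that $C$ is a simple closed curve traversed once counterclockwise and that $f$ has neither a zero nor a pole on $C$ itself, so that the logarithmic derivative $\frac{f'(z)}{f(z)}$ is defined and holomorphic on a neighbourhood of $C$.

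First I would observe that $\frac{f'}{f}$ is holomorphic on the region bounded by $C$ away from the zeros and poles of $f$: where $f$ is holomorphic and nonvanishing this is immediate, and at a zero or pole the quotient is not defined. By Theorem~\ref{discretezero} the zeros of $f$ are discrete, and the poles are discrete by the definition of a meromorphic function, so the bounded interior of $C$ contains only finitely many of them, say $z_1,\dots,z_r$. Applying Cauchy's Theorem on the multiply connected region obtained from the interior of $C$ by deleting small disjoint discs about each $z_j$, the integral over $C$ decomposes as a sum over small positively oriented circles $\gamma_j$ about the $z_j$:
$$\int_C \frac{f'(z)}{f(z)}\,dz=\sum_{j=1}^r \int_{\gamma_j}\frac{f'(z)}{f(z)}\,dz.$$

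Next I would evaluate each local integral using the power/Laurent series structure recalled above. If $z_j$ is a zero of order $m$, then since holomorphic functions are analytic we may write $f(z)=(z-z_j)^m g(z)$ with $g$ holomorphic and nonvanishing near $z_j$; taking the logarithmic derivative gives $\frac{f'(z)}{f(z)}=\frac{m}{z-z_j}+\frac{g'(z)}{g(z)}$, where the second term is holomorphic near $z_j$ and hence integrates to $0$ over $\gamma_j$ by Cauchy's Theorem, while a direct parametrisation gives $\int_{\gamma_j}\frac{m}{z-z_j}\,dz=2\pi i m$. If instead $z_j$ is a pole of order $n$, the Laurent expansion lets us write $f(z)=(z-z_j)^{-n}h(z)$ with $h$ holomorphic and nonvanishing near $z_j$, so that $\frac{f'(z)}{f(z)}=\frac{-n}{z-z_j}+\frac{h'(z)}{h(z)}$ and the contribution is $-2\pi i n$. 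Summing over $j$ and dividing by $2\pi i$ produces exactly the number of zeros minus the number of poles enclosed by $C$, each counted with multiplicity.

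The step I expect to be the main obstacle is the contour deformation: making precise that Cauchy's Theorem on the region between $C$ and the small circles yields the decomposition displayed above. This is where one genuinely needs that only finitely many singularities lie inside $C$ (discreteness together with boundedness of the interior) and a careful treatment of the multiply connected domain; the rest is bookkeeping once the local factorisations $f(z)=(z-z_j)^m g(z)$ and $f(z)=(z-z_j)^{-n}h(z)$ are in hand.
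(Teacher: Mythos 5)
Your proof is correct, and it is the standard argument. The paper itself states Theorem~\ref{argprinciple} without proof, but it does prove the closely related weighted version (Theorem~\ref{extended-argument-principle} in the appendix, computing $\int_\gamma \frac{zf'(z)}{f(z)}\,dz$) by exactly the technique you use: factor $f(z)=(z-z_0)^k g(z)$ locally with $g$ nonvanishing, observe that the logarithmic derivative splits into $\frac{k}{z-z_0}$ plus a holomorphic remainder, and sum the local contributions. The only cosmetic difference is that the paper invokes the residue theorem to assemble the local data, whereas you carry out the contour deformation by hand via Cauchy's theorem on the multiply connected region; you are right that this deformation (together with the finiteness of the singularities inside $C$, which needs compactness of the closed interior and $f\not\equiv 0$) is the one step that requires care, and it is precisely the content the residue theorem packages up.
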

\subsubsection{M\"{o}bius Transformations}
\begin{definition} [Projective Space]$\boldsymbol{P}_1(\C)$ denotes $(\C^{2}\setminus\{0\})/\sim$ where $\sim$ is the equivalence relation defined by $v_1\sim v_2\iff v_1=\lambda v_2$. In other words, one dimensional complex projective space is the set of one dimensional subspaces of a two dimensional complex vector space.
\end{definition}
Generally people refer to a point in the projective space using the coordinates of a representative element of the equivalence class. Here we will write the coordinates $(z,w)\in\C^2$ as a column vector $\begin{pmatrix}z\\w\end{pmatrix}$ so that we may left multiply by two by two matrices.

\begin{definition} [Riemann Sphere]
The Riemann sphere, denoted $\hat{\mathbb{C}}$ is $\boldsymbol{P}_1(\mathbb{C})$
\end{definition}

An alternative representation of the Riemann sphere (which more people probably use) is $\mathbb{C}\cup \{\infty\}$, adding a "point at infinity" to the complex plane. The way this corresponds to our definition is that for a coordinate $\begin{pmatrix}z\\w\end{pmatrix}$ where $w\neq 0$, $\begin{pmatrix}z\\w\end{pmatrix} \mapsto \frac{z}{w}$, and otherwise $\begin{pmatrix}z\\0\end{pmatrix}\mapsto \infty$.

\begin{definition} [M\"{o}bius Transformation on the Riemann Sphere]
    For $\Mabcd\in\GL_2(\C)$, its associated M\"{o}bius transformation is defined by left-multiplying the representative vector of the point in  $\hat{\mathbb{C}}$ with the matrix. We notate this as $\Mabcd z$ where $z\in \hat{\mathbb{C}}$.
\end{definition}
It's not exactly obvious that this transformation is well-defined, but it's simple to verify that values in the same equivalence class gets sent to the same equivalence class.
\newline\newline
If we apply this definition of M\"{o}bius transformation to the other representation of the Riemann sphere, then we get that 
$$\infty\mapsto\frac{a}{c},\qquad -\frac{d}{c}\mapsto\infty,$$
and otherwise $$z\mapsto\frac{az+b}{cz+d}.$$

This is all explained in the Appendix (including when $c=0$) as well as a proof that it is a group action of $\GL_2(\C)$.
We also have that the M\"{o}bius transformation is holomorphic on all but at most one point.

\section{Modular Forms}
We attempt to motivate a link between elliptic curves and modular forms and try to go far enough to see that cusp forms of level two and weight two do not exist.
\newline\newline
This is usually not in undergrad courses, maybe in an Honours program or master's coursework. It can be taught in graduate school, but not part of the standard core curricula, and is considered more specialised.
\newline\newline

\subsection{Level 1 Modular Forms}

\subsubsection{Modular Forms of Level 1 Already Encountered}
Before completing the full definition of modular forms, we introduce some examples of familiar objects that turn out to be modular forms (of level 1):

\begin{itemize}
    \item The most striking example comes from our discriminant $\Delta$, which when thought of as a complex function that assigns a complex number to every elliptic curve over $\C$, will reveal itself as a weight 12 cusp form of level 1.
    \item This also applies to the $j$-invariant, which is an example of a \emph{modular function}, i.e. weight zero modular form, of level 1.
\end{itemize}

If you recall what has been introduced in the outline, you might ask the question: \textit{How a function whose domain is the set of elliptic curves be an example of something whose domain is the upper half-plane?} We look into this further as the discussion develops.

\subsubsection{Definition of Level 1 modular forms}
A large chunk of the definition already appears scattered in the outline but some of it is presented differently and some key properties were not discussed.
\newline\newline
The first key property that we incorporate here is that modular forms are \emph{holomorphic functions}, which means they are complex differentiable.
\newline\newline
We also package the $a,b,c,d$ together using matrices, in particular $\SLZ$.
\newline\newline
Here we focus on level 1 modular forms and defer the definition of \emph{congruence subgroups} that unlocks the level $N$ modular forms.

\begin{definition}[Upper Half Plane $\mathcal{H}$]
The upper half plane $\mathcal{H}$ is the set of complex numbers $z$ such that $\operatorname{Im}(z)>0$.
\end{definition}

\begin{proposition}
    The Möbius transformation defines a group action of $\SLZ$ on $\mathcal{H}$.
\end{proposition}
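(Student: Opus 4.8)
The plan is to verify the two group action axioms directly, relying on the fact—stated earlier in the excerpt—that the Möbius action is already a group action of $\GL_2(\C)$ on the Riemann sphere $\hat{\C}$. Since $\SLZ \subset \GL_2(\C)$, the associativity-type axiom $(gh)\cdot z = g\cdot(h\cdot z)$ and the identity axiom $I\cdot z = z$ are inherited for free; the only genuine content is that $\mathcal{H}$ is \emph{preserved} by the action, i.e. that for $\gamma = \Mabcd \in \SLZ$ and $z \in \mathcal{H}$ we have $\gamma z \in \mathcal{H}$ (in particular that $cz+d \neq 0$, so the formula $z \mapsto \frac{az+b}{cz+d}$ makes sense on $\mathcal{H}$ rather than landing at the point at infinity). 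Once stability is shown, restricting the $\GL_2(\C)$-action gives a well-defined action of $\SLZ$ on $\mathcal{H}$.

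First I would show $cz+d \neq 0$ for $z \in \mathcal{H}$: if $c \neq 0$ then $cz+d = 0$ forces $z = -d/c \in \R$, contradicting $\operatorname{Im}(z) > 0$; if $c = 0$ then $ad - bc = 1$ gives $ad = 1$ with $a,d \in \Z$, so $d = \pm 1 \neq 0$. Hence the image point is the honest complex number $\frac{az+b}{cz+d}$. Then I would compute its imaginary part. The key identity is
$$\operatorname{Im}\!\left(\frac{az+b}{cz+d}\right) = \frac{(ad-bc)\operatorname{Im}(z)}{|cz+d|^2} = \frac{\operatorname{Im}(z)}{|cz+d|^2},$$
obtained by writing $\frac{az+b}{cz+d} = \frac{(az+b)\overline{(cz+d)}}{|cz+d|^2}$ and expanding the numerator: the terms involving $\operatorname{Im}$ collapse to $(ad-bc)\operatorname{Im}(z)$ after cancellation, and $ad-bc = 1$ since $\gamma \in \SLZ$. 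Since $\operatorname{Im}(z) > 0$ and $|cz+d|^2 > 0$, we conclude $\operatorname{Im}(\gamma z) > 0$, so $\gamma z \in \mathcal{H}$.

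With stability established, I would conclude: the map $\SLZ \times \mathcal{H} \to \mathcal{H}$ is the restriction of the already-verified $\GL_2(\C)$-action on $\hat{\C}$, so the identity matrix acts trivially and $(gh)z = g(hz)$ for all $g,h \in \SLZ$, $z \in \mathcal{H}$; these are exactly the axioms in the definition of a group action on a set. The main (and really only) obstacle is the imaginary-part computation — it is a routine but slightly fiddly algebraic manipulation where the role of $\det \gamma = 1$ must be tracked carefully, since it is precisely this that guarantees the action sends $\mathcal{H}$ into itself rather than flipping it to the lower half-plane (which is what would happen for a matrix of determinant $-1$). Everything else is bookkeeping that piggybacks on the $\GL_2(\C)$ result proved in the appendix.
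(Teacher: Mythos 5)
Your proposal is correct and follows essentially the same route as the paper's own proof in the appendix: the substantive step in both is the computation $\operatorname{Im}(\gamma z)=\operatorname{Im}(z)/|cz+d|^2>0$ using $\det\gamma=1$, with the identity and compatibility axioms inherited from the matrix action on $\C^2$ (equivalently, from the $\GL_2(\C)$ action on $\hat{\C}$). Your explicit check that $cz+d\neq 0$, including the $c=0$ case, is a small tidiness improvement over the paper's version but not a different argument.
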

\begin{proof} We put a proof in the Appendix that $\SL_2(\C)$ acts and this is a subgroup so the restriction gives an action.


\end{proof}

We take $\Gamma=\SLZ$ in the following definition and change this to obtain different levels.
\begin{definition} [Weakly Modular Form of Level 1]

A holomorphic function $f:\mathcal{H}\rightarrow\mathbb{C}$ is a weakly modular form of weight $k$ with respect to group $\Gamma$ if
$$f\left(\frac{az + b}{cz + d}\right)=(cz+d)^kf(z)\text{ for all }z\in \mathbb{C}, \Mabcd\in\Gamma.$$
\end{definition}

There is also one more component: that of being holomorphic at $\infty$, which we also delay until after understanding the condition of weakly modular better.

\subsubsection{Why the Upper Half Plane?}
You might wonder: why do modular forms live on the upper half-plane 
\[
\mathcal{H} = \{ z \in \mathbb{C} \mid \operatorname{Im}(z) > 0 \},
\]
rather than on all of \( \mathbb{C} \)?
\newline\newline
We begin with the action of \( \mathrm{GL}_2(\mathbb{Z}) \), the group of \( 2 \times 2 \) integer matrices with determinant \( \pm 1 \). This group acts on the extended complex plane 
\[
\widehat{\mathbb{C}} = \mathbb{C} \cup \{\infty\}
\]
via Möbius transformations:
\[
z \mapsto \frac{az + b}{cz + d}.
\]

We can simplify this situation by restricting our action to $\mathcal{H}$ and at the same time restricting our group to $\SL_2(\Z)$. Every matrix in $\GL_2(\Z)$ is either in $\SL_2(\Z)$ or in it after multiplying by a matrix $a=-1,b=c=0, d=1$ and this matrix reflects the upper half plane to the lower half plane.
\newline\newline
But why not allow all Möbius transformations?
\newline\newline
Well, while \( \mathrm{SL}_2(\mathbb{Z}) \) represents only a subset of Möbius transformations, it plays a special arithmetic role: it acts linearly on the lattice generators \( \omega_1, \omega_2 \in \mathbb{C} \) via
\[
\begin{pmatrix}
a & b \\
c & d
\end{pmatrix}
\cdot (\omega_1, \omega_2) = (a\omega_1 + b\omega_2,\, c\omega_1 + d\omega_2),
\]
which results in a new pair of generators of the same lattice \( \Lambda = \mathbb{Z}\omega_1 + \mathbb{Z}\omega_2 \). That is, \( \mathrm{SL}_2(\mathbb{Z}) \) acts on the set of bases of a fixed lattice. This leads to the following:

\begin{definition}
    Let $\mathscr{L}$ be the set of lattices
\end{definition}

We give another description of $\mathcal{H}$ to add some understanding:
\newline\newline
Consider a lattice $\mathcal{L}$ with a chosen basis $\omega_1,\omega_2$ oriented so that $\omega_2/\omega_1$ has positive imaginary part. If we act on these pairs by $\SLZ$ as 
$$w_1\mapsto cw_1+dw_2$$
$$w_2\mapsto aw_1+bw_2$$

then the orbits are in one to one correspondence with the lattices in $\mathscr{L}$.
\newline\newline
If we now introduce scaling of the lattices then up to scaling every lattice corresponds to a unique pair with $w_1=1$ and $\text{Im}(w_2)>0$:

$$w_1,w_2\mapsto 1, w_2/w_1$$

we then get $w_2/w_1\in \mathcal{H}$. The group action gives
$$g(w_1,w_2)=g(1,\omega_2/\omega_1=z)\mapsto \frac{az+b}{cz+d}=gz$$

So we have a map 

$$\mathscr{L}/\C^*\to \mathcal{H}/\SLZ $$

\begin{lemma} The $\SLZ$ orbits of $\mathcal{H}$ are in one to one correspondence with the set of lattices $\mathcal{L}$ up to scaling 
\end{lemma}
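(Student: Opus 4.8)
The plan is to make precise the informal correspondence sketched just above the statement, namely the chain of maps $\mathscr{L} \to \mathscr{L}/\C^* \to \mathcal{H}/\SLZ$, and to show the induced map is a bijection. First I would set up the map in one direction: given a lattice $\mathcal{L}$, choose an oriented basis $(\omega_1,\omega_2)$ (one with $\omega_2/\omega_1 \in \mathcal{H}$, which is always possible after swapping or negating a basis vector), and send $\mathcal{L}$ to the $\SLZ$-orbit of $\tau := \omega_2/\omega_1 \in \mathcal{H}$. The key claim is that this is well-defined on lattices up to scaling: if $(\omega_1',\omega_2')$ is another oriented basis of the same lattice, then the two bases differ by a matrix $\gamma \in \GL_2(\Z)$, and the orientation condition forces $\det \gamma = +1$, i.e. $\gamma \in \SLZ$; a direct computation (exactly the Möbius action written in the excerpt) shows $\omega_2'/\omega_1' = \gamma \cdot \tau$, so the orbit is unchanged. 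Scaling $\mathcal{L}$ by $\lambda \in \C^*$ scales both $\omega_i$ and leaves $\tau$ literally unchanged, so the map descends to $\mathscr{L}/\C^*$.

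Next I would construct the inverse. Given $\tau \in \mathcal{H}$, form the lattice $\mathcal{L}_\tau := \Z \cdot 1 \oplus \Z \cdot \tau$, with its canonical oriented basis $(1,\tau)$. I would check that replacing $\tau$ by $\gamma\cdot\tau$ for $\gamma = \Mabcd \in \SLZ$ gives a lattice homothetic to $\mathcal{L}_\tau$: indeed $\mathcal{L}_{\gamma\tau} = \Z \oplus \Z\frac{a\tau+b}{c\tau+d} = \frac{1}{c\tau+d}(\Z(c\tau+d) \oplus \Z(a\tau+b)) = \frac{1}{c\tau+d}\mathcal{L}_\tau$, using that $(a\tau+b, c\tau+d)$ is again a $\Z$-basis of $\mathcal{L}_\tau$ since $\gamma$ is invertible over $\Z$. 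So $\tau \mapsto [\mathcal{L}_\tau]$ is well-defined on $\mathcal{H}/\SLZ \to \mathscr{L}/\C^*$. Then I would verify the two maps are mutually inverse: starting from $[\mathcal{L}]$ with oriented basis $(\omega_1,\omega_2)$, we get $\tau = \omega_2/\omega_1$, and then $\mathcal{L}_\tau = \Z \oplus \Z(\omega_2/\omega_1) = \frac{1}{\omega_1}\mathcal{L}$, which is $\mathcal{L}$ up to scaling; the other composite is immediate since $\mathcal{L}_\tau$ has canonical basis $(1,\tau)$ with ratio exactly $\tau$.

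The main obstacle — really the only subtle point — is the orientation bookkeeping: one must be careful that "oriented basis" is an honest condition preserved exactly by $\SLZ$ and not by the larger group $\GL_2(\Z)$, and that every lattice admits such a basis. The cleanest way to handle this is to observe that for any basis $(\omega_1,\omega_2)$, the quantity $\operatorname{Im}(\omega_2/\omega_1)$ is nonzero (the $\omega_i$ are $\R$-linearly independent, by the definition of a lattice in the excerpt), and swapping $\omega_1 \leftrightarrow \omega_2$ flips its sign; so exactly one of the two orderings is oriented. Then a change of oriented basis is a matrix in $\GL_2(\Z)$ that preserves the sign of $\operatorname{Im}(\omega_2/\omega_1)$, and the transformation law $\operatorname{Im}(\gamma \tau) = \frac{\det\gamma}{|c\tau+d|^2}\operatorname{Im}(\tau)$ forces $\det \gamma = 1$. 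Everything else is the routine algebra of Möbius transformations already recorded in the excerpt. I would present the argument as: (1) construct both maps, (2) check well-definedness of each (the orientation lemma being the heart of it), (3) check they are inverse to each other.
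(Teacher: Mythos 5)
Your proposal is correct and follows essentially the same route as the paper: both directions come down to the observation that a change of basis between two lattices is a matrix in $\GL_2(\Z)$ whose determinant is forced to be $+1$ by the condition that the basis ratio lies in $\mathcal{H}$, together with the Möbius transformation law for that ratio. Your presentation is somewhat more structured (explicitly building the two maps $\mathscr{L}/\C^*\leftrightarrow\mathcal{H}/\SLZ$ and checking they are mutually inverse), but the mathematical content matches the paper's proof.
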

\begin{proof}
First we show that two complex numbers in one orbit correspond to the same lattice. If we have $z=\frac{\omega_2}{\omega_1}$ and $\gamma=\Mabcd\in\SLZ$, then if we define $\begin{pmatrix} \omega_4\\\omega_3 \end{pmatrix}=\gamma\begin{pmatrix} \omega_2\\\omega_1 \end{pmatrix}$, i.e. $\omega_4=a\omega_2+b\omega_1$ and $\omega_3=c\omega_2+d\omega_1$, then using the projective interpretation of the Mobius transformation, $\frac{\omega_4}{\omega_3}=\gamma\left(\frac{\omega_2}{\omega_1}\right)$. To show the lattices are equal, consider a point on the $\Z\omega_3+\Z\omega_4$ lattice, so $k_3\omega_3+k_4\omega_4$ where $k_3,k_4\in\Z$. Then
$$k_3\omega_3+k_4\omega_4=k_3(c\omega_2+d\omega_1)+k_4(a\omega_2+b\omega_1)=(dk_3+bk_4)\omega_1+(ck_3+ak_4)\omega_2\in\Z\omega_1+\Z\omega_2$$
On the other hand, if we have a point $k_1\omega_1+k_2\omega_2$ on the $\Z\omega_1+\Z\omega_2$ lattice, then we can use a similar argument to show that this point is also on the $\Z\omega_3+\Z\omega_4$ lattice, except this time instead of using $\begin{pmatrix} \omega_4\\\omega_3 \end{pmatrix}=\gamma\begin{pmatrix} \omega_2\\\omega_1 \end{pmatrix}$ we use $\begin{pmatrix} \omega_2\\\omega_1 \end{pmatrix}=\gamma^{-1}\begin{pmatrix} \omega_4\\\omega_3 \end{pmatrix}$ where $\gamma^{-1}$ has integer entries. Therefore these two lattices are the same, and so $\Z+\Z\frac{\omega_2}{\omega_1}$ and $\Z+\Z\frac{\omega_4}{\omega_3}=\Z+\Z\gamma\left(\frac{\omega_2}{\omega_1}\right)$ are equivalent up to scaling.

Now we need to show conversely that if two lattices are equivalent up to scaling, then they correspond to the same orbit. If we have such equivalent lattices $\Z\omega_1+\Z\omega_2$ and $\Z\omega_3+\Z\omega_4$, then we can scale one of them, say the second one, to make them the same lattice, and that will keep $\frac{\omega_4}{\omega_3}$ the same, so up to swapping, they correspond to $\frac{\omega_2}{\omega_1},\frac{\omega_4}{\omega_3}\in\mathcal H$. Then since $\omega_2\in\Z\omega_3+\Z\omega_4$, there exist $a,b\in\Z$ such that $\omega_2=a\omega_4+b\omega_3$. We can do similar things to $\omega_1,\omega_3,\omega_4$ to get the equations
\begin{align*}
    \omega_2&=a\omega_4+b\omega_3\\
    \omega_1&=c\omega_4+d\omega_3\\
    \omega_4&=e\omega_2+f\omega_1\\
    \omega_3&=g\omega_2+h\omega_1\\
\end{align*}
where $a,b,c,d,e,f,g,h\in\Z$, so then substituting the third and fourth equations into the first two we get
\begin{align*}
    \omega_2&=(ae+bg)\omega_2+(af+bh)\omega_1\\
    \omega_1&=(ce+dg)\omega_2+(cf+dh)\omega_1
\end{align*}
The coefficients of these linear combinations of $\omega_1,\omega_2$ are uniquely determined so putting them into a matrix we get that
$$\begin{pmatrix}
    ae+bg&af+bh\\
    ce+dg&cf+dh
\end{pmatrix}=\Mabcd\begin{pmatrix} e&f\\g&h \end{pmatrix}=\begin{pmatrix} 1&0\\0&1 \end{pmatrix}$$
Therefore $\det\Mabcd\det\begin{pmatrix} e&f\\g&h \end{pmatrix}=1$, and since these matrices have integer coefficients, their determinants are integers which must then each be $\pm1$.

If $\det\Mabcd=-1$, then define $\gamma'=\begin{pmatrix}c&d\\a&b\end{pmatrix}$, so $\det\gamma'=1$ so it is in $\SLZ$, and since $\begin{pmatrix} \omega_1\\\omega_2 \end{pmatrix}=\gamma'\begin{pmatrix} \omega_4\\\omega_3 \end{pmatrix}$, then $\frac{\omega_1}{\omega_2}=\gamma'\left(\frac{\omega_4}{\omega_3}\right)$ so $\frac{\omega_1}{\omega_2}\in\mathcal H$ which is a contradiction since $\frac{\omega_2}{\omega_1}\in\mathcal H$. Therefore $\det\Mabcd=1$, and $\Mabcd\in\SLZ$. Then $\frac{\omega_2}{\omega_1}=\Mabcd\left(\frac{\omega_4}{\omega_3}\right)$, so the lattices indeed map to complex numbers in the same orbit.
\end{proof}

So we can also think of these functions as defined on a lattice with a chosen basis.
\newline\newline
If we believe that a complex elliptic curve corresponds to a lattice, then this also explains how the domains of $j$ and $\Delta$ end up being $\mathcal{H}$.
\newline\newline
Now we turn our attention to the action's affect on $f$.

\begin{prop}
    For $f:\mathcal{H}\rightarrow\mathbb{C}$ it satisfies
$$f\left(\frac{az+b}{cz+d}\right)=(cz+d)^kf(z)\text{ for all }z\in \mathbb{C}, \Mabcd\in\SL_2(\Z).$$
If and only if it is both periodic:
$$f(z+1)=f(z),$$
and satisfies
 $$f(-1/z)= z^kf(z).$$

\end{prop}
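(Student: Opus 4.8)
The statement is an "if and only if" between a condition involving all of $\SLZ$ and a condition involving just two specific matrices. The forward direction is trivial specialisation; the reverse direction is where the work is, and it hinges on the fact that $\SLZ$ is generated by the two matrices $T = \begin{pmatrix} 1&1\\0&1\end{pmatrix}$ and $S = \begin{pmatrix} 0&-1\\1&0\end{pmatrix}$, which realise $z\mapsto z+1$ and $z\mapsto -1/z$ respectively. Granting that generation fact, the transformation law $f((az+b)/(cz+d)) = (cz+d)^k f(z)$ is a cocycle condition that, once checked on generators, propagates to all of $\SLZ$ by a short bookkeeping induction.

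\textbf{Forward direction.} Assuming the full modularity relation, substitute $\begin{pmatrix} a&b\\c&d\end{pmatrix} = \begin{pmatrix} 1&1\\0&1\end{pmatrix}$ to get $f(z+1) = (0\cdot z + 1)^k f(z) = f(z)$, and substitute $\begin{pmatrix} a&b\\c&d\end{pmatrix} = \begin{pmatrix} 0&-1\\1&0\end{pmatrix}$ to get $f(-1/z) = (1\cdot z + 0)^k f(z) = z^k f(z)$. Nothing more is needed here.

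\textbf{Reverse direction.} First I would record the automorphy factor notation: for $\gamma = \begin{pmatrix} a&b\\c&d\end{pmatrix}$ write $j(\gamma,z) = cz+d$, and define the "slash" operation $(f|_k\gamma)(z) = (cz+d)^{-k} f(\gamma z)$. The chain rule for Möbius transformations gives the cocycle identity $j(\gamma_1\gamma_2, z) = j(\gamma_1, \gamma_2 z)\, j(\gamma_2, z)$, from which $f|_k(\gamma_1\gamma_2) = (f|_k\gamma_1)|_k\gamma_2$; hence the set of $\gamma$ with $f|_k\gamma = f$ is a subgroup of $\SLZ$. The two hypotheses say exactly that $T$ and $S$ lie in this subgroup, so once we know $\langle S, T\rangle = \SLZ$ the subgroup is everything and we are done. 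For the generation fact I would give the standard division-algorithm argument: given $\gamma\in\SLZ$ with bottom row $(c,d)$, if $c = 0$ then $d = \pm 1$ and $\gamma$ is $\pm T^n$ for some $n$ (and $-I = S^2$), while if $c\neq 0$ one picks $n$ with $|d - nc| < |c|$ (or the analogous reduction) so that left-multiplying by a suitable power of $T$ and then by $S$ strictly decreases $|c|$; iterating terminates and expresses $\gamma$ as a word in $S,T$. This is the only genuinely substantive step, and it is the main obstacle in the sense that all the analytic content is vacuous — the content is the group theory of $\SLZ$.

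\textbf{One caveat to flag.} If the paper wishes to avoid invoking the generation theorem as a black box, the cleanest self-contained route is to prove it inline via the reduction algorithm above; I would present it as a short lemma ("$\SLZ$ is generated by $S$ and $T$") and then the proposition follows in two lines. I expect the write-up to be: (i) one line for the forward direction, (ii) the cocycle/subgroup observation, (iii) the generation lemma, (iv) combine. The delicate points to get right are the bookkeeping of signs (handling $-I$) and making sure the reduction on $|c|$ genuinely terminates, but no estimates or limits enter.
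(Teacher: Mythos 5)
Your proposal is correct and follows essentially the same route as the paper: the forward direction by specialising to the two matrices, and the converse by reducing to the fact that $T$ and $S$ generate $\SL_2(\Z)$. In fact you go further than the paper, which explicitly omits the generation argument (and the cocycle/subgroup bookkeeping) that you sketch; your version would make the proof self-contained.
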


 \begin{proof}
      The two matrices that give rise to the last two conditions are $a=1,b=1,c=0,d=1$ and $a=0,b=1,c=-1,d=0$.
 \newline\newline     
      To get the converse, it suffices to show that these two matrices generate $\SLZ$, which means of every element of $\SL_2(\Z)$ can be expressed as a product (of possibly many matrices) of these two matrices and their inverses.
\newline\newline
      We omit that they generate $\SL_2(\Z)$ but explain why we also have the inverses of the two matrices. The second matrix is its own inverse, while the first condition with $z\mapsto z-1$ gives the same matrix with $b=-1$, which is the inverse.

 \end{proof}

 In general we get periodic but not the second one for higher than level one.

 \subsubsection{Periodic Gives Fourier Series in $q$}
 This actually means that, by making the substitution $q=e^{2\pi iz}$, and letting $D$ be the unit disc of the complex plane, there exists a unique function $g:D\setminus\{0\}\rightarrow\mathbb{C}$ such that $g(q)=f(z)$. $g$ is then also holomorphic on $D\setminus\{0\}$.

\subsubsection{Holomorphic at Infinity}

Because the disc $D$ is missing $0$, $g$ may not necessarily be nicely behaved around $0$, based on the behaviour of $f$ as the imaginary part approaches infinity. We say that a weakly modular form $f$ is "holomorphic at infinity" if its corresponding $g$ can be continued to $0$ such that $g$ ends up being holomorphic at $0$.
\newline\newline
Remark about holomorphicness of function of $q$: A function of $q$ can be expanded as a Laurent series, which is a power series where the powers of $q$ are allowed to be negative. There are general situations for functions having a Laurent series, while also, in this case, it amounts to a periodic function having a Fourier series. One practical way to defining holomorphic here is just to say that there are no negative powers of $q$ in this expansion, so that taking $q=0$ should be a valid move on this expression once the point of infinity is added.
\newline\newline
An interesting fact is that by Riemann's theorem on removable singularities, if $g$ is bounded on some neighbourhood of $0$, then it is indeed possible to holomorphically continue $g$ to $0$. The converse is true by continuity. Therefore, $f$ being holomorphic at infinity is equivalent to $f$ being bounded past some value for its imaginary part.

\subsubsection{Eisenstein Series}

We define an infinite family of level 1 modular forms.

\begin{definition}
    For a fixed lattice $\mathcal L$, define the values 
    $$G_k(\mathcal L)=\sum_{\omega\in\mathcal L\setminus\{0\}}\omega^{-k}$$
    where $k\geq3$.  Furthermore, if we define $G_k(z)$ for $z\in\mathcal H$ to equal the value of $G_k(\mathcal L)$ where $\mathcal L$ is the lattice generated by $1,z$, we claim that we get a modular form of level 1.
\end{definition} 

In order to prove that this sum converges, we need to establish a lemma about lattices.
\begin{lemma}
    For a lattice $\Z\omega_1+\Z\omega_2$, there exists $\lambda>0$ such that for all $k_1,k_2\in\Z$, $|k_1\omega_1+k_2\omega_2|\geq\lambda(|k_1|+|k_2|)$.
\end{lemma}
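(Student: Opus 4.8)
The plan is to show that the function $g(k_1,k_2) = |k_1\omega_1+k_2\omega_2|$, which is defined on the lattice of integer pairs, attains a positive minimum on a suitable compact set, and then scale. First I would note that if $(k_1,k_2)=(0,0)$ the inequality is trivial, so assume otherwise. The key idea is homogeneity: for any real $t>0$, $|tk_1\omega_1+tk_2\omega_2| = t|k_1\omega_1+k_2\omega_2|$ and $t(|k_1|+|k_2|) $ scales the same way, so it suffices to prove the inequality for pairs $(k_1,k_2)$ with $|k_1|+|k_2|=1$; the general case follows by dividing by $|k_1|+|k_2|$ and using homogeneity (note that although $k_1,k_2$ are integers, once we divide by $|k_1|+|k_2|$ we land on real pairs, so the right setting is to prove the bound for all \emph{real} $(k_1,k_2)$ with $|k_1|+|k_2|=1$).

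So the core step is: consider the continuous function $(t_1,t_2)\mapsto |t_1\omega_1+t_2\omega_2|$ on the set $S = \{(t_1,t_2)\in\mathbb{R}^2 : |t_1|+|t_2|=1\}$. This set is closed and bounded, hence compact, so the continuous function attains a minimum value $\lambda$ on it. I would then argue $\lambda>0$: if $\lambda=0$, there would be $(t_1,t_2)\in S$ with $t_1\omega_1+t_2\omega_2=0$, i.e. $\omega_1$ and $\omega_2$ would be linearly dependent over $\mathbb{R}$ (since $(t_1,t_2)\neq(0,0)$), contradicting the defining property of a lattice that $\omega_1,\omega_2$ are not real multiples of one another. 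Hence $\lambda>0$ and $|t_1\omega_1+t_2\omega_2|\geq\lambda$ for all $(t_1,t_2)\in S$.

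Finally I would transfer this back: given arbitrary integers $k_1,k_2$ not both zero, set $s=|k_1|+|k_2|>0$ and apply the bound to $(k_1/s,k_2/s)\in S$, giving $|k_1\omega_1+k_2\omega_2|/s \geq \lambda$, i.e. $|k_1\omega_1+k_2\omega_2|\geq\lambda(|k_1|+|k_2|)$, which also holds trivially when $k_1=k_2=0$. The main obstacle is not really difficulty but making sure the scaling argument is set up cleanly — one must pass from integer pairs to real pairs to invoke compactness, and one must remember to invoke $\mathbb{R}$-linear independence of $\omega_1,\omega_2$ (which is exactly the hypothesis that they are not multiples of each other) at the one crucial point. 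An alternative route avoiding compactness would be to write $\omega_j = u_j + iv_j$ and directly estimate $|k_1\omega_1+k_2\omega_2|^2$ as a positive-definite quadratic form in $k_1,k_2$, but the compactness argument is cleaner to state.
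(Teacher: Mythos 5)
Your proof is correct, but it takes a different route from the paper's. You use the standard compactness argument: by homogeneity you reduce to the compact set $S=\{(t_1,t_2)\in\R^2:|t_1|+|t_2|=1\}$, invoke continuity of $(t_1,t_2)\mapsto|t_1\omega_1+t_2\omega_2|$ to get an attained minimum $\lambda$, and use the $\R$-linear independence of $\omega_1,\omega_2$ exactly once to rule out $\lambda=0$. (This is essentially the proof that all norms on $\R^2$ are equivalent, specialised to the norm $\|(t_1,t_2)\|=|t_1\omega_1+t_2\omega_2|$ versus the $\ell^1$ norm.) The paper instead argues constructively: for each of the four sign patterns of $(k_1,k_2)$ it picks a direction $v$ (e.g.\ the angle bisector of $\pm\omega_1$ and $\pm\omega_2$) along which both $\pm\omega_1$ and $\pm\omega_2$ have strictly positive component, bounds $|k_1\omega_1+k_2\omega_2|$ below by its component along $v$, and takes $\lambda$ to be the minimum of the resulting constants; the linearity of the projection does the work that compactness does for you. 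Your argument is shorter and generalises immediately to lattices of any rank, at the cost of appealing to the extreme value theorem; the paper's is more elementary and actually exhibits an explicit $\lambda$ in terms of $\omega_1,\omega_2$, which is more in keeping with the expository level of the surrounding text. One small point of care in your write-up that you handled correctly: the passage from integer pairs to real pairs is essential for compactness, since the integer points with $|k_1|+|k_2|=1$ form only a four-element set (for which the claim on $S$ would be trivial but would not imply the general bound without the homogeneity step you supply).
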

\begin{proof}
    If $k_1,k_2\geq0$, then since $\omega_1$ and $\omega_2$ are not real multiples of each other, we can choose a direction in the complex plane, say represented by $v\in\C$ such that $\omega_1$ and $\omega_2$ have positive component in that direction. An example of such a direction is the angle bisector of $\omega_1$ and $\omega_2$. Denote by $\operatorname{comp}_vw$ the component of $w$ in the $v$ direction. Then setting $\lambda=\min(\operatorname{comp}_v\omega_1,\operatorname{comp}_v\omega_2)>0$, then
    $$\left|k_1\omega_1+k_2\omega_2\right|\geq \operatorname{comp}_v(k_1\omega_1+k_2\omega_2)=k_1\operatorname{comp}_v\omega_1+k_2\operatorname{comp}_v\omega_2\geq\lambda(|k_1|+|k_2|)$$
    For the other three cases of the signs of $k_1,k_2$, we can do a similar argument where we pick a $v$ such that the component of $\pm\omega_1$ and $\pm\omega_2$ are positive, where the signs of each of the $\pm$ depend on the signs of $k_1,k_2$. Then taking the minimum of the $\lambda$'s in each case to be our final $\lambda$, the lemma is proved.
\end{proof}

\begin{proof}[Proof this sum converges absolutely]\label{minus3converge}

It suffices to consider $k=3$.

Since we are looking for absolute convergence, we can evaluate this sum in whatever order we want. If we let $\omega_1,\omega_2$ be a basis for the lattice, then each $\omega$ can be written uniquely as $k_1\omega_1+k_2\omega_2$ for $k_1,k_2\in\mathbb Z$ which are not both $0$. 

There exists $\lambda>0$ such that $|k_1\omega_1+k_2\omega_2|\geq \lambda(|k_1|+|k_2|)$ for all $k_1,k_2\in\mathbb Z$.

Therefore we can order $\mathcal{L}\setminus \{0\}$ by the values of $|k_1|+|k_2|=n>0$, where for each $n$ there are at most $4n$ $\omega$'s in $\mathcal L\setminus \{0\}$ with that particular value of $n$. 
$$\begin{aligned} 
\sum_{\omega\in\mathcal{L}\setminus \{0\}}|\omega^{-3}|
&=\sum_{n=1}^{\infty}\sum_{|k_1|+|k_2|=n}|k_1\omega_1+k_2\omega_2|^{-3}\\
&\leq\sum_{n=1}^{\infty}\sum_{|k_1|+|k_2|=n}((|k_1|+|k_2|)\lambda)^{-3}\\
&=\sum_{n=1}^{\infty}\sum_{|k_1|+|k_2|=n}(n\lambda)^{-3}\\
&=\lambda^{-3}\sum_{n=1}^{\infty}\sum_{|k_1|+|k_2|=n}n^{-3}\\
&\leq\lambda^{-3}\sum_{n=1}^{\infty}4nn^{-3}\\
&\leq4\lambda^{-3}\sum_{n=1}^{\infty}n^{-2}<\infty.
\end{aligned}$$
\end{proof} 

We can also prove that the sum used to form $G_k(z)$, when placed in some order, converges uniformly on compact sets in $\mathcal H$.
\begin{proof}[Proof this sum converges uniformly on compact sets]
   Subbing in the definition for $G_k(z)$, since the sum is absolutely convergent we can rearrange the absolute sum
   \begin{align*}
   & \sum_{(C, D) \in \Z^2 \setminus \{(0, 0)\}} \left| \frac{1}{(Cz +
  D)^k} \right|\\
  & = \sum_{D \in \Z \setminus \{0\}} \left| \frac{1}{D^k} \right| + \sum_{C
  \in \mathbb{Z} \setminus \{ 0 \}} \sum_{D \in \mathbb{Z}} \left|
  \frac{1}{(Cz + D)^k} \right|
\end{align*}
For each $C\in\Z\setminus\{0\}$ we can choose $w_C=Cz+m_C$ for some $m_C\in\mathbb Z$ such that $\operatorname{Re}(w_C)\in[0,1)$, so now this sum can be expressed as
\begin{align*}
  & \sum_{D \in \Z \setminus \{0\}} \left| \frac{1}{D^k} \right| + \sum_{C
  \in \mathbb{Z} \setminus \{0\}} \sum_{D \in \mathbb{Z}} \left| \frac{1}{(w_C
  + D)^k} \right|\\
  & = \sum_{D \in \Z \setminus \{0\}} \left| \frac{1}{D^k} \right| + \sum_{D
  \in \mathbb{Z}} \sum_{C \in \mathbb{Z} \setminus \{0\}} \left| \frac{1}{(w_C
  + D)^k} \right|\\
  & = \sum_{D \in \Z \setminus \{0\}} \left| \frac{1}{D^k} \right| + \sum_{C
  \in \mathbb{Z} \setminus \{0\}} \left| \frac{1}{(w_C)^k} \right| + \sum_{D
  \in \mathbb{Z}^+} \sum_{C \in \mathbb{Z} \setminus \{0\}} \left|
  \frac{1}{(w_C + D)^k} \right| + \sum_{D \in \mathbb{Z}^+} \sum_{C \in
  \mathbb{Z} \setminus \{0\}} \left| \frac{1}{(w_C - D)^k} \right|
\end{align*}
Let $I=\operatorname{Im }(z)>0$. Then $\operatorname{Im}(w_C)=\operatorname{Im}(Cz)=CI$. For $D\in\Z^+$, $\operatorname{Re}(w_C+D)\geq D>0$, and $\operatorname{Re}(w_C-D)\leq -D+1<0$. Therefore $|w_C+D|\geq|CIi+D|$ and $|w_C-D|\geq|CIi-D+1|$. So
\begin{align*}
  &\sum_{D \in \Z \setminus \{0\}} \left| \frac{1}{D^k} \right| + \sum_{C
  \in \mathbb{Z} \setminus \{0\}} \left| \frac{1}{(w_C)^k} \right| + \sum_{D
  \in \mathbb{Z}^+} \sum_{C \in \mathbb{Z} \setminus \{0\}} \left|
  \frac{1}{(w_C + D)^k} \right| + \sum_{D \in \mathbb{Z}^+} \sum_{C \in
  \mathbb{Z} \setminus \{0\}} \left| \frac{1}{(w_C - D)^k} \right|\\
  &\leq\sum_{D \in \Z \setminus \{0\}} \left| \frac{1}{D^k} \right| + \sum_{C
  \in \mathbb{Z} \setminus \{0\}} \left| \frac{1}{(w_C)^k} \right| + \sum_{D
  \in \mathbb{Z}^+} \sum_{C \in \mathbb{Z} \setminus \{0\}} \left|
  \frac{1}{(CIi + D)^k} \right| + \sum_{D \in \mathbb{Z}^+} \sum_{C \in
  \mathbb{Z} \setminus \{0\}} \left| \frac{1}{(CIi-D+1)^k} \right|\\
  &= \sum_{C \in \mathbb{Z} \setminus \{0\}} \left| \frac{1}{(w_C)^k} \right| +\sum_{D \in \Z \setminus \{0\}} \left| \frac{1}{D^k} \right| + \sum_{D
  \in \mathbb{Z}} \sum_{C \in \mathbb{Z} \setminus \{0\}} \left|
  \frac{1}{(CIi + D)^k} \right|\\
\end{align*}

We know that $\sum_{(C,D)\in\Z^2\setminus\{(0,0)\}}\frac1{(CIi+D)^k}=G_k(Ii)$ which absolutely converges, so we can collect the two sums on the right to get
\begin{align*}
  &\sum_{C \in \mathbb{Z} \setminus \{0\}} \left| \frac{1}{(w_C)^k} \right| +\sum_{(C,D) \in \mathbb{Z}^2 \setminus \{(0,0)\}} \left| \frac{1}{(CIi + D)^k} \right|\\
  &\leq  \sum_{C \in \mathbb{Z} \setminus \{0\}} \left| \frac{1}{(CIi)^k} \right| +\sum_{(C,D) \in \mathbb{Z}^2 \setminus \{(0,0)\}} \left| \frac{1}{(CIi + D)^k} \right|
\end{align*}
which is a sum only depending on $I$ and decreasing as $I$ becomes larger. The first sum in particular is just a sub-sum of the second one so it's finite. For any compact subset of $\mathcal H$, it has a minimum imaginary value (which must be positive), and hence we can find an upper bound on the absolute sum. This actually gives us an upper bound for $G_k$ given a lower bound on $\operatorname{Im}(z)$. Furthermore, for any $\varepsilon>0$, there exists a finite subset of the terms in the above absolute sum such that the difference between the sum of those elements and the actual sum is less than $\varepsilon$. In the above algebra we have always kept the terms of the original sum separate, so tracing this subset back to the original absolute sum for $G_k(z)$, that sum converges uniformly on compact sets.
\end{proof}

Then since the sum for $G_k(z)$ converges uniformly on compact sets, $G_k$ is holomorphic on $\mathcal H$.

\begin{prop}
    $G_k$ is a modular form of level 1 and weight $k$.
\end{prop}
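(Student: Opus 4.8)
The plan is to verify the three defining requirements of a level $1$ modular form of weight $k$: holomorphy on $\mathcal{H}$, the weak modularity (transformation) law with respect to $\SLZ$, and holomorphy at infinity. The first is already in hand: we proved above that the series defining $G_k(z)$ converges uniformly on compact subsets of $\mathcal{H}$, and a uniform limit of holomorphic functions is holomorphic, so $G_k$ is holomorphic on $\mathcal{H}$.

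For the transformation law I would argue directly with the series. Writing the general nonzero element of the lattice generated by $1$ and $z$ as $m + nz$, we have $G_k(z) = \sum_{(m,n)\in\Z^2\setminus\{(0,0)\}} (m+nz)^{-k}$. Given $\gamma = \Mabcd \in \SLZ$, substitute $z \mapsto \frac{az+b}{cz+d}$ and clear the denominator: the term indexed by $(m,n)$ becomes $(cz+d)^k\bigl((mc+na)z + (md+nb)\bigr)^{-k}$. Now $(m,n)\mapsto(mc+na,\ md+nb)$ is a bijection of $\Z^2\setminus\{(0,0)\}$, since the underlying integer matrix has determinant $-(ad-bc)=-1$; because the series converges absolutely it may be reindexed freely, and the reindexed sum is again exactly $G_k(z)$. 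This gives $G_k\!\left(\frac{az+b}{cz+d}\right) = (cz+d)^k G_k(z)$, so $G_k$ is weakly modular of weight $k$. (Equivalently, in the language of the uniformisation discussion: $(cz+d)$ times the lattice generated by $1,\gamma z$ is the lattice generated by $1,z$, and $G_k(\lambda\mathcal{L}) = \lambda^{-k}G_k(\mathcal{L})$.)

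For holomorphy at infinity, recall from the remark above that it suffices to show $G_k(z)$ remains bounded as $\operatorname{Im}(z)\to\infty$. On the region $\operatorname{Im}(z)\geq 1$ the series converges uniformly — this is exactly the estimate already established, which depends only on a lower bound for $\operatorname{Im}(z)$ — so we may pass to the limit term by term. Each term with $n\neq 0$ tends to $0$ as $\operatorname{Im}(z)\to\infty$, while the terms with $n=0$ contribute the convergent constant $\sum_{m\in\Z\setminus\{0\}} m^{-k}$; hence $G_k(z)$ approaches a finite limit and is in particular bounded for large imaginary part. By Riemann's theorem on removable singularities the associated function of $q = e^{2\pi i z}$ then extends holomorphically over $q=0$, so $G_k$ is holomorphic at infinity. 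Combining the three points, $G_k$ is a modular form of level $1$ and weight $k$. (For $k$ odd the same argument applies but $G_k\equiv 0$, since $\omega$ and $-\omega$ cancel in pairs.)

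I expect the only genuinely delicate steps to be the two rearrangements — reindexing the series under $\gamma$ in the transformation law, and interchanging the limit with the infinite sum in the boundedness argument — but both are legitimate precisely because the absolute convergence and the locally uniform convergence of the defining series have already been proved, so what remains is bookkeeping rather than analysis.
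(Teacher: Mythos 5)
Your proposal is correct and follows essentially the same route as the paper: holomorphy from the locally uniform convergence already established, the weight-$k$ transformation law by reindexing the absolutely convergent lattice sum under the unimodular change of index induced by $\gamma$, and holomorphy at infinity from the bound on $|G_k(z)|$ that depends only on a lower bound for $\operatorname{Im}(z)$. Your extra observation that $G_k(z)$ actually tends to the finite limit $\sum_{m\neq 0}m^{-k}$ as $\operatorname{Im}(z)\to\infty$ is a harmless refinement of the boundedness argument the paper uses.
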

\begin{proof}
    $$G_k(z)=\sum_{\omega\in\mathcal L\setminus\{0\}}\omega^{-k}=\sum_{(C,D)\in\mathbb Z^2\setminus(0,0)}(Cz+D)^{-k}$$

and then for any $\begin{pmatrix} a&b\\c&d \end{pmatrix}\in\SLZ$,

\begin{align*}
G_k\left(\frac{az+b}{cz+d}\right)&=\sum_{(C,D)\in\mathbb Z^2\setminus(0,0)}\left(C\frac{az+b}{cz+d}+D\right)^{-k}\\
&=(cz+d)^k\sum_{(C,D)\in\mathbb Z^2\setminus(0,0)}\left(C(az+b)+D(cz+d)\right)^{-k}\\
&=(cz+d)^k\sum_{(C,D)\in\mathbb Z^2\setminus(0,0)}\left((Ca + Dc)z+(Cb+Dd)\right)^{-k}
\end{align*}
and we notice that $\begin{pmatrix}Ca+Dc\\Cb+Dd\end{pmatrix}=\begin{pmatrix}a&c\\b&d\end{pmatrix}\begin{pmatrix}C\\D\end{pmatrix}$ where $\begin{pmatrix}a&c\\b&d \end{pmatrix}=\begin{pmatrix} a&b\\c&d \end{pmatrix}^\intercal\in\SLZ$. Therefore the $(Ca + Dc)z+(Cb+Dd)$s are just a permutation of the $Cz+D$s so this equals

$$(cz+d)^k\sum_{(C,D)\in\mathbb Z^2\setminus(0,0)}\left(Cz+D\right)^{-k}=(cz+d)^kG_k(z).$$

Therefore $G_k$ is actually weakly modular with weight $k$. Furthermore, in the proof section above, we showed that given a lower bound for the imaginary part of $z$, then $|G_k(z)|$ is bounded. Hence, $G_k(z)$ is bounded for sufficiently large imaginary part of $z$, so it is a modular form of weight $k$.

\end{proof}

Note for $k$ odd, $G_k=0$ since every non-zero lattice point has a negative.
\newline\newline
Bonus:

If we add, multiply or scale modular forms of level 1 we get a modular form of level 1. This makes the set of modular forms of level 1 a commutative $\C$-algebra. If we multiply weight $k$, with weight $l$, we get weight $k+l$, which defines whats called a \emph{grading}.
\newline\newline
One main example of a graded commutative $\C$-algebra is that of a polynomial ring graded by degree.
\newline\newline
Let $\C[G_4,G_6]$ be a polynomial ring in $G_4$ and $G_6$ graded so that the degree of $G_4$ is $4$ and the degree of $G_6$ is $6$.

\begin{thm} The modular forms of level 1 is equal to
$$\C[G_4,G_6]$$
    as a commutative graded $\C$-algebra.
\end{thm}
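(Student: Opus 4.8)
The plan is to study the graded $\C$-algebra homomorphism $\Phi\colon\C[X,Y]\to M_*:=\bigoplus_k M_k$ with $X\mapsto G_4$, $Y\mapsto G_6$ (grading $\deg X=4$, $\deg Y=6$); it is well defined because sums, products and scalings of level-$1$ forms are again level-$1$ forms with weights adding (the Bonus remark above), and the theorem is exactly that $\Phi$ is bijective in every weight $k$. Everything rests on the \emph{valence formula}: for nonzero $f\in M_k$,
\[
v_\infty(f)+\tfrac12 v_i(f)+\tfrac13 v_\rho(f)+\sum_{P}v_P(f)=\frac{k}{12},
\]
where $\rho=e^{2\pi i/3}$, $v_P$ is the order of vanishing, and $P$ ranges over the remaining points of a standard fundamental domain $\mathcal F$ for the action of $\SLZ$ on $\mathcal H$ (built from the generators $z\mapsto z+1$ and $z\mapsto-1/z$).

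I would prove the valence formula by applying the Argument Principle (Theorem~\ref{argprinciple}) to $\tfrac{1}{2\pi i}\oint\tfrac{f'(z)}{f(z)}\,dz$ around the boundary of $\mathcal F$, truncated at height $T$ and with small circular notches cut around $i$, $\rho$ and $\rho+1$. The two vertical sides cancel because $f(z+1)=f(z)$; the top segment contributes $-v_\infty(f)$ through the $q$-expansion as $T\to\infty$; the notch at $i$ contributes $-\tfrac12 v_i(f)$ and the notches at $\rho,\rho+1$ together $-\tfrac13 v_\rho(f)$, the fractions reflecting the arc angles $\pi$ and $2\pi/3$ forced by the order-$2$ and order-$3$ elliptic stabilisers; and the lower arc, folded onto itself by $z\mapsto-1/z$ via $f(-1/z)=z^k f(z)$, contributes $+\tfrac{k}{12}$. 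Pinning these contributions down exactly --- especially the elliptic-point angles and the $z\mapsto-1/z$ folding --- is where essentially all the work lies, and is the main obstacle.

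Granting the formula, the bookkeeping is routine. Nonnegativity of its left side gives $M_k=0$ for $k<0$, for odd $k$ (also immediate from $-I\in\SLZ$) and for $k=2$; it gives $M_0=\C$; and it gives $\dim M_k\le\lfloor k/12\rfloor+1$, attained for $k=4,6,8,10$ by $G_4,G_6,G_4^2,G_4G_6$. The relation $f(-1/i)=i^kf(i)$ forces $G_6(i)=0$ and its analogue at $\rho$ forces $G_4(\rho)=0$, while the formula in weights $4,6$ gives $G_4(i)\ne0$, $G_6(\rho)\ne0$; evaluating a putative relation $\alpha G_4^3+\beta G_6^2=0$ at $z=i$ and $z=\rho$ then forces $\alpha=\beta=0$, so $\dim M_{12}=2$ with basis $G_4^3,G_6^2$. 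Each $G_k$ has constant term $2\zeta(k)\ne0$ (only the $C=0$ terms survive as $\operatorname{Im}z\to\infty$), so a suitable nonzero $\Delta:=\alpha G_4^3+\beta G_6^2$ has vanishing constant term: a nonzero weight-$12$ cusp form. Since $\Delta$ has $v_\infty\ge1$ and total $1$, the formula forces $v_\infty(\Delta)=1$ and $\Delta$ nowhere zero on $\mathcal H$.

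Consequently multiplication by $\Delta$ is an isomorphism $M_{k-12}\xrightarrow{\ \sim\ }S_k$ onto the space $S_k$ of weight-$k$ cusp forms: injective as $M_*$ is an integral domain, surjective because for $g\in S_k$ the quotient $g/\Delta$ is holomorphic on $\mathcal H$ (as $\Delta\ne0$ there) and at $\infty$ (as $g$ vanishes there and $\Delta$ has only a simple zero). Moreover, for even $k\ge4$, picking $a,b\ge0$ with $4a+6b=k$ (possible for all even $k\ge4$), the constant-term functional $M_k\to\C$ kills $S_k$ and sends $G_4^aG_6^b$ to $(2\zeta(4))^a(2\zeta(6))^b\ne0$, so $M_k=\C\,G_4^aG_6^b\oplus S_k$. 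Inducting on even $k$ (base cases $k\le10$ and $\Delta\in\operatorname{im}\Phi$ handled above): any $f\in M_k$ differs from a multiple of $G_4^aG_6^b$ by an element of $S_k=\Delta\cdot M_{k-12}$, and $M_{k-12}\subseteq\operatorname{im}\Phi$ by induction, so $f\in\operatorname{im}\Phi$; hence $\Phi$ is surjective. Finally, the recursion $\dim M_k=1+\dim M_{k-12}$ just obtained (with $\dim M_j=0$ for $j<0$ or $j=2$) shows $\dim M_k$ equals the number $\#\{(a,b)\in\Z_{\ge0}^2:4a+6b=k\}$ of weight-$k$ monomials in $G_4,G_6$ --- a short elementary check --- so the surjection $\C[G_4,G_6]_k\twoheadrightarrow M_k$ is forced to be an isomorphism and $G_4,G_6$ are algebraically independent. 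Assembling over all $k$ gives $\C[G_4,G_6]\cong M_*$ as graded $\C$-algebras.
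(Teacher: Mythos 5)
Your proposal is correct in outline, and it supplies an argument that the paper itself does not: the theorem is stated there with no proof at all, and the appendix only gestures at the essential ingredient (the weight-$k$ valence formula ``number of zeros $=k/12$'') with a two-line placeholder. Your route is the classical one: valence formula; non-negativity forcing $M_k=0$ for $k<0$, $k$ odd, $k=2$ and $M_0=\C$; the vanishing of $G_6$ at $i$ and of $G_4$ at $\rho$ versus their non-vanishing at the other elliptic point; a nonzero weight-$12$ cusp form $\Delta$ with $v_\infty(\Delta)=1$ and no zeros on $\mathcal H$; the isomorphism $M_{k-12}\cong S_k$ by multiplication by $\Delta$; and induction on the weight combined with the monomial count $\#\{(a,b)\in\Z_{\ge0}^2:4a+6b=k\}$, which upgrades surjectivity of $\Phi$ to an isomorphism and yields the algebraic independence of $G_4$ and $G_6$ for free. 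All of these steps check out. As you acknowledge, the genuine work is the valence formula itself; beyond nailing the corner angles at $i$ and $\rho$ (contributing the factors $\tfrac12$ and $\tfrac13$) and the folding of the unit-circle arc under $z\mapsto -1/z$ (contributing $k/12$), a complete proof must also handle zeros of $f$ on the boundary of the fundamental domain away from the elliptic points --- one detours around such a zero on one side and around its identified partner on the other, so each pair is counted once --- and must justify that only finitely many zeros lie in the truncated domain, by the same Bolzano--Weierstrass plus discreteness-of-zeros argument the paper uses for $\wp$. With that lemma supplied, your dimension bookkeeping closes the proof.
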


\subsection{The Weierstrass $\wp$ Function}

This section deals with a very natural and important complex function that both gives many interesting examples of level 1 modular forms but also allows us to further carry out uniformisation.

\subsubsection{Definition and Absolute Convergence for $\wp$}

\begin{definition} [Weierstrass $\wp$ function]
   Let $\mathcal{L}$ be a complex lattice. The Weierstrass $\wp$ function is a function defined on $\mathbb{C}\setminus\mathcal{L}$ such that
   $$\wp(z)=\frac{1}{z^2}+\sum_{\omega\in\mathcal{L}\setminus\{0\}}\left(\frac{1}{(z-\omega)^2}-\frac{1}{\omega^2}\right).$$
\end{definition}

Before we prove the absolute convergence of the sum, we note that the formula is not as strange as it might seem. If you want to define something obviously periodic you might start with:

$$\sum_{\omega\in\mathcal{L}}\frac{1}{(z-\omega)^2},$$
but this won't converge, so something $\frac{1}{\omega^2}$ is needed to make it smaller. Of course, we cannot have $\omega=0$ in the denominator so we put the term where $\omega=0$ separately and require the sum to not have $\omega=0$.

This will stay periodic once we check that the definition is absolutely convergent.
\newline\newline
Here it is not so obvious what $\sum_{\omega\in\mathcal{L}\setminus\{0\}}$ means, since we don't know what order we are adding each of the lattice points in. To solve this we can order the lattice points by their modulus from smallest to biggest, since there are only a finite number of lattice points with modulus less than anything. However soon we will see that it doesn't really matter in what order we arrange the sum.

\begin{lemma}
The sum 
$$\sum_{\omega\in\mathcal{L}\setminus\{0\}}\left(\frac{1}{(z-\omega)^2}-\frac{1}{\omega^2}\right)$$
absolutely converges for all $z\in\mathbb{C}\setminus\mathcal{L}$, and also uniformly converges on any compact (closed and bounded) subset of $\mathbb{C}\setminus\mathcal{L}$.
\end{lemma}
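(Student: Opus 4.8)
\section*{Proof proposal}

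The plan is to reduce everything to the convergence of $\sum_{\omega\in\mathcal L\setminus\{0\}}|\omega|^{-3}$, which was already established in the proof labelled \ref{minus3converge}. First I would put the general term over a common denominator:
$$\frac{1}{(z-\omega)^2}-\frac{1}{\omega^2}=\frac{\omega^2-(z-\omega)^2}{\omega^2(z-\omega)^2}=\frac{z(2\omega-z)}{\omega^2(z-\omega)^2}.$$
The point is that the numerator is linear in $\omega$ while the denominator is quartic, so the term decays like $|\omega|^{-3}$; this extra power of $|\omega|$ (compared with the naive $|\omega|^{-2}$) is exactly what the subtracted $\tfrac1{\omega^2}$ buys us.

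Next, fix $z\in\C\setminus\mathcal L$ and discard the finitely many $\omega$ with $|\omega|\le 2|z|$ (there are finitely many, since by the lattice lemma proved just above, $|k_1\omega_1+k_2\omega_2|\ge\lambda(|k_1|+|k_2|)$, so only boundedly many lattice points lie in any disc). For the remaining $\omega$ we have $|z-\omega|\ge|\omega|-|z|\ge\tfrac12|\omega|$ and $|2\omega-z|\le 2|\omega|+|z|\le\tfrac52|\omega|$, hence
$$\left|\frac{1}{(z-\omega)^2}-\frac{1}{\omega^2}\right|\le\frac{|z|\cdot\tfrac52|\omega|}{|\omega|^2\cdot\tfrac14|\omega|^2}=\frac{10|z|}{|\omega|^3}.$$
Summing this over the $\omega$ with $|\omega|>2|z|$ and invoking $\sum_{\omega}|\omega|^{-3}<\infty$ shows the tail converges absolutely; adding back the finitely many discarded terms gives absolute convergence of the full sum. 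Because the majorant converges unconditionally, the order in which the lattice points are summed is irrelevant.

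For uniform convergence on a compact $K\subset\C\setminus\mathcal L$, set $M=\sup_{z\in K}|z|<\infty$. The finitely many $\omega$ with $|\omega|\le 2M$ contribute terms $\frac{1}{(z-\omega)^2}-\frac{1}{\omega^2}$ that are continuous in $z$ on $K$ (as $K$ avoids $\mathcal L$) and hence bounded on $K$; a finite sum of bounded terms is harmless. For $|\omega|>2M$ the estimate above applies to every $z\in K$ and gives the $z$-independent bound $10M|\omega|^{-3}$, with $\sum_{|\omega|>2M}10M|\omega|^{-3}<\infty$, so the Weierstrass $M$-test yields uniform convergence of the tail on $K$, and therefore of the whole series on $K$. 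The only real work here is bookkeeping with the constants and the reduction to finitely many ``close'' lattice points; no input beyond the already-proved convergence of $\sum|\omega|^{-3}$ is needed.
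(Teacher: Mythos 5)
Your proposal is correct and follows essentially the same route as the paper: combine the two fractions to expose the $z(2\omega-z)/(\omega^2(z-\omega)^2)$ form, discard the finitely many lattice points within a fixed multiple of the bound on $|z|$, estimate the tail term by a constant times $|\omega|^{-3}$ using $|z-\omega|\ge\tfrac12|\omega|$, and invoke the already-established convergence of $\sum_{\omega}|\omega|^{-3}$ uniformly via the $M$-test. The only differences are cosmetic (slightly different constants in the numerator bound).
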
 
\begin{proof}
To show this, we will show that the absolute sum uniformly converges on any compact subset $C\subset\mathbb{C}\setminus\mathcal{L}$. This would prove it absolutely converges anywhere since any point is contained in a compact set, and obviously this would also show that the sum uniformly converges in $C$.
\newline\newline
We assume that $C$ is nonempty because otherwise the statement is vacuously true. Since $C$ is compact, then $C$ is bounded, so let $m\in\mathbb R^+$ be an upper bound to the modulus of points in $C$, so then for all $z\in C$, we have $|z|\leq m$. Now let the lattice points with modulus $\leq 2m$ be $M$ (so $0\in M$). $M$ is a finite subset of $\mathcal L$. We can now write
$$\sum_{\omega\in\mathcal{L}\setminus\{0\}}\left|\frac{1}{(z-\omega)^2}-\frac{1}{\omega^2}\right|=\sum_{\omega\in M\setminus\{0\}}\left|\frac{1}{(z-\omega)^2}-\frac{1}{\omega^2}\right|+\sum_{\omega\in\mathcal{L}\setminus M}\left|\frac{1}{(z-\omega)^2}-\frac{1}{\omega^2}\right|$$
where the first sum is finite, so it suffices to prove that the second sum uniformly converges.
\newline\newline
Since we are looking for absolute convergence, we can evaluate this sum in whatever order we want. If we let $\omega_1,\omega_2$ be a basis for the lattice, then each $\omega$ can be written uniquely as $k_1\omega_1+k_2\omega_2$ for $k_1,k_2\in\mathbb Z$ which are not both $0$. There exists $\lambda>0$ such that $|k_1\omega_1+k_2\omega_2|\geq \lambda(|k_1|+|k_2|)$ for all $k_1,k_2\in\mathbb Z$. Therefore we can order $\mathcal{L}\setminus M$ by the values of $|k_1|+|k_2|=n>0$, where for each $n$ there are at most $4n$ $\omega$'s in $\mathcal L\setminus M$ with that particular value of $n$. 
\begin{align*}
\sum_{\omega\in\mathcal{L}\setminus M}\left|\frac{1}{(z-\omega)^2}-\frac{1}{\omega^2}\right|&=\sum_{\omega\in\mathcal{L}\setminus M}\left|\frac{z(z-2\omega)}{(z-\omega)^2\omega^2}\right|\\
&\leq\sum_{\omega\in\mathcal{L}\setminus M}\left|\frac{m(|\omega|+|2\omega|)}{(z-\omega)^2\omega^2}\right|\\
&=\sum_{\omega\in\mathcal{L}\setminus M}\left|\frac{m \times3\omega}{(z-\omega)^2\omega^2}\right|\\
&\leq\sum_{\omega\in\mathcal{L}\setminus M}\frac{m}{|z-\omega|^2|\omega|}\\
&\leq\sum_{\omega\in\mathcal{L}\setminus M}\frac{m}{\left||\omega|-|z|\right|^2|\omega|}
\end{align*}
Here, $|z|\leq m\leq \frac{|\omega|}{2}\leq|\omega|$, so $|\omega|-|z|$ is non-negative. Thus this is
\begin{align*}
&\leq\sum_{\omega\in\mathcal{L}\setminus M}\frac{m}{\left||\omega|-\frac{|\omega|}{2}\right|^2|\omega|}\\
&=\sum_{\omega\in\mathcal{L}\setminus M}\frac{m}{\left(\frac{|\omega|}{2}\right)^2|\omega|}\\
&=2m\sum_{\omega\in\mathcal{L}\setminus M}|\omega|^{-3}\\
&=2m\sum_{n=0}^{\infty}\sum_{|k_1|+|k_2|=n}|k_1\omega_1+k_2\omega_2|^{-3}\\
\end{align*}
which can be checked to converge uniformly using same calculation as Proposition \ref{minus3converge}.
\end{proof}

\subsubsection{$\wp'$ the derivative of $\wp$}

Holomorphic functions that uniformly converge on compact sets converge to a holomorphic function whose derivative is the limit of the derivative (using Cauchy's Integral Formula) so the derivative of $\wp$ is equal to the sum of the derivatives of the summands, which is
$$\wp'(z)=-2\times\frac{1}{z^3}-2\sum_{\omega\in\mathcal L\setminus\{0\}}\frac{1}{(z-\omega)^3}=-2\sum_{\omega\in\mathcal L}\frac{1}{(z-\omega)^3}$$
which also converges uniformly and is clearly periodic with respect to $\mathcal L$, i.e. $\wp'(z)=\wp'(z+\omega)$ for all $z\in\mathbb C\setminus\mathcal L,\omega\in\mathcal L$. This allows us to give an alternate proof of periodicity of $\wp$.

\subsubsection{Proof $\wp$ is periodic}

We get $\wp$ is periodic, essentially by design, but here is another proof:

\begin{prop}$\wp$ is periodic with respect to $\mathcal L$
\end{prop}
\begin{proof}Let $\omega_1,\omega_2$ be generators for $\mathcal L$, so it suffices to prove that $\wp(z)=\wp(z+\omega_1)$ and $\wp(z)=\wp(z+\omega_2)$ for all $z\in\mathbb C\setminus\mathcal L$. Looking at the definition, it's clear that $\wp$ is even. Therefore $\wp(\frac{\omega_1}{2})=\wp(-\frac{\omega_1}{2})$. Since $\mathbb C\setminus\mathcal L$ is connected, find a contour $\gamma_1$ from $-\frac{\omega_1}{2}$ to $z$ within $\mathbb C\setminus\mathcal L$. Then let $\gamma_2$ be a translation of $\gamma_1$ by $\omega_1$, so $\gamma_2$ goes from $\frac{\omega_1}{2}$ to $z+\omega_1$. Now since $\wp$ is holomorphic, $\wp'$ is also holomorphic and in particular continuous, so using the Fundamental Theorem of Calculus,
$$\wp(z)-\wp\left(-\frac{\omega_1}2\right)=\int_{\gamma_1}\wp'(t)\mathrm{d}t=\int_{\gamma_2}\wp'(t)\mathrm{d}t=\wp(z+\omega_1)-\wp\left(\frac{\omega_1}2\right)$$
so therefore $\wp(z)=\wp(z+\omega_1)$. A similar argument can be used for $\omega_2$, proving that $\wp$ is $\mathcal L$-periodic.

\end{proof}

\subsubsection{Relationship with Eisenstein Series}

Now if we write
\begin{align*}
    \wp(z)&=\frac{1}{z^2}+\sum_{\omega\in\mathcal{L}\setminus\{0\}}\left(\frac{1}{(z-\omega)^2}-\frac{1}{\omega^2}\right)\\
    &=\frac{1}{z^2}+\sum_{\omega\in\mathcal{L}\setminus\{0\}}\frac{1}{\omega^2}\left(\frac{1}{(1-\frac{z}{\omega})^2}-1\right)\\
    &=\frac{1}{z^2}+\sum_{\omega\in\mathcal{L}\setminus\{0\}}\frac{1}{\omega^2}\left(\left(\frac{1}{1-\frac{z}{\omega}}\right)^2-1\right)\\
    &=\frac{1}{z^2}+\sum_{\omega\in\mathcal{L}\setminus\{0\}}\frac{1}{\omega^2}\left(\left(\sum_{n=0}^\infty\frac{z^n}{\omega^n}\right)^2-1\right)\\
    &=\frac{1}{z^2}+\sum_{\omega\in\mathcal{L}\setminus\{0\}}\frac{1}{\omega^2}\left(\sum_{n=0}^\infty(n+1)\frac{z^n}{\omega^n}-1\right)\\
    &=\frac{1}{z^2}+\sum_{\omega\in\mathcal{L}\setminus\{0\}}\frac{1}{\omega^2}\sum_{n=1}^\infty(n+1)\frac{z^n}{\omega^n}\\
    &=\frac{1}{z^2}+\sum_{\omega\in\mathcal{L}\setminus\{0\}}\sum_{n=1}^\infty(n+1)\frac{z^n}{\omega^{n+2}}.\\
\end{align*}
It can be checked that this last sum absolutely converges when $z$ has smaller modulus than all non-zero lattice points since then $\left|\frac{z}{\omega}\right|<1$, so we can swap the sum order
\begin{align*}
    &=\frac{1}{z^2}+\sum_{n=1}^\infty\sum_{\omega\in\mathcal{L}\setminus\{0\}}(n+1)\frac{z^n}{\omega^{n+2}}\\
    &=\frac{1}{z^2}+\sum_{n=1}^\infty(n+1)z^n\sum_{\omega\in\mathcal{L}\setminus\{0\}}\frac{1}{\omega^{n+2}}\\
    &=\frac{1}{z^2}+\sum_{n=1}^\infty(n+1)G_{n+2}(\mathcal L)z^n
\end{align*}
which gives us the Laurent series for $\wp$. Note that the odd $k$ terms are zero,  that is $G_{k}(\mathcal L)=0$ for odd $k$ and hence $\wp$ is even.

\subsection{Uniformisation via $\wp$}

We now revisit the uniformisation previewed in the last Section with renewed vigour.

\begin{theorem}
    The Weierstrass elliptic function satisfies the following differential equation:
$$\wp'(z)^2=4\wp(z)^3-60G_4(\mathcal L)\wp(z)-140G_6(\mathcal L)$$

\end{theorem}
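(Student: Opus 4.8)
The plan is to compare the Laurent expansions of the two sides near $z=0$ and then promote a local identity to a global one using periodicity and Liouville's theorem. Set $F(z)=\wp'(z)^2-4\wp(z)^3+60G_4(\mathcal L)\wp(z)+140G_6(\mathcal L)$; the goal is to prove that $F\equiv 0$ on $\C\setminus\mathcal L$.

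\emph{Step 1: Laurent expansions.} From the expansion derived in the previous subsection, $\wp(z)=z^{-2}+3G_4(\mathcal L)z^2+5G_6(\mathcal L)z^4+O(z^6)$, where only even powers occur because $G_k(\mathcal L)=0$ for odd $k$. Differentiating term by term — legitimate since the defining series converges uniformly on compact subsets of $\C\setminus\mathcal L$ — gives $\wp'(z)=-2z^{-3}+6G_4(\mathcal L)z+20G_6(\mathcal L)z^3+O(z^5)$. Squaring and cubing, one computes $\wp'(z)^2=4z^{-6}-24G_4(\mathcal L)z^{-2}-80G_6(\mathcal L)+O(z^2)$, $4\wp(z)^3=4z^{-6}+36G_4(\mathcal L)z^{-2}+60G_6(\mathcal L)+O(z^2)$, and $60G_4(\mathcal L)\wp(z)=60G_4(\mathcal L)z^{-2}+O(z^2)$. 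Combining with the signs appearing in $F$, the $z^{-6}$, the $z^{-2}$, and the constant terms all cancel, so $F(z)=O(z^2)$; in particular $F(z)\to 0$ as $z\to 0$.

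\emph{Step 2: globalise.} Since $\wp$ and $\wp'$ are $\mathcal L$-periodic, so is $F$, and the only candidate singularities of $\wp$, $\wp'$, and hence of $F$, are the lattice points. Near $z=0$ the function $F$ is bounded (it tends to $0$), so by Riemann's theorem on removable singularities it extends holomorphically across $0$, and then across every lattice point by periodicity. Thus $F$ is holomorphic on all of $\C$ and $\mathcal L$-periodic, so its values are determined by those on a closed fundamental parallelogram of $\mathcal L$; that region is compact, so $F$ is bounded there and therefore bounded on $\C$. By Liouville's theorem $F$ is constant, and since $F(z)\to 0$ as $z\to 0$ that constant is $0$. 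Hence $F\equiv 0$, which is exactly the asserted differential equation.

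The main obstacle is purely computational: keeping track of enough terms in the Laurent expansions of $\wp'^2$ and $\wp^3$ so that the principal parts and the constant terms cancel precisely and one is left with $O(z^2)$. Everything else is standard once the relevant facts from the complex-analysis preliminaries — term-by-term differentiation of locally uniformly convergent series, the removable-singularity theorem, and Liouville's theorem — are invoked.
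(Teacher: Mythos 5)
Your proposal is correct and follows essentially the same route as the paper: compare Laurent expansions at $z=0$ to see that the principal part and constant term of the difference vanish, then use $\mathcal L$-periodicity, removability of the singularities at lattice points, and Liouville's theorem to conclude the difference is identically zero. The expansion coefficients you record ($4z^{-6}-24G_4z^{-2}-80G_6$ for $\wp'^2$ and $z^{-6}+9G_4z^{-2}+15G_6$ for $\wp^3$) match the paper's and cancel as claimed.
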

\begin{proof}
    
To verify this, we use the Laurent series of the left and right hand sides, and take the non-positive exponent terms. We have that
$$\wp(z)=z^{-2}+3G_4(\mathcal L)z^2+5G_6(\mathcal L)z^4+\cdots$$
$$\wp'(z)=-2z^{-3}+6G_4(\mathcal L)z+20G_6(\mathcal L)z^3+\cdots$$
$$\wp(z)^3=z^{-6}+9G_4(\mathcal L)z^{-2}+15G_6(\mathcal L)+\cdots$$
$$\wp'(z)^2=4z^{-6}-24G_4(\mathcal L)z^{-2}-80G_6(\mathcal L)+\cdots$$
$$\wp'(z)^2=4(z^{-6}-6G_4(\mathcal L)z^{-2}-20G_6(\mathcal L))+\cdots$$
\begin{align*}
4\wp(z)^3-60G_4(\mathcal L)\wp(z)-140G_6(\mathcal L)&=4(\wp(z)^3-15G_4(\mathcal L)\wp(z)-35G_6(\mathcal L))\\
&=4(z^{-6}+(9-15)G_4(\mathcal L)z^{-2}+(15-35)G_6(\mathcal L))+\cdots\\
&=4(z^{-6}-6G_4(\mathcal L)z^{-2}-20G_6(\mathcal L))+\cdots.
\end{align*}
and indeed the Laurent series' terms for the non-positive exponents match. Thus, if you subtract them, it becomes a normal power series with constant term $0$, meaning that the difference is actually a holomorphic function on a neighbourhood of $0$ if you fill in the point at $0$. Due to $\mathcal L$-periodicity, this also means it's holomorphic in a neighbourhood around each lattice point and thus is holomorphic on the entirety of $\mathbb C$ after filling in the lattice points. Furthermore, on a single closed parallelogram of the lattice, the function is bounded due to continuity, and by periodicity it is bounded on the entirety of $\mathbb C$, so now by Liouville's theorem, the function is constant, and due to the $0$ constant term it is $0$. Therefore the differential equation holds.

\end{proof}

Do you notice anything interesting about this differential equation?
\newline\newline
It is an elliptic curve!
\newline\newline
That is $(\wp(z),\wp'(z))$ is a point on the elliptic curve $y^2=4x^3+60G_4(\mathcal L)x-140G_6(\mathcal L)$. However, at this point we do not yet know that the curve described by that equation is not singular, so we cannot technically call it an elliptic curve yet. This will be answered in the next theorem:

\begin{thm} Let $\mathcal{L}$ be a lattice and $\C/\mathcal{L}$ the quotient as Abelian groups.
The equation

$$y^2=4x^3-60G_4(\mathcal{L})x-140G_6(\mathcal{L})$$
defines an elliptic curve over $\C$ (that is, it is non-singular) written $E_\C$.

Furthermore, there is an isomorphism of Abelian groups
$$\C/\mathcal{L}\to E_{\C}$$
given by
$$z+\mathcal{L}\mapsto (\wp(z),\wp'(z))$$
for $z\notin \mathcal{L}$ and $0\mapsto O$, the point at infinity.
\end{thm}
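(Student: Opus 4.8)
The plan is: (1) verify that the cubic $4x^3-60G_4(\mathcal{L})x-140G_6(\mathcal{L})$ has distinct roots, so that $E_\C$ is genuinely an elliptic curve; (2) check that $\phi\colon \C/\mathcal{L}\to E_\C$ sending $z+\mathcal{L}\mapsto(\wp(z),\wp'(z))$ and $0\mapsto O$ is well-defined and bijective; (3) check that it intertwines the two group laws. The recurring analytic input is a standard consequence of the argument principle (Theorem~\ref{argprinciple}): a non-constant $\mathcal{L}$-periodic meromorphic function whose poles in a fundamental parallelogram have total multiplicity $m$ takes every value exactly $m$ times --- apply the argument principle to $f-c$ over the boundary of a fundamental parallelogram translated so that no zero or pole lies on it, the integrals over opposite sides cancelling by periodicity. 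For non-singularity: at each half-period $w\in\{\omega_1/2,\ \omega_2/2,\ (\omega_1+\omega_2)/2\}$ we have $2w\in\mathcal{L}$, so $-w\equiv w$, whence $\wp'(w)=\wp'(-w)=-\wp'(w)$ (using that $\wp'$ is periodic and odd), forcing $\wp'(w)=0$; the differential equation of the previous theorem then exhibits $\wp(w)$ as a root of the cubic. These three roots are distinct because $\wp$ has a single (double) pole per fundamental domain, and $z=w$ is already a \emph{double} zero of $\wp(z)-\wp(w)$ since $\wp'(w)=0$, so it exhausts both zeros and no other half-period gives the same value. Hence $\Delta\neq 0$ and the equation defines an elliptic curve $E_\C$.

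Well-definedness of $\phi$ is immediate from $\mathcal{L}$-periodicity of $\wp$ and $\wp'$, and the differential equation puts $(\wp(z),\wp'(z))$ on $E_\C$ whenever $z\notin\mathcal{L}$. Note that $\wp'$ has a single (triple) pole per fundamental domain, hence three zeros there, which must be exactly the three half-periods. For surjectivity, given $(x_0,y_0)\in E_\C\setminus\{O\}$, choose $z_0$ with $\wp(z_0)=x_0$ (possible since $\wp$ takes every value); then $\wp'(z_0)^2=y_0^2$, and replacing $z_0$ by $-z_0$ if necessary (which fixes $\wp$ and negates $\wp'$) yields $\phi(z_0)=(x_0,y_0)$, while $\phi(0)=O$. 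For injectivity, if $\wp(z_1)=\wp(z_2)$ with $z_1,z_2\notin\mathcal{L}$ then $z_2\equiv\pm z_1\pmod{\mathcal{L}}$, since $\wp$ is even and takes each value exactly twice; if moreover $\wp'(z_1)=\wp'(z_2)$ and $z_2\equiv -z_1$, then $\wp'(z_1)=-\wp'(z_1)=0$, forcing $z_1$ to be a half-period, where $-z_1\equiv z_1$ anyway; hence $z_1\equiv z_2$, and $\phi$ is injective.

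For the homomorphism property --- the crux --- the key lemma (again the residue theorem, now applied to $z\,f'(z)/f(z)$ over a translated fundamental parallelogram) is that for any non-constant elliptic function $f$, the sum of its zeros minus the sum of its poles, counted with multiplicity over a fundamental parallelogram, lies in $\mathcal{L}$. Given $P_1=\phi(z_1)$ and $P_2=\phi(z_2)$, take the line $y=ax+b$ through them (the tangent line if $z_1\equiv z_2\pmod{\mathcal{L}}$; the vertical line $x=\wp(z_1)$ if the $x$-coordinates agree) and set $f(z)=\wp'(z)-a\wp(z)-b$. Then $f$ has a single pole per fundamental domain, of multiplicity $3$ at the origin, hence three zeros there; two of them are $z_1,z_2$, and calling the third $z_3$ the lemma gives $z_1+z_2+z_3\equiv 0\pmod{\mathcal{L}}$. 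Read against the chord-and-tangent definition of $+$ on $E_\C$, this says precisely that $\phi(z_1)+\phi(z_2)+\phi(z_3)=O$ whenever $z_1+z_2+z_3\equiv 0$; and since $\phi(-z)=-\phi(z)$ (because $(\wp(-z),\wp'(-z))=(\wp(z),-\wp'(z))$ is the reflection of $\phi(z)$ across the $x$-axis), this is equivalent to $\phi(z_1+z_2)=\phi(z_1)+\phi(z_2)$. Together with the bijectivity established above, $\phi$ is an isomorphism of abelian groups.

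I expect the main obstacle to be this last step: carefully matching every degenerate configuration of the chord-and-tangent law (the point $O$, vertical lines, tangency, two-torsion points) with the corresponding degeneration of $f=\wp'-a\wp-b$ (a drop in the pole multiplicity at $0$, or coincident zeros), and pinning down the ``sum of zeros minus sum of poles'' lemma --- in particular the technical point that a fundamental parallelogram can always be chosen (by translation) so that no zero or pole of $f$ lies on its boundary.
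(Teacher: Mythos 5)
Your proposal is correct and follows essentially the same route as the paper: non-singularity via the vanishing of $\wp'$ at the half-periods and the ``each value taken exactly twice'' count, bijectivity from evenness of $\wp$ and oddness of $\wp'$, and additivity from the ``sum of zeros minus sum of poles lies in $\mathcal{L}$'' lemma applied to $\wp'-a\wp-b$, exactly as in the paper's use of $h=\ell(\wp)-\wp'$ and the integral of $zh'/h$ over a fundamental parallelogram. The one caveat you should make explicit is that the vertical-line case cannot be fed into $f=\wp'-a\wp-b$ at all (there is no such $a,b$) and must be dispatched separately via $\wp(z)-x_0$ or directly from $z_1+z_2\equiv 0$, which is how the paper handles it before introducing $\ell$.
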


\begin{proof} Proof checklist: Bijective, Group Homomorphism.
\newline\newline
First we prove that the map creates a \textbf{bijection} between the Abelian groups. 
\newline\newline
Let $x\in\C$ and let $f(z)=\wp(z)-x$. We aim to show that $x$ is in the range of $\wp$ by showing $0$ is in the range of $f$ and with further examination of the zeros of $f$, conclude there is a bijection $z\mapsto (x,y)$.
\newline\newline
Due to our Laurent series expansion of $\wp$, we know that $f$ has a pole of multiplicity $2$ at $0$, so we can define $g(z)=z^2f(z)$ so then $g$ can be extended to be holomorphic at $0$. Now take a closed parallelogram of the lattice and shift it so that it is centered at $0$, that is,  let $\omega_1,\omega_2$ generate $\mathcal{L}$ and take the parallelogram with vertices $\pm\frac{\omega_1+\omega_2}{2}$ and $\pm\frac{\omega_1-\omega_2}{2}$ (including the boundary and interior points). So now the only lattice point on the parallelogram is $0$. Therefore $g$ is defined on the whole of the parallelogram. Call this parallelogram $\mathcal{P}$.
\newline\newline
We show that $f$ has only finitely many zeros in $\mathcal{P}$.
Assume for the sake of contradiction that $f$ had infinitely many zeroes in $\mathcal{P}$. Then $g$ has infinitely many zeroes in $\mathcal{P}$, and we can form an infinite sequence of distinct zeroes in $\mathcal{P}$, so by the Bolzano-Weierstrass theorem, there is a convergent subsequence, so this sequence has a limit point inside the closed parallelogram. Then $g=0$ so $f=0$ in the parallelogram (Theorem \ref{discretezero}), so due to periodicity then $f=0$ everywhere, which is clearly not true, looking at the Laurent series of $f$ and we have reached a contradiction.
\newline\newline
Now since there are a finite number of zeroes on any period of the lattice, shift the parallelogram so that it has no zeroes nor lattice points on its border. Call this shifted parallelogram $\mathcal{P}_1$ and let this parallelogram's counterclockwise boundary be $\partial \mathcal{P}_1$. We show that
$$\int_{\partial \mathcal{P}_1} \frac{f'(z)}{f(z)} dz=0$$
and use the argument principle (Theorem \ref{argprinciple}), to count the number of zeros of $f$.
\newline\newline
Due to $\mathcal L$-periodicity of $f'/f$, the opposite sides of the parallelogram being in opposite directions cancel out in the integral, so this evaluates to $0$. Thus using the argument principle, $f$ has the same number of zeroes on the parallelogram as poles counting multiplicity, and there are $2$ poles (the double pole at the lattice point), so it has $2$ zeroes counting multiplicity. Therefore, $\wp(z)=x$ for either two distinct values of $z$ or a single value with multiplicity two, up to translations of $\mathcal{L}$ (we will say modulo $\mathcal{L}$).
\newline\newline
We now show that for all but three $z$ values modulo $\mathcal{L}$ (keep in mind that a cubic has three roots), there are exactly two distinct values of $z$ that produce distinct $y$ values (ie non-zero $y^2$) up to translation by $\mathcal{L}$, otherwise, $y^2=0$.
\newline\newline
For any $z\in\mathbb C\setminus\mathcal L$ not equal to $\frac{\omega_1}2$, $\frac{\omega_2}2$ or $\frac{\omega_1+\omega_2}2$ $\mod\mathcal{L}$, $z\not\equiv-z\mod\mathcal L$, so since $\wp$ is even, $z,-z$ are those values mod $\mathcal{L}$ on which $\wp(-z)$ equals $\wp(z)$. Hence in this case, there are exactly two distinct $z$ that output the same $x$. We show these output non-zero $y$-values which are negatives of each other. To see they are negatives of each other, we recall that $\wp'(-z)=-\wp'(z)$ is odd. To show these $\pm y$ are non-zero, it suffices to show $\wp(\frac{\omega_1}2)$, $\wp(\frac{\omega_2}2)$ and $\wp(\frac{\omega_1+\omega_2}2)$ are distinct zeros different from $\wp(z)$ since a cubic equation in $x$ can have at most 3 zeros.
\newline\newline
Since $\wp'$ is odd, that means that $$\wp'(\frac{\omega_1}2)=-\wp'(-\frac{\omega_1}2)=-\wp'(-\frac{\omega_1}2+\omega_1)=-\wp'(\frac{\omega_1}2),$$
so 
$$\wp'(\frac{\omega_1}2)=0.$$ Similarly, $$\wp'(\frac{\omega_2}2)=0\qquad \text{and}\qquad \wp'(\frac{\omega_1+\omega_2}2)=0.$$ 
We now explain why these are different to the other $\wp(z)$ using multiplicity of zeros of $f$ established above, namely, we show these each have multiplicity 2. If $\alpha\in\{\frac{\omega_1}2,\frac{\omega_2}2,\frac{\omega_1+\omega_2}2\}$, then $f(\alpha)=0$ for $f$ with $x=\wp(\alpha)$ and $\alpha$ has multiplicity at least two since $f'(\alpha)=\wp'(\alpha)=0$. This means that $\alpha$ has multiplicity exactly 2 using the multiplicity of zeros of $f$ already established.
\newline\newline



This also shows that $\wp(\frac{\omega_1}2),\wp(\frac{\omega_2}2),\wp(\frac{\omega_1+\omega_2}2)$ are distinct roots of the polynomial $4x^3-60G_4(\mathcal L)x-140G_6(\mathcal L)$. Hence the curve $y^2=4x^3-60G_4(\mathcal L)x-140G_6(\mathcal L)$ is non-singular. 
\newline\newline
In summary, $\frac{\omega_1}2,\frac{\omega_2}2,\frac{\omega_1+\omega_2}2$ are taken to the three distinct $x$-values which are roots of the cubic, at which the elliptic curve only assumes one $y$-value of zero, and all other points modulo $\mathcal{L}$ are taken to $x$-values at which the elliptic curve assumes two $y$-values which are negatives of each other, and each $y$-value is taken once due to $\wp'$ being odd. Finally if we extend the map to take the lattice points to the point at infinity on the elliptic curve, this map bijects $\mathbb C/\mathcal L$ with the elliptic curve.
\newline\newline
Now we need to show that this map is a \textbf{group homomorphism}, in other words, it is additive. Say that we have two points, $z_1$ and $z_2$, and we want to show that $z_1+z_2$ gets taken to the sum of the corresponding points on the elliptic curve. First we deal with the case where one of them is a lattice point. If $z_1\in\mathcal L$ then $z_1+z_2\equiv z_2\mod\mathcal L$, so $z_1+z_2$ gets taken to the same point as $z_2$ as expected. Otherwise, $z_1,z_2$ get taken to actual points in $\mathbb C^2$ by $\wp$ and $\wp'$. If $(\wp(z_1),\wp'(z_1))$ and $(\wp(z_2),\wp'(z_2))$ are inverses, then $\wp(z_1)=\wp(z_2)$ and $\wp'(z_1)=-\wp'(z_2)\neq\infty$, it can be checked from the bijection above that $z_1+z_2\equiv0\mod\mathcal L$, and hence $z_1+z_2$ gets sent to the point at infinity as expected. Otherwise, the line joining the points $(\wp(z_1),\wp'(z_1))$ and $(\wp(z_2),\wp'(z_2))$ isn't "vertical" since otherwise it would pass through the point at infinity (Note: this includes the tangent case as well), so it can be expressed as $y=\ell(x)$ where $\ell$ is a linear polynomial. Define
$$h(z)=\ell(\wp(z))-\wp'(z)$$
which then has roots corresponding exactly to the intersections of the line and the elliptic curve. Define 
$$p(x)=4x^3+60G_4(\mathcal L)x-140G_6(\mathcal L).$$ Given such a root $w$ of $h$, then we know that $\wp'(w)=\ell(\wp(w))$ since the point is on the line. The differential equation for $\wp$ can be expressed as $$\wp'(z)^2=p(\wp(z)),$$
so then differentiating this we get $2\wp'(z)\wp''(z)=\wp'(z)p'(\wp(z))$. Therefore for all $z$ where $\wp'(z)\neq0$, 
$$2\wp''(z)=p'(\wp(z)),$$
and since the zeroes of a non-zero holomorphic function ($\wp'$) are isolated, by continuity we get this identity for all $z\in\mathbb C\setminus\mathcal L$. Therefore, given that $w$ is a root of $h$, then $\wp'(w)^2=\ell(\wp(w))^2=p(\wp(w))$, so $\wp(w)$ is a root of $\ell^2-p$, then
\begin{align*}
&\text{$w$ is at least a double root of $h$}\\
   \iff&h'(w)&=0\\ 
   \iff& \wp'(w)\ell'(\wp(w))-\wp''(w)&=0\\ 
   \iff&2\ell(\wp(w))\ell'(\wp(w))-p'(\wp(w))&=0\\
   \iff&(\ell^2-p)'(\wp(w))&=0\\
   \iff&\text{$\wp(w)$ is at least a double root of $\ell^2-p$}
\end{align*}
Now assume further that the above equivalence is true for $w$. Then we can show that $\ell(\wp(w))\neq0$: assume for the sake of contradiction that $\ell(\wp(w))=0$. then $\wp'(w)=\ell(\wp(w))=0$ and $\ell(\wp(w))^2=p(\wp(w))=0$, and also from the above equivalence (second $\iff$), we have $\wp'(w)\ell'(\wp(w))-\wp''(w)=0$, then $\wp''(w)=\wp'(w)\ell'(\wp(w))=0$. Finally, $p'(\wp(w))=2\wp''(w)=0$. Now we have $p(\wp(w))=0$ and $p'(\wp(w))=0$, so this is a contradiction since that means $p$ has a double root at $\wp(w)$, but we already established above that $p$ has $3$ distinct roots. 

Therefore since $\ell(\wp(w))\neq0$, from $2\ell(\wp(w))\ell'(\wp(w))-p'(\wp(w))=0$, we can conclude that 
$$\ell'(\wp(w))=0\iff p'(\wp(w))=0.$$
Furthermore, differentiating the identity $2\wp''(z)=p'(\wp(z))$ we get $$2\wp'''(z)=\wp'(z)p''(\wp(z)).$$
Also since $\ell$ is linear, $\ell''=0$. Hence
\begin{align*}
    &\text{$w$ is at least a triple root of $h$}\\
    \iff &h''(w)&=0\\
    \iff &\wp''(w)\ell'(\wp(w))+\wp'(w)^2\ell''(\wp(w))-\wp'''(w)&=0\\
    \iff &p'(\wp(w))\ell'(\wp(w))-\wp'(w)p''(\wp(w))&=0\\
    \iff &p'(\wp(w))\ell'(\wp(w))-\ell(\wp(w))p''(\wp(w))&=0
\end{align*}

We also have that
\begin{align*}
    &\text{$\wp(w)$ is at least a triple root of $\ell^2-p$}\\
    \iff &(\ell^2-p)''(\wp(w))&=0\\
    \iff &(2{\ell'}^2+2\ell\ell''-p'')(\wp(w))&=0\\
    \iff &(2{\ell'}^2-p'')(\wp(w))&=0\\
    \iff&2\ell'(\wp(w))^2-p''(\wp(w))&=0
\end{align*}
In the case that $\ell'(\wp(w))=p'(\wp(w))=0$, then 
\begin{align*}
&2\ell'(\wp(w))^2-p''(\wp(w))&=0\\
\iff&p''(\wp(w))&=0\\
\iff&\ell(\wp(w))p''(\wp(w))&=0\\
\iff&p'(\wp(w))\ell'(\wp(w))-\ell(\wp(w))p''(\wp(w))&=0
\end{align*}
In the case that $\ell'(\wp(w))\neq0$ and $p'(\wp(w))\neq0$, then
\begin{align*}
&2\ell'(\wp(w))^2-p''(\wp(w))&=0\\
\iff&2\ell'(\wp(w))^2p'(\wp(w))-p''(\wp(w))p'(\wp(w))&=0\\
\iff&\ell'(\wp(w))^2p'(\wp(w))-p''(\wp(w))\ell(\wp(w))\ell'(\wp(w))&=0\\
\iff&\ell'(\wp(w))p'(\wp(w))-p''(\wp(w))\ell(\wp(w))&=0\\
\end{align*}
so either way, 
$$w\text{ is at least a triple root of }h \iff \wp(w)\text{ is at least a triple root of }\ell^2-p$$
$\ell^2-p$ has degree $3$ so roots can be at most triple roots. As for $h$, looking at the definition, $h$ has poles of multiplicity $3$ at each lattice point (that is from the $-\wp'$ term), so applying a similar argument that we did to $\wp$ above, except $g(z)=z^3h(z)$ this time, we get that $h$ has $3$ zeroes counting multiplicity on a period of the lattice. Hence roots of $h$ also can have multiplicity at most $3$. Therefore,  the multiplicity of the root $w$ of $h$ is equivalent to the multiplicity of the root $\wp(w)$ of $\ell^2-p$ where $\wp(w)$ is the $x$-coordinate of the intersection, i.e. the multiplicity of the intersection of the line with the elliptic curve.
\newline\newline
Now this means that mod $\mathcal L$, $z_1$ and $z_2$ are two of the roots of $h$ counting multiplicity, and the third root $z_3$ counting multiplicity corresponds to the third intersection of the line with the elliptic curve counting multiplicity. Now consider once again the anticlockwise border of a parallelogram of the lattice $\gamma=\gamma_1+\gamma_2+\gamma_3+\gamma_4$ where $\gamma_1,\gamma_2,\gamma_3,\gamma_4$ are its sides, $\gamma_3$ is $\gamma_1$ translated by $\omega_1$ and reversed, $\gamma_4$ is $\gamma_2$ translated by $\omega_2$ and reversed, and $\gamma$ doesn't pass through any poles or zeroes of $h$. Then consider
$$\int_\gamma \frac{zh'(z)}{h(z)}dz$$
which turns out from Theorem \ref{extended-argument-principle} to equal $2\pi i$ times the difference between the sum of the roots $z_1+z_2+z_3$ and the sum of poles of $h$ inside the parallelogram, counting multiplicity. We now show this integral evaluates to $2\pi i$ times an element of $\mathcal{L}$:
{\tiny
\begin{align*}
\int_\gamma \frac{zh'(z)}{h(z)}dz&=\int_{\gamma_1} \frac{zh'(z)}{h(z)}dz+\int_{\gamma_2} \frac{zh'(z)}{h(z)}dz+\int_{\gamma_3} \frac{zh'(z)}{h(z)}dz+\int_{\gamma_4} \frac{zh'(z)}{h(z)}dz\\
&=\int_{\gamma_1} \frac{zh'(z)}{h(z)}dz+\int_{\gamma_2} \frac{zh'(z)}{h(z)}dz-\int_{\gamma_1} \frac{(z+\omega_1)h'(z+\omega_1)}{h(z+\omega_1)}dz-\int_{\gamma_2} \frac{(z+\omega_2)h'(z+\omega_2)}{h(z+\omega_2)}dz\\
&=\int_{\gamma_1} \frac{zh'(z)}{h(z)}dz+\int_{\gamma_2} \frac{zh'(z)}{h(z)}dz-\int_{\gamma_1} \frac{(z+\omega_1)h'(z)}{h(z)}dz-\int_{\gamma_2} \frac{(z+\omega_2)h'(z)}{h(z)}dz\\
&=-\int_{\gamma_1} \frac{\omega_1h'(z)}{h(z)}dz-\int_{\gamma_2} \frac{\omega_2h'(z)}{h(z)}dz\\
&=-\omega_1\int_{\gamma_1} \frac{h'(z)}{h(z)}dz-\omega_2\int_{\gamma_2} \frac{h'(z)}{h(z)}dz.
\end{align*}
}
From here it suffices to show the remaining integrals are integers. These are two integrals of the logarithmic derivative of $h$ starting and ending at points which, due to the $\mathcal L$-periodicity of $h$, have equal values of $h$. Therefore each integral evaluates to an integer multiple of $2\pi i$, making the above evaluate to $2\pi i$ times a lattice point. Also, the only pole inside the parallelogram is a triple pole at a lattice point. Therefore the sum of the roots of $h$ in the parallelogram is itself actually part of the lattice, i.e. $z_1+z_2+z_3\in\mathcal L$ and hence $z_1+z_2+z_3\equiv0\mod\mathcal L$. Then since $z_3\equiv-z_1-z_2\mod\mathcal L$ and maps to the third intersection of the line with the elliptic curve, i.e. the negative of the sum of the other two points on the elliptic curve, then $-z_3\equiv z_1+z_2\mod\mathcal L$ maps to the actual sum of the other two points on the elliptic curve. Hence the map is a group isomorphism.
\end{proof}

Now the proof in the previous section applies to show the m torsion points are a direct sum of two cyclic groups. One thing we are missing here is an argument that every elliptic curve is covered here. That is, how do we know we get every cubic in $x$ on the right hand side of the equation.

\subsection{Level $N$ Modular Forms}
Keywords: Principal congruence subgroups, $\Gamma_0(N)$

We now complete the definition of modular forms
\begin{definition} [Principal congruence subgroup]
    The principal congruence subgroup of level $N$, denoted $\Gamma(N)$, is the subgroup of $\SLZ$ formed by those matrices whose entries are equivalent to those of the identity matrix $\mod N$.
\end{definition}

\begin{proof}
    Define a function $f:\SLZ\rightarrow \operatorname{SL}_2(\mathbb{Z}/N\mathbb Z)$ where you take each of the entries of the matrix $\mod N$. This is clearly a group homomorphism as the operations used in computing the product of matrices are addition and multiplication, which are compatible with taking $\mod N$, meaning $f(a)f(b)=f(ab)$. Furthermore, it is clear that $\Gamma(N)$ is the kernel of $f$. Thus $\Gamma(N)$ forms a group, in particular a normal subgroup of $\SLZ$.
\end{proof}

\begin{lemma}
    For any principal congruence subgroup $\Gamma(N)$, $\SLZ/\Gamma(N)$ is finite.
\end{lemma}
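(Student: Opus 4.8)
The plan is to exploit the homomorphism $f : \SLZ \to \operatorname{SL}_2(\mathbb{Z}/N\mathbb{Z})$ constructed in the preceding proof, whose kernel is exactly $\Gamma(N)$, together with the fact that its codomain is a finite set. The idea is that reducing a matrix modulo $N$ throws away almost all of its information, and only finitely many reductions are possible.

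First I would observe that two matrices $A, B \in \SLZ$ lie in the same coset of $\Gamma(N)$ precisely when $A^{-1}B \in \Gamma(N)$, which, since $f$ is a homomorphism with kernel $\Gamma(N)$, happens if and only if $f(A)^{-1}f(B) = f(A^{-1}B)$ is the identity, i.e. if and only if $f(A) = f(B)$. Hence the map $f$ descends to a well-defined injection $\SLZ/\Gamma(N) \hookrightarrow \operatorname{SL}_2(\mathbb{Z}/N\mathbb{Z})$ sending each coset to the common reduction modulo $N$ of its elements. Next I would bound the target: every element of $\operatorname{SL}_2(\mathbb{Z}/N\mathbb{Z})$ is in particular an element of $\mat_2(\mathbb{Z}/N\mathbb{Z})$, an array of four entries each drawn from the $N$-element set $\mathbb{Z}/N\mathbb{Z}$, so $\operatorname{SL}_2(\mathbb{Z}/N\mathbb{Z})$ has at most $N^4$ elements. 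Since $\SLZ/\Gamma(N)$ injects into this finite set, it is finite, with $|\SLZ/\Gamma(N)| \leq N^4$.

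There is no substantial obstacle in this argument; it is essentially a direct application of the correspondence between cosets of a kernel and elements of the image, which was set up in the lemma immediately preceding. The only point deserving a moment of care is the justification that distinct cosets map to \emph{distinct} reductions (not merely that every reduction is hit), and this is exactly the ``kernel is $\Gamma(N)$'' statement already established. One could, if desired, sharpen the bound to the exact value $|\SLZ/\Gamma(N)| = N^3 \prod_{p \mid N}(1 - p^{-2})$ by also showing $f$ is surjective and counting $\operatorname{SL}_2(\mathbb{Z}/N\mathbb{Z})$ via the Chinese Remainder Theorem, but finiteness is all that is needed here and the crude estimate suffices.
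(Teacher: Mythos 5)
Your argument is correct and is essentially the paper's proof: both use the reduction-mod-$N$ homomorphism $f:\SLZ\to\operatorname{SL}_2(\mathbb{Z}/N\mathbb{Z})$ with kernel $\Gamma(N)$ and conclude that the quotient embeds in the finite group $\operatorname{SL}_2(\mathbb{Z}/N\mathbb{Z})$. The only difference is cosmetic — you unwind the first isomorphism theorem by hand and add the explicit bound $N^4$, whereas the paper simply cites the theorem.
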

\begin{proof}
    With the same homomorphism as above $f:\SLZ\rightarrow \operatorname{SL}_2(\mathbb{Z}/N\mathbb Z)$, by the first isomorphism theorem, $\SLZ/\Gamma(N)$ is isomorphic to the image of $f$. However, evidently $\operatorname{SL}_2(\mathbb{Z}/N\mathbb Z)$ is finite since there are a finite number of matrices with entries $\mod N$, so the image of $f$, being a subset of that, is finite. Therefore $\SLZ/\Gamma(N)$ is finite.
\end{proof}

\begin{definition} [Congruence subgroup]
    A congruence subgroup is a subgroup of $\SLZ$ that contains a principal congruence subgroup. Then the smallest $N$ for which $\Gamma(N)$ is a subgroup of this congruence subgroup is called its "level".
\end{definition}

An example of a congruence subgroup which is used in the proof of Fermat's Last Theorem is
\begin{definition}
    For a positive integer $N$, $\Gamma_0(N)$ is the congruence subgroup of level $N$ formed by matrices in $\SLZ$ whose bottom left entry is $0\mod N$.
\end{definition}
\begin{proof}
    The product of two upper triangular matrices (square matrices for which all entries below the main diagonal is zero) is upper triangular. In the case of $2\times 2$ matrices in $\operatorname{SL}_2(\mathbb Z/N\mathbb Z)$, using the formula for inverses, then the inverse of an upper triangular matrix is also upper triangular. The identity matrix is upper triangular. Hence the set of upper triangular matrices in $\operatorname{SL}_2(\mathbb{Z}/N\mathbb Z)$ which we will call $T$ form a subgroup. It is then clear that $\Gamma_0(N)$ is the pre-image of $T$ under the map $f:\SLZ\rightarrow \operatorname{SL}_2(\mathbb{Z}/N\mathbb Z)$ that takes the entries of the matrix $\mod N$. Hence $\Gamma_0(N)$ forms a subgroup of $\SLZ$. Also, ${I}\subset T$ in $\operatorname{SL}_2(\mathbb Z/N\mathbb Z)$, so taking preimages of each, $\Gamma(N)\subset\Gamma_0(N)$, so $\Gamma_0(N)$ is a congruence subgroup of level $N$.
\end{proof}

\begin{lemma}
    For any congruence subgroup $\Gamma$, $\SLZ/\Gamma$ is finite.
\end{lemma}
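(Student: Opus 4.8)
The plan is to leverage the previous lemma, which establishes that $\SLZ/\Gamma(N)$ is finite for every principal congruence subgroup $\Gamma(N)$, together with the defining property of a congruence subgroup: by definition $\Gamma$ contains some principal congruence subgroup $\Gamma(N)$ (take $N$ to be its level). The chain of subgroups
$$\Gamma(N)\subseteq\Gamma\subseteq\SLZ$$
is the whole story, and the argument is essentially a counting argument about cosets.

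First I would note that since $\Gamma(N)\subseteq\Gamma$, every left coset of $\Gamma$ in $\SLZ$ is a union of left cosets of $\Gamma(N)$; more usefully, there is a well-defined surjection
$$\SLZ/\Gamma(N)\longrightarrow\SLZ/\Gamma,\qquad g\,\Gamma(N)\longmapsto g\,\Gamma.$$
The map is well-defined precisely because $\Gamma(N)\subseteq\Gamma$: if $g\,\Gamma(N)=g'\,\Gamma(N)$ then $g^{-1}g'\in\Gamma(N)\subseteq\Gamma$, so $g\,\Gamma=g'\,\Gamma$. It is surjective because every coset $g\,\Gamma$ is the image of $g\,\Gamma(N)$.

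Then I would conclude: the target of a surjection from a finite set is finite, and $\SLZ/\Gamma(N)$ is finite by the preceding lemma, so $\SLZ/\Gamma$ is finite. In fact this gives the quantitative bound $|\SLZ/\Gamma|\le|\SLZ/\Gamma(N)|$, and $|\SLZ/\Gamma(N)|$ is itself bounded by $|\operatorname{SL}_2(\Z/N\Z)|$ via the first isomorphism theorem argument already used. There is no real obstacle here — the only thing to be careful about is invoking the definition of congruence subgroup correctly (so that $N$ exists) and checking that the coset map is well-defined; everything else is immediate from the earlier lemma.
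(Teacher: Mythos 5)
Your proof is correct and is essentially the paper's argument: the paper says $\SLZ/\Gamma$ is finite because it is a "less fine partitioning" of $\SLZ$ than $\SLZ/\Gamma(N)$, and your explicit surjection $g\,\Gamma(N)\mapsto g\,\Gamma$ is just the rigorous form of that remark. Nothing is missing.
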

\begin{proof}
    Since $\Gamma$ is a congruence subgroup, then there is some $N$ where $\Gamma(N)\subset\Gamma\subset\SLZ$. Then since $\SLZ/\Gamma(N)$ is finite, $\SLZ/\Gamma$ is finite, as it is a less fine partitioning of $\SLZ$.
\end{proof}

Now we are better equipped to define what a modular form is.

\begin{definition} [Weakly Modular Form]

A holomorphic function $f:\mathcal{H}\rightarrow\mathbb{C}$ is a weakly modular form of weight $k$ with respect to congruence subgroup $\Gamma$ if
$$f\left(\Mabcd z\right)=(cz+d)^kf(z)\text{ for all }z\in \mathbb{C}, \Mabcd\in\Gamma$$
\end{definition}

When the equation in the above definition holds, we say that "$f$ is weight-$k$ invariant under $\Gamma$".


\subsubsection{Periodic}

Here $\Gamma$ is a congruence subgroup so it contains some $\Gamma(N)$, and therefore contains $\begin{pmatrix} 1&N\\0&1 \end{pmatrix}$. Thus define $h$ to be the smallest positive integer such that $\begin{pmatrix} 1&h\\0&1 \end{pmatrix}\in\Gamma$. If we take a weakly modular form $f$ with respect to $\Gamma$ and sub into the functional equation $\Mabcd = \begin{pmatrix} 1&h\\0&1 \end{pmatrix}$ where $N$ is the level of $\Gamma$, then we get $f(z+h)=f(z)$, so $f$ is actually periodic with period $h$. This actually means that, by making the substitution $q=e^{2\pi iz/h}$, and letting $D$ be the unit disc of the complex plane, there exists a unique function $g:D\setminus\{0\}\rightarrow\mathbb{C}$ such that $g(q)=f(z)$. $g$ is then also holomorphic on $D\setminus\{0\}$.

\subsubsection{Holomorphic at infinity}

Apparently it is very common to study this $\hat{g}$ with the $q$ sub, such as taking the power series of $\hat{g}$ at $0$. Also, as long as $g$ is bounded on an open neighbourhood of $0$, then by Riemann's Theorem on removable singularities, it is indeed possible to fill in $g$ at $0$. Therefore as long as $f(z)$ is bounded for $\operatorname{Im}(z)>a$ for some $a$, then $f$ being a weakly modular form implies it is a modular form.
\newline\newline
Because the disc $D$ is missing $0$, $g$ may not necessarily be nicely behaved around $0$, based on the behaviour of $f$ as the imaginary part approaches infinity. We say that a weakly modular form $f$ is "holomorphic at infinity" if its corresponding $g$ can be continued to $0$ such that $g$ ends up being holomorphic at $0$.
\newline\newline
Remark about holomorphicness of function of $q$: A function of $q$ can be expanded as a Laurent series, which is a power series where the powers of $q$ are allowed to be negative. There are general situations for functions having a Laurent series, while also, in this case, it amounts to a periodic function having a Fourier series. One practical way to defining holomorphic here is just to say that there are no negative powers of $q$ in this expansion, so that taking $q=0$ should be a valid move on this expression once the point of infinity is added.
\newline\newline
An interesting fact is that by Riemann's theorem on removable singularities, if $g$ is bounded on some neighbourhood of $0$, then it is indeed possible to holomorphically continue $g$ to $0$. The converse is true by continuity. Therefore, $f$ being holomorphic at infinity is equivalent to $f$ being bounded past some value for its imaginary part.
\newline\newline
If we adjoin $\infty$ and $\mathbb{Q}$ to a congruence subgroup $\Gamma$, call the equivalence class of points in $\{\infty\}\cup \mathbb{Q}$ a \emph{cusp} of $\Gamma$. 

\begin{lemma}
    $\{\infty\}\cup \mathbb{Q}$ is closed under $SL_2(\mathbb{Z)}$
\end{lemma}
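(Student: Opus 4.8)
The plan is to pass to the projective description of the Riemann sphere. Recall that a point of $\hat{\mathbb{C}}$ is an equivalence class of column vectors $\begin{pmatrix} z \\ w \end{pmatrix}\in\C^2\setminus\{0\}$ up to scaling, with $\begin{pmatrix} z \\ w \end{pmatrix}$ corresponding to $z/w\in\C$ when $w\neq 0$ and $\begin{pmatrix} z \\ 0 \end{pmatrix}$ corresponding to $\infty$. Under this dictionary I would first observe that $\{\infty\}\cup\mathbb{Q}$ is precisely the set of classes admitting a representative with \emph{integer} entries (not both zero): indeed $\infty=\begin{pmatrix} 1 \\ 0 \end{pmatrix}$, any rational $p/q$ (with $p,q\in\mathbb{Z}$, $q\neq 0$) is the class of $\begin{pmatrix} p \\ q \end{pmatrix}$, and conversely any nonzero integer column vector maps to a rational number or to $\infty$.

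With this reformulation, the M\"{o}bius action of $\Mabcd\in SL_2(\mathbb{Z})$ is given by left multiplication on these column-vector representatives. Since the matrix has integer entries, it sends an integer column vector $\begin{pmatrix} z \\ w \end{pmatrix}$ to the integer column vector $\begin{pmatrix} az+bw \\ cz+dw \end{pmatrix}$. The only thing to check is that this image is not the zero vector, and this is exactly where $\det\Mabcd=1$ is used: the matrix is invertible over $\mathbb{Z}$, so it cannot take a nonzero vector to zero. Hence the image class again has an integer representative and therefore lies in $\{\infty\}\cup\mathbb{Q}$.

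Unwinding the cases makes this concrete: $\infty=\begin{pmatrix} 1 \\ 0 \end{pmatrix}\mapsto\begin{pmatrix} a \\ c \end{pmatrix}$, which is $a/c\in\mathbb{Q}$ when $c\neq 0$ and $\infty$ when $c=0$; and $p/q=\begin{pmatrix} p \\ q \end{pmatrix}\mapsto\begin{pmatrix} ap+bq \\ cp+dq \end{pmatrix}$, which is the rational number $\tfrac{ap+bq}{cp+dq}$ when $cp+dq\neq 0$ and $\infty$ otherwise. There is no genuine obstacle here; the only step that warrants a second's thought is the non-vanishing of the image column vector, which is immediate from invertibility of the matrix over $\mathbb{Z}$. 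One could instead argue without projective coordinates by directly computing $\tfrac{az+b}{cz+d}$ for $z\in\mathbb{Q}$ and tracking when the denominator vanishes, but the projective viewpoint keeps the closure transparent and also exhibits $\{\infty\}\cup\mathbb{Q}$ as the natural $SL_2(\mathbb{Z})$-stable set from which cusps are formed.
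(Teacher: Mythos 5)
Your proof is correct, but it takes a different route from the one in the paper. The paper argues directly with the formula $z\mapsto\frac{az+b}{cz+d}$ and splits into cases: $\infty\mapsto a/c$, a rational $z\neq-d/c$ maps to a rational, and $z=-d/c$ maps to $\infty$ because the numerator tends to $-1/c\neq0$ while the denominator vanishes. You instead work projectively: you identify $\{\infty\}\cup\Q$ as exactly the set of points of $\hat{\C}$ admitting an integer column-vector representative, note that a matrix in $\SLZ$ sends integer vectors to integer vectors, and use invertibility to rule out the zero vector. Both arguments are sound; yours is arguably cleaner, since the paper's case analysis has to treat the $c=0$ and $c\neq0$ subcases separately (its line ``$\frac{a}{c}\in\Q$'' tacitly assumes $c\neq0$; when $c=0$ one gets $\infty\mapsto\infty$, which is still fine but is a case the fraction formula does not display uniformly), whereas in your formulation all of these collapse into the single observation that an invertible integer matrix permutes the nonzero integer vectors up to scaling. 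Your closing remark unwinding the cases essentially recovers the paper's computation, so the two proofs are reconciled. The only caveat is that your argument leans on the projective definition of the M\"obius action; since the paper sets up exactly that definition earlier, this is legitimate here.
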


\begin{proof}
   First, as $z \to \infty$, $$\frac{az + b}{cz + d} \to \frac{a}{c}\in\Q.$$ 
   If $z$ is a rational $\neq -d/c$, then $\frac{az + b}{cz + d}\in\Q$. Finally, when $z = -d/c$ where $c\neq0$, the denominator goes to $0$ while the $a(-d/c) +b= \frac{bc-ad}{c}=-1/c \in \mathbb{Q}$ (as $c\neq0$), taking the value to $\infty$. Thus,  $\{\infty\}\cup \mathbb{Q}$ is closed under $SL_2(\mathbb{Z)}$. 
\end{proof}

\begin{theorem}
    If $\Gamma$ is a congruence subgroup of level $N$, then $\alpha^{-1}\Gamma\alpha$ is also a congruence subgroup with level $N$, where $\alpha \in SL_2(\mathbb{Z})$.
\end{theorem}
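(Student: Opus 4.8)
The plan is to exploit the fact, established earlier, that each principal congruence subgroup $\Gamma(N)$ is the kernel of the reduction homomorphism $\SLZ\to\SL_2(\Z/N\Z)$ and hence a \emph{normal} subgroup of $\SLZ$; normality is exactly what makes conjugation harmless, since $\alpha^{-1}\Gamma(N)\alpha=\Gamma(N)$ for every $\alpha\in\SLZ$. I would also record the trivial observation that $x\mapsto\alpha^{-1}x\alpha$ is a group automorphism of $\SLZ$, so it sends the subgroup $\Gamma$ to a subgroup $\alpha^{-1}\Gamma\alpha$ of $\SLZ$.

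Next, to see that $\alpha^{-1}\Gamma\alpha$ is a congruence subgroup whose level is at most $N$: since $\Gamma$ has level $N$ we have $\Gamma(N)\subseteq\Gamma$, and conjugating by $\alpha$ while invoking normality of $\Gamma(N)$ gives $\Gamma(N)=\alpha^{-1}\Gamma(N)\alpha\subseteq\alpha^{-1}\Gamma\alpha$. Thus $\alpha^{-1}\Gamma\alpha$ contains a principal congruence subgroup, so it is indeed a congruence subgroup, and its level --- the least $M$ with $\Gamma(M)$ contained in it --- satisfies $M\le N$.

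Finally I would run the same argument in reverse to obtain the opposite inequality. Replacing $\Gamma$ by $\alpha^{-1}\Gamma\alpha$ and $\alpha$ by $\alpha^{-1}\in\SLZ$, the conjugate becomes $\alpha(\alpha^{-1}\Gamma\alpha)\alpha^{-1}=\Gamma$; so if $M$ denotes the level of $\alpha^{-1}\Gamma\alpha$ then $\Gamma(M)\subseteq\alpha^{-1}\Gamma\alpha$, whence $\Gamma(M)=\alpha\Gamma(M)\alpha^{-1}\subseteq\Gamma$, forcing $N\le M$ by minimality of $N$. Together with $M\le N$ this yields $M=N$, as required.

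There is no real obstacle here; the one point to be careful about is not to conflate ``congruence subgroup of level $N$'' with ``contains $\Gamma(N)$'' --- many subgroups contain $\Gamma(N)$ without having level exactly $N$ --- which is precisely why the symmetric two-sided argument, rather than a single inclusion, is needed to pin the level down. The essential ingredient, worth stating explicitly, is the normality of $\Gamma(N)$ in $\SLZ$.
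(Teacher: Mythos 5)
Your proposal is correct and rests on the same key fact as the paper's proof: $\Gamma(N)$ is normal in $\SLZ$ (being the kernel of reduction mod $N$), so conjugation by $\alpha$ fixes $\Gamma(N)$ and hence carries the inclusion $\Gamma(N)\subseteq\Gamma$ to $\Gamma(N)\subseteq\alpha^{-1}\Gamma\alpha$. The paper verifies this by the explicit computation $\alpha^{-1}\gamma\alpha\equiv\alpha^{-1}I\alpha\equiv I\pmod N$ for $\gamma\in\Gamma(N)$, which is the same normality argument in coordinates. Where you go further is in the final caveat you raise yourself: containing $\Gamma(N)$ only shows the level of $\alpha^{-1}\Gamma\alpha$ is \emph{at most} $N$, and the paper stops there, implicitly conflating the two. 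Your symmetric argument --- conjugating back by $\alpha^{-1}$ to force $\Gamma(M)\subseteq\Gamma$ and hence $N\le M$ --- is exactly what is needed to justify the phrase ``with level $N$'' in the statement, so your write-up is the more complete of the two.
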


\begin{proof}
  We will first show that $\alpha^{-1}\Gamma\alpha$ is a subgroup. If we choose the identity element of $\Gamma$ to be $I$, $\alpha^{-1}I\alpha= \alpha^{-1}\alpha$ which becomes its own identity element $I'$, i.e. $\alpha^{-1}\Gamma\alpha$ contains an identity element. Next, we will show that every element has an inverse. Suppose we have $\gamma$ such that $\gamma \in \Gamma$. Then, $$(\alpha^{-1}\gamma^{-1}\alpha) \cdot (\alpha^{-1}\gamma\alpha) = (\alpha^{-1}\gamma^{-1}) \cdot (\gamma\alpha)=(\alpha^{-1}) \cdot (\alpha)$$ giving the identity element. As $\gamma^{-1} \in \Gamma$, $(\alpha^{-1}\gamma^{-1}\alpha)$ is the inverse of $(\alpha^{-1}\gamma\alpha)$. Finally, for $\gamma \in \Gamma(N)\subset\Gamma$ we have $\gamma \equiv I$ (mod $N$) for each element in the matrix entry. Then, $\alpha^{-1}\gamma\alpha\in\alpha^{-1}\Gamma\alpha$, and $\alpha^{-1}\gamma\alpha \equiv \alpha^{-1}I\alpha \equiv \alpha^{-1}\alpha \equiv I$ mod $N$. Hence $\Gamma(N)\subset\alpha^{-1}\Gamma\alpha$, so $\alpha^{-1}\Gamma\alpha$ is a congruence subgroup with level $N$.
\end{proof}

\begin{definition}
   Define $j(\alpha,z)=cz+d $ where $\alpha= \Mabcd$ which can be any $2\times2$ matrix.
\end{definition}

\begin{definition}
For a function $f:\mathcal{H}\rightarrow\mathbb{C}$, an integer $k$, and a $2\times2$ matrix $\alpha$ with complex entries, define the function $f[\alpha]_k$ such that
$$f[\alpha]_k(z)=j(\alpha,z)^{-k}f(\alpha(z))$$
for all $z\in\mathcal{H}$.
\end{definition}
    
\begin{lemma} \label{j func result}
$j(\beta,\gamma(z))j(\gamma,z)=j(\beta\gamma,z)$.
\end{lemma}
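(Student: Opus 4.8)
The plan is a direct computation comparing the two sides after writing out matrix entries. Write $\beta = \begin{pmatrix} a_1 & b_1 \\ c_1 & d_1 \end{pmatrix}$ and $\gamma = \begin{pmatrix} a_2 & b_2 \\ c_2 & d_2 \end{pmatrix}$, so that by definition $j(\gamma,z) = c_2 z + d_2$ and $\gamma(z) = \frac{a_2 z + b_2}{c_2 z + d_2}$, working at a point $z$ where $c_2 z + d_2 \neq 0$ (automatic for $z \in \mathcal{H}$ with real entries, since the pole $-d_2/c_2$ is real). Then
$$j(\beta,\gamma(z)) = c_1\gamma(z) + d_1 = \frac{c_1(a_2 z + b_2) + d_1(c_2 z + d_2)}{c_2 z + d_2},$$
so multiplying by $j(\gamma,z) = c_2 z + d_2$ clears the denominator and gives $j(\beta,\gamma(z))\,j(\gamma,z) = (c_1 a_2 + d_1 c_2)z + (c_1 b_2 + d_1 d_2)$.

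On the other hand, the $2\times 2$ matrix multiplication formula already recorded shows that the bottom row of $\beta\gamma$ is exactly $(c_1 a_2 + d_1 c_2,\ c_1 b_2 + d_1 d_2)$, whence $j(\beta\gamma,z) = (c_1 a_2 + d_1 c_2)z + (c_1 b_2 + d_1 d_2)$; comparing the two expressions finishes the proof. A cleaner conceptual route I would also mention is the linear action on column vectors underlying the Möbius transformation: any $\alpha = \begin{pmatrix} a & b \\ c & d \end{pmatrix}$ sends $\begin{pmatrix} z \\ 1 \end{pmatrix}$ to $\begin{pmatrix} az+b \\ cz+d \end{pmatrix} = j(\alpha,z)\begin{pmatrix} \alpha(z) \\ 1 \end{pmatrix}$. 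Applying this twice,
$$(\beta\gamma)\begin{pmatrix} z \\ 1 \end{pmatrix} = \beta\!\left(j(\gamma,z)\begin{pmatrix} \gamma(z) \\ 1 \end{pmatrix}\right) = j(\gamma,z)\,j(\beta,\gamma(z))\begin{pmatrix} \beta(\gamma(z)) \\ 1 \end{pmatrix},$$
and reading off the second coordinate against $(\beta\gamma)\begin{pmatrix} z \\ 1 \end{pmatrix} = j(\beta\gamma,z)\begin{pmatrix} (\beta\gamma)(z) \\ 1 \end{pmatrix}$ gives the identity at once (the first coordinates simultaneously re-prove $(\beta\gamma)(z) = \beta(\gamma(z))$).

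There is essentially no hard step here: the only thing to watch is the degenerate case where $\gamma$ has a pole at $z$, i.e. $c_2 z + d_2 = 0$, which does not arise for $z \in \mathcal{H}$ with real entries, and which otherwise can be handled by reading the statement as an identity of rational functions in $z$. I expect the entire content to be the bookkeeping of matching $(c_1 a_2 + d_1 c_2)z + (c_1 b_2 + d_1 d_2)$ with the bottom row of the product $\beta\gamma$, which is immediate from the multiplication formula already in the text.
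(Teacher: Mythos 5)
Your computation is correct and is essentially the paper's own proof: both expand $j(\beta,\gamma(z))\,j(\gamma,z)$, clear the denominator $c_2z+d_2$, and match the result against the bottom row of the product matrix $\beta\gamma$. The additional column-vector identity $\alpha\bigl(\begin{smallmatrix} z \\ 1 \end{smallmatrix}\bigr)=j(\alpha,z)\bigl(\begin{smallmatrix} \alpha(z) \\ 1 \end{smallmatrix}\bigr)$ is a nice conceptual supplement but not needed.
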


\begin{proof}
   Let $\beta=\Mabcd$ and $\gamma=\begin{pmatrix}
e & f \\
g & h
\end{pmatrix}$.

\begin{align*}
\text{LHS} &= (c\gamma(z) + d)(gz + h) \\
          &= c\gamma(z)(gz + h) + dgz + dh \\
          &= \frac{c(ez + f)}{gz + h}(gz + h) + dgz + dh \\
          &= cez + dgz + cf + dh.
\end{align*}
Evaluating $\beta\gamma$, \\ \[\begin{pmatrix}
a & b \\
c & d
\end{pmatrix}
\begin{pmatrix}
e & f \\
g & h
\end{pmatrix}
=
\begin{pmatrix}
ae + bg & af + bh \\
ce + dg & cf + dh
\end{pmatrix}.
\]

Hence, $LHS=RHS= cez+dgz+cf+dh$.

\end{proof}
\begin{definition}
    For a congruence subgroup $\Gamma$ in $\SL_2\mathbb(\Z)$ and some integer $k$, we define a function $f$ to be a \emph{modular form of weight $k$ with respect to $\Gamma$} if 
    \begin{enumerate}
        \item $f$ is weakly modular of weight $k$ with respect to $\Gamma$
        \item $f[\alpha]_k$ is holomorphic at $\infty$ for all $\alpha \in \SL_2(\mathbb{Z})$
    \end{enumerate}
    \end{definition}
    
But what does holomorphic at $\infty$ mean? First, it serves to show that 

\begin{prop}
    $f[\alpha]_k$ is weakly modular under $\alpha^{-1}\Gamma\alpha$. 

\end{prop}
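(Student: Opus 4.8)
The plan is to reduce the whole statement to a single ``cocycle'' identity for the slash operator, namely that for any two $2\times 2$ matrices $\alpha,\beta$ (with the relevant denominators non-vanishing on $\mathcal{H}$),
$$f[\alpha\beta]_k = \bigl(f[\alpha]_k\bigr)[\beta]_k.$$
Granting this, the proposition drops out immediately. Given $\beta\in\alpha^{-1}\Gamma\alpha$, I would write $\beta=\alpha^{-1}\gamma\alpha$ with $\gamma\in\Gamma$ and compute
$$\bigl(f[\alpha]_k\bigr)[\beta]_k = \bigl(f[\alpha]_k\bigr)[\alpha^{-1}\gamma\alpha]_k = f[\alpha\alpha^{-1}\gamma\alpha]_k = f[\gamma\alpha]_k = \bigl(f[\gamma]_k\bigr)[\alpha]_k = f[\alpha]_k,$$
where the second and fourth equalities are the cocycle identity and the last uses that $f$ is weakly modular of weight $k$ under $\Gamma$, so $f[\gamma]_k = f$. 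Thus $f[\alpha]_k$ is weight-$k$ invariant under $\alpha^{-1}\Gamma\alpha$, which (since this conjugate is itself a congruence subgroup by the theorem above) is exactly the claim.

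To establish the cocycle identity I would simply unwind the definition of $[\,\cdot\,]_k$. By definition,
$$\bigl(f[\alpha]_k\bigr)[\beta]_k(z) = j(\beta,z)^{-k}\bigl(f[\alpha]_k\bigr)(\beta(z)) = j(\beta,z)^{-k}\,j(\alpha,\beta(z))^{-k}\,f\bigl(\alpha(\beta(z))\bigr).$$
Two ingredients then finish it. First, Möbius transformations compose, i.e. $\alpha(\beta(z)) = (\alpha\beta)(z)$, which is precisely the group-action property already recorded for $\GL_2(\C)$ and hence for $\SLZ$. Second, Lemma \ref{j func result} gives $j(\alpha,\beta(z))\,j(\beta,z) = j(\alpha\beta,z)$, so $j(\beta,z)^{-k} j(\alpha,\beta(z))^{-k} = j(\alpha\beta,z)^{-k}$. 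Substituting both turns the right-hand side into $j(\alpha\beta,z)^{-k} f\bigl((\alpha\beta)(z)\bigr) = f[\alpha\beta]_k(z)$, as wanted.

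Finally I would note that $f[\alpha]_k$ is genuinely holomorphic on $\mathcal{H}$, so that ``weakly modular'' is meaningful for it: writing $\alpha = \Mabcd$ with $\alpha\in\SLZ$, the factor $j(\alpha,z) = cz+d$ is holomorphic and nowhere zero on $\mathcal{H}$ (a zero would force $z = -d/c\in\R$, impossible in $\mathcal{H}$ when $c\neq 0$, while if $c=0$ then $d\neq 0$), and $\alpha$ maps $\mathcal{H}$ into $\mathcal{H}$, so $f[\alpha]_k(z) = j(\alpha,z)^{-k} f(\alpha(z))$ is a product and composition of holomorphic functions. I do not expect a genuine obstacle here: the only thing requiring care is the bookkeeping in the cocycle computation — keeping the product order $\alpha\beta$ consistent with the form of Lemma \ref{j func result} as stated — after which the argument is purely formal.
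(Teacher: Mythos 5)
Your proof is correct and follows essentially the same route as the paper: both arguments reduce to the cocycle identity $j(\beta,\gamma(z))j(\gamma,z)=j(\beta\gamma,z)$ of Lemma \ref{j func result} together with composition of M\"obius transformations. You merely package the computation as the standalone identity $f[\alpha\beta]_k=(f[\alpha]_k)[\beta]_k$ before applying it, which is a clean (and standard) way to organize the same calculation.
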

\begin{proof}
    We know from our definition of a weakly modular form that $$f(\gamma\alpha(z))=j(\gamma,\alpha(z))^kf(\alpha(z))$$  Then by Lemma \ref{j func result}, 
    $$f((\gamma\alpha)(z))= j(\gamma\alpha,z)^kj(\alpha,z)^{-k}f(\alpha(z))$$ and again, $$j(\alpha,(\alpha^{-1}\gamma\alpha)(z))^{-k}f((\gamma\alpha) (z))=j(\alpha^{-1}\gamma\alpha,z)^kj(\alpha,z)^{-k}f(\alpha(z)).$$
    \\ This implies $$f[\alpha]_k((\alpha^{-1}\gamma\alpha)(z))=j(\alpha^{-1}\gamma\alpha,z)^kf[\alpha]_k(z)$$ as desired.
\end{proof}
    Thus, as $f[\alpha]_k$ is holomorphic on $\mathcal{H}$ and weakly modular on $\alpha^{-1}\Gamma\alpha$, a congruence subgroup of $\SL_2\mathbb(\Z)$, its holomorphy at $\infty$ is justified.

\subsection{Hecke Operators}
Keywords: Newform, Eigenform, Hecke Operator. \newline

Modular forms of a fixed weight and congruence subgroup form a vector space over $\mathbb{C}$, as  adding two modular forms gives another, and scaling by a complex number preserves modularity.
\newline\newline
It would be nicer to work with modular forms if there were operations you could do to them to get other modular forms. One particular operator of interest is the Hecke operator $T_n$ defined for each positive integer $n$. We will define it after introducing some prerequisites.

\begin{definition}
    $M_n$ denotes the set of $2\times2$ matrices with integer entries and determinant $n$.
\end{definition}

If you multiply a matrix from $M_n$ with a matrix in $\SLZ$ on either the left or right, the resulting matrix will still have integer entries and determinant $n$, so it's still in $M_n$. You can thus verify that left multiplication by things in $\SLZ$ defines a group action on $M_n$. Then we can write the set of orbits under this group action as $\SLZ \backslash M_n$. A similar kind of notation can be used for any subgroup of $\SLZ$.

\begin{lemma}
    For any positive integer $n$, the set of orbits $\SLZ \backslash M_n$ is finite.
\end{lemma}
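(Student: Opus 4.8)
The plan is to produce in every orbit a canonical upper–triangular representative and then count how many such representatives can exist. The key observation is that left multiplication by the matrices $\begin{pmatrix}1&k\\0&1\end{pmatrix}$, $\begin{pmatrix}1&0\\k&1\end{pmatrix}$ (for $k\in\Z$) and $\begin{pmatrix}0&-1\\1&0\end{pmatrix}$, all of which lie in $\SLZ$, realises on $M_n$ exactly the integer row operations ``add an integer multiple of one row to the other'' and ``swap the two rows up to a sign''. So, starting from $A=\begin{pmatrix}a&b\\c&d\end{pmatrix}\in M_n$, I would run the Euclidean algorithm on the first column: while $c\neq0$, subtract a suitable integer multiple of one row from the other to shrink $|c|$, swapping rows when necessary; this terminates since the nonnegative integer $|a|+|c|$ cannot decrease forever. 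We thus reach, within the same $\SLZ$-orbit, a matrix $\begin{pmatrix}a'&b'\\0&d'\end{pmatrix}$; multiplying by $-I\in\SLZ$ if needed we may take $a'>0$, and since the determinant is unchanged $a'd'=n$, so $a'$ is a positive divisor of $n$ and $d'=n/a'>0$. Finally, using $\begin{pmatrix}1&k\\0&1\end{pmatrix}$ to replace $b'$ by $b'+kd'$, we arrange $0\le b'<d'$ (this is just the Hermite normal form over $\Z$).

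Now every orbit is represented by a matrix $\begin{pmatrix}a'&b'\\0&n/a'\end{pmatrix}$ with $a'$ a positive divisor of $n$ and $0\le b'<n/a'$. There are at most $\sum_{a'\mid n}\tfrac{n}{a'}$ such matrices, a finite number, so $\SLZ\backslash M_n$ is finite.

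The whole content lies in the first step, and the only point that needs genuine care — the \emph{main obstacle}, such as it is — is verifying that each reduction move really is left multiplication by an element of $\SLZ$ (integer entries and determinant exactly $1$, which is why the row swap must carry the minus sign and why one may never rescale a single row) and that the Euclidean descent terminates. I should also note that I do not need the normalized representatives above to be pairwise inequivalent: for finiteness an upper bound on the number of orbits is enough, so uniqueness of the normal form can be skipped.
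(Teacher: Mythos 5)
Your proposal is correct and follows essentially the same route as the paper: reduce each matrix to an upper-triangular (Hermite-type) representative by left-multiplying with shear matrices realising the Euclidean algorithm on the first column, reduce the top-right entry modulo the bottom-right entry, and then count representatives via the divisors of $n$. Your version is slightly more careful about signs (using the swap matrix and $-I$ to force $a'>0$), but the argument is the same.
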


\begin{proof}
    Let $m\in M_n$ be a matrix $\Mabcd$. Left multiplying it by $\begin{pmatrix}
        1 & k\\
        0 & 1
    \end{pmatrix}\in \SLZ$
    does the row operation of adding $k$ times the bottom row to the top row ($k\in\mathbb{Z}$). Similarly, left multiplying it by $\begin{pmatrix}
        1 & 0\\
        k & 1
    \end{pmatrix}$
    does the row operation of adding $k$ times the top row to the bottom row. Using these two operations you can perform the Euclidean algorithm on the entries $a$ and $c$ to make $c=0$. Then in the resulting matrix, express $b=qd+r$ via division so we can perform a row operation to make the new $b$ equal to $r$, and thus $0\leq b<d$, while preserving $c=0$. In the final matrix, since $c=0$, the determinant is $ad=n$, so there are a finite number of possible values for $a,d$ (namely the divisors of $n$). For each of these pairs of values, there is also a finite number of possible values for $b$, since the size of $b$ is bounded by $d$. Therefore the matrix we end up with is among a finite set of possible matrices. Therefore each matrix in $M_n$ shares an orbit with one of a finite set of matrices so there are a finite number of orbits.
\end{proof}

\begin{lemma}
    For any positive integer $n$ and congruence subgroup $\Gamma$, the set of orbits $\Gamma \backslash M_n$ is finite.
\end{lemma}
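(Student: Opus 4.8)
The plan is to deduce this from the two finiteness facts already in hand: that $\SLZ \backslash M_n$ is finite (the preceding lemma), and that a congruence subgroup $\Gamma$ has finite index in $\SLZ$ — which is precisely the earlier lemma that $\SLZ/\Gamma$ is finite, since the number of left cosets $\SLZ/\Gamma$ equals the number of right cosets $\Gamma\backslash\SLZ$. The conceptual content is that the partition of $M_n$ into $\Gamma$-orbits refines the partition into $\SLZ$-orbits, and each $\SLZ$-orbit can break into at most $[\SLZ:\Gamma]$ many $\Gamma$-orbits because $\Gamma$ sits inside $\SLZ$ with finite index.

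Concretely, I would first fix a finite set of representatives $m_1,\dots,m_r\in M_n$ for the orbits of $\SLZ\backslash M_n$, and a finite set of representatives $g_1,\dots,g_k\in\SLZ$ for the right cosets, so that $\SLZ=\bigsqcup_{i=1}^{k}\Gamma g_i$. Now take an arbitrary $m\in M_n$. It lies in the $\SLZ$-orbit of some $m_j$, i.e. $m=g\,m_j$ for some $g\in\SLZ$; writing $g=\gamma g_i$ with $\gamma\in\Gamma$ gives $m=\gamma\,(g_i m_j)$, so $m$ lies in the $\Gamma$-orbit of $g_i m_j$. Hence the finite set $\{\,g_i m_j : 1\le i\le k,\ 1\le j\le r\,\}$ meets every $\Gamma$-orbit in $M_n$, and therefore
$$|\Gamma\backslash M_n|\ \le\ kr\ =\ [\SLZ:\Gamma]\cdot|\SLZ\backslash M_n|\ <\ \infty.$$

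I do not expect a genuine obstacle here; the argument is a routine "finite index times finite quotient" count. The only points needing care are bookkeeping: one must use the same left action of $\SLZ$ (and of $\Gamma$) on $M_n$ that was set up before the previous lemma, and one must decompose $\SLZ$ into \emph{right} cosets $\Gamma g_i$ so that the factorisation $g=\gamma g_i$ with $\gamma\in\Gamma$ is actually available. It is worth remarking that the proof uses nothing about $\Gamma$ beyond having finite index in $\SLZ$, so the same conclusion holds for an arbitrary finite-index subgroup.
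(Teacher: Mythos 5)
Your proposal is correct and follows the same route as the paper, which simply cites the finiteness of $\SLZ\backslash M_n$ together with the finite index of $\Gamma$ in $\SLZ$; you have merely made explicit the coset-decomposition bookkeeping that the paper's one-line proof leaves implicit. The bound $|\Gamma\backslash M_n|\le[\SLZ:\Gamma]\cdot|\SLZ\backslash M_n|$ is exactly the right quantitative form of that argument.
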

\begin{proof}
    Because the set of orbits $\SLZ \backslash M_n$ is finite, and $\Gamma$ has finite index in $\SLZ$ by being a congruence subgroup, then the set of orbits $\Gamma \backslash M_n$ is finite.
\end{proof}

The interesting thing about matrices that are in the same orbit in this case is
\begin{lemma}
    If $f$ is a modular form of weight $k$ over congruence subgroup $\Gamma$, and $\mu_1,\mu_2\in M_n$ are in the same orbit in $\Gamma \backslash M_n$, then
    $$f[\mu_1]_k=f[\mu_2]_k.$$
\end{lemma}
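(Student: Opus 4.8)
The plan is to reduce everything to the cocycle identity in Lemma \ref{j func result} together with the defining invariance of $f$. Two matrices $\mu_1,\mu_2\in M_n$ lie in the same orbit of $\Gamma\backslash M_n$ precisely when $\mu_2=\gamma\mu_1$ for some $\gamma\in\Gamma$, so the claim to prove is exactly $f[\gamma\mu_1]_k=f[\mu_1]_k$ for all $\gamma\in\Gamma$.

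First I would record that the slash operator $f\mapsto f[\alpha]_k$ is a right action of $2\times2$ matrices in the sense that $f[\alpha\beta]_k=(f[\alpha]_k)[\beta]_k$. This is a one-line computation: unwinding the definition, $(f[\alpha]_k)[\beta]_k(z)=j(\beta,z)^{-k}j(\alpha,\beta(z))^{-k}f(\alpha(\beta(z)))$, and Lemma \ref{j func result} (which is stated for arbitrary $2\times2$ matrices, so it applies to $\mu_1$ and $\mu_2$ which are not in $\SLZ$) turns $j(\alpha,\beta(z))^{-k}j(\beta,z)^{-k}$ into $j(\alpha\beta,z)^{-k}$, giving $f[\alpha\beta]_k(z)$. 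Here one should note that $\mu_1$ has positive determinant $n$, hence maps $\mathcal{H}$ into $\mathcal{H}$, so all the Möbius evaluations stay in the upper half plane where $f$ is defined.

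With the cocycle in hand, the proof is immediate: $f[\mu_2]_k=f[\gamma\mu_1]_k=(f[\gamma]_k)[\mu_1]_k$, and since $f$ is weakly modular of weight $k$ with respect to $\Gamma$ and $\gamma\in\Gamma$, we have $f[\gamma]_k=f$ by definition of weak modularity. Therefore $f[\mu_2]_k=f[\mu_1]_k$.

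The only genuinely delicate points are bookkeeping rather than substance: checking that Lemma \ref{j func result} was indeed proved for all $2\times2$ matrices (it was), and checking that $\mu_1(z)\in\mathcal{H}$ so that the weak-modularity relation $f(\gamma w)=j(\gamma,w)^k f(w)$ may legitimately be applied at $w=\mu_1(z)$. Neither presents a real obstacle, so this statement is essentially a formal consequence of the cocycle identity; the ``main obstacle'' is just being careful that the slash operator really is associative and that domains match up.
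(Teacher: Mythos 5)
Your proof is correct and uses the same ingredients as the paper's: the cocycle identity of Lemma \ref{j func result} combined with the weak modularity of $f$ under $\gamma\in\Gamma$. Packaging the computation as ``the slash operator is a right action, and $f[\gamma]_k=f$'' is just a cleaner reorganisation of the paper's direct substitution into the invariance condition, so the two arguments are essentially identical.
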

\begin{proof}
    If $\mu_1$ and $\mu_2$ are in the same orbit in $\Gamma \backslash M_n$, then there exists $\gamma\in\Gamma$ such that $\mu_2=\gamma\mu_1$. Since $f$ is a modular form of weight $k$ over $\Gamma$ we know that by substituting $\mu_1(\tau)$ into the invariance condition we get
    \begin{align*}
    f(\gamma(\mu_1(\tau)))&=j(\gamma,\mu_1(\tau))^kf(\mu_1(\tau))\\
    j(\mu_1,\tau)^{-k}f(\mu_1(\tau))&=(j(\gamma,\mu_1(\tau))j(\mu_1,\tau))^{-k}f(\gamma(\mu_1(\tau)))
    \\
    f[\mu_1]_k(\tau)&=(j(\gamma,\mu_1(\tau))j(\mu_1,\tau))^{-k}f(\gamma\mu_1(\tau))
    \end{align*}
    Using Lemma \ref{j func result}, this equals
$$j(\gamma\mu_1,\tau)^{-k}f(\gamma\mu_1(\tau))=j(\mu_2,\tau)^{-k}f(\mu_2(\tau))=f[\mu_2]_k(\tau)$$
and we are done.
\end{proof}

Hence, $f[\mu]_k$ is the same for all $\mu$ in a particular orbit. Now we can define the Hecke operator as this sum:

\begin{definition}[Hecke Operator]
    Consider the modular forms $f$ of weight $k$ over $\Gamma$. For each positive integer $n$, there is a Hecke operator $T_n$ defined on these modular forms in the following way: Let $\mu_1,\mu_2,\dots,\mu_\ell$  be a set of representatives for each of the orbits in $\Gamma\backslash M_n$. Then define the Hecke operator $T_n$ as
    $$T_n(f)=n^{k-1}\sum_{i=1}^l f[\mu_i]_k.$$
\end{definition}

The sum is finite since there are a finite number of orbits, and from the above lemma, this definition is independent of the choice of representative from each orbit.
\newline\newline
It is easy to see that this operator is linear. However, does it always output a modular form? We now verify that $T_n$ indeed outputs another modular form of weight $k$ over $\Gamma$.
\newline\newline
Firstly we have to check the invariance condition. Consider the map $\mu\mapsto\mu\gamma$ where $\mu\in M_n, \gamma\in\Gamma$. Clearly the map is $M_n\rightarrow M_n$. If this map takes matrices $\mu_i$ and $\mu_j$ to members of the same orbit, then there exists $\gamma_1\in\Gamma$ such that $\mu_i\gamma=\gamma_1\mu_j\gamma$, so $\mu_i=\gamma_1\mu_j$ and $\mu_i,\mu_j$ are also in the same orbit. Hence this map takes the orbit representatives to distinct orbits, and with there being a finite number of orbits, this map also takes the orbit representatives to all orbits, and each orbit is covered exactly once. Hence for $\gamma\in\Gamma$, $\mu_1\gamma,\mu_2\gamma,\dots,\mu_\ell\gamma$ is also a valid set of orbit representatives, so considering that the choice of orbit representatives doesn't matter, we can actually express the sum in the definition of $T_n$ in two different ways:
$$T_n(f)=n^{k-1}\sum_if[\mu_i]_k=n^{k-1}\sum_if[\mu_i\gamma]_k$$
which evaluates to
$$T_n(f)(\tau)=n^{k-1}\sum_ij(\mu_i\gamma,\tau)^{-k}f(\mu_i\gamma(\tau))$$
Using Lemma \ref{j func result}, this means that
\begin{align*}
T_n(f)(\tau)&=n^{k-1}\sum_ij(\mu_i,\gamma(\tau))^{-k}j(\gamma,\tau)^{-k}f(\mu_i\gamma(\tau))\\
&=n^{k-1}j(\gamma,\tau)^{-k}\sum_ij(\mu_i,\gamma(\tau))^{-k}f(\mu_i\gamma(\tau))\\
j(\gamma,\tau)^kT_n(f)(\tau)&=n^{k-1}\sum_ij(\mu_i,\gamma(\tau))^{-k}f(\mu_i(\gamma(\tau)))\\
&=T_n(f)(\gamma(\tau))
\end{align*}
Thus the invariant condition holds for all $\gamma\in\Gamma$ as required. Furthermore, due to the finiteness of the sum in the definition of $T_n$, it is clear that $T_n(f)$ is holomorphic. Hence, $T_n(f)$ is indeed weakly modular with weight $k$ over $\Gamma$.
\newline\newline
Now we have to verify the "holomorphic at infinity" condition. Each individual $f[\mu_i]_k$, by definition of $f$ being a modular form, is holomorphic at infinity with respect to each of their individual periods. Therefore they are bounded for large imaginary part of the input. The sum is finite, so the sum of them ($T_n(f)$) is also bounded for large imaginary part of the input. Then in its $q$-expansion, the singularity at $q=0$ is removable, so indeed $T_n(f)$ is holomorphic at infinity.
\newline\newline
Now that we have defined the Hecke operators, it is also interesting to note that the Taniyama-Shimura-Weil conjecture states that for the modular form $f$ associated to a rational elliptic curve, then for each $n\in\mathbb Z^+$, there is a $\lambda_n\in\mathbb C$ such that $T_n(f)=\lambda_n f$. In linear algebra terms, this says that $f$ is an eigenvector for each of the Hecke operators $T_n$, and the $\lambda_n$ is then the eigenvalue of this eigenvector. Since $f$ is a modular form, the convention is to call $f$ an "eigenform" of the Hecke operators. What is even more amazing is that for each prime number $p$, the eigenvalue of $f$ with respect to $T_p$ is $a_p$, i.e. $T_p(f)=a_pf$.


\subsection{No Non-Zero Weight $2$, Level $2$, $\Gamma_0(2)$ Cusp Forms}
Here we try to adapt the proof of what happens in level $1$ to the case of the congruence subgroup $\Gamma_0(2)$.

\subsubsection{Argument principle modified by $\SL_2(\Z)$}
\begin{lemma}
    If $\alpha\in\SLZ$, then $\alpha'(z)=j(\alpha,z)^{-2}$.
\end{lemma}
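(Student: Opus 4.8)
The plan is to prove this by a one-line direct computation of the derivative of the Möbius transformation attached to $\alpha$. Write $\alpha=\Mabcd\in\SLZ$, so that on $\mathcal{H}$ it acts by $\alpha(z)=\frac{az+b}{cz+d}$. Since $c,d\in\Z$ are not both zero and $z\in\mathcal{H}$ has positive imaginary part, the denominator $cz+d$ is nonzero, so $\alpha$ is a genuine holomorphic function on $\mathcal{H}$ and we may differentiate it using the quotient rule. The only structural fact we need about $\alpha$ beyond being a matrix is the defining relation $\det\alpha=ad-bc=1$.

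Carrying out the computation:
$$\alpha'(z)=\frac{a(cz+d)-c(az+b)}{(cz+d)^2}=\frac{ad-bc}{(cz+d)^2}=\frac{1}{(cz+d)^2}=j(\alpha,z)^{-2},$$
using $j(\alpha,z)=cz+d$ by definition. The degenerate case $c=0$ needs no separate argument: then $cz+d=d$ is a nonzero constant, $\alpha(z)=(az+b)/d$, and the same computation gives $\alpha'(z)=a/d=1/d^2=j(\alpha,z)^{-2}$ since $ad=1$.

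There is essentially no obstacle in this statement; it is a routine application of the quotient rule. The only points worth a moment's care are that $cz+d$ does not vanish on $\mathcal{H}$ (so that $\alpha$ is holomorphic there and differentiation is legitimate), and that we genuinely use $\det\alpha=1$ rather than merely $\det\alpha\neq 0$ — for a general $\alpha\in\GL_2(\C)$ the same computation yields the slightly more general identity $\alpha'(z)=\det(\alpha)\,j(\alpha,z)^{-2}$, of which the stated lemma is the $\det\alpha=1$ case. This factor of $j(\alpha,z)^{-2}$ is exactly the weight-$2$ automorphy factor, which is presumably why the lemma is being isolated here as a stepping stone toward an $\SL_2(\Z)$-equivariant version of the argument principle for weight-$2$ forms.
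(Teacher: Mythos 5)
Your computation is exactly the paper's proof: apply the quotient rule to $\alpha(z)=\frac{az+b}{cz+d}$ and use $ad-bc=1$ to get $\alpha'(z)=j(\alpha,z)^{-2}$. The additional remarks (nonvanishing of $cz+d$ on $\mathcal{H}$, the $c=0$ case, the $\GL_2$ generalisation with the $\det\alpha$ factor) are correct but not needed beyond what the paper records.
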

\begin{proof}
    Let $\alpha=\Mabcd$, then $\det(\alpha)=ad-bc=1$ so
    $$\alpha'(z)=\left(\frac{az+b}{cz+d}\right)'=\frac{a(cz+d)-c(az+b)}{(cz+d)^2}=\frac{ad-bc}{(cz+d)^2}=\frac 1{(cz+d)^2}$$
    as desired.
\end{proof}

\begin{definition}
    For $\alpha\in\SLZ$, denote by $j'(\alpha)$ the bottom left entry of $\alpha$.
\end{definition}
The reason we called it that is because if you differentiate $j(\alpha,z)$ with respect to $z$, you get $j'(\alpha)$.
\newline\newline
First let's establish a result about what happens to the line integral on holomorphic function when you transform the path using a Mobius transformation.

\begin{definition}
    If $\gamma:[t_1,t_2]\rightarrow\mathbb C$ is a path and $\alpha\in\SLZ$, then denote by $\alpha(\gamma)$ the path formed by applying the Mobius transformation associated with $\alpha$ to $\gamma$, i.e. if $\alpha=\Mabcd$ then $\alpha(\gamma)(t)=\alpha(\gamma(t))=\frac{a\gamma(t)+b}{c\gamma(t)+d}$.
\end{definition}

\begin{lemma}\label{alphaarg}
If you have a holomorphic function $f : \mathcal{H} \rightarrow \mathbb{C}$ and in $\mathcal{H}$ is some piecewise continuously differentiable curve $\gamma : [t_1, t_2] \rightarrow \mathcal{H}$ not passing through any zero of $f$, and you have $\alpha \in \SLZ$, then for any non-negative integer $k$,
$$\int_{\alpha (\gamma)} \frac{f' (z)}{f (z)} \mathrm{d} z=\int_{\gamma} \frac{(f [\alpha]_k)' (z)}{f [\alpha]_k (z)} \mathrm{d} z + \int_{\gamma} \frac{kj' (\alpha)}{j (\alpha, z)} \mathrm{d} z$$
\end{lemma}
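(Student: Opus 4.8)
The plan is to reduce the claim to the chain rule together with the change-of-variables formula for complex line integrals; there is nothing deep here beyond careful bookkeeping. First I would note the integrands are legitimate: for $z\in\mathcal H$ and $\alpha=\Mabcd\in\SLZ$ we have $j(\alpha,z)=cz+d\neq 0$ (if $c=0$ then $ad=1$ gives $d=\pm1$; if $c\neq0$ then $cz+d=0$ would force $z=-d/c\in\mathbb R$), so $j(\alpha,\cdot)$ is holomorphic and non-vanishing on $\mathcal H$, $f[\alpha]_k=j(\alpha,\cdot)^{-k}f(\alpha(\cdot))$ is holomorphic on $\mathcal H$, and $f[\alpha]_k(z)=0$ precisely when $f(\alpha(z))=0$. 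Thus $\gamma$ avoiding the zeros of $f[\alpha]_k$ is the same as $\alpha(\gamma)$ avoiding the zeros of $f$, which is the non-vanishing hypothesis, and both sides are well defined.

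The computational core is a logarithmic-derivative identity. Differentiating $f[\alpha]_k(z)=j(\alpha,z)^{-k}f(\alpha(z))$ with the product and chain rules, and using $\tfrac{d}{dz}j(\alpha,z)=j'(\alpha)$ and the previous lemma $\alpha'(z)=j(\alpha,z)^{-2}$, gives
$$(f[\alpha]_k)'(z)=-k\,j(\alpha,z)^{-k-1}j'(\alpha)\,f(\alpha(z))+j(\alpha,z)^{-k-2}f'(\alpha(z)).$$
Dividing by $f[\alpha]_k(z)=j(\alpha,z)^{-k}f(\alpha(z))$ yields
$$\frac{(f[\alpha]_k)'(z)}{f[\alpha]_k(z)}+\frac{k\,j'(\alpha)}{j(\alpha,z)}=\frac{f'(\alpha(z))}{f(\alpha(z))}\,j(\alpha,z)^{-2},$$
valid at every point of $\gamma$.

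It remains to integrate over $\gamma$ and identify the right-hand side. Parametrizing $\gamma:[t_1,t_2]\to\mathcal H$, the path $\alpha(\gamma)$ is parametrized by $t\mapsto\alpha(\gamma(t))$ with derivative $\alpha'(\gamma(t))\gamma'(t)=j(\alpha,\gamma(t))^{-2}\gamma'(t)$, so by the definition of the line integral
$$\int_{\alpha(\gamma)}\frac{f'(w)}{f(w)}\,dw=\int_{t_1}^{t_2}\frac{f'(\alpha(\gamma(t)))}{f(\alpha(\gamma(t)))}\,j(\alpha,\gamma(t))^{-2}\gamma'(t)\,dt=\int_{\gamma}\frac{f'(\alpha(z))}{f(\alpha(z))}\,j(\alpha,z)^{-2}\,dz.$$
Integrating the displayed identity over $\gamma$ and substituting this in gives the asserted formula. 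The main (mild) obstacle is simply getting the product/chain-rule computation and the substitution rule exactly right; the content is that the logarithmic derivative of a product/composition splits additively, and then one integrates.
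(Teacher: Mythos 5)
Your proposal is correct and follows essentially the same route as the paper: both rest on the identity $\alpha'(z)=j(\alpha,z)^{-2}$, the product/chain-rule differentiation of $f[\alpha]_k=j(\alpha,\cdot)^{-k}f(\alpha(\cdot))$, and the change-of-variables formula for the line integral over $\alpha(\gamma)$; dividing by $f[\alpha]_k$ before integrating (as you do) versus solving for $f'(\alpha(z))$ and substituting into the integrand (as the paper does) is only a cosmetic rearrangement. Your preliminary check that $j(\alpha,z)\neq 0$ on $\mathcal{H}$ and that the zero sets match up is a welcome addition the paper omits.
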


\begin{proof}
    First we do the case where $\gamma$ is by itself continuously differentiable. We compute that

\begin{align}
  \int_{\alpha (\gamma)} \frac{f' (z)}{f (z)} \mathrm{d} z
  &=  \int_{t_1}^{t_2} (\alpha (\gamma))' (t)  \frac{f' (\alpha (\gamma (t)))}{f
  (\alpha (\gamma (t)))} \mathrm{d} t \nonumber\\
  &=  \int_{t_1}^{t_2} \alpha' (\gamma (t)) \gamma' (t)  \frac{f' (\alpha (\gamma
  (t)))}{f (\alpha (\gamma (t)))} \mathrm{d} t \nonumber\\
  &= \int_{t_1}^{t_2} j (\alpha, \gamma (t))^{- 2} \gamma' (t)  \frac{f' (\alpha
  (\gamma (t)))}{f (\alpha (\gamma (t)))} \mathrm{d} t \nonumber
\end{align}

We know that since
\[ f [\alpha]_k (z) = j (\alpha, z)^{- k} f (\alpha (z)) \]
then

\begin{align}
  (f [\alpha]_k)' (z) &=  j (\alpha, z)^{- k} \alpha' (z) f' (\alpha (z)) -
  kj' (\alpha) j (\alpha, z)^{- k - 1} f (\alpha (z)) \nonumber\\
  &=  j (\alpha, z)^{- k} j (\alpha, z)^{- 2} f' (\alpha (z)) - kj' (\alpha) j
  (\alpha, z)^{- k - 1} f (\alpha (z)) \nonumber\\
  &=  j (\alpha, z)^{- k - 2} f' (\alpha (z)) - kj' (\alpha) j (\alpha, z)^{-
  k - 1} f (\alpha (z)).\nonumber
\end{align}

So
\[ f' (\alpha (z)) = j (\alpha, z)^{k + 2}  (f [\alpha]_k)' (z) + kj' (\alpha)
   j (\alpha, z) f (\alpha (z)). \]
Now

{\tiny{\begin{align}
  & \int_{\alpha (\gamma)} \frac{f' (z)}{f (z)} \mathrm{d} z \nonumber\\
  = & \int_{t_1}^{t_2} j (\alpha, \gamma (t))^{- 2} \gamma' (t)  \frac{f' (\alpha
  (\gamma (t)))}{f (\alpha (\gamma (t)))} \mathrm{d} t \nonumber\\
  = & \int_{t_1}^{t_2} j (\alpha, \gamma (t))^{- 2} \gamma' (t)  \frac{j (\alpha,
  \gamma (t))^{k + 2}  (f [\alpha]_k)' (\gamma (t)) + kj' (\alpha) j (\alpha,
  \gamma (t)) f (\alpha (\gamma (t)))}{f (\alpha (\gamma (t)))} \mathrm{d} t
  \nonumber\\
  = & \int_{t_1}^{t_2} \gamma' (t)  \frac{j (\alpha, \gamma (t))^{- 2} j (\alpha,
  \gamma (t))^{k + 2}  (f [\alpha]_k)' (\gamma (t))}{f (\alpha (\gamma (t)))}
  \mathrm{d} t + \int_{t_1}^{t_2} \gamma' (t) j (\alpha, \gamma (t))^{- 2}  \frac{kj'
  (\alpha) j (\alpha, \gamma (t)) f (\alpha (\gamma (t)))}{f (\alpha (\gamma
  (t)))} \mathrm{d} t \nonumber\\
  = & \int_{t_1}^{t_2} \gamma' (t)  \frac{(f [\alpha]_k)' (\gamma (t))}{j (\alpha,
  \gamma (t))^{- k} f (\alpha (\gamma (t)))} \mathrm{d} t + \int_{t_1}^{t_2} \gamma' (t) 
  \frac{kj' (\alpha) j (\alpha, \gamma (t))}{j (\alpha, \gamma (t))^2} \mathrm{d}
  t \nonumber\\
  = & \int_{t_1}^{t_2} \gamma' (t)  \frac{(f [\alpha]_k)' (\gamma (t))}{f [\alpha]_k
  (\gamma (t))} \mathrm{d} t + \int_{t_1}^{t_2} \gamma' (t)  \frac{kj' (\alpha)}{j
  (\alpha, \gamma (t))} \mathrm{d} t \nonumber\\
  = & \int_{\gamma} \frac{(f [\alpha]_k)' (z)}{f [\alpha]_k (z)} \mathrm{d} z +
  \int_{\gamma} \frac{kj' (\alpha)}{j (\alpha, z)} \mathrm{d} z. \nonumber
\end{align}}}

Then for the general case where $\gamma$ is piecewise continuously differentiable, simply split the integral up into the continuously differentiable pieces of $\gamma$ and apply the result to each of them, then add them back together to get the general result.
\end{proof}

\begin{cor}
    If in the above, $f$ is weakly modular of weight $k$ with congruence subgroup $\Gamma$, and $\alpha\in\Gamma$, then
    $$\int_{\alpha(\gamma)}\frac{f'(z)}{f(z)}\mathrm dz=\int_{\gamma} \frac{f' (z)}{f (z)} \mathrm{d} z + \int_{\gamma} \frac{kj' (\alpha)}{j (\alpha, z)} \mathrm{d} z.$$
\end{cor}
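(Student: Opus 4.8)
The plan is to recognise that this corollary follows almost immediately from Lemma \ref{alphaarg} together with the definition of weak modularity. The only substantive point is the identification $f[\alpha]_k = f$ when $\alpha \in \Gamma$ and $f$ is weakly modular of weight $k$ with respect to $\Gamma$.

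First I would unwind the definition: for $\alpha = \Mabcd \in \Gamma$ we have $f[\alpha]_k(z) = j(\alpha,z)^{-k} f(\alpha(z)) = (cz+d)^{-k} f\!\left(\frac{az+b}{cz+d}\right)$. By the weak modularity hypothesis, $f\!\left(\frac{az+b}{cz+d}\right) = (cz+d)^k f(z)$, so the two factors of $(cz+d)^{\pm k}$ cancel and $f[\alpha]_k(z) = f(z)$ for all $z \in \mathcal{H}$. In particular $f[\alpha]_k$ and $f$ have exactly the same zeros, so the hypothesis in Lemma \ref{alphaarg} that $\gamma$ avoids the zeros of $f$ is precisely the hypothesis needed there for $f[\alpha]_k$ as well — no extra work is required on that front.

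Then I would simply substitute $f[\alpha]_k = f$ (and hence $(f[\alpha]_k)' = f'$) into the conclusion of Lemma \ref{alphaarg}:
$$\int_{\alpha(\gamma)}\frac{f'(z)}{f(z)}\,\mathrm dz = \int_{\gamma}\frac{(f[\alpha]_k)'(z)}{f[\alpha]_k(z)}\,\mathrm dz + \int_{\gamma}\frac{kj'(\alpha)}{j(\alpha,z)}\,\mathrm dz = \int_{\gamma}\frac{f'(z)}{f(z)}\,\mathrm dz + \int_{\gamma}\frac{kj'(\alpha)}{j(\alpha,z)}\,\mathrm dz,$$
which is exactly the claimed identity.

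There is no real obstacle here; the corollary is essentially a bookkeeping consequence. If anything, the only thing to be careful about is making sure the hypotheses of Lemma \ref{alphaarg} are genuinely met — namely that $\gamma$ is a piecewise continuously differentiable curve in $\mathcal{H}$ missing the zeros of $f$, that $k$ is a non-negative integer, and that $\alpha \in \SLZ$ — all of which are inherited directly from the setup, since $\Gamma \subseteq \SLZ$. So the write-up is a two-line deduction once the equality $f[\alpha]_k = f$ is noted.
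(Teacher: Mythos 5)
Your proposal is correct and matches the paper's own proof: both note that weak modularity gives $f[\alpha]_k = f$ and then substitute this directly into Lemma \ref{alphaarg}. Your additional remarks on the zeros of $f[\alpha]_k$ coinciding with those of $f$ and the hypotheses being inherited are sensible but not essential bookkeeping.
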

\begin{proof}
    Since $f$ is weight-$k$ invariant number $\alpha$, then
    $$f[\alpha]_k(z)=j(\alpha,z)^{-k}f(\alpha(z))=f(z),$$ 
     which we substitute into Lemma \ref{alphaarg}.
\end{proof}

\subsubsection{Proof of non-existence}

Now we are ready to prove there are no cusp forms as those described in the title of this section.

Assume on the contrary that $f : \mathcal{H} \rightarrow \mathbb{C}$ is a non-zero cusp form
of weight $2$ over $\Gamma_0 (2)$. Let $t = \begin{pmatrix}
  1 & 1\\
  0 & 1
\end{pmatrix}$ and $\alpha = \begin{pmatrix}
  - 1 & 0\\
  2 & - 1
\end{pmatrix}$ so $t, \alpha \in \Gamma_0 (2)$. Also let $s =
\begin{pmatrix}
  0 & - 1\\
  1 & 0
\end{pmatrix} \in \SLZ$. Then let $\gamma$ be the closed path composed of the paths shown below. That is, 
$$\gamma=\gamma_1+\gamma_5+\alpha(-\gamma_3)+s(\gamma_6)+\gamma_3+\gamma_4+t(-\gamma_1)+\gamma_2.$$
\begin{center}
\asyinclude{gamma02.asy}
\end{center}

We take advantage of the fact that zeroes of a holomorphic function are
isolated to make the path avoid zeroes: $\gamma_4$ and $\gamma_5$ we try to
make small enough arcs so that their circles do not enclose any zero of $f$
other than the ones potentially existing at $\frac{1 + i}{2}$ and $\frac{- 1 +
i}{2}$ respectively. This is possible because otherwise there is a sequence of zeroes converging on those points, which would imply that $f=0$.  We can also make $\gamma_6$'s imaginary value arbitrarily
high which allows us to avoid any zeroes since there are only a finite number
of zeroes in the compact region bounded by our path. Similarly, $\gamma_2$'s
imaginary value can be made as high as we want. Finally, if there are any
zeroes on $\gamma_1$ and $\gamma_3$, there are finitely many of them (since
the path is compact) so we can go around them using small detours that are
within the neighbourhood of those zeroes in which there are no other zeroes,
which due to the $\Gamma_0 (2)$-invariance of $f$ means that $t (- \gamma_1)$
and $\alpha (- \gamma_3)$ also do not pass through any zeroes. Thus using the
argument principle (modified by $\SL_2(\Z)$):
\begin{dmath*}
   \left( \text{\# roots inside $\gamma$} \right) =  \frac{1}{2 \pi i} 
  \int_{\gamma} \frac{f' (z)}{f (z)} \mathrm{d} z \\    
  =\frac{1}{2 \pi i}  \left( \int_{\gamma_1} \frac{f' (z)}{f (z)} \mathrm{d} z
  + \int_{t (- \gamma_1)} \frac{f' (z)}{f (z)} \mathrm{d} z + \int_{\gamma_2}
  \frac{f' (z)}{f (z)} \mathrm{d} z + \int_{\gamma_3} \frac{f' (z)}{f (z)} \mathrm{d}
  z + \int_{\alpha (- \gamma_3)} \frac{f' (z)}{f (z)} \mathrm{d} z +
  \int_{\gamma_4} \frac{f' (z)}{f (z)} \mathrm{d} z + \int_{\gamma_5} \frac{f'
  (z)}{f (z)} \mathrm{d} z + \int_{s (\gamma_6)} \frac{f' (z)}{f (z)} \mathrm{d} z
  \right)\\
  =  \frac{1}{2 \pi i}  \left( \int_{\gamma_1} \frac{f' (z)}{f (z)} \mathrm{d} z
  - \int_{\gamma_1} \frac{f' (z)}{f (z)} \mathrm{d} z - \int_{\gamma_1} \frac{2
  \cdot 0}{1} \mathrm{d} z + \int_{\gamma_2} \frac{f' (z)}{f (z)} \mathrm{d} z +
  \int_{\gamma_3} \frac{f' (z)}{f (z)} \mathrm{d} z - \int_{\gamma_3} \frac{f'
  (z)}{f (z)} \mathrm{d} z - \int_{\gamma_3} \frac{2 \cdot 2}{2 z - 1} \mathrm{d} z +
  \int_{\gamma_4} \frac{f' (z)}{f (z)} \mathrm{d} z + \int_{\gamma_5} \frac{f'
  (z)}{f (z)} \mathrm{d} z + \int_{\gamma_6} \frac{(f [s]_2)' (z)}{f[s]_2 (z)} \mathrm{d} z +
  \int_{\gamma_6} \frac{2 \times (- 1)}{- z} \mathrm{d} z \right)\\
  =  \frac{1}{2 \pi i}  \int_{\gamma_2} \frac{f' (z)}{f (z)} \mathrm{d} z + 2
  \times \frac{1}{2 \pi i}  \int_{- \gamma_3} \frac{1}{z - \frac{1}{2}} \mathrm{d}
  z - \frac{1}{2 \pi i}  \int_{- \gamma_4} \frac{f' (z)}{f (z)} \mathrm{d} z -
  \frac{1}{2 \pi i}  \int_{- \gamma_5} \frac{f' (z)}{f (z)} \mathrm{d} z -
  \frac{1}{2 \pi i}  \int_{- \gamma_6} \frac{(f [s]_2)' (z)}{f [s]_2 (z)}
  \mathrm{d} z + \frac{2}{2 \pi i} \int_{\gamma_6} \frac{1}{z} \mathrm{d} z. 
\end{dmath*}

Now we address each term individually.  

\begin{itemize}
    \item[1.] $$\frac{1}{2 \pi i}  \int_{\gamma_2}
\frac{f' (z)}{f (z)} \mathrm{d} z$$

Using a change of variables, this equals the multiplicity of $f$ at $0$ in its
$q$-expansion which we will denote by $\nu_{\infty} (f)$. (need to check this bruh)

\item[2.]
$$\frac{1}{2 \pi i}  \int_{- \gamma_3} \frac{1}{z - \frac{1}{2}} \mathrm{d} z$$
integrates anticlockwise around $\frac{1}{2}$, so if the angle of the arc is
$\theta$ radians, then this equals $\frac{\theta}{2 \pi i}$. As we make
$\gamma_4$ and $s (\gamma_6)$ have smaller and smaller radius (by making
$\gamma_6$ have larger imaginary part) then $\theta \rightarrow
\frac{\pi}{2}$. Therefore this integral approaches $\frac{1}{4}$.

$$\frac{1}{2 \pi i}  \int_{- \gamma_3} \frac{1}{z - \frac{1}{2}} \mathrm{d} z=\frac{1}{2 \pi i}  \int_{\theta_1}^{\theta_2} \frac{ie^{it}}{e^{it}} \mathrm{d} t=\frac{\theta_2-\theta_1}{2\pi}\to \frac{\pi/2}{2\pi}=\frac{1}{4}.$$

\item[3.] $$\frac{1}{2 \pi i}  \int_{- \gamma_4} \frac{f' (z)}{f (z)} \mathrm{d} z$$
integrates anticlockwise around $\frac{1 + i}{2}$, and as we decrease the
radius of $\gamma_4$, the angle that it integrates approaches $\frac{\pi}{2}$.
Therefore this approaches $\frac{1}{4}$ of the multiplicity of the zero at
$\frac{1 + i}{2}$.

\item[4.] $$\frac{1}{2 \pi i}  \int_{- \gamma_5} \frac{f' (z)}{f (z)} \mathrm{d} z$$ Similarly as we decrease the radius of $\gamma_5$,
$\frac{1}{2 \pi i}  \int_{- \gamma_5} \frac{f' (z)}{f (z)} \mathrm{d} z$
approaches $\frac{1}{4}$ the multiplicity of the zero at $\frac{- 1 + i}{2}$,
but since $f$ is periodic with period $1$, this is the same as the
multiplicity of the zero at $\frac{1 + i}{2}$. Therefore we will denote both
multiplicities by $\nu_{\frac{1 + i}{2}} (f)$.

\item[5.] $$\frac{1}{2 \pi i}  \int_{- \gamma_6} \frac{(f [s]_2)'
(z)}{f[s]_2 (z)} \mathrm{d} z$$

Due to the cusp condition of cusp forms, we know that $f [s]_2$ has a
$q$-expansion and has a zero at $0$ in its $q$-expansion. However, what is the
value of the minimal $h$ in $q = e^{2 \pi ihz}$ for $f [s]_2$? We know that $f
[s]_2$ is weakly modular with respect to the congruence subgroup $s^{- 1}
\Gamma_0 (2) s$, which is of level $2$ since $\Gamma_0 (2)$ is of level $2$.
Therefore $h \leqslant 2$. However, $s \begin{pmatrix}
  1 & 1\\
  0 & 1
\end{pmatrix} s^{- 1} = \begin{pmatrix}
  1 & 0\\
  - 1 & 1
\end{pmatrix} \not\in \Gamma_0 (2)$, so $\begin{pmatrix}
  1 & 1\\
  0 & 1
\end{pmatrix} \not\in s^{- 1} \Gamma_0 (2) s$, and hence $h \neq 1$.
Therefore $h = 2$, and $f [s]_2$ is periodic with period $2$ which is what
will be used for the $q$-expansion. If we set the imaginary part of $\gamma_6$
to be high enough, then $\frac{1}{2 \pi i}  \int_{- \gamma_6} \frac{(f [s]_2)'
(z)}{f[s]_2 (z)} \mathrm{d} z$ is equal to the multiplicity of the zero at $0$ in the
$q$-expansion of $f [s]_2$, which we will denote by $\nu_0 (f)$.

\item[6.] $$\lim_{\text{Im}\gamma_6\to0}\int_{\gamma_6} \frac{1}{z}
\mathrm{d}z =0.$$ 

As the imaginary part of $\gamma_6$ is increased, $\int_{\gamma_6} \frac{1}{z}
\mathrm{d} z$ tends to $0$ as it starts at real part $1$ and ends at real part $-
1$ with the same imaginary part, so the beginning and end of the path have the
same modulus, but the angle about the origin approaches $0$. 

Alternatively, let $y$ be the imaginary part of $\gamma_6$, the integral becomes
$$\int_{-1}^1 \frac{1}{t+yi}dt=\int_{-1}^1 \frac{t-iy}{t^2+y^2}dt=i\int_{-1}^1 \frac{y}{t^2+y^2}dt=2i\tan^{-1}(1/y).$$

\end{itemize}

Therefore, taking the limit of this equality as we make all the circles on
which $\gamma_4, \gamma_5$ reside as well as the detours tiny and the
imaginary part of $\gamma_6$ really high, we get
\[ \left( \text{\# roots inside $\gamma$ without detours} \right) = -
   \nu_{\infty} (f) + \frac{1}{4} - \frac{1}{4} \nu_{\frac{1 + i}{2}}
   (f) - \frac{1}{4} \nu_{\frac{1 + i}{2}} (f) - \nu_0 (f) \]
\[ \left( \text{\# roots inside $\gamma$ without detours} \right) +
   \nu_{\infty} (f) + \frac{1}{2} \nu_{\frac{1 + i}{2}} (f) + \nu_0 (f) =
   \frac{1}{4} \]
However, due to $f$ being a cusp form, $\nu_{\infty} (f) \geqslant 1$ which is
a contradiction. Therefore there are no non-zero cusp forms of weight $2$ over
$\Gamma_0 (2)$.



\section{Preliminaries III}

\subsection{Topology and continuity} 

Keywords: Open, Closed, Continuous, Discrete Topology, Subspace Topology, (infinite) Product Topology.

This is a key subject toward the end of undergraduate. It is also a core topic for graduate school. We can avoid alot of this here but it is still good to know what we are avoiding.

We have already seen open sets and closed sets defined in the complex plane, a topology is something that makes precise the notion of open and closed in other contexts as well. A topological space is a set with a set of subsets called open sets or equivalently closed sets. Intuitively, it is a set with a notion of closeness. We could use limits to express this but we can have a notion of closeness without necessarily a notion of distance.

\begin{definition}[Topological Space] A \emph{topological space} is a pair \( (X, \mathcal{T}) \) where:
\begin{itemize}
    \item \( X \) is a set;
    \item \( \mathcal{T} \subseteq \mathcal{P}(X) \) is a collection of subsets of \( X \), called a \emph{topology}, such that:
    \begin{enumerate}
        \item \( \emptyset \in \mathcal{T} \) and \( X \in \mathcal{T} \);
        \item The union of any collection of sets in \( \mathcal{T} \) is also in \( \mathcal{T} \), i.e., if \( \{U_i\}_{i \in I} \subseteq \mathcal{T} \), then \( \bigcup_{i \in I} U_i \in \mathcal{T} \);
        \item The intersection of any finite number of sets in \( \mathcal{T} \) is also in \( \mathcal{T} \), i.e., if \( U_1, \dots, U_n \in \mathcal{T} \), then \( \bigcap_{j=1}^n U_j \in \mathcal{T} \).
    \end{enumerate}
\end{itemize}

The elements of \( \mathcal{T} \) are called \emph{open sets}. If we swapped finite intersections with arbitrary intersections and arbitrary unions with finite unions, then we get the axioms for \emph{closed} sets. Their relationship is that 

$$U \,\text{open}\,\iff\, X\setminus U\,\,\text{closed}$$

\end{definition} 

Some trivial examples of a topology on $X$ can help formalise some intuitive concepts.

\begin{eg}[Discrete topology] The discrete topology of a set $X$ is the set of \emph{all} subsets of $X$.
\end{eg}

\begin{definition}[Subspace Topology]
Let $(X, \mathcal{T})$ be a topological space and let $Y \subseteq X$ be a subset. The \emph{subspace topology} on $Y$ is defined by
\[
\mathcal{T}_Y = \{ U \cap Y \mid U \in \mathcal{T} \}.
\]
That is, a set $V \subseteq Y$ is open in the subspace topology if and only if there exists an open set $U$ in $X$ such that $V = U \cap Y$.

The topological space $(Y, \mathcal{T}_Y)$ is called a \emph{subspace} of $(X, \mathcal{T})$.
\end{definition}

The open discs that helped define open sets in the complex plane correspond in this abstract setting t0 that of a base open set, defined as follows:

\begin{definition}[Basis for a Topology]
Let $X$ be a set. A collection $\mathcal{B}$ of subsets of $X$ is called a \emph{basis} for a topology on $X$ if:

\begin{enumerate}
    \item For each $x \in X$, there exists at least one $B \in \mathcal{B}$ such that $x \in B$.
    \item If $x \in B_1 \cap B_2$ for some $B_1, B_2 \in \mathcal{B}$, then there exists $B_3 \in \mathcal{B}$ such that $x \in B_3 \subseteq B_1 \cap B_2$.
\end{enumerate}

Given a basis $\mathcal{B}$, the topology $\mathcal{T}$ \emph{generated by} $\mathcal{B}$ is defined by declaring a subset $U \subseteq X$ to be open if for every $x \in U$, there exists a basis element $B \in \mathcal{B}$ such that $x \in B \subseteq U$.

Note every open set is consequently a union of sets in $\mathcal{B}$ and sets in $\mathcal{B}$ are open. 
\end{definition}

\begin{definition}[Product Topology]
Let $(X, \mathcal{T}_X)$ and $(Y, \mathcal{T}_Y)$ be topological spaces. The \emph{product topology} on the Cartesian product $X \times Y$ is the topology $\mathcal{T}_{X \times Y}$ generated by the basis
\[
\mathcal{B} = \{ U \times V \mid U \in \mathcal{T}_X,\ V \in \mathcal{T}_Y \}.
\]
In other words, a subset $W \subseteq X \times Y$ is open in the product topology if and only if for every point $(x, y) \in W$, there exist open sets $U \in \mathcal{T}_X$ and $V \in \mathcal{T}_Y$ such that
\[
(x, y) \in U \times V \subseteq W.
\]

The topology $\mathcal{T}_{X \times Y}$ is called the \emph{product topology}, and the space $(X \times Y, \mathcal{T}_{X \times Y})$ is called the \emph{product space}.
\end{definition}

The natural functions between topologial spaces are continuous functions defined below. They are a generalisation of one's familiar definition of continuous.

\begin{definition}[Continuous Function] Let \( (X, \mathcal{T}_X) \) and \( (Y, \mathcal{T}_Y) \) be topological spaces. A function
\[
f : X \to Y
\]
is said to be \emph{continuous} if for every open set \( V \in \mathcal{T}_Y \), the preimage
\[
f^{-1}(V) = \{ x \in X : f(x) \in V \}
\]
belongs to \( \mathcal{T}_X \). That is, the preimage of every open set in \( Y \) is an open set in \( X \).

\end{definition}

There will be many situations where one is forced to put a topology on a set, for example there is a saying : `when things become infinite, there is always a notion of closeness'

The discrete topology gives a way to put a topology on every set that makes every function continuous.

There is often a canonical choice for the topology in a certain situation based on how it is defined and it usually concerns making the appropriate functions continuous.

\begin{definition}[Product Topology on an Infinite Product]
Let $\{ (X_i, \mathcal{T}_i) \}_{i \in I}$ be a family of topological spaces indexed by a set $I$. Let $X = \prod_{i \in I} X_i$ denote the Cartesian product of the sets $X_i$.

The \emph{product topology} on $X$ is the topology generated by the basis consisting of all sets of the form
\[
\prod_{i \in I} U_i,
\]
where for each $i \in I$, $U_i \in \mathcal{T}_i$, and $U_i = X_i$ for all but finitely many indices $i$.

In other words, a basic open set in the product topology is a product of open sets where only finitely many factors differ from the whole space.

This topology is the coarsest topology on $X$ such that all the projection maps
\[
\pi_j : X \to X_j, \quad (x_i)_{i \in I} \mapsto x_j
\]
are continuous for every $j \in I$.

The resulting topological space $\left( \prod_{i \in I} X_i,\ \mathcal{T}_{\text{prod}} \right)$ is called the \emph{product space} with the \emph{product topology}.
\end{definition}

\subsubsection{Topological groups and rings}

\begin{definition}[Topological Group]
A \emph{topological group} is a group \( G \) equipped with a topology such that:
\begin{itemize}
    \item The group operation \( \mu : G \times G \to G \), defined by \( \mu(x, y) = xy \), is continuous with respect to the product topology on \( G \times G \);
    \item The inversion map \( \iota : G \to G \), defined by \( \iota(x) = x^{-1} \), is continuous.
\end{itemize}
That is, \( G \) is both a group and a topological space, where the group structure and the topology are compatible in the sense that multiplication and inversion are continuous functions.
\end{definition}

For a topological group, when discussing a base $\mathcal{B}$, it suffices to consider a base of neighbourhoods about the identity. This is because for every open set $U$ and $g\in U$, $g^{-1}(U)$ is an open set containing the identity and isomorphic to $U$.
\newline\newline
Topology comes up as an issue when we discuss infinite extensions in Galois theory next.
\newline\newline
Similar definitions can be made for topological rings.

\subsection{Galois Theory} 

This is often taught in third-year university, but can be a really good topic for Olympiad students in high school (see our online course where we try to teach Galois theory to high school students). It is also reviewed again as a core topic in graduate school algebra.
\newline\newline
Keywords: Field extension, Galois Extension, Normal, Separable, Galois Group, Fundamental Theorem of Galois Theory, Algebraic Closure

\begin{definition}[Field Extension]
Let \( F \) and \( K \) be fields. We say that \( K \) is a \emph{field extension} of \( F \), written \( K/F \), if \( F \subseteq K \) and the operations of \( F \) are those of \( K \) restricted to \( F \). That is, \( K \) is a field containing \( F \) as a subfield.
\end{definition}

$K$ is a $F$-vector space. We say the extension is finite if this vector space is finitely generated.

\begin{definition}[\( \operatorname{Aut}(K/F) \)]
Let \( K/F \) be a field extension. The group \( \operatorname{Aut}(K/F) \) is defined as the set of all field automorphisms of \( K \) that fix every element of \( F \). That is,
\[
\operatorname{Aut}(K/F) = \{ \sigma \in \operatorname{Aut}(K) \mid \sigma(f) = f \text{ for all } f \in F \}.
\]
This is a subgroup of the group of all automorphisms of \( K \), and its group operation is composition of functions. 
\end{definition}

We often see the definition of Galois group given as $$\operatorname{Gal}(K/F)=\operatorname{Aut}(K/F)$$ 
but is there really no difference?

The difference is, using $\Gal(K/F)$ only applies to Galois extensions:


\begin{definition}[Galois Extension]
Let $E/F$ be a field extension. We say that $E$ is a \emph{Galois extension} of $F$ if it is both \emph{normal} and \emph{separable}. That is:
\begin{itemize}
    \item $E$ is a \textbf{normal extension}, meaning every irreducible polynomial in $F[x]$ that has at least one root in $E$ factorises completely into linear factors over $E$.
    \item $E$ is a \textbf{separable extension}, meaning that for every $e\in E$, and $f\in F[x]$ of minimal degree satisfying $f(e)=0$ does not have a double root or higher at $e$.
\end{itemize}
The group of field automorphisms of $E$ that fix $F$ pointwise is called the \emph{Galois group} of $E$ over $F$, denoted $\mathrm{Gal}(E/F)$.
\end{definition}

Some practical remarks about this definition:
\begin{itemize}
    \item Separable is guaranteed for finite fields and fields of characteristic zero.
    \item Normal extensions $F\subset M\subset E$ between $E$ and $F$ can be characterised by the property that $\Gal(E/M)$ is a normal subgroup of $\Gal(E/F)$.
\end{itemize}

Galois extensions are the kinds of extensions that we can apply Galois theory to, that is, we have:

\begin{theorem}[Fundamental Theorem of Galois Theory]
Let $E/F$ be a finite Galois extension with Galois group $G = \mathrm{Gal}(E/F)$. Then there is a one-to-one, inclusion-reversing correspondence between the intermediate fields $K$ such that $F \subseteq K \subseteq E$ and the subgroups $H$ of $G$, given by:
\begin{align*}
    K \mapsto \mathrm{Gal}(E/K), \quad\text{and}\quad H \mapsto E^H
\end{align*}
where $E^H = \{ x \in E \mid \sigma(x) = x \text{ for all } \sigma \in H \}$ is the fixed field of $H$.

\begin{itemize}
    \item $K$ is a normal extension of $F$ if and only if the corresponding subgroup $H$ is a normal subgroup of $G$.
    \item In that case, $\mathrm{Gal}(K/F) \cong G/H$.
\end{itemize}
\end{theorem}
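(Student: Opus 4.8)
The plan is to build the correspondence on two pieces of input: \emph{Artin's lemma}, which says that for a finite group $H$ of automorphisms of a field $E$ one has $[E : E^H] = |H|$, and the companion fact that a finite Galois (normal and separable) extension $E/F$ satisfies $|\mathrm{Gal}(E/F)| = [E:F]$ (proved by writing $E$ as a splitting field of a separable polynomial over $F$ and counting extensions of embeddings). I would treat both as standard technical input; Artin's lemma itself rests on Dedekind's linear independence of field characters, which is the one genuinely nontrivial ingredient, together with the observation that $\mathrm{Aut}(E/F)$ is finite whenever $[E:F]$ is finite.

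First I would check that the two maps are mutually inverse. For a subgroup $H$ of $G$, the inclusion $H \subseteq \mathrm{Gal}(E/E^H)$ is immediate from the definitions; conversely $|\mathrm{Gal}(E/E^H)| \leq [E : E^H] = |H|$ by Artin's lemma, forcing $H = \mathrm{Gal}(E/E^H)$. For an intermediate field $K$, one notes that $E/K$ is still normal and separable, hence Galois, so $[E:K] = |\mathrm{Gal}(E/K)|$; since also $[E : E^{\mathrm{Gal}(E/K)}] = |\mathrm{Gal}(E/K)|$ by Artin's lemma and $K \subseteq E^{\mathrm{Gal}(E/K)} \subseteq E$, comparing degrees gives $K = E^{\mathrm{Gal}(E/K)}$. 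Inclusion-reversal is visible directly: a larger subgroup fixes a smaller field, and a larger field is fixed by a smaller subgroup.

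Next, the normality statement. Writing $H = \mathrm{Gal}(E/K)$, a short computation shows $\sigma H \sigma^{-1} = \mathrm{Gal}(E/\sigma(K))$ for every $\sigma \in G$, so $H$ is normal in $G$ if and only if $\sigma(K) = K$ for all $\sigma \in G$. I would then argue this last condition is equivalent to $K/F$ being normal: if $K/F$ is normal, the minimal polynomial over $F$ of any $\alpha \in K$ splits in $K$, and $\sigma(\alpha)$ is one of its roots, so $\sigma(K) \subseteq K$ and equality follows by finite dimension; conversely, if every $\sigma$ fixes $K$ setwise, then for $\alpha \in K$ the roots of its minimal polynomial over $F$ — which lie in $E$ since $E/F$ is normal, and which $G$ permutes transitively — all lie in $\sigma(K) = K$, so $K/F$ is normal.

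Finally, when $K/F$ is normal, restriction $\rho : G \to \mathrm{Gal}(K/F)$, $\sigma \mapsto \sigma|_K$, is well-defined since $\sigma(K) = K$, has kernel exactly $\mathrm{Gal}(E/K) = H$, and is surjective because $|G/H| = [E:F]/[E:K] = [K:F] = |\mathrm{Gal}(K/F)|$ (the last equality using that $K/F$ is itself Galois); the first isomorphism theorem then gives $G/H \cong \mathrm{Gal}(K/F)$. The main obstacle is really the preliminary machinery — Artin's lemma and the equality $|\mathrm{Gal}(E/F)| = [E:F]$ — and, inside the normality step, justifying that $G$ acts transitively on the roots in $E$ of an irreducible polynomial over $F$, which comes from the isomorphism-extension theorem; the rest is bookkeeping with degrees and indices.
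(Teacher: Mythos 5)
The paper does not actually prove this theorem: it appears in the Galois theory preliminaries as a standard result quoted without proof (consistent with the authors' stated policy of treating such background as a black box). So there is no in-paper argument to compare against; your proposal has to be judged on its own.

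On its own terms, your outline is the standard and correct proof, and I see no gaps. The two external inputs you isolate — Artin's lemma $[E:E^H]=|H|$ (resting on Dedekind's independence of characters) and $|\mathrm{Gal}(E/F)|=[E:F]$ for finite Galois $E/F$ — are exactly the right technical core, and the rest is the bookkeeping you describe: $H\subseteq\mathrm{Gal}(E/E^H)$ plus the degree bound forces equality one way; $E/K$ being Galois plus Artin forces $K=E^{\mathrm{Gal}(E/K)}$ the other way. The normality step correctly routes through $\sigma H\sigma^{-1}=\mathrm{Gal}(E/\sigma(K))$ together with the injectivity of the correspondence already established, and your two directions of ``$\sigma(K)=K$ for all $\sigma$ iff $K/F$ normal'' are sound, with the transitivity of $G$ on conjugate roots (isomorphism-extension) flagged as the one nontrivial input there. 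The final isomorphism $G/H\cong\mathrm{Gal}(K/F)$ via restriction, kernel identification, and an index count is likewise standard. One small point worth making explicit if you write this up in full: surjectivity of the restriction map can also be seen directly by extending an element of $\mathrm{Gal}(K/F)$ to $E$ (again the extension theorem), which avoids needing to know in advance that $K/F$ is Galois before the count; but your counting argument is also fine since separability of $K/F$ is inherited from $E/F$.
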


We now give an example of a Galois extension we really care about: (we also start departing from undergraduate Galois theory to more graduate level)

\begin{definition} [Algebraic Closure]
    Let $K$ be a field. Then the algebraic closure of $K$, denoted by $\overline{K},$ is the smallest possible field extension of $K$ which contains $K,$ contains the roots of all polynomials with coefficients in $K,$ and every polynomial with coefficients in $\overline{K}$ has all its roots in $\overline{K}.$
\end{definition}

\begin{prop}\label{closuregal}
    The algebraic closure of a field $K$ of characteristic zero is a Galois extension of $K$.
\end{prop}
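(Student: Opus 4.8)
The plan is to verify directly the two defining properties of a Galois extension for $\overline{K}/K$: normality and separability. Before either, I would record the auxiliary fact that every element of $\overline{K}$ is algebraic over $K$ — this is forced by $\overline{K}$ being the \emph{smallest} extension of $K$ closed under taking roots of polynomials over $K$ — so that each $e\in\overline{K}$ has a well-defined minimal polynomial $f\in K[x]$, which is necessarily irreducible.

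First I would check normality. Let $f\in K[x]$ be irreducible with at least one root $\alpha\in\overline{K}$. By the very definition of the algebraic closure, $\overline{K}$ contains the roots of \emph{every} polynomial with coefficients in $K$; in particular it contains all the roots of $f$, so $f$ factors completely into linear factors over $\overline{K}$. Hence $\overline{K}/K$ is normal, essentially for free.

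Next, separability, which is where the characteristic-zero hypothesis does the work. Fix $e\in\overline{K}$ with minimal polynomial $f\in K[x]$, irreducible of degree $n\geq 1$. The crucial observation is that the formal derivative $f'$ is nonzero: writing $f=x^n+\cdots$, its derivative has leading term $nx^{n-1}$, and $n\neq 0$ in a field of characteristic zero, so $\deg f'=n-1$. Now $\gcd(f,f')$ divides $f$, and since $f$ is irreducible this gcd is either a unit or an associate of $f$; the latter is impossible because $0\leq \deg f' < \deg f$. Therefore $\gcd(f,f')=1$, so $f$ and $f'$ have no common root, which means $f$ has no repeated roots (a repeated root of $f$ would be a root of $f'$ as well). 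In particular $f$ does not have a double (or higher) root at $e$, so the extension is separable. Being both normal and separable, $\overline{K}/K$ is Galois.

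I expect no genuine obstacle here, only one point requiring care: the separability step, and specifically the reliance on minimal polynomials being irreducible together with the derivative/gcd argument. This is exactly the place the characteristic-zero assumption is indispensable — in characteristic $p$ one can have inseparable algebraic extensions and the conclusion genuinely fails — so a good proof should flag where that hypothesis enters rather than treat it as routine.
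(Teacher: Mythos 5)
Your proof is correct and follows the same route as the paper's: normality falls out of the definition of the algebraic closure, and separability comes from the characteristic-zero hypothesis. The only difference is that you supply the derivative/gcd argument for separability explicitly, whereas the paper invokes it as a known fact; this is a worthwhile elaboration but not a different approach.
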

\begin{proof}
    We get from characteristic zero that it is separable. Since every polynomial completely factorises in the algebraic closure, it is also normal.
\end{proof}

\subsubsection{Finite Fields}
Keywords: Frobenius Element
\newline\newline
The study of finite fields is a really nice case study in applying Galois theory. Every finite field is found to be classified uniquely by their size, which necessarily is a prime power. The Galois groups also have explicity descriptions.

\begin{definition}
    In a finite field $\mathbb F_{p^k}$, the Frobenius element of the Galois group $\operatorname{Gal}(\mathbb F_{p^k}/\mathbb F_p)$ is defined to be $\operatorname{Frob}_p:\mathbb F_{p^k}\rightarrow\mathbb F_{p^k}=x\mapsto x^p$.
\end{definition}

The finite field $\mathbb F_{p^k}$ has characteristic $p$, i.e. $p=\underbrace{1+1+\dots+1}_{p\text{ times}}=0$. Therefore, for any $a,b\in\mathbb F_{p^k}$,
$$\operatorname{Frob}_p(a+b)=(a+b)^p=\sum_{i=0}^p\binom{p}{i}a^ib^{p-i}$$
and since for all $i=1,2,3,\dots,p-1$, $p\mid\binom{p}{i}$, then those terms disappear and this equals
$$\binom{p}{p}a^p+\binom{p}{0}=b^p=a^p+b^p=\operatorname{Frob}_p(a)+\operatorname{Frob}_p(b)$$

Furthermore,
$$\operatorname{Frob}_p(ab)=(ab)^p=a^pb^p=\operatorname{Frob}_p(a)\operatorname{Frob}_p(b)$$
$$\operatorname{Frob}_p(1)=1^p=1$$

Thus, $\operatorname{Frob}_p$ is indeed an automorphism of $\mathbb F_{p^k}$. Furthermore, by Fermat's Little Theorem we know that $\operatorname{Frob}_p$ fixes $\mathbb F_p$, so it is part of the Galois group $\operatorname{Gal}(\mathbb F_{p^k}/\mathbb F_p)$. Indeed, this is closely related to the Frob that appears in various statements for modular Galois representations.

\begin{theorem}[Classification of Extensions of Finite Fields]
Let \( \mathbb{F}_q \) be a finite field with \( q = p^r \) elements, where \( p \) is prime. Then:

\begin{itemize}
    \item For each integer \( n \geq 1 \), there exists a unique (up to isomorphism) finite extension field \( \mathbb{F}_{q^n} \) of \( \mathbb{F}_q \) with \( q^n \) elements.
    
    \item The extension \( \mathbb{F}_{q^n} / \mathbb{F}_q \) is Galois of degree \( n \), and its Galois group is cyclic:
    \[
    \operatorname{Gal}(\mathbb{F}_{q^n} / \mathbb{F}_q) \cong \mathbb{Z}/n\mathbb{Z},
    \]
    generated by the Frobenius automorphism \( \operatorname{Frob}_q : x \mapsto x^q \).
    
    \item The lattice of intermediate fields between \( \mathbb{F}_q \) and \( \mathbb{F}_{q^n} \) corresponds bijectively to the divisors of \( n \) (note each subgroup of $\Z/n\Z$ is $\Z/d\Z$ for divisors $d$ of $n$). For each divisor \( d \mid n \), there is a unique subfield \( \mathbb{F}_{q^d} \subseteq \mathbb{F}_{q^n} \).
\end{itemize}
\end{theorem}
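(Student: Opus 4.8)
The plan is to build everything from the structure of the algebraic closure $\overline{\mathbb{F}_p}$ together with the Frobenius automorphism, rather than constructing each $\mathbb{F}_{q^n}$ separately. First I would establish existence and uniqueness of a field with $q^n$ elements inside $\overline{\mathbb{F}_p}$: consider the map $\sigma = \operatorname{Frob}_q : x \mapsto x^q$ on $\overline{\mathbb{F}_p}$, which is a field homomorphism by the same binomial-coefficient argument given earlier in the excerpt for $\operatorname{Frob}_p$ (and $\sigma = (\operatorname{Frob}_p)^r$). Its fixed set $\{x : x^{q^n} = x\}$ is precisely the set of roots of the polynomial $X^{q^n} - X$; since this polynomial is separable (its derivative is $q^n X^{q^n-1} - 1 = -1$, a nonzero constant in characteristic $p$), it has exactly $q^n$ distinct roots, and the fixed set of a field endomorphism is a subfield. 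This gives a field $\mathbb{F}_{q^n}$ with $q^n$ elements containing $\mathbb{F}_q$ (as $\mathbb{F}_q$ is the root set of $X^q - X \mid X^{q^n} - X$, using $q-1 \mid q^n - 1$). Uniqueness follows because any subfield of $\overline{\mathbb{F}_p}$ of size $q^n$ consists of elements satisfying $x^{q^n} = x$, hence is exactly this root set; and any abstract field of that size embeds in $\overline{\mathbb{F}_p}$ since it is algebraic over $\mathbb{F}_p$.

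Next I would handle the Galois and degree claims. The extension $\mathbb{F}_{q^n}/\mathbb{F}_q$ is normal because $\mathbb{F}_{q^n}$ is the splitting field of $X^{q^n} - X$ over $\mathbb{F}_q$, and separable because every element is a root of that separable polynomial (alternatively, invoke the remark in the excerpt that separability is automatic for finite fields). Comparing cardinalities, $|\mathbb{F}_{q^n}| = q^n = |\mathbb{F}_q|^n$, so the dimension of $\mathbb{F}_{q^n}$ as an $\mathbb{F}_q$-vector space is $n$, i.e. $[\mathbb{F}_{q^n} : \mathbb{F}_q] = n$, and hence $|\operatorname{Gal}(\mathbb{F}_{q^n}/\mathbb{F}_q)| = n$. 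The automorphism $\operatorname{Frob}_q$ fixes $\mathbb{F}_q$ pointwise (every $a \in \mathbb{F}_q$ satisfies $a^q = a$), so it lies in the Galois group, and I must show it has order exactly $n$: its $k$-th power is $x \mapsto x^{q^k}$, which is the identity on $\mathbb{F}_{q^n}$ iff every element satisfies $x^{q^k} = x$ iff $q^n \mid q^k$ among the root counts, which forces $n \mid k$. Thus $\operatorname{Frob}_q$ generates a cyclic subgroup of order $n$, which must be the whole group, giving $\operatorname{Gal}(\mathbb{F}_{q^n}/\mathbb{F}_q) \cong \mathbb{Z}/n\mathbb{Z}$.

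Finally, the intermediate field correspondence is just the Fundamental Theorem of Galois Theory (available in the excerpt) applied to the cyclic group $\mathbb{Z}/n\mathbb{Z}$, whose subgroups are exactly $\frac{n}{d}\mathbb{Z}/n\mathbb{Z} \cong \mathbb{Z}/d\mathbb{Z}$ for each divisor $d \mid n$. I would identify the fixed field of the subgroup of index $d$ (generated by $\operatorname{Frob}_q^{\,d}$, the subgroup of order $n/d$) as $\{x : x^{q^d} = x\} = \mathbb{F}_{q^d}$, matching the uniqueness statement already proved, and note inclusion-reversal translates divisibility of degrees into containment of fields. I do not expect a serious obstacle here; the one point needing care is the separability/distinct-roots count for $X^{q^n} - X$ via the derivative computation, since the entire existence argument rests on getting exactly $q^n$ roots — everything downstream (degree, order of Frobenius, the subfield lattice) is then bookkeeping with divisors.
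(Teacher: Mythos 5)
Your proposal is essentially correct, and there is nothing in the paper to compare it against: the paper states this classification theorem without any proof, so your argument fills a genuine gap rather than duplicating one. The overall strategy --- realise $\mathbb{F}_{q^n}$ as the root set of the separable polynomial $X^{q^n}-X$ inside $\overline{\mathbb{F}_p}$, get the degree by counting, pin down the order of $\operatorname{Frob}_q$ by a root-counting bound, and then quote the Fundamental Theorem of Galois Theory for the subfield lattice --- is the standard one and it works.

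Two small points of wording should be tightened. First, the set $\{x : x^{q^n}=x\}$ is the fixed set of $\sigma^n$, not of $\sigma$ itself (whose fixed set is $\mathbb{F}_q$); you clearly intend the former. Second, the justification that $\operatorname{Frob}_q$ has order exactly $n$ should not be phrased as ``$q^n \mid q^k$'': the clean argument is that $\operatorname{Frob}_q^{\,n}=\mathrm{id}$ since every element satisfies $x^{q^n}=x$, so the order divides $n$; and if $\operatorname{Frob}_q^{\,k}=\mathrm{id}$ for some $k<n$ then all $q^n$ elements of the field would be roots of $X^{q^k}-X$, a polynomial with at most $q^k<q^n$ roots, a contradiction. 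With those repairs the proof is complete; the remaining appeal to the embeddability of an abstract field of size $q^n$ into $\overline{\mathbb{F}_p}$ is a standard property of algebraic closures and is acceptable at the level of rigour the paper uses elsewhere.
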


\subsection{Representation theory}

The abstract algebra objects often manifests concretely as functions on other objects. The notions that are associated with representations include: group actions, group representations (linear actions), and a big example for us will be Galois representations.


\begin{definition}[Group Representation]
Let \( G \) be a group and let \( V \) be a vector space over a field \( F \). A \emph{representation} of \( G \) on \( V \) is a group homomorphism
\[
\rho : G \to \operatorname{GL}(V),
\]
where \( \operatorname{GL}(V) \) is the group of invertible linear transformations of \( V \).

If \( \dim V = 2 \) and a basis of \( V \) is chosen, then \( \operatorname{GL}(V) \cong \operatorname{GL}_2(F) \), the group of invertible \( 2 \times 2 \) matrices over \( F \). In this case, the representation \( \rho \) can be viewed as a homomorphism
\[
\rho : G \to \operatorname{GL}_2(F),
\]
and for each \( g \in G \), \( \rho(g) \) is a \( 2 \times 2 \) matrix representing the linear action of \( g \) on \( V \) with respect to the chosen basis.
\end{definition}

\begin{definition}[Irreducible Representation]
Let \( G \) be a group and let \( V \) be a vector space over a field \( F \). A \emph{representation} of \( G \) is a group homomorphism
\[
\rho : G \longrightarrow \mathrm{GL}(V).
\]
The representation \( \rho \) (or \( V \), viewed as a \( G \)-module) is said to be \emph{irreducible} if the only \( G \)-invariant subspaces of \( V \) are \( \{0\} \) and \( V \) itself.

In other words, \( V \) has no proper, nontrivial subspaces \( W \subset V \) such that \( \rho(g)(W) \subseteq W \) for all \( g \in G \).
\end{definition}

\section{Galois Representations}

One of the main goals of this Section is to establish enough definitions and background to state the assumptions and conclusions of Serre's modularity conjecture.

Keywords: $l$-adic/mod $l$ Galois Representation, Odd, Irreducible, Two-Dimensional, Absolutely Irreducible, Unramified, Trace of Frobenius

\subsection{Definition of Galois Representation}

Keywords: $l$-adic/mod $l$ Galois Representation, Continuous, Two-Dimensional
\newline\newline
At first glance one might think a Galois representation is a group representation of a Galois group but this isn't quite enough, one realises that the Galois group isn't just a group but a topological group and one needs the representation to also respect the topology, that is, be continuous.

\subsubsection{Absolute Galois Group as a topological group}

\begin{definition}[Absolute Galois Group]
Let $\overline{\Q}$ be the algebraic closure of $\Q$.

By Proposition $\ref{closuregal}$, $\overline{ \mathbb{Q}}$ is a Galois extension of $\Q$. 

We define the absolute Galois group as

    $$G_\Q=\Gal(\overline{ \mathbb{Q}}/\mathbb{Q})=\Aut(\overline{ \mathbb{Q}}).$$
\end{definition}
    

So we can apply our Galois theory to this right? Unfortunately, we cannot since this is an infinite extension! What happens if the extension is infinite? This is where the issue of topology arises! We need to first define a topology of the Galois group of an infinite extension and then the Galois correspondence can be salvaged as between \emph{closed} subgroups rather than all of the subgroups.
\newline\newline
How to define a topology on the Galois group?
\newline\newline
Let's start with finite Galois groups from which we are familiar. If $K$ is a finite Galois extension of $\Q$, the Galois group $\Gal(K/\Q)$ is finite and we give it the discrete topology.
\newline\newline
We now look more closely at how these finite groups interact (this is already part of the Fundamental theorem)

\begin{theorem}[Restriction Maps Between Galois Groups]
Let $\Q \subseteq L \subseteq M$ be Galois extensions, in particular $\Q\subseteq L$ is a finite extension. Then the restriction map
\[
\operatorname{res}_{M/L} : \operatorname{Gal}(M/\Q) \longrightarrow \operatorname{Gal}(L/\Q),
\quad \sigma \mapsto \sigma|_L
\]
is a group homomorphism.

Moreover, the subgroup
\[
\operatorname{Gal}(M/L) = \{ \sigma \in \operatorname{Gal}(M/\Q) \mid \sigma|_L = \operatorname{id}_L \}.
\]
are exactly those that are mapped to the identity.
\end{theorem}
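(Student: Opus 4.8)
The plan is to verify three things in turn: that the assignment $\sigma \mapsto \sigma|_L$ actually lands in $\Gal(L/\Q)$ (well-definedness), that it respects composition, and that its kernel is exactly $\Gal(M/L)$. Only the first of these has genuine content, and it is precisely where the hypotheses that $L/\Q$ is normal and finite get used.

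First I would check well-definedness. Given $\sigma \in \Gal(M/\Q)$, the restriction $\sigma|_L$ is certainly a ring homomorphism $L \to M$ fixing $\Q$ pointwise, hence injective; the point is to show $\sigma(L) = L$, so that $\sigma|_L$ is an automorphism of $L$. Since $\Q \subseteq L$ is finite, write $L = \Q(\alpha_1,\dots,\alpha_n)$ and let $f_i \in \Q[x]$ be the minimal polynomial of $\alpha_i$. Because $L/\Q$ is normal and $f_i$ has the root $\alpha_i \in L$, it splits completely over $L$. Applying $\sigma$ to $f_i(\alpha_i)=0$ and using that $\sigma$ fixes the coefficients of $f_i$ shows $\sigma(\alpha_i)$ is again a root of $f_i$, hence $\sigma(\alpha_i) \in L$; since the $\alpha_i$ generate $L$ over $\Q$, this gives $\sigma(L) \subseteq L$. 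As $\sigma|_L$ is an injective $\Q$-linear endomorphism of the finite-dimensional $\Q$-vector space $L$, it is surjective, so $\sigma(L)=L$ and $\sigma|_L \in \Aut(L/\Q) = \Gal(L/\Q)$.

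Next I would check the homomorphism property: for $\sigma,\tau \in \Gal(M/\Q)$ and $x \in L$ we have $\tau(x)\in L$ by the previous step, so $(\sigma\tau)|_L(x) = \sigma(\tau(x)) = \sigma|_L(\tau|_L(x))$, i.e.\ $\operatorname{res}_{M/L}(\sigma\tau) = \operatorname{res}_{M/L}(\sigma)\operatorname{res}_{M/L}(\tau)$. Finally, $\sigma$ lies in the kernel of $\operatorname{res}_{M/L}$ iff $\sigma|_L = \operatorname{id}_L$, i.e.\ iff $\sigma$ fixes every element of $L$, which is exactly the defining condition for $\sigma \in \Gal(M/L)$. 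This simultaneously re-confirms that $\Gal(M/L)$ is a (normal) subgroup of $\Gal(M/\Q)$, being the kernel of a homomorphism.

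The main obstacle is the well-definedness step; everything else is purely formal. It is worth flagging that without normality of $L/\Q$ the restriction $\sigma|_L$ is merely an embedding $L \hookrightarrow M$ that need not map $L$ onto itself, and that the finite-degree hypothesis is what upgrades $\sigma(L)\subseteq L$ to $\sigma(L)=L$ via a dimension count — for an infinite normal extension $L/\Q$ one instead argues that $\sigma^{-1}$ also maps $L$ into $L$.
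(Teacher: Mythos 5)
Your proof is correct and uses the same key idea as the paper: normality of $L/\Q$ forces $\sigma$ to send each element of $L$ to another root of its minimal polynomial over $\Q$, hence back into $L$. You are in fact slightly more complete than the paper, which stops at $\sigma(L)\subseteq L$ and omits both the dimension-count upgrade to $\sigma(L)=L$ and the (formal) verification of the homomorphism and kernel claims.
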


\begin{proof}
    We prove here the restriction map indeed lands in $\Gal(L/\Q)$. For $\sigma\in\Gal(M/\Q)$ and $\alpha\in L$ we have $\sigma(\alpha)\in M$ but we want to show $\sigma(\alpha)\in L$. Since $L$ is a finite extension of $\Q$, then $\alpha$ is algebraic over $\Q$, so let $p\in \Q[x]$ be an irreducible polynomial with coefficients in $\Q$ such that $p(\alpha)=0$. Then $$p(\sigma(\alpha))=\sigma(p(\alpha))=\sigma(0)=0.$$
    So $\sigma(\alpha)$ is a root of $p$. Now as $L$ is a normal extension, the roots of $p$ are in $L$, which proves the claim.
\end{proof}

In the above we are interested in $M=G_\Q$ and otherwise $M$ and $L$  finite. 

Recall the definition of inverse limit:

\begin{definition}[Inverse System and Inverse Limit]
\leavevmode
\begin{enumerate}
    \item Let $I$ be a set with a partial order $\leq$. An \emph{inverse system} (also called a projective system) indexed by $I$ is a collection of sets (or groups or rings or topological spaces) $\{A_i\}_{i \in I}$ together with maps (of sets, groups, rings, or topological spaces)
    \[
    \varphi_{ij} : A_i \to A_j \quad \text{for all } j \leq i
    \]
    such that $\varphi_{ii} = \mathrm{id}_{A_i}$ for all $i \in I$ and $\varphi_{jk} \circ \varphi_{ij} = \varphi_{ik}$ for all $k \leq j \leq i$. The $\varphi_{ij}$ are called \emph{transition maps}.

    \item The \emph{inverse limit} of the system $\left( \{A_i\}, \{\varphi_{ij}\} \right)$ is the set/group/ring/topological space
    \[
    \varprojlim_{i} A_i = \left\{ (a_i)_{i \in I} \in \prod_{i \in I} A_i : \varphi_{ij}(a_i) = a_j \text{ for all } j \leq i \right\}
    \]
    of compatible systems of elements of the inverse system. In the case of topological spaces, we put the subspace topology on $\varprojlim A_i$.
\end{enumerate}
\end{definition}

The set of finite Galois groups form an inverse system with the restriction maps as the transition maps.

\begin{thm}
    
$$G_\Q=\varprojlim \Gal(K/\Q) $$
\end{thm}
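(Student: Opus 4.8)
The plan is to write down an explicit group isomorphism between $G_\Q$ and the inverse limit on the right, the topology on $G_\Q$ then being defined precisely so that this map is a homeomorphism. First I would fix the index set: let $I$ be the collection of all finite Galois extensions $K/\Q$ with $\Q\subseteq K\subseteq\overline{\Q}$, partially ordered by inclusion. This $I$ is directed, because the compositum $K_1K_2$ of two members is again a finite Galois extension of $\Q$ (finite degree, still normal, and separable automatically in characteristic zero). For $K\subseteq K'$ in $I$ the restriction $\operatorname{res}_{K'/K}\colon\Gal(K'/\Q)\to\Gal(K/\Q)$, $\sigma\mapsto\sigma|_K$, is a well-defined group homomorphism by the restriction-map theorem stated above, and these maps obviously satisfy $\operatorname{res}_{K/K}=\mathrm{id}$ and $\operatorname{res}_{K'/K}\circ\operatorname{res}_{K''/K'}=\operatorname{res}_{K''/K}$, so $\bigl(\{\Gal(K/\Q)\}_{K\in I},\{\operatorname{res}_{K'/K}\}\bigr)$ is an inverse system. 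Define
$$\Phi\colon G_\Q\longrightarrow\varprojlim_{K\in I}\Gal(K/\Q),\qquad\sigma\longmapsto(\sigma|_K)_{K\in I}.$$
Each $\sigma|_K$ lies in $\Gal(K/\Q)$ because normality of $K/\Q$ forces $\sigma(K)=K$, the tuple is compatible since $(\sigma|_{K'})|_K=\sigma|_K$, and $\Phi$ is a homomorphism because restriction is.

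Next I would check $\Phi$ is bijective. For injectivity, if $\Phi(\sigma)$ is the identity tuple then $\sigma$ fixes pointwise every finite Galois extension of $\Q$; since each $\alpha\in\overline{\Q}$ lies in such an extension --- namely the splitting field of its minimal polynomial over $\Q$ --- we get $\sigma=\mathrm{id}$. Surjectivity is the heart of the argument: given a compatible system $(\sigma_K)_{K\in I}$, define $\sigma\colon\overline{\Q}\to\overline{\Q}$ by $\sigma(\alpha)=\sigma_K(\alpha)$ for any $K\in I$ containing $\alpha$. This is well defined because if $\alpha\in K_1\cap K_2$ then both candidate values agree with $\sigma_{K_1K_2}(\alpha)$ by compatibility, and it is a $\Q$-automorphism because, given $\alpha,\beta\in\overline{\Q}$, one can choose a single $K\in I$ containing both, on which $\sigma$ coincides with the field automorphism $\sigma_K$; hence $\sigma$ respects addition and multiplication and fixes $\Q$, and $\Phi(\sigma)=(\sigma_K)_K$. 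The main obstacle is this gluing step: one must be careful that the locally defined pieces patch to a single map that is additive and multiplicative everywhere, and this is exactly where directedness of $I$ (equivalently, closure of $I$ under compositum) is indispensable --- though once that structure is in place the verification is routine rather than deep.

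Finally, because the role of this theorem is to equip $G_\Q$ with a topology, I would close by declaring $G_\Q$ to carry the topology that makes $\Phi$ a homeomorphism; equivalently, the coarsest topology for which every restriction map $G_\Q\to\Gal(K/\Q)$, each target given the discrete topology, is continuous. With this convention, and giving $\varprojlim\Gal(K/\Q)$ the subspace topology from $\prod_{K}\Gal(K/\Q)$ as in the definition of inverse limit, the displayed identity is an isomorphism of topological groups, and the Galois correspondence can now be recovered between \emph{closed} subgroups of $G_\Q$ and intermediate fields.
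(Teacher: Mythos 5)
Your proposal is correct, and it supplies an argument the paper does not: the paper states this theorem with no proof at all and proceeds straight to using the inverse limit to define the Krull topology. Your construction of $\Phi\colon\sigma\mapsto(\sigma|_K)_K$, the injectivity argument via splitting fields, and the surjectivity argument via gluing a compatible system over the directed set of finite Galois subextensions is the standard and complete way to establish the identification, and your closing convention (give $G_\Q$ the topology making $\Phi$ a homeomorphism, i.e.\ the coarsest topology making all restrictions to the discrete groups $\Gal(K/\Q)$ continuous) matches exactly how the paper then defines the Krull topology. One small point worth making explicit in the gluing step: besides additivity and multiplicativity, you should note that the glued map $\sigma$ is bijective on $\overline{\Q}$ --- this follows because each $\sigma_K$ is a bijection of $K$ onto itself and every element of $\overline{\Q}$ lies in some $K\in I$ --- but this is a one-line check, not a gap.
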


\begin{definition}[Krull topology of $G_\Q$]
    Define discrete topology for finite Galois groups and define topology on inverse limit as subspace topology of the product topology. Give this topology to $G_\Q$ from the inverse limit. Now $G_\Q$ is a topological group!
\end{definition}
\begin{proposition}
The collection
\[
\left\{ \operatorname{Gal}(\overline{\Q}/K) \mid K/\mathbb{Q} \text{ finite Galois} \right\}
\]
forms a basis of open neighbourhoods of the identity element in \( \operatorname{Gal}(\overline{\Q}/\mathbb{Q}) \).
\end{proposition}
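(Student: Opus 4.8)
The plan is to unwind the definition of the Krull topology on $G_\Q$ — namely the subspace topology it inherits as $\varprojlim \Gal(K/\Q) \subseteq \prod_K \Gal(K/\Q)$, each finite Galois group being discrete — and to recognise the subgroups $\operatorname{Gal}(\overline{\Q}/K)$ as precisely the most basic open sets around the identity.

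First I would check that each member of the proposed collection is an open neighbourhood of $\mathrm{id}$. For a finite Galois extension $K/\Q$ the restriction map $\pi_K : G_\Q \to \Gal(K/\Q)$, $\sigma \mapsto \sigma|_K$, is continuous, being the restriction to $G_\Q$ of a coordinate projection out of the product, and $\{\mathrm{id}_K\}$ is open in the discrete group $\Gal(K/\Q)$; hence
$$\operatorname{Gal}(\overline{\Q}/K) = \{\sigma \in G_\Q : \sigma|_K = \mathrm{id}_K\} = \pi_K^{-1}(\{\mathrm{id}_K\})$$
is open in $G_\Q$ and clearly contains $\mathrm{id}$.

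Next I would show that every open neighbourhood $U$ of $\mathrm{id}$ contains one of these. By the definition of the product and subspace topologies, $U$ contains a set of the form $G_\Q \cap \prod_K U_K$ with $\mathrm{id} \in \prod_K U_K$ and $U_K = \Gal(K/\Q)$ for all but finitely many indices, say $K_1,\dots,K_n$. Replacing each exceptional $U_{K_i}$ by the singleton $\{\mathrm{id}_{K_i}\}$ only shrinks the set, so $U$ contains $\bigcap_{i=1}^n \operatorname{Gal}(\overline{\Q}/K_i)$. Taking $K = K_1\cdots K_n$ to be the compositum inside $\overline{\Q}$, I would invoke the field-theoretic fact that a finite compositum of finite Galois extensions of $\Q$ is again finite Galois, together with the elementary observation that an automorphism fixes $K$ pointwise exactly when it fixes each generating field $K_i$ pointwise; this yields $\bigcap_{i=1}^n \operatorname{Gal}(\overline{\Q}/K_i) = \operatorname{Gal}(\overline{\Q}/K)$, a single member of the collection sitting inside $U$.

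Putting the two directions together gives the claim: each $\operatorname{Gal}(\overline{\Q}/K)$ is an open neighbourhood of $\mathrm{id}$, and every open neighbourhood of $\mathrm{id}$ contains one of them, which is exactly what it means for the collection to be a basis of open neighbourhoods at the identity. The only step with genuine content is the assertion that a finite compositum of finite Galois extensions is finite Galois (plus the trivial reduction of ``fixes the compositum'' to ``fixes each factor''); everything else is routine bookkeeping with the product-topology definition. It is worth remarking as a bonus that these basic neighbourhoods are themselves subgroups, which is precisely what upgrades $G_\Q$ from a topological space to a topological group.
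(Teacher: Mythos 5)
Your proof is correct. The paper states this proposition without any proof, so there is nothing to compare against; your argument supplies exactly the details the paper's definition of the Krull topology (subspace of the product of discrete finite Galois groups) implicitly calls for. Both directions are sound: $\operatorname{Gal}(\overline{\Q}/K)=\pi_K^{-1}(\{\mathrm{id}_K\})$ is open since the coordinate projections are continuous and the targets are discrete, and a basic product-open neighbourhood of the identity constrains only finitely many coordinates $K_1,\dots,K_n$, so it contains $\bigcap_i \operatorname{Gal}(\overline{\Q}/K_i)=\operatorname{Gal}(\overline{\Q}/K_1\cdots K_n)$. You correctly isolate the one ingredient with real content, namely that a finite compositum of finite Galois extensions of $\Q$ is again finite Galois; if you wanted the write-up to be fully self-contained you would prove that (normality is clear since a compositum of splitting fields is a splitting field, and separability holds in characteristic zero), but citing it is reasonable at the level of the surrounding text.
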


We are now ready to give the definition of a Galois representation.

\begin{definition}[Galois Representation of the Absolute Galois Group of \(\mathbb{Q}\)]
A \emph{Galois representation} of the absolute Galois group of \( \mathbb{Q} \) is a continuous group homomorphism
\[
\rho : \operatorname{Gal}(\overline{\mathbb{Q}}/\mathbb{Q}) \longrightarrow \mathrm{GL}_n(R),
\]
where:
\begin{itemize}
    \item \( \overline{\mathbb{Q}} \) is the algebraic closure of \( \mathbb{Q} \),
    \item \( R \) is a topological ring (commonly \( \mathbb{Q}_\ell, \mathbb{Z}_\ell, \mathcal{O}_F \), or a finite field),
    \item \( \mathrm{GL}_n(R) \) is given the topology induced by \( R \),
    \item \( \operatorname{Gal}(\overline{\mathbb{Q}}/\mathbb{Q}) \) has the Krull topology,
    \item and the homomorphism \( \rho \) is continuous with respect to these topologies.
    \item the $n$ is the dimension of the representation, for us $n=2$.
\end{itemize}
\end{definition}

We define $\mathcal{O}_F$ later, but for the other choices of $R$ we have $R=\F_\ell$ gives mod $l$ Galois representations:

\begin{definition}[Mod \(\ell\) Galois Representation]
Let \(\ell\) be a prime number. A \emph{mod \(\ell\) Galois representation} is a continuous group homomorphism
\[
\bar{\rho} : \operatorname{Gal}(\overline{\mathbb{Q}}/\mathbb{Q}) \longrightarrow \mathrm{GL}_n(\mathbb{F}_\ell),
\]
where:
\begin{itemize}
    \item \( \mathbb{F}_\ell \) is the finite field with \( \ell \) elements,
    \item \( \operatorname{Gal}(\overline{\mathbb{Q}}/\mathbb{Q}) \) has the Krull topology,
    \item \( \mathrm{GL}_n(\mathbb{F}_\ell) \) is given the discrete topology,
    \item and \( \bar{\rho} \) is continuous with respect to these topologies.
\end{itemize}
\end{definition}

and if $R$ is a finite extension of $\Q_\ell$ we get an $\ell$-adic Galois representation. Here we will mainly be concerned with $R=\Q_\ell$:

\begin{definition}[\(\ell\)-adic Galois Representation]
Let \(\ell\) be a prime number. An \emph{\(\ell\)-adic Galois representation} is a continuous group homomorphism
\[
\rho : \operatorname{Gal}(\overline{\mathbb{Q}}/\mathbb{Q}) \longrightarrow \mathrm{GL}_n(\Q_\ell),
\]
in the literature $\Q_\ell$ could also be replaced by a finite extension of $\Q_\ell$.
\end{definition}

These can be related to each other in similar ways to the construction of the Tate module. Given compatible representations on finite ring $\Z/\ell^n\Z$, an inverse limit can be taken to get $\Z_\ell$, which can then be extended to $\Q_\ell$. From a finite field, a quotient can also be taken to get back to a mod $\ell$ representation.

Next we discuss continuity 

\subsubsection{Continuity of Galois representations}

When the groups are finite with the discrete topology, continuity is essentially vacuous and we are back to the familiar situation without continuity.

Something like this can be achieved for finite fields or more generally finite rings:

\begin{proposition}
Let \( \rho : G_{\mathbb{Q}} \to \mathrm{GL}_2(\mathbb{F}_q) \) be a Galois representation over a finite field. Then \( \rho \) factors as
\[
G_{\mathbb{Q}} \twoheadrightarrow \operatorname{Gal}(K/\mathbb{Q}) \to \mathrm{GL}_2(\mathbb{F}_q)
\]
for some finite Galois extension \( K/\mathbb{Q} \). Moreover. $\F_q$ can be replaced by $\Z/\ell^n\Z$ to get the same conclusion.
\end{proposition}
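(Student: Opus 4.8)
The plan is to exploit the fact that the target group is \emph{finite and discrete}, which forces $\ker\rho$ to be open, and then to feed this into the description of the Krull topology on $G_\Q$ to extract a finite Galois extension through which $\rho$ factors. First I would note that $\GL_2(\F_q)$ is a finite group, so in the discrete topology the singleton $\{I\}$ is open. Since $\rho$ is continuous, $N:=\ker\rho=\rho^{-1}(\{I\})$ is an open subgroup of $G_\Q$; it is moreover normal, being the kernel of a homomorphism. Because $N$ is open it is in particular an open neighbourhood of the identity, so by the Proposition identifying $\{\Gal(\overline{\Q}/K)\mid K/\Q\text{ finite Galois}\}$ as a basis of open neighbourhoods of the identity in $G_\Q$, there exists a finite Galois extension $K/\Q$ with $\Gal(\overline{\Q}/K)\subseteq N$.

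Next I would descend $\rho$ to a finite quotient. Applying the Restriction Maps theorem with $M=\overline{\Q}$ and $L=K$ (and using the inverse limit description $G_\Q=\varprojlim\Gal(K/\Q)$ to get surjectivity), the restriction map gives a surjection $\operatorname{res}:G_\Q\twoheadrightarrow\Gal(K/\Q)$ with kernel exactly $\Gal(\overline{\Q}/K)$, hence an isomorphism $G_\Q/\Gal(\overline{\Q}/K)\cong\Gal(K/\Q)$. Since $\Gal(\overline{\Q}/K)\subseteq\ker\rho$, the map $\rho$ is constant on each coset of $\Gal(\overline{\Q}/K)$ and therefore factors uniquely through this quotient, yielding the claimed factorisation
\[
G_\Q\twoheadrightarrow G_\Q/\Gal(\overline{\Q}/K)\cong\Gal(K/\Q)\longrightarrow\GL_2(\F_q),
\]
where the first arrow is restriction to $K$ and the last is the induced homomorphism.

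Finally, for the "moreover" clause I would observe that the argument used nothing about $\F_q$ beyond the two facts that $\GL_2(\F_q)$ is a finite group and carries the discrete topology; since $\Z/\ell^n\Z$ is finite, $\GL_2(\Z/\ell^n\Z)$ is again finite and discrete, so the same three steps apply verbatim. I expect the one point requiring genuine care to be ensuring that the finite quotient we land on is itself a Galois group of a \emph{finite} extension — that is, insisting the field $K/\Q$ be Galois rather than merely finite, so that $G_\Q/\Gal(\overline{\Q}/K)\cong\Gal(K/\Q)$ holds; this is precisely why one invokes the "finite Galois" neighbourhood basis rather than an arbitrary open subgroup of $G_\Q$. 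Everything else — openness of the kernel, the universal property of the quotient by a subgroup contained in the kernel — is formal.
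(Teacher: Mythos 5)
Your proposal is correct and follows essentially the same route as the paper: use discreteness of the finite target to make $\rho^{-1}(\{\mathrm{id}\})$ open, locate a finite Galois $\operatorname{Gal}(\overline{\Q}/K)$ inside it via the neighbourhood basis of the identity, and factor through $G_\Q/\operatorname{Gal}(\overline{\Q}/K)\cong\operatorname{Gal}(K/\Q)$. Your version just spells out the coset/universal-property step and the restriction-map surjectivity a bit more explicitly than the paper does.
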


\begin{proof} We write the argument for $\F_q$ but it is the same for any discrete topology on $\gl_2$.
Since \( \mathrm{GL}_2(\mathbb{F}_q) \) is a finite group with the discrete topology,  $V=\{\mathrm{id}\}\subset \mathrm{GL}_2(\mathbb{F}_q)$ is open. Then \( \rho^{-1}(V) \subseteq G_{\mathbb{Q}} \) is an open neighborhood of the identity in the Krull topology. Hence \( \rho^{-1}(V) \) contains some open normal subgroup \( \operatorname{Gal}(\overline{\mathbb{Q}}/K) \) for some finite Galois extension \( K/\mathbb{Q} \).
\newline\newline
But then \( \rho(\operatorname{Gal}(\overline{\mathbb{Q}}/K)) \subseteq V = \{\mathrm{id}\} \).
\newline\newline
Therefore, \( \rho \) factors through the finite quotient
\[
G_{\mathbb{Q}} / \operatorname{Gal}(\overline{\mathbb{Q}}/K) = \operatorname{Gal}(K/\mathbb{Q}),
\]
as desired.
\end{proof}

We might be careful here to conclude that a continuous representation is equivalent to a finite Galois group representation because this may have something to do with the choice of $K$.
\newline\newline
If we have compatible representations of finite Galois groups, we can also rebuild a continuous Galois representation of the absolute Galois group on $\Z_\ell$

\subsubsection{Example of Galois representation: Cyclotomic character}

To construct the $p$-adic cyclotomic character of $G_\Q$, first consider the $p^k$ roots of unity for any $k\in\Z^+_0$, which are the roots of the rational polynomial $x^{p^k}-1$. Therefore, for any $\sigma\in G_\Q$, $\sigma$ permutes the roots. Furthermore, the roots of unity form a cyclic group of size $p^k$ under multiplication, and since $\sigma$ is multiplicative, it thus induces a group automorphism of that cyclic group. If we take a primitive root of unity to be $\zeta$, so then the roots of unity are $1,\zeta,\zeta^2,\dots,\zeta^{p^k-1}$, we can create a bijective map $f:\{1,\zeta,\zeta^2,\dots,\zeta^{p^k-1}\}\to\Z/p^k\Z$ were $f(\zeta^i)=i\in\Z/p^k\Z$, so that $f(\zeta^a\zeta^b)=a+b=f(\zeta^a)+f(\zeta^b)$. Hence, this is a group isomorphism between the roots of unity under multiplication and $\Z/p^k\Z$ under addition. Therefore $g_\sigma=f\circ \sigma\circ f^{-1}$ is a group automorphism of $\Z/p^k\Z$ under addition. However, by being an automorphism under addition, then for any $\lambda\in\Z/p^k\Z$, $g_\sigma(\lambda x)=\lambda g_\sigma(x)$, since multiplication by $\lambda$ is simply repeated addition. Hence, $g_\sigma$ forms a $\Z/p^k\Z$-linear map from $\Z/p^k\Z$ to itself. 

Furthermore, for any $\sigma_1,\sigma_2\in G_\Q$, $g_{\sigma_1}\circ g_{\sigma_2}=f\circ \sigma_1\circ f^{-1}\circ f\circ \sigma_2\circ f^{-1} = f\circ \sigma_1\circ\sigma_2\circ f^{-1}=g_{\sigma_1\circ\sigma_2}$. Also the identity element of $G_\Q$ fixes the roots of unity so it maps to the identity linear map. Therefore for all $\sigma\in G_\Q$, $g_{\sigma^{-1}}\circ g_\sigma$ is the identity map, so $g_\sigma$ is invertible, i.e. $g_\sigma\in\operatorname{GL}_1(\Z/p^k\Z)$.
\begin{definition}
    $\chi:G_\Q\to\operatorname{GL}_1(\Z/p^k\Z)=\sigma\mapsto g_\sigma$ is the mod-$p^k$ cyclotomic character of $G_\Q$. It is a one dimensional representation.
\end{definition}

A worry that one may have is that this definition depends on our choice of primitive root $\zeta$. Since this is a one-dimensional representation, elements of $\operatorname{GL}_1(\Z/p^k\Z)$ compose like $\Z/p^k\Z$ under multiplication, so composition is commutative. Therefore we can actually show that this definition is independent of the choice of the primitive root. Consider two primitive roots $\zeta_1,\zeta_2$, which will then have bijective group isomorphisms $f_1,f_2$ between the $p^k$ roots of unity under multiplication and $\Z/p^k\Z$ under addition as above. Then let the corresponding mod $p^k$ cyclotomic characters be $\chi_1,\chi_2$ respectively. Then for any $\sigma\in G_\Q$,

\begin{align*}
\chi_2(\sigma)&=f_2\circ\sigma\circ f_2^{-1}=f_2\circ f_1^{-1}\circ f_1\circ\sigma\circ f_1^{-1}\circ f_1\circ f_2^{-1}\\&=\left(f_1\circ f_2^{-1}\right)^{-1}\circ\left(f_1\circ\sigma\circ f_1^{-1}\right)\circ\left(f_1\circ f_2^{-1}\right)\\&=\left(f_1\circ f_2^{-1}\right)^{-1}\circ\chi_1(\sigma)\circ\left(f_1\circ f_2^{-1}\right)
\end{align*}

We know that $f_1\circ f_2^{-1}$ is a group automorphism of $\Z/p^k\Z$, so once again it must be an invertible linear map, and obviously $\chi_1(\sigma)$ is an invertible linear map. Therefore we can commute these to get

$$\left(f_1\circ f_2^{-1}\right)^{-1}\circ\left(f_1\circ f_2^{-1}\right)\circ\chi_1(\sigma)=\chi_1(\sigma)$$

and hence we have shown that $\chi_1=\chi_2$.

We can now combine these into a $p$-adic representation! Note that while it's true that our representation does not depend on the choice of the primitive root of unity, when combining them and showing that they are compatible, we must choose "compatible" roots of unity. An easy way to do this is simply to set each $p^k$ primitive root of unity to the principal one, i.e. $\zeta_{p^k}=e^{2\pi i/p^k}$.
\newline\newline
We view the $p$-adic integers as the inverse limit of the rings $\Z/p^k\Z$ under the transition map between $\Z/p^k\Z\to\Z/p^{k-1}\Z$ where we take the number mod $p^{k-1}$. For each $k\in\Z^+_0$, we let the mod $p^k$ cyclotomic character be $\chi_{p^k}$.
\begin{definition}[$p$-adic cyclotomic character]
   Define the $p$-adic cyclotomic character $\chi$ to be
   $$\chi(\sigma)=(x_0,x_1,x_2,\dots)\mapsto(\chi_{p^0}(\sigma)(x_0),\chi_{p^1}(\sigma)(x_1),\chi_{p^2}(\sigma)(x_2),\dots)$$
   (Here, $(x_0,x_1,x_2,\dots)\in \Z_p=\varprojlim_n \Z/p^n\Z\subset \Z/p^0\Z\times\Z/p^1\Z\times\Z/p^2\Z\times\cdots$)
\end{definition}
\begin{proof}
Now we prove that this is indeed a character. There are a number of things that we need to check. Firstly, we should show that $\chi(\sigma)$ actually outputs something in $\Z_p$. Thus we need to check that for each $k\in\Z^+$, $\chi_{p^k}(\sigma)(x_k)\equiv\chi_{p^{k-1}}(\sigma)(x_{k-1})\mod{p^{k-1}}$.
\newline\newline
$\zeta_{p^k}^p=\zeta_{p^{k-1}}$ (this is what I meant when I said that our choice of primitive roots were compatible). Therefore, $\left(\zeta_{p^k}^a\right)^p=\zeta_{p^{k-1}}^b\iff \zeta_{p^{k-1}}^a=\zeta_{p^{k-1}}^b\iff a\equiv b\mod{p^{k-1}}$. Hence
\begin{align*}
   x_k&\equiv x_{k-1 }&\mod{p^{k-1}}\\
   (\zeta_{p^k}^{x_k})^p&=\zeta_{p^{k-1}}^{x_{k-1}}\\
   \sigma((\zeta_{p^k}^{x_k})^p)&=\sigma(\zeta_{p^{k-1}}^{x_{k-1}})\\
   \sigma((\zeta_{p^k}^{x_k}))^p&=\sigma(\zeta_{p^{k-1}}^{x_{k-1}})\\
   \chi_{p^k}(\sigma)(x_k)&\equiv\chi_{p^{k-1}}(\sigma)(x_{k-1})&\mod{p^{k-1}}
\end{align*}

Then we need to show that $\chi(\sigma)$ is an invertible linear map. This is clear as each of the components is an invertible linear map, so it's a linear map, and an inverse function can be constructed by inverting each of the components. So $\chi_{p^k}(\sigma)\in\operatorname{GL}_1(\Z_p)$.
\newline\newline
Finally we need to show that this is a representation. However, this is also obvious, as since the individual $\chi_{p^k}$'s are representations, and $\chi$ simply applies them to the components, then the whole thing preserves group operations and hence is a representation.
\end{proof}

\subsubsection{Hypotheses of Serre's conjecture}
Serre's conjecture aims to find minimal hypotheses on a two dimensional mod $\ell$ Galois representation in order to conclude that it is modular. What exactly modular means, we will ignore for now and focus on what conditions appear in this minimal list.

\begin{definition}[Odd]
   Let $c\in\mathrm{Gal}(\overline{\mathbb{Q}}/\mathbb{Q})$ denotes complex conjugation over $\mathbb{C}$, that is 
   $$c(x+iy)=x-iy$$
   for $i^2=-1$ and all $x,y\in\R$. Restricting this makes sense as we can take $\overline{\Q}\subset\C$ and for any $f\in\Z[x]$ with root $\alpha$ we have 
   $$f(c(\alpha))=c(f(\alpha))=0,$$
which is because the coefficients of $f$ are real and $c|_\R=\mathrm{id}_\R$.
\newline\newline
We say a two dimensional mod $\ell$ Galois representation
\( \rho : G_{\mathbb{Q}} \to \mathrm{GL}_2(\mathbb{F}_\ell) \)
is \emph{odd} if
  $$\text{det}\rho(c)=-1.$$ 

  \end{definition}
  
  This is a necessary condition for representations to arise from modular forms but it is not completely clear that it is needed in the hypotheses to conclude modularity. 
\newline\newline
   Note that $\det\rho(c)=\pm1$ since applying $c$ twice is the identity and $\det\rho$ is multiplicative:
   $$1=\det\rho(c^2)=\det(\rho(c)^2)=(\det\rho(c))^2.$$
We might then call those representations with $+1$ even.
\newline\newline
We now discuss irreducibility. The general definition of irreducible representation in Preliminaries III is particularly concrete for two dimensional representations as the only smaller representations are one or zero dimensional. 
\begin{proposition}[Irreducible two-dimensional mod $\ell$ Galois representation] We say 
   \( \rho : G_{\mathbb{Q}} \to \mathrm{GL}_2(\mathbb{F}_\ell) \)
   is irreducible if there are no one dimensional invariant subspaces. Equivalently we have

1. There does not exist $v\in \F_\ell^2$ that is an eigenvector simultaneously of $\rho(\sigma)$ for all $\sigma\in G_\Q$.

   2. There is no $B\in \GL_2(\F_\ell)$ such that $B\rho(\sigma)B^{-1}$ has bottom left entry zero for all $\sigma\in G_\Q$.
   
\end{proposition}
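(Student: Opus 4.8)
The plan is to prove all four conditions equivalent by pure linear algebra over $\F_\ell$, first unwinding what a stable line means, then translating to matrix form by a change of basis. Throughout I would fix the convention that elements of $\GL_2(\F_\ell)$ act on column vectors, so that a matrix $M$ has bottom-left (the $(2,1)$) entry zero precisely when $Me_1$ is a scalar multiple of $e_1$, where $e_1=\begin{pmatrix}1\\0\end{pmatrix}$; stating this up front is what keeps condition $2$ of the proposition unambiguous.

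First I would record that since $\F_\ell^2$ is two-dimensional, its only proper nontrivial subspaces are the lines $\F_\ell v$ with $v\neq 0$, so ``no one-dimensional invariant subspace'' is the same as the general irreducibility condition (no $G_\Q$-invariant subspace besides $0$ and $\F_\ell^2$). Next, if $L=\F_\ell v$ satisfies $\rho(\sigma)L\subseteq L$ for all $\sigma$, then because each $\rho(\sigma)$ is invertible we in fact get $\rho(\sigma)L=L$, hence $\rho(\sigma)v=\lambda_\sigma v$ for some $\lambda_\sigma\in\F_\ell^\times$; thus $v$ is a simultaneous eigenvector of all $\rho(\sigma)$. Conversely, a simultaneous eigenvector manifestly spans a stable line. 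This gives the equivalence of the defining condition with item $1$.

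For the equivalence with item $2$, suppose $v$ is a simultaneous eigenvector and choose $B\in\GL_2(\F_\ell)$ whose inverse has first column $v$ (extend $\{v\}$ to a basis and let that basis be the columns of $B^{-1}$), so $B^{-1}e_1=v$. Then $B\rho(\sigma)B^{-1}e_1=B\rho(\sigma)v=\lambda_\sigma Bv=\lambda_\sigma e_1$, so $B\rho(\sigma)B^{-1}$ has bottom-left entry zero for every $\sigma$. Conversely, if such a $B$ exists then $e_1$ is a simultaneous eigenvector of all $B\rho(\sigma)B^{-1}$, whence $B^{-1}e_1$ is a simultaneous eigenvector of all $\rho(\sigma)$, contradicting item $1$. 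Chaining these implications closes the loop.

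There is no genuine obstacle here: every step is routine. The only points needing care are bookkeeping ones — fixing whether matrices act on rows or columns so that ``bottom left entry'' is well-defined, and the small but essential use of invertibility of each $\rho(\sigma)$, which is what upgrades $\rho(\sigma)L\subseteq L$ to $\rho(\sigma)L=L$ and forces the eigenvalue $\lambda_\sigma$ to be nonzero so that $v$ really does generate its line. I would make the column-vector convention explicit in the opening sentence of the proof and otherwise keep the argument brief.
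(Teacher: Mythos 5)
Your proof is correct. The paper states this proposition without any proof at all, so you are supplying an argument the authors omitted; what you give is the standard linear-algebra verification, and every step checks out. In particular you correctly identify the two points that actually need saying: the column-vector convention that makes ``bottom-left entry zero'' mean $Me_1\in\F_\ell e_1$, and the invertibility of each $\rho(\sigma)$, which upgrades $\rho(\sigma)L\subseteq L$ to equality and guarantees $\lambda_\sigma\neq0$. The change-of-basis step (taking $B^{-1}$ to have first column $v$) and its converse are both handled correctly, so the chain of equivalences closes. Nothing to add.
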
 

Irreducibility is also a necessary condition and Serre \cite{Serre} makes a case that it cannot be removed as a hypothesis in his conjecture. He shows this by showing there are reducible representations present satisfying other collections of hypotheses.
\newline\newline
We can now give a statement of Serre's conjecture emphasising the hypotheses:

\begin{thm}[Serre's conjecture (assumptions version)] 
    Any odd, irreducible Galois representation 
    \( \rho : G_{\mathbb{Q}} \to \mathrm{GL}_2(\mathbb{F}_\ell) \)
    is modular.
\end{thm}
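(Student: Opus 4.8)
The honest plan is that this is the Khare--Wintenberger theorem \cite{SMC}, and no short proof exists; what I would do is organise the argument around three engines --- \emph{modularity lifting theorems} (the ``$R=\mathbb{T}$'' philosophy of Wiles and Taylor--Wiles, in the refined form due to Diamond, Fujiwara and Kisin), \emph{potential modularity} (Taylor), and an inductive \emph{``killing ramification''} scheme --- and treat the already-proven Modularity Theorem (the old TSW, \cite{BCDT}) as a black-box supply of modular $\ell$-adic representations. The overall shape: given an odd irreducible $\bar\rho:G_{\mathbb{Q}}\to\GL_2(\F_\ell)$, I would attach to it a compatible system of geometric $\ell$-adic representations, arrange that some member of this system is covered by an inductive hypothesis (hence modular), and then propagate modularity back along the compatible system to $\bar\rho$ itself.

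In more detail, the steps in order. \textbf{(1) Modularity lifting.} Prove (or cite, in the needed generality) that if $\rho$ is a geometric $\ell$-adic lift of $\bar\rho$ --- unramified outside a finite set, de Rham at $\ell$ --- whose residual representation is modular, and if $\bar\rho|_{G_{\mathbb{Q}(\zeta_\ell)}}$ is absolutely irreducible with well-behaved local deformation rings (here Kisin's study of potentially semistable deformation rings is essential), then $\rho$ is modular. \textbf{(2) Existence of good lifts.} Using Galois cohomology and obstruction theory (Ramakrishna's method, extended by B\"ockle and Khare--Wintenberger), construct for the given $\bar\rho$ a geometric $\ell$-adic lift with prescribed, as-minimal-as-possible ramification; this is the device that lets one peel off ramified primes one at a time. \textbf{(3) Potential modularity.} Invoke Taylor's theorem that any such geometric $\rho$ becomes modular after base change to a suitable totally real field $F$; the proof finds an $F$-rational point on a moduli space of abelian varieties carrying the right mod-$\ell$ and mod-$\ell'$ structures via Moret-Bailly's point-existence theorem. \textbf{(4) Descent.} Combine potential modularity over $F$ with a modularity lifting theorem over $\mathbb{Q}$ to produce a genuine compatible system $\{\rho_\lambda\}$ with $\bar\rho$ as its residual representation at $\ell$. \textbf{(5) Induction.} Order the representations by the weight $k(\bar\rho)$ and level $N(\bar\rho)$ predicted by the $\epsilon$-conjecture, together with the prime $\ell$; choosing a different prime $\ell'$, the residual representation $\bar\rho_{\ell'}$ of the compatible system has strictly smaller invariants, so it is modular by the inductive hypothesis, and modularity then transfers back along $\{\rho_\lambda\}$ to $\bar\rho$. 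The base of the induction is the level-one case and small primes $\ell=2,3,5$, handled in the spirit of Tate and Serre: Odlyzko-type discriminant bounds together with Fontaine's and Abrashkin's bounds on ramification of crystalline representations force any such irreducible $\bar\rho$ to either not exist or come from a known modular form.

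The main obstacle --- and the part that genuinely required new mathematics --- is assembling modularity lifting theorems in exactly the generality the induction consumes: one must cover $\ell=2$ and $\ell=3$, residual representations that are irreducible over $\mathbb{Q}$ but whose restriction to $G_{\mathbb{Q}(\zeta_\ell)}$ is delicate, and local deformation conditions at the ramified primes that are not of the cleanest type. Running parallel to this is the bookkeeping that makes the induction genuinely well-founded: proving that ``killing ramification'' strictly decreases a real complexity measure, and that lifts with the prescribed local behaviour always exist. Everything downstream of these inputs --- constructing and transporting along the compatible system, and then deducing Fermat's Last Theorem by feeding in the mod $P$ representation of the Frey curve, which is odd and irreducible and whose predicted weight-$2$, level-$2$ cusp form is known not to exist --- is comparatively formal.
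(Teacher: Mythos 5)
Your proposal is not comparable to anything in the paper, because the paper does not prove this statement at all: it is stated as one of the black boxes, attributed to Khare and Wintenberger \cite{SMC} in the timeline, and the authors explicitly say they replace proofs of the main statements with clarifications. So the honest assessment is that you have written an accurate survey of the architecture of the Khare--Wintenberger proof rather than a proof, and you say as much yourself. As a survey it is faithful: the five engines you name (modularity lifting in Kisin's generality, Ramakrishna-style construction of geometric lifts with controlled ramification, Taylor's potential modularity via Moret-Bailly, assembly of compatible systems, and the induction on $(\ell, k(\bar\rho), N(\bar\rho))$ with base case the level-one results of Tate, Serre and Khare using Fontaine/Odlyzko discriminant bounds) are exactly the components of the published argument, and you correctly identify the genuinely hard part as the lifting theorems at $\ell=2,3$ and the well-foundedness of the killing-ramification induction. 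Two small caveats: the proof does not really consume the Modularity Theorem \cite{BCDT} as an input in the way you suggest --- the supply of modular points comes from potential modularity and the compatible-system machinery, not from TSW --- and step (4) as written elides the hardest descent issue, namely that potential modularity over a totally real $F$ only gives modularity over $F$, and returning to $\mathbb{Q}$ requires the lifting theorem over $\mathbb{Q}$ applied to a residual representation already known to be modular, which is precisely what the induction is arranged to provide. Within the standards of this paper, which states rather than proves such results, your outline is a reasonable stand-in; as a proof it is of course only a map of where the mathematics lives.
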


\subsection{Galois Representations from an Elliptic Curve}

Keywords: mod $\ell$ Galois Representation, $\ell$-adic Galois Representation, $\ell$-adic Numbers

\begin{definition} [Galois Group acting on Torsion Points]
    Let $\sigma\in\Gal(\overline{ \mathbb{Q}}/\mathbb{Q}).$ Let $P \in E[n]$ and $P=(x,y)$ where $x,y \in \overline \Q.$ Then define $\sigma(P)=(\sigma(x), \sigma(y))$ and $\sigma(\mathcal{O})=\mathcal{O}.$
\end{definition}

Note the $n$-torsion points' coordinates are all algebraic numbers. This is because the elliptic curve has coefficients in $\mathbb{Q}$, then finding the $P$ on the elliptic curve which are $n$-torsion points amounts to solving two simultaneous equations: the first one is the elliptic curve itself, and the second is $nP=\mathcal{O}$, which together is a single variable polynomial equation in $x$. We can then get $y$ via another equation. 
\newline\newline
We show that $\sigma(P)\in E[n]$ and $\sigma$ is additive on $E[n]$.

\begin{lemma} If $\sigma\in$Gal$(\overline{ \mathbb{Q}}/\mathbb{Q})$, then $\sigma(-x)=-\sigma(x)$ for all $x\in \overline{\Q}$.
\end{lemma}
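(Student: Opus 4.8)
The plan is to use nothing about $\sigma$ beyond the fact that, as an element of $\Gal(\overline{\Q}/\Q)$, it is a field automorphism of $\overline{\Q}$ and in particular a homomorphism of the additive group $(\overline{\Q},+)$. First I would record that $\sigma(0)=0$: from $\sigma(0)=\sigma(0+0)=\sigma(0)+\sigma(0)$ and the fact that $(\overline{\Q},+)$ is a group, cancelling $\sigma(0)$ gives $\sigma(0)=0$. This is the same observation already made in the excerpt for homomorphisms of abelian groups, so it can simply be cited.

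Next, for an arbitrary $x\in\overline{\Q}$, apply $\sigma$ to the identity $x+(-x)=0$ and use additivity: $\sigma(x)+\sigma(-x)=\sigma\big(x+(-x)\big)=\sigma(0)=0$. Rearranging in $(\overline{\Q},+)$ yields $\sigma(-x)=-\sigma(x)$, which is exactly the claim. No case analysis or appeal to multiplicativity, surjectivity, or the topology on $G_\Q$ is needed.

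There is essentially no obstacle here; the only point to be careful about is to invoke only the additive part of the automorphism structure, keeping the argument self-contained. The lemma is really a bookkeeping step: it is the ingredient that, together with the addition law on $E$ and the coordinatewise definition $\sigma(P)=(\sigma(x),\sigma(y))$, will be used immediately afterwards to check that $\sigma$ sends $E[n]$ into $E[n]$ and acts additively on it.
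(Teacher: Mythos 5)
Your argument is correct and is essentially the same as the paper's: both apply $\sigma$ to $x+(-x)=0$ and use additivity together with $\sigma(0)=0$. The only cosmetic difference is that you derive $\sigma(0)=0$ by cancellation in $\sigma(0)=\sigma(0)+\sigma(0)$, whereas the paper simply cites preservation of the additive identity.
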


\begin{proof}
    Since a field automorphism must preserve the identity elements and additivity, we have $\sigma(0)=0$ and $\sigma(x+y)=\sigma(x)+\sigma(y)$. Then, $$\sigma(x)+\sigma(-x)=\sigma(x-x)=\sigma(0)=0$$ and hence $$\sigma(-x)=-\sigma(x).$$
\end{proof}

\begin{lemma}
    If $\sigma\in\operatorname{Gal}(\overline\Q/\Q)$ and $Q\in\Q[x_1,x_2,\dots,x_n]$, and $P\in\overline\Q^n$, then $Q(\sigma(P))=\sigma(Q(P))$.
\end{lemma}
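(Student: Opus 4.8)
The plan is to reduce everything to the two defining properties of an element $\sigma\in\operatorname{Gal}(\overline\Q/\Q)$: that it is a ring homomorphism (so it respects finite sums and products) and that it fixes $\Q$ pointwise (so $\sigma(c)=c$ for every $c\in\Q$). First I would unwind the notation: writing $P=(p_1,\dots,p_n)$ with each $p_i\in\overline\Q$, the componentwise action gives $\sigma(P)=(\sigma(p_1),\dots,\sigma(p_n))$, which is what appears on the left-hand side.

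Next I would expand $Q$ as a finite sum of monomials, $Q(x_1,\dots,x_n)=\sum_{\alpha}c_\alpha x_1^{\alpha_1}\cdots x_n^{\alpha_n}$ with $c_\alpha\in\Q$ and $\alpha=(\alpha_1,\dots,\alpha_n)$ ranging over a finite set of multi-indices. Evaluating at $P$ and applying $\sigma$, I would push $\sigma$ through the finite sum (additivity), then through each monomial (multiplicativity, applied finitely many times to handle each power), and finally use that $\sigma$ fixes each coefficient. This is the chain
$$\sigma(Q(P))=\sum_{\alpha}\sigma(c_\alpha)\,\sigma(p_1)^{\alpha_1}\cdots\sigma(p_n)^{\alpha_n}=\sum_{\alpha}c_\alpha\,\sigma(p_1)^{\alpha_1}\cdots\sigma(p_n)^{\alpha_n}=Q(\sigma(P)).$$
If one wants to be fully rigorous about the monomial step, one can first prove the claim for monomials by induction on total degree using $\sigma(ab)=\sigma(a)\sigma(b)$, and then extend to arbitrary $Q$ by additivity.

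Honestly I do not expect any real obstacle here: the statement is purely formal, and the only thing worth flagging is that it relies on $Q$ having \emph{finitely} many terms, so no limiting or convergence argument is needed. The one conceptual point to state clearly is why $\sigma(c_\alpha)=c_\alpha$, namely that this is precisely the defining property of $\operatorname{Gal}(\overline\Q/\Q)$ as automorphisms of $\overline\Q$ fixing $\Q$. Everything else is bookkeeping.
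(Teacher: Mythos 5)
Your proposal is correct and follows essentially the same route as the paper's proof: expand $Q$ into finitely many monomials, push $\sigma$ through the sum by additivity, through each monomial by multiplicativity, and use that $\sigma$ fixes the rational coefficients. No gaps; the remarks about finiteness of the sum and the induction on degree are just extra care on the same argument.
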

\begin{proof}
    Let $P=(p_1,p_2\dots,p_n)$. Since $\sigma$ preserves addition, then $\sigma(Q(P))$ equals the sums of the $\sigma$s of the monomials of $Q$ applied to $P$. Let such a monomial be $qx_1^{\alpha_1}x_2^{\alpha_2}\dots x_n^{\alpha_n}$, where $q\in\Q$. Then since $\sigma$ is multiplicative, 
    $$\sigma(qp_1^{\alpha_1}p_2^{\alpha_2}\dots p_n^{\alpha_n})=\sigma(q)\sigma(p_1)^{\alpha_1}\sigma(p_2)^{\alpha_2}\dots\sigma(p_n)^{\alpha_n}$$
    and since $q\in\Q$ is fixed under $\sigma$, this evaluates to $q\sigma(p_1)^{\alpha_1}\sigma(p_2)^{\alpha_2}\dots\sigma(p_n)^{\alpha_n}$. Adding up the results of the monomials we indeed get $Q(\sigma(P))$.
\end{proof}

\begin{lemma}\label{permuteroots}
    If $\sigma\in\operatorname{Gal}(\overline\Q/\Q)$, $Q(x,y)$ is a polynomial of rational coefficients with degree $n>1$, and $\ell(x,y)$ is a linear polynomial (so its solution set forms a line), and the points $P_1(x_1,y_1), P_2(x_2,y_2),..., P_n(x_n,y_n)\in\overline\Q^2$ form the complete set of distinct common zeros of $Q$ and $\ell$, then $$\{\sigma(P_1), \sigma(P_2),..., \sigma(P_n)\} = \{ P_1, P_2,..., P_n\}.$$
\end{lemma}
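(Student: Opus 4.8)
# Proof Proposal

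The plan is to show that $\sigma$ permutes the common zero set of $Q$ and $\ell$, and since this set is finite, permutation is equivalent to being a bijection from the set to itself. First I would observe that $\sigma$ acts injectively on $\overline{\Q}^2$: since $\sigma$ is a field automorphism it is injective on $\overline{\Q}$, hence $\sigma(P_i) = \sigma(P_j)$ forces $P_i = P_j$. So the points $\sigma(P_1), \dots, \sigma(P_n)$ are pairwise distinct. It therefore suffices to prove the inclusion $\{\sigma(P_1), \dots, \sigma(P_n)\} \subseteq \{P_1, \dots, P_n\}$, because an injective self-map of a finite set is automatically surjective.

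For the inclusion, the key is the previous lemma (that $Q(\sigma(P)) = \sigma(Q(P))$ for polynomials $Q$ with rational coefficients). Since each $P_i$ is a common zero of $Q$ and $\ell$, we have $Q(P_i) = 0$ and $\ell(P_i) = 0$. Applying the lemma to both $Q$ and $\ell$ (noting $\ell$ also has rational coefficients), I get
$$Q(\sigma(P_i)) = \sigma(Q(P_i)) = \sigma(0) = 0, \qquad \ell(\sigma(P_i)) = \sigma(\ell(P_i)) = \sigma(0) = 0.$$
Thus $\sigma(P_i)$ is again a common zero of $Q$ and $\ell$ lying in $\overline{\Q}^2$. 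By hypothesis $\{P_1, \dots, P_n\}$ is the \emph{complete} set of distinct common zeros, so $\sigma(P_i) \in \{P_1, \dots, P_n\}$, giving the inclusion.

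Combining injectivity with the inclusion, the map $P_i \mapsto \sigma(P_i)$ is a bijection of the finite set $\{P_1, \dots, P_n\}$ onto itself, which is exactly the claimed equality of sets. There is no real obstacle here — the only mild subtlety is being careful that "complete set of distinct common zeros" is being used for zeros in $\overline{\Q}^2$ (which is where $\sigma$ is defined), and that the lemma on polynomial evaluation genuinely applies to linear polynomials as a special case of polynomials with rational coefficients. Everything else is a routine finite-set argument.
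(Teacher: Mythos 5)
Your proposal is correct and follows essentially the same route as the paper: apply the evaluation lemma $Q(\sigma(P))=\sigma(Q(P))$ to both $Q$ and $\ell$ to see that each $\sigma(P_i)$ is again a common zero, then combine injectivity with finiteness of the zero set. The one small difference is that you establish injectivity directly from the fact that $\sigma$ acts componentwise injectively on $\overline{\Q}^2$, whereas the paper takes a slight detour through the observation that distinct points on a line have distinct $x$-coordinates or distinct $y$-coordinates; your version is cleaner and equally valid.
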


\begin{proof}
  For $i\in\mathbb{N}:1\leq i\leq n$, $Q(\sigma(P_i))= \sigma(Q(P)) = \sigma(0) = 0$. Similarly, $\ell(\sigma(P_i))=\sigma(\ell(P_i))=\sigma(0)=0$  Thus, $\sigma$ takes the common zeroes of $Q,\ell$ to other common zeroes. Furthermore, since $P_1,P_2,\dots,P_n$ are distinct points on a line, then they either have distinct $x$-coordinates or distinct $y$-coordinates. If they have distinct $x$ coordinates, so then since $\sigma$ is injective, they all get taken to distinct $x$-coordinates, and a similar thing happens when they have distinct $y$-coordinates, so distinct points get taken to distinct points. Since there are a finite number of common zeroes, then $\sigma$ permutes them.  
\end{proof}

Note that in the above, by Bezout's Theorem, in an algebraically closed field such as $\overline\Q$, there are always going to be $1\times n=n$ common zeroes of $Q$ and $\ell$ counting "multiplicity". However, we only stated this lemma for distinct roots, not multiple roots. Unfortunately stating the exact definition of multiplicity requires the algebraic geometry, so we use the same working definition of multiplicity as we did in the uniformisation section, which is that in the case that $Q$ is an elliptic curve, the multiplicity of the intersection of an elliptic curve $y^2=p(x)$ with a line $\ell(x)=mx+b$ is equal to the multiplicity of the $x$-coordinate of the intersection as a root of the polynomial $\ell^2-p$. We will then state the analogous lemma without proof:

\begin{lemma}
    If $\sigma\in\operatorname{Gal}(\overline\Q/\Q)$, $Q(x,y)$ is a polynomial of rational coefficients with degree $n>1$, and $\ell(x,y)$ is a linear polynomial (so its solution set forms a line), and the points $P_1(x_1,y_1), P_2(x_2,y_2),..., P_n(x_n,y_n)\in\overline\Q^2$ form the complete set of common zeros of $Q$ and $\ell$ counting multiplicity, then $\sigma$ permutes the common zeroes of each multiplicity.
\end{lemma}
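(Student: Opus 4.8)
The plan is to mimic the proof of the distinct-roots case (Lemma \ref{permuteroots}) but keep track of multiplicities throughout. The key point is that the notion of multiplicity in play here — for an elliptic curve $Q$ given by $y^2 = p(x)$ and a line $\ell(x) = mx+b$, the multiplicity of an intersection point is the multiplicity of its $x$-coordinate as a root of the single-variable polynomial $\ell^2 - p \in \overline{\Q}[x]$ — is defined purely algebraically in terms of a polynomial with coefficients that are polynomial expressions in $m,b$ and the coefficients of $p$. Since $Q$ has rational coefficients and $\sigma$ fixes $\Q$ while respecting all ring operations (by the lemma $Q(\sigma(P)) = \sigma(Q(P))$ applied to polynomials in the coefficients), $\sigma$ should carry a root of $\ell^2 - p$ of multiplicity $m$ to a root of $\sigma(\ell)^2 - p$ of multiplicity $m$.

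First I would fix the line $\ell$ and observe that $\sigma$ sends the common zeros of $Q$ and $\ell$ to common zeros of $Q$ and $\sigma(\ell)$; but since we are intersecting with a fixed line whose points are among the $P_i$, and the $P_i$ already form the \emph{complete} set of common zeros of $Q$ and $\ell$ counting multiplicity, the degree count (Bezout: $1 \times n = n$ points with multiplicity) forces $\sigma(\ell)$ to cut out the same configuration — more carefully, one checks that $\sigma$ maps $\{P_1,\dots,P_n\}$ as a set into itself exactly as in the distinct case, using injectivity of $\sigma$ on coordinates. Next I would transport the multiplicity statement: writing $\mu(P_i)$ for the multiplicity of $P_i$, I would show $\mu(\sigma(P_i)) = \mu(P_i)$ by applying $\sigma$ to the factorisation of $\ell^2 - p$ over $\overline{\Q}$ into linear factors, $\ell^2 - p = c\prod_j (x - \alpha_j)^{e_j}$; applying $\sigma$ coefficientwise and using that $\sigma$ is a ring automorphism gives $\sigma(\ell)^2 - p = \sigma(c)\prod_j (x - \sigma(\alpha_j))^{e_j}$, so the exponent attached to $\sigma(\alpha_j)$ equals the one attached to $\alpha_j$. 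Finally, combining "$\sigma$ permutes the $P_i$" with "$\sigma$ preserves the exponent of each" yields that $\sigma$ permutes, within each multiplicity class, the common zeros — which is the claim.

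The main obstacle I anticipate is the bookkeeping around the fact that $\sigma(\ell) \neq \ell$ in general: the distinct-case lemma gets away with $\sigma$ fixing the configuration because the $P_i$ are \emph{all} the common zeros of the \emph{fixed} line $\ell$, whereas here one must argue that $\sigma(\ell)$, though a genuinely different line, has the same intersection divisor with $Q$ — this is really where one uses that the full intersection data is $\sigma$-stable and that multiplicities, being read off a polynomial whose coefficients lie in the $\sigma$-image of a $\Q$-algebra generated by the data, are transported faithfully. A secondary subtlety is that the working definition of multiplicity only literally covers the case where $Q$ is an elliptic curve in the short Weierstrass form, so the statement should really be read with that restriction in mind (as the surrounding text indicates); with the genuine algebraic-geometric definition of intersection multiplicity the argument is cleaner but requires machinery the paper deliberately avoids, which is presumably why it is stated without proof.
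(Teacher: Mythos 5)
The paper states this lemma explicitly without proof, so there is no in-paper argument to compare against and your proposal must stand on its own. Its core mechanism is sound and is surely the intended one: extend $\sigma$ to a coefficientwise automorphism of $\overline{\Q}[x]$, apply it to the factorisation $\ell^2-p=c\prod_j(x-\alpha_j)^{e_j}$, and read off that $\sigma(\alpha_j)$ is a root of $\sigma(\ell)^2-\sigma(p)=\sigma(\ell)^2-p$ of the same multiplicity $e_j$, using that $p$ has rational coefficients and is therefore fixed. That is exactly the multiplicity-transport statement needed downstream, and your closing caveat --- that the working definition of multiplicity only covers curves in Weierstrass form intersected with non-vertical lines, so a fully general proof is out of reach with the paper's tools --- is fair.

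The gap is in the step returning from ``$\sigma$ carries the intersection divisor of $(Q,\ell)$ to that of $(Q,\sigma(\ell))$'' to ``$\sigma$ permutes the $P_i$''. You claim the Bezout count $1\times n=n$ ``forces $\sigma(\ell)$ to cut out the same configuration'', but Bezout gives $n$ intersections for \emph{every} line, so it cannot force the two configurations to coincide; and the permutation conclusion is genuinely false when $\sigma(\ell)\neq\ell$. For example, on $y^2=x^3+1$ the line $y=\sqrt2$ meets the curve at $(1,\sqrt2)$, $(\omega,\sqrt2)$, $(\omega^2,\sqrt2)$, and any $\sigma$ with $\sigma(\sqrt2)=-\sqrt2$ sends these to points of the different line $y=-\sqrt2$. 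The lemma (like its distinct-roots predecessor, whose proof uses $\ell(\sigma(P_i))=\sigma(\ell(P_i))$ and hence silently assumes $\ell$ has rational, or at least $\sigma$-fixed, coefficients) is only literally true when $\sigma(\ell)=\ell$, in which case your factorisation argument finishes the proof at once and the Bezout detour is unnecessary. For the lines that actually occur in the additivity proof --- lines through two algebraic points, not $\sigma$-stable in general --- the correct statement, and the one your argument honestly proves, is that $\sigma$ maps the multiplicity-$m$ intersections of $Q$ with $\ell$ bijectively onto the multiplicity-$m$ intersections of $Q$ with $\sigma(\ell)$; you should either add the hypothesis that $\ell$ is fixed by $\sigma$ or restate the conclusion in that form.
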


\begin{lemma}
    $\sigma$ is additive on the torsion points of a rational elliptic curve, i.e. $\sigma(P+Q)=\sigma(P)+\sigma(Q)$.
\end{lemma}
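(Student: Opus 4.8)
The plan is to exploit the geometric definition of the group law: $P+Q$ is determined by intersecting $E$ with the line through $P$ and $Q$ and reflecting in the $x$-axis, and a field automorphism $\sigma$ commutes with each of these operations. First I would dispose of the degenerate cases. If $P=\mathcal{O}$ or $Q=\mathcal{O}$ the claim is immediate from $\sigma(\mathcal{O})=\mathcal{O}$ and $P+\mathcal{O}=P$. If $Q=-P$, then $\sigma(Q)=\sigma(-P)=-\sigma(P)$ by the lemma $\sigma(-x)=-\sigma(x)$ applied to the $y$-coordinate, so both $\sigma(P+Q)$ and $\sigma(P)+\sigma(Q)$ equal $\mathcal{O}$. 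Hence we may assume $P,Q\neq\mathcal{O}$ and $Q\neq -P$.

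Next I would write down the line $\ell$ joining $P$ and $Q$ — the chord when $P\neq Q$, the tangent when $P=Q$ — noting that its coefficients lie in $\overline{\Q}$, since they are rational expressions in the coordinates of $P$ and $Q$, which are algebraic. Let $\ell^{\sigma}$ be the line obtained by applying $\sigma$ to those coefficients. Because $\ell(P)=\ell(Q)=0$ and $\sigma$ is a field homomorphism fixing $\Q$, we get $\ell^{\sigma}(\sigma(P))=\ell^{\sigma}(\sigma(Q))=0$, and since $\sigma$ is injective we have $\sigma(P)\neq\sigma(Q)$ when $P\neq Q$, so $\ell^{\sigma}$ is exactly the line through $\sigma(P)$ and $\sigma(Q)$; in the case $P=Q$ one checks with the multiplicity version of Lemma \ref{permuteroots} that $\ell^{\sigma}$ is the tangent to $E$ at $\sigma(P)$.

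Then I would invoke the argument behind Lemma \ref{permuteroots}. Since $E$ has coefficients in $\Q$, the automorphism $\sigma$ carries the common solution set of $E$ and $\ell$ bijectively onto the common solution set of $E$ and $\ell^{\sigma}$, preserving multiplicities. By the group law the three intersection points of $\ell$ with $E$, counted with multiplicity, are $P$, $Q$, and $-(P+Q)$; applying $\sigma$, the three intersection points of $\ell^{\sigma}$ with $E$ are $\sigma(P)$, $\sigma(Q)$, and $\sigma(-(P+Q))$. Since $\ell^{\sigma}$ is the line through $\sigma(P)$ and $\sigma(Q)$, the group law gives $\sigma(P)+\sigma(Q)=-\sigma(-(P+Q))$. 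Finally, negation on $E$ only flips the sign of the $y$-coordinate, so $\sigma(-(P+Q))=-\sigma(P+Q)$ by $\sigma(-x)=-\sigma(x)$, and therefore $\sigma(P)+\sigma(Q)=\sigma(P+Q)$.

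The main obstacle I anticipate is the bookkeeping around $\ell$: its coefficients are genuinely non-rational, so $\sigma$ acts on $\ell$ nontrivially, and one must be careful to apply the earlier lemmas in the form ``$\sigma$ matches $E\cap\ell$ with $E\cap\ell^{\sigma}$'' rather than ``$\sigma$ permutes $E\cap\ell$.'' The tangent case needs the multiplicity version of the permutation lemma together with a short check that a double intersection point is sent to a double intersection point; once the degenerate cases are separated out, everything else is routine.
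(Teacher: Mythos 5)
Your argument is correct, and it follows the same geometric template as the paper's proof: apply $\sigma$ to the collinear triple $P,\,Q,\,-(P+Q)$ and use $\sigma(-y)=-\sigma(y)$ to turn the reflected third intersection point into $\sigma(P+Q)$. But your version is more careful than the paper's on the one point that matters. The paper deduces from Lemma \ref{permuteroots} that $\sigma$ \emph{permutes} the set $\{P,Q,-(P+Q)\}$; that lemma, however, requires the line $\ell$ to have rational coefficients, whereas the chord through two torsion points generally has coefficients only in $\overline{\Q}$, and the permutation claim is in fact false in general since $\sigma(P)$ need not lie on $\ell$ at all. What the argument really needs --- and what you supply --- is the weaker statement that $\sigma$ carries $E\cap\ell$ bijectively onto $E\cap\ell^{\sigma}$, where $\ell^{\sigma}$ is the line with $\sigma$-transformed coefficients, i.e.\ precisely the line through $\sigma(P)$ and $\sigma(Q)$; the identity $\sigma(P)+\sigma(Q)=-\sigma(-(P+Q))=\sigma(P+Q)$ then follows exactly as you say. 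You also handle the degenerate cases ($P$ or $Q$ equal to $\mathcal{O}$, $Q=-P$, and the tangent case $P=Q$ via multiplicities) that the paper explicitly sets aside. The only thing to attend to in a final write-up is that the ``matching'' version of Lemma \ref{permuteroots} be stated explicitly: its proof adapts verbatim (replace $\ell(\sigma(P_i))=\sigma(\ell(P_i))=0$ by $\ell^{\sigma}(\sigma(P_i))=\sigma(\ell(P_i))=0$), but as currently stated the lemma covers only rational $\ell$.
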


\begin{proof} 
    Suppose that the three points $P(x_1,y_1), Q(x_2,y_2),$ and $R(x_3,y_3)$ are collinear and lie on the Elliptic curve. We cover the case where the three points are distinct.

Then we have $(\sigma(x_1),\sigma(y_1)), (\sigma(x_2),\sigma(y_2)),$ and $(\sigma(x_3),\sigma(y_3))$ are equal to $P(x_1,y_1), Q(x_2,y_2),$ and $R(x_3,y_3)$  in some order by Lemma \ref{permuteroots}. Hence we then have $$\sigma(P)+\sigma(Q)=(\sigma(x_1),\sigma(y_1)) + (\sigma(x_2),\sigma(y_2))=(\sigma(x_3),-\sigma(y_3))$$ as the three points are collinear on the Elliptic curve. But $$(\sigma(x_3),-\sigma(y_3))=(\sigma(x_3),\sigma(-y_3))=\sigma(x_3,-y_3)=\sigma(P+Q).$$ Hence, $$\sigma(P+Q)=\sigma(P)+\sigma(Q).$$
\end{proof}

 \begin{lemma}
     The function $\sigma$ permutes the set of $n$-torsion points. 
 \end{lemma}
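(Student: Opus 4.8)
The plan is to combine the additivity of $\sigma$ on torsion points (the previous lemma) with the fact that $\sigma$ is an invertible field automorphism. First I would record that $\sigma$ sends $E$ to itself: if $P=(x,y)\in E$ satisfies $y^2=x^3+a_2x^2+a_4x+a_6$ with $a_2,a_4,a_6\in\mathbb{Q}$, then applying $\sigma$ and using that $\sigma$ is a ring homomorphism fixing $\mathbb{Q}$ yields $\sigma(y)^2=\sigma(x)^3+a_2\sigma(x)^2+a_4\sigma(x)+a_6$, so $\sigma(P)\in E$; together with $\sigma(\mathcal{O})=\mathcal{O}$ by definition, $\sigma$ restricts to a map $E\to E$.

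Next I would show $\sigma(E[n])\subseteq E[n]$. Take $P\in E[n]$, so $nP=\mathcal{O}$. I would first upgrade the additivity lemma from a single sum $P+Q$ to an arbitrary integer multiple: writing $nP$ as $\underbrace{P+\cdots+P}_{n}$ and applying additivity repeatedly (with $\sigma(\mathcal{O})=\mathcal{O}$ as the base case, and the coordinate-level identity $\sigma(-x)=-\sigma(x)$ to handle negative multiples if needed) gives $\sigma(nP)=n\sigma(P)$. Hence $n\sigma(P)=\sigma(nP)=\sigma(\mathcal{O})=\mathcal{O}$, so $\sigma(P)\in E[n]$, i.e. $\sigma$ maps $E[n]$ into $E[n]$.

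Finally, to promote ``maps into'' to ``permutes'', I would use that $\sigma$ is a bijection of $\overline{\mathbb{Q}}$ whose inverse $\sigma^{-1}$ again lies in $\operatorname{Gal}(\overline{\mathbb{Q}}/\mathbb{Q})$. Applying the previous step to $\sigma^{-1}$ gives $\sigma^{-1}(E[n])\subseteq E[n]$ as well, so the restriction $\sigma|_{E[n]}$ is an injection of the finite set $E[n]$ (finite by Theorem \ref{EC Iso}, which identifies $E[n]\cong(\mathbb{Z}/n\mathbb{Z})\oplus(\mathbb{Z}/n\mathbb{Z})$) into itself, hence a bijection. Therefore $\sigma$ permutes $E[n]$.

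The only point requiring a little care — more bookkeeping than a genuine obstacle — is the extension of the additivity lemma, which was stated for one sum $P+Q$, to the $n$-fold sum defining $nP$, and in particular making sure the ``collinear points'' case analysis behind that lemma covers degenerate configurations (repeated points, tangents, the point at infinity) so that $\sigma(nP)=n\sigma(P)$ holds unconditionally. Once that is in hand, the rest of the argument is immediate.
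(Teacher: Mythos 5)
Your proposal is correct and follows essentially the same route as the paper: both show $\sigma(E[n])\subseteq E[n]$ via $n\sigma(P)=\sigma(nP)=\sigma(\mathcal{O})=\mathcal{O}$ and then upgrade to a permutation using injectivity on the finite set $E[n]$ (the paper derives injectivity from additivity and the trivial kernel, you from $\sigma$ being a bijection of $\overline{\Q}$ together with the option of applying the same containment to $\sigma^{-1}$ — an immaterial difference). Your closing caveat about extending the two-point additivity lemma to the $n$-fold sum $nP$ is a fair point, and one the paper's own proof also elides.
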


 \begin{proof}
    Since $P$ is an $n$-torsion point, we have $nP=\mathcal{O}.$ Now by Lemma $5.3$ we have get that $$n\sigma(P)=\sigma(nP)=\sigma(\mathcal{O})=\mathcal{O}.$$ Hence $\sigma(P)\in E(n)$. Also, $\sigma$ is additive and the only input which goes to $0$ is $0$ itself hence it is injective because if $\sigma(x)=\sigma(y)$ this means $$\sigma(x-y)=\sigma(x)-\sigma(y)=0,$$ so $x=y.$ This implies that since the set of $n$-torsion points is finite the function $\sigma$ actually permutes the set of $n$-torsion points. 
    \end{proof}

    \begin{definition} [$l$-adic Galois Representation of $E$]
 From the additive action of $G_\Q$ on each $E[\ell^n]$, the multiplication by $\ell$ transition maps are compatible with the action and hence we get an action of $G_\Q$ on the Tate module of $E$ over $\Z_\ell$, from which we can consider as a representation over $\Q_\ell$. Call this representation $\rho_{E,\ell}$
  $$\rho_{E, l}: \text{Gal}(\overline{\Q},\Q)\mapsto \text{Aut}(\tate[l])\cong GL_2(\mathbb{\Z}_\ell)\subset GL_2(\mathbb{\Q}_\ell).$$


\end{definition}

This can be found on page 88 \cite{Silverman}. We have tried to discover it for ourselves here without looking there.
\newline\newline
If instead of including into the $\ell$-adic numbers, we quotient to $\F_\ell$ we get the mod $\ell$ representation:

\begin{definition}[mod $\ell$ Galois Representation of an Elliptic Curve]
 $$\overline{\rho_{E, l}}: \text{Gal}(\overline{\Q},\Q)\mapsto \text{Aut}(\tate[l])\cong GL_2(\mathbb{\Z}_l)\to GL_2(\mathbb{F}_l).$$

This is the same as $\galq$ acting on $E[\ell]$, which we call the $\ell$-division points of $E$ when considered as an $\F_\ell$-vector space.

\end{definition}

\subsubsection{Galois reps from Elliptic curves and Serre's conjecture}

\begin{proposition}[Proposition 6 \cite{Serre}]

Let $P\geq5$ be a prime and let $E$ be the Frey curve for 
$$x^P+y^P=z^P.$$

Then the mod $P$ Galois representation $\overline{\rho_{E, P}}$ for the Frey curve is irreducible.
    
\end{proposition}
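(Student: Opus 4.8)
## Proof Proposal

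The plan is to leverage the semistability of the Frey curve together with Mazur's deep theorems on rational torsion and rational isogenies of elliptic curves over $\mathbb{Q}$. The key observation is that if $\overline{\rho_{E,P}}$ were reducible, then (working over $\overline{\mathbb{Q}}$ and choosing a suitable basis, by the equivalent formulation of irreducibility given above) there would be a $G_{\mathbb{Q}}$-stable line in $E[P]$, i.e. a subgroup $C \subset E[P]$ of order $P$ stable under the action of $\mathrm{Gal}(\overline{\mathbb{Q}}/\mathbb{Q})$. Such a $C$ gives rise to a rational isogeny $E \to E/C$ of degree $P$, so $E$ would admit a rational $P$-isogeny.

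The first step is to recall/quote Mazur's theorem (the isogeny theorem): for $P$ a prime, an elliptic curve over $\mathbb{Q}$ can only have a rational $P$-isogeny when $P \in \{2,3,5,7,11,13,17,19,37,43,67,163\}$ (and for the larger values only in exceptional cases). Since $P \geq 5$, this already restricts to finitely many primes; to eliminate these remaining cases one uses the semistability of the Frey curve. The second step is to analyze the action on the stable line $C$ more carefully: on $C \cong \mathbb{F}_P$ the Galois group acts by a character $\chi_1$, and on the quotient $E[P]/C$ by a character $\chi_2$, with $\chi_1 \chi_2 = \bar\chi$ the mod $P$ cyclotomic character (since the determinant of $\overline{\rho_{E,P}}$ is the cyclotomic character, a standard fact about the Weil pairing on torsion points). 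The third step is a ramification/level analysis: because $E$ is semistable with conductor $\mathrm{rad}(abc)$, the characters $\chi_1,\chi_2$ can only be ramified at $P$ and at primes dividing $abc$, and at each bad prime the semistability forces the local behavior to be very constrained (either unramified-up-to-cyclotomic-twist, or the characters are powers of the cyclotomic character). One then shows that up to swapping $C$ and twisting, $\chi_1$ is either trivial or the cyclotomic character, which means $E$ has a rational point of order $P$ or its quotient does — and Mazur's torsion theorem ($E(\mathbb{Q})_{\mathrm{tors}}$ has no $P$-torsion for $P \geq 5$, actually for $P > 7$, with $P = 5,7$ handled by the semistability and the specific shape of the Frey curve) gives the contradiction.

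The main obstacle will be the careful local analysis at the bad primes and at $P$: showing that semistability, combined with the explicit congruence conditions satisfied by $a^P, b^P, c^P$ (which are perfect $P$-th powers, so the valuations of the discriminant $(abc)^{2P}$ at each bad prime are divisible by $2P$), forces the mod $P$ representation to be "as reducible as possible" only in ways that Mazur's theorems already forbid. This is exactly the point where Serre's original argument in \cite{Serre} invokes both Mazur's torsion theorem and his isogeny theorem, and where the hypothesis $P \geq 5$ (rather than $P \geq 11$) requires checking the small cases $P = 5, 7$ by hand using the specific form $y^2 = x(x - a^P)(x + b^P)$, whose full $2$-torsion is rational — this rational $2$-torsion interacts with a hypothetical rational $P$-isogeny to push $E$ into a modular curve $X_0(2P)$ of genus $\geq 1$ with no non-cuspidal rational points. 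I would present the argument by reducing to these two cited theorems of Mazur and then indicating how semistability plus the $2$-torsion structure closes the remaining finite list of primes, rather than reproving Mazur.
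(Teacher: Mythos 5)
The paper never proves this proposition: in keeping with its stated policy it cites it as Proposition~6 of Serre and leaves the proof environment empty, so there is nothing of the paper's to compare you against. Your sketch is, in outline, the correct standard argument (due to Mazur, as used by Serre): reducibility gives a $G_\Q$-stable subgroup $C\subset E[P]$ of order $P$, the two diagonal characters $\chi_1,\chi_2$ multiply to the mod $P$ cyclotomic character, semistability forces inertia at every bad prime $\ell\neq P$ to act unipotently so that $\chi_1,\chi_2$ are unramified outside $P$, and the finite/ordinary behaviour at $P$ lets one arrange (after swapping $C$ with $E[P]/C$) that one character is unramified everywhere. You should make explicit the step you leave implicit here: an everywhere-unramified character of $G_\Q$ is trivial because $\Q$ admits no nontrivial everywhere-unramified extension (Minkowski). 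Then $E$ or $E/C$ has a rational point of order $P$ and, since the Frey curve $y^2=x(x-a^P)(x+b^P)$ has full rational $2$-torsion preserved under an odd-degree isogeny, its torsion subgroup would contain $\Z/2\Z\oplus\Z/2P\Z$, which Mazur's classification excludes for every $P\geq5$. Two corrections to your write-up: the full isogeny theorem of Mazur is not needed — the torsion classification alone closes all cases $P\geq 5$, including $P=5,7$, once the character analysis is done; and the claim that $X_0(2P)$ has genus $\geq1$ with no non-cuspidal rational points fails for $P=5$ (the curve $X_0(10)$ has genus $0$), so the endgame must run through the torsion subgroup $\Z/2\Z\oplus\Z/2P\Z$ rather than through rational points of $X_0(2P)$.
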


\begin{proposition}[Lemma 4 \cite{Serre}] If $\rho$ is irreducible, then
    
$\det\rho=\chi$

\end{proposition}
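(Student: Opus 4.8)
The plan is to identify $\det\rho$ with the mod $P$ cyclotomic character $\chi$ by comparing both to the way $G_\Q$ permutes the $P$-th roots of unity, the bridge being the Weil pairing on $E[P]$. In fact the argument works verbatim for the mod $\ell$ representation $\overline{\rho_{E,\ell}}$ of any elliptic curve $E$ over $\Q$ and any prime $\ell$; specialising to the Frey curve and $\ell=P$ gives the statement. Irreducibility is not needed for this step (it is the standing hypothesis under which Serre records the lemma, and is used elsewhere in his argument).

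First I would recall the one external ingredient, the Weil pairing: attached to $E$ there is a map
$$e_P:\ E[P]\times E[P]\longrightarrow \mu_P,$$
where $\mu_P\subset\overline{\Q}^{\times}$ is the group of $P$-th roots of unity, which is $\F_P$-bilinear, alternating (so $e_P(Q,Q)=1$ and hence $e_P(Q_2,Q_1)=e_P(Q_1,Q_2)^{-1}$), non-degenerate, and $G_\Q$-equivariant: $e_P(\sigma Q_1,\sigma Q_2)=\sigma\!\left(e_P(Q_1,Q_2)\right)$ for all $\sigma\in G_\Q$. This is a standard fact in the theory of elliptic curves (see \cite{Silverman}); concretely, over $\C$ one can use the uniformisation $E_\C\cong\C/\mathcal L$ from the previous Section to write $E[P]=\tfrac1P\mathcal L/\mathcal L$ and set $e_P\!\left(\tfrac{a\omega_1+b\omega_2}{P},\tfrac{c\omega_1+d\omega_2}{P}\right)=e^{2\pi i(ad-bc)/P}$, which one checks to be well defined, bilinear, alternating and non-degenerate; the $G_\Q$-equivariance is the subtle point and is cleanest to see from the algebraic definition of $e_P$ in terms of functions on $E$.

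Next, using that $E[P]\cong\F_P\oplus\F_P$ as an $\F_P$-vector space (the Corollary to Theorem \ref{EC Iso}), fix a basis $Q_1,Q_2$ of $E[P]$. For $\sigma\in G_\Q$ write $\rho(\sigma)=\Mabcd$ in this basis, so that $\sigma Q_1=aQ_1+cQ_2$ and $\sigma Q_2=bQ_1+dQ_2$. Then bilinearity together with the alternating property give
$$e_P(\sigma Q_1,\sigma Q_2)=e_P(Q_1,Q_2)^{ad-bc}=e_P(Q_1,Q_2)^{\det\rho(\sigma)}.$$
On the other hand, by $G_\Q$-equivariance $e_P(\sigma Q_1,\sigma Q_2)=\sigma\!\left(e_P(Q_1,Q_2)\right)$, and $e_P(Q_1,Q_2)$ is a primitive $P$-th root of unity by non-degeneracy, so by the definition of the mod $P$ cyclotomic character $\sigma\!\left(e_P(Q_1,Q_2)\right)=e_P(Q_1,Q_2)^{\chi(\sigma)}$. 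Comparing the two expressions, and using that $e_P(Q_1,Q_2)$ has exact order $P$, we obtain $\det\rho(\sigma)\equiv\chi(\sigma)\pmod P$ for every $\sigma$, i.e.\ $\det\rho=\chi$.

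The only real obstacle is the Weil pairing itself: granting it, the rest is a two-line manipulation. A fully self-contained treatment would have to either develop enough of the divisor theory of $E$ to construct $e_P$ algebraically, or construct it analytically as above and then prove that the analytic pairing coincides with the algebraic one (which is what makes $G_\Q$-equivariance transparent). Everything else — the matrix bookkeeping and the appeal to the definition of $\chi$ — is routine.
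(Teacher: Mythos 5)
Your argument is correct, and it is the standard one: the paper itself gives no proof of this proposition (it simply cites Lemma 4 of \cite{Serre} and, as announced in the introduction, replaces the proof by clarification), so there is nothing internal to compare against; what you have written is essentially the argument Serre and Silverman give. The matrix computation is right: with $\sigma Q_1=aQ_1+cQ_2$, $\sigma Q_2=bQ_1+dQ_2$, bilinearity and the alternating property give $e_P(\sigma Q_1,\sigma Q_2)=e_P(Q_1,Q_2)^{ad-bc}$, non-degeneracy together with $e_P(Q_i,Q_i)=1$ forces $e_P(Q_1,Q_2)$ to have exact order $P$, and comparing with Galois equivariance and the paper's definition of the mod $P$ cyclotomic character (which indeed acts by $\zeta\mapsto\zeta^{\chi(\sigma)}$) yields $\det\rho=\chi$ in $\F_P^\times$. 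You are also right, and it is worth saying explicitly, that irreducibility plays no role here: the identity $\det\overline{\rho_{E,\ell}}=\chi$ holds for every elliptic curve over $\Q$ and every prime $\ell$; the hypothesis is just the standing assumption under which Serre records the lemma.

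The one genuine gap is the one you flag yourself: the existence of the Weil pairing with all four properties (bilinear, alternating, non-degenerate, $G_\Q$-equivariant) is doing all the work, and nothing in the paper constructs it. Your analytic formula $e_P\bigl(\tfrac{a\omega_1+b\omega_2}{P},\tfrac{c\omega_1+d\omega_2}{P}\bigr)=e^{2\pi i(ad-bc)/P}$ gives the pairing over $\C$ with the first three properties, but Galois equivariance cannot be seen from that description at all, since $G_\Q$ does not act through the uniformisation; one really needs the algebraic construction via divisors and functions on $E$ (as in \cite{Silverman}, III.8). So as written this is a correct proof conditional on a cited black box, which is a reasonable standard for this paper, but be aware that the black box is not a routine verification.
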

    
\begin{proposition}[\cite{Serre}]
    
If $\det\rho=\chi$ then $\rho$ odd
\end{proposition}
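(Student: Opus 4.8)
The plan is to evaluate both sides of the hypothesis at complex conjugation $c\in G_\Q$ and check that the common value is $-1$. Since $\rho$ is a mod $\ell$ representation, $\det\rho$ is a character $G_\Q\to\GL_1(\F_\ell)=\F_\ell^\times$, so the $\chi$ occurring in the statement must be the mod $\ell$ cyclotomic character (the reduction of the $p$-adic one at $p=\ell$, $k=1$). Recall from the definition of \emph{odd} that complex conjugation does restrict to a well-defined element $c$ of $G_\Q$, and that $\det\rho(c)=\pm 1$ because $c^2=\mathrm{id}$ and $\det\rho$ is multiplicative. So the whole content of the statement reduces to computing $\chi(c)=-1$.

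First I would unwind the definition of the mod $\ell$ cyclotomic character. Fix the primitive $\ell$-th root of unity $\zeta=e^{2\pi i/\ell}\in\overline\Q\subset\C$ and the group isomorphism $f:\{1,\zeta,\dots,\zeta^{\ell-1}\}\to\Z/\ell\Z$ with $f(\zeta^i)=i$; then $\chi(\sigma)=f\circ\sigma\circ f^{-1}$, an element of $\GL_1(\Z/\ell\Z)$, i.e.\ multiplication by a unit of $\Z/\ell\Z$. For $\sigma=c$: since $c(\zeta)=\overline{\zeta}=e^{-2\pi i/\ell}=\zeta^{-1}$, we get $c(\zeta^i)=\zeta^{-i}$, hence $\chi(c):i\mapsto -i$ is multiplication by $-1$. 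The paper already observed that $\chi$ is independent of the choice of primitive root, so this computation is legitimate regardless of which $\zeta$ one picks. Identifying $\GL_1(\F_\ell)$ with $\F_\ell^\times$, this says $\chi(c)=-1$.

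Combining the two steps, $\det\rho(c)=\chi(c)=-1$, which is exactly the condition for $\rho$ to be odd.

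There is no serious obstacle here; the only thing requiring care is the bookkeeping that the $\chi$ in the statement is the mod $\ell$ version, and that the identifications ($\GL_1(\F_\ell)\cong\F_\ell^\times$, the additive group $\Z/\ell\Z$ versus the multiplicative group of $\ell$-th roots of unity) are tracked consistently. One genuine caveat worth flagging: when $\ell=2$ one has $-1=1$ in $\F_2$, so the notions of \emph{odd} and \emph{even} coincide and the statement becomes degenerate; in the application to Fermat's Last Theorem one only needs $\ell=P\geq 5$, so this causes no difficulty.
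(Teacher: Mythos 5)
Your proof is correct. The paper itself gives no argument for this proposition (it simply cites Serre, consistent with its stated policy of replacing proofs by clarifications), so there is nothing to compare against; what you supply is the standard and complete argument. The two ingredients are exactly right: $\det\rho(c)=\pm1$ is forced by $c^2=\mathrm{id}$, and the computation $c(\zeta)=\overline{\zeta}=\zeta^{-1}$ shows, via the paper's definition of the mod~$\ell$ cyclotomic character as $f\circ\sigma\circ f^{-1}$ with $f(\zeta^i)=i$, that $\chi(c)$ is multiplication by $-1$. Your bookkeeping of the identification $\GL_1(\F_\ell)\cong\F_\ell^\times$ and the caveat about $\ell=2$ (where $-1=1$ and the statement degenerates, harmlessly for the application with $\ell=P\geq5$) are both apt; one could add that the choice of embedding $\overline{\Q}\subset\C$ only changes $c$ by conjugacy, which does not affect $\det\rho(c)$, but the paper's definition of \emph{odd} already fixes such an embedding.
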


\subsubsection{Wiles' Use of Galois Representations}
Recall in our story that it was the $3$-adic representation Wiles proved was modular to conclude FLT, while it was the mod 3 representation that he already knew was modular from existing results of Langlands and Tunnel. It was the mod $P$ representation of the Frey curve that was used to prove it was not modular.
\newline\newline
Langlands-Tunnel means mod 3 rep of Frey irreducible implies it is modular. It is known that Elliptic curve modular if and only if the 3-adic rep is modular.
\newline\newline
Wiles uses some mod 5 reps to get around the irreducibility condition (3-5 trick?). He then concludes mod 3 is modular. He uses this to start an induction for high powers of 3 to get that 3-adic is a module. He then concludes E is modular.

\subsection{Modular Galois Representations}

Here we focus on the conclusions of Serre's conjecture. The main idea is that given a modular form, one can attach a Galois representation, while Serre's conjecture allows us to conclude when a given Galois representation is of this form. We unpackage the necessary conditions associated with this.

Keywords: Frobenius, Finite Flat, Unramified
\newline\newline
Questions:
\begin{itemize}
    \item How to get from modular form to Galois representation?
    \item What is a modular Galois representation and its level?
    \item Why is level of modular Galois representation = level of modular form = conductor of E?
\end{itemize}

Here we explore various other properties of Galois representations and their definitions.
\newline\newline
Our main goal is to understand the definitions involved in the statement of Serre's conjecture:

\begin{thm}[Serre's Conjecture]

Let 

$$\rho: G\to GL_2(\overline{\F}_p)$$

be a two dimensional, irreducible, odd, Galois representation. 

Attach to $\rho$ three quantities $N,k,\in\N$ and $\epsilon:\Z/N\Z\to\F_p\setminus\{0\} $ as described in \cite{Serre}.

Then there exists a cusp form

$$f=\sum_n A_nq^n$$

of weight $k$ and level $N$ such that for all primes $\ell$ coprime to $Np$, $\rho$ is unramified at $\ell$ and we have

$$\mathrm{Tr}(\mathrm{Frob}_{\ell, \rho}) = A_\ell \quad \text{and} \quad \det(\mathrm{Frob}_{\ell, \rho}) = \varepsilon(\ell) \ell^{k-1},
$$
    
\end{thm}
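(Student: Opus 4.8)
The plan is to follow the strategy of Khare and Wintenberger \cite{SMC}, and we should say at the outset that this is by far the deepest statement in this article: what follows is only a bird's-eye view of how the pieces assemble, not anything we could actually carry out here.

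First I would make precise the invariants $N,k,\epsilon$ attached to $\rho$, since the whole force of the theorem is that they are \emph{explicit}. The character $\epsilon$ is recovered from the determinant: $\det\rho$ is a one-dimensional odd representation, hence of the form $\epsilon\cdot\chi^{k-1}$ for the mod $p$ cyclotomic character $\chi$. The level $N=N(\rho)$ is the prime-to-$p$ Artin conductor, assembled from the local behaviour of $\rho$ at each $\ell\neq p$; this is exactly the data recording at which $\ell$ the representation is \emph{unramified}. The weight $k=k(\rho)$ is the more delicate Serre weight, read off from the restriction of $\rho$ to inertia at $p$. So the theorem asserts that a cusp form of this precise level, weight and nebentypus exists.

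The heart of the argument is an intricate induction on the prime $p$ and on the complexity of $N(\rho)$, whose inductive step braids together three machines. (i) \emph{Level and weight optimisation}: the generalisations of Ribet's theorem (the $\epsilon$-conjecture, already available to us) due to Ribet, Carayol, Edixhoven and Diamond reduce to the minimal pair $(N,k)$, while conversely one sometimes \emph{raises} the level to create room. (ii) \emph{Existence of a good lift}: one must produce a $p$-adic representation $\rho_p$ reducing to $\rho$ which is geometric, minimally ramified, and fits into a \emph{compatible system} $\{\rho_\lambda\}_\lambda$ of $\lambda$-adic representations indexed by the primes of a number field; this rests on Taylor's potential-modularity theorems together with the deformation-theoretic lifting results of Ramakrishna, Khare--Ramakrishna and B\"ockle. (iii) \emph{Modularity lifting}: if for some auxiliary prime $q$ the mod-$q$ member $\bar\rho_q$ of the compatible system is already known modular --- by the inductive hypothesis at the smaller prime, or by a base case --- then the Taylor--Wiles--Kisin ``$R=\mathbb T$'' theorems (the direct descendants of Wiles' argument for semistable elliptic curves) promote this to modularity of $\rho_q$, and then by Chebotarev and Brauer--Nesbitt the whole compatible system, $\rho$ included, is modular. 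The induction bottoms out at the low-conductor results of Tate for $p=2$ and Serre for $p=3$, and at the modularity of the small families of elliptic curves and abelian surfaces that arise along the way (see the survey in \cite{DDT}).

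Finally, once $\rho\cong\bar\rho_f$ for a cusp form $f=\sum_n A_n q^n$ of the prescribed level and weight, the two displayed identities are just the defining property of the Galois representation $\rho_f$ attached to $f$ by Eichler--Shimura and Deligne: for $\ell\nmid Np$, $\rho_f$ is unramified at $\ell$ with $\Tr\rho_f(\mathrm{Frob}_\ell)=A_\ell$ (the Hecke-eigenvalue statement we already met for $T_p$) and $\det\rho_f(\mathrm{Frob}_\ell)=\epsilon(\ell)\ell^{k-1}$, and reducing mod $p$ gives the equations in the theorem. The hard part, by an enormous margin, is step (iii) together with the existence step (ii): the modularity lifting theorems and the construction of compatible lifts with controlled ramification are each the subject of book-length work, drawing on the Taylor--Wiles patching method, $p$-adic Hodge theory and automorphy of symmetric powers. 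We treat all of this as a black box.
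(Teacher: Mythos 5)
The paper does not prove this statement at all: consistent with its stated philosophy, it treats Serre's conjecture as a black box established by Khare and Wintenberger \cite{SMC} and devotes the surrounding text to unpacking the \emph{definitions} occurring in the statement (the recipes for $N$, $k$, $\epsilon$, and the meaning of unramified and of the trace and determinant of Frobenius). Your proposal instead gives a roadmap of the actual proof, and as a bird's-eye description of the Khare--Wintenberger strategy it is broadly faithful: the identification of $\epsilon$ and $k$ from $\det\rho$ and from inertia at $p$, the level as the prime-to-$p$ Artin conductor, the induction on the prime and the conductor, the interplay of level/weight optimisation, construction of compatible systems of lifts via potential modularity, modularity lifting ($R=\mathbb{T}$) theorems, and the base cases of Tate and Serre at $p=2,3$; and you correctly locate the source of the two displayed identities in the Eichler--Shimura--Deligne construction of $\rho_f$. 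What this buys is orientation rather than verification --- every load-bearing step in your sketch is itself a cited black box of book length, so nothing here could be checked within the article --- but that is exactly the level at which the paper operates, and your sketch is complementary to the paper's own discussion, which concentrates on making the quantities $N,k,\epsilon$ and the Frobenius conditions meaningful rather than on how modularity is ultimately established. One small caution: calling $\det\rho$ a ``one-dimensional odd representation'' is loose (oddness is the condition $\det\rho(c)=-1$ on the two-dimensional $\rho$); what you mean is that the character $\det\rho$ factors as $\epsilon\chi^{k-1}$ with $\epsilon$ of conductor prime to $p$, which is how Serre extracts $\epsilon$.
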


If we get to it, then we would also like to show that Frey's curve gives a Galois representation satisfying the hypotheses of this conjecture.

We examine the conclusion in several parts:

1. Weight, Level, $k$, $N$, $\epsilon$.

2. Unramified, finite, $\mathrm{Tr}(\mathrm{Frob}_{\ell, \rho})$ and $\mathrm{det}(\mathrm{Frob}_{\ell, \rho})$.







\subsubsection{Conductor, level and Weight, $k, N, \epsilon$}

Serre explicitly gives a way to get $N$ and $k$ from the Galois representation in his 1987 paper \cite{Serre}. These correspond to the level and weight of the modular form. We will be interested in how these apply to the Frey curve, which will ultimately have $k=2$, $\epsilon=1$ and $N$ lowered to $2$. In particular, he describes in detail the condition for $k=2$:

\begin{proposition}[\cite{Serre}]
    For Frey's curve, $k=2$ from $\nu_P(\Delta)=0\mod P$.
\end{proposition}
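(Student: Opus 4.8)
The plan is to follow Serre's recipe for the weight $k(\overline\rho)$ attached to a two–dimensional mod $P$ representation (\cite{Serre}, §2): the weight is read off from the restriction $\overline{\rho_{E,P}}$ to a decomposition/inertia subgroup at $P$, and the minimal value $k=2$ is exactly the one assigned when this local representation is ``as unramified as possible'', i.e. arises from a finite flat group scheme over $\Z_P$ (equivalently, the relevant extension is \emph{peu ramifi\'ee}). So the task is to show that $\overline{\rho_{E,P}}$ restricted to the decomposition group at $P$ is finite flat, and then invoke that Serre's recipe outputs $2$ in that case. The input ``$\nu_P(\Delta)\equiv 0\bmod P$'' in the statement is precisely the numerical condition that makes this local finite-flatness hold.

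First I would use the semistability of the Frey curve, already proved above: at the prime $P$ the curve $E$ has either good or multiplicative reduction. In the good reduction case, $E[P]$ is the $P$-torsion of an abelian scheme over $\Z_P$, hence a finite flat group scheme over $\Z_P$, so $\overline{\rho_{E,P}}|_{G_{\Q_P}}$ is finite flat and (in particular $\nu_P(\Delta_{\min})=0\equiv 0 \bmod P$) Serre's recipe gives $k=2$. In the multiplicative reduction case I would pass to the Tate uniformization $E_{\Q_P}\cong E_q$ for the unique $q\in\Q_P^\times$ with $\nu_P(q)=\nu_P(\Delta_{\min})$, under which $E_q[P]$ sits in an exact sequence $0\to\mu_P\to E_q[P]\to\Z/P\Z\to 0$ of $G_{\Q_P}$-modules, whose class in $\Q_P^\times/(\Q_P^\times)^P$ (via Kummer theory) is the class of $q$. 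The standard fact from the theory of the Tate curve, which I would cite rather than reprove, is that this extension is finite flat (peu ramifi\'ee) precisely when $\nu_P(q)\equiv 0\pmod P$.

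Next comes the (routine) discriminant bookkeeping. By the Proposition above, the naive Frey discriminant is $(abc)^{2P}$; passing to the minimal Weierstrass model changes $\Delta$ only by a power of $2$, and since $P\ge 5$ is odd this does not affect $\nu_P$. Hence
\[
\nu_P(\Delta_{\min})=\nu_P\big((abc)^{2P}\big)=2P\,\nu_P(abc)\equiv 0 \pmod P,
\]
which is the hypothesis ``$\nu_P(\Delta)=0\bmod P$''. Feeding this into the previous paragraph, the Tate-curve extension $E_q[P]$ is peu ramifi\'ee, so $\overline{\rho_{E,P}}|_{G_{\Q_P}}$ is finite flat in the multiplicative case as well; either way Serre's recipe assigns $k=2$.

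The main obstacle is not the arithmetic of the Frey curve but the two imported black boxes. The first is the precise statement of Serre's weight recipe and the verification that it returns $2$ in the finite-flat/peu-ramifi\'ee situation: this genuinely requires describing $\overline\rho|_{I_P}$ via the fundamental characters of tame inertia, distinguishing niveau $1$ from niveau $2$, and tracking Serre's case analysis in \cite{Serre}, none of which we reproduce here. The second is the structure of $E_q[P]$ over $\Q_P$ together with the classification of when the resulting extension is peu ramifi\'ee. I would also flag that the present statement concerns only the weight $k$; the companion assertions that the level can be taken to be $N=2$ and that $\varepsilon=1$ rest on the behaviour of $\overline{\rho_{E,P}}$ at the other bad primes $\ell\mid abc$ (again using $P\mid\nu_\ell(\Delta)$) and on $\det\overline{\rho_{E,P}}=\chi$, and are handled separately.
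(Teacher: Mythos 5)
Your proposal is correct and is exactly the argument the paper is deferring to: the paper states this proposition with no proof, citing \cite{Serre} as a black box, and your sketch reconstructs Serre's actual route (weight $2$ $\iff$ finite flat at $P$, good reduction handled trivially, multiplicative reduction via the Tate curve and the \emph{peu ramifi\'ee} criterion $\nu_P(q)\equiv 0 \bmod P$, and the bookkeeping $\nu_P(\Delta_{\min})=2P\,\nu_P(abc)\equiv 0\bmod P$). You have also correctly flagged the two genuine black boxes (Serre's tame-inertia case analysis and the structure of $E_q[P]$), which is where all the real content lives; the only small point worth making explicit is that the claim ``minimality only changes $\Delta$ by a power of $2$'' relies on the standard normalisation of the Frey curve ($a\equiv -1\bmod 4$, $2\mid b$, giving $\Delta_{\min}=2^{-8}(abc)^{2P}$), which is harmless here since $P\geq 5$.
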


The conductor $N$, which corresponds to the level, is obtained from the Galois representation as the \emph{Artin conductor}. We did not explore this further.
\newline\newline
Once $N$ is defined and a modular form exists of level $N$, Ribet's level lowering argument can be applied to get a form of level $2$.
\newline\newline
We will revisit some of this proof in the next subsection.
\newline\newline
Lastly, we have $\epsilon= 1$ since $\det\rho=\chi$, the cyclotomic character.

\subsubsection{Unramified, Finite, trace and determinant of Frobenius}

Here we give a sketch/summary of the roles of each of these definitions and treat them in more detail in the following section.

\begin{itemize}
    \item Unramified at a prime $p$ is a property of a Galois representation of $G_\Q$ that allows us to define an element $\Frp\in G_\Q$ up to conjugacy in the image of the representation.
\item 
$\Frp$ is closely related to the Frobenius automorphism of finite fields of characteristic $p$ and generates a large part of $G_\Q$. Knowing how this element act will determine a large chunk of the overall representation.
\item Trace and determinant is taken of the image of $\Frp$ in the representation. They are well-defined because $\Frp$ is defined up to conjugacy and trace/determinant are invariant under conjugacy. In a two dimensional representation where each matrix has 4 entries, knowing the trace and determinant gives alot of information. It essentially gives two out of the four entries and also allows the eigenvalues of the matrix to be determined.
salso generates a large part of G and the trace and determinant give two out of 4 entries worth of information.

\item The Trace of the image of Frobenius allows one to recover the modular form via its Fourier coefficients $A_p$. These are eigenvalues for the Hecke operators whose eigenvectors are the forms themselves, such modular forms are called \emph{eigenforms}.

\item Finite (or finite flat) is defined in \cite{Serre} and we do not understand the definition in terms of group schemes nor explore it further, other than to note some practical remarks: it is claimed that for primes not equal to $p$, finite is equivalent to unramified.

At $p$, being finite is as good as one gets to being unramified and leads to many similar desirable consequences.

Finite at $p$ also leads to $k=2$. (\cite{Serre}).

\item It is known that for an elliptic curve E, its
mod $p$ Galois representation is unramified outside $pN$ and finite at $p$.

\end{itemize}

\subsubsection{Ribet's Theorem revisited}

If we look at Ribet's theorem (see for example, screenshot in example \ref{Ribet}) again, we see that the only condition we had not addressed earlier was that every time a prime was removed from the level, the representation needed to be finite/finite flat at that removed prime.
\newline\newline
We sketch again how Ribet's proof of FLT goes from Ribet's Theorem, showing Frey's curve is not modular.

\begin{itemize}
    \item Take the Frey curve $E$ from $x^P+y^P=z^P$ for prime $P\geq 5$. Take its mod $P$ Galois representation. It is known generally that from Elliptic curves, the representation is unramified outside $NP$ and finite at $P$. It is also known that specifically for Frey's curve that its representation is unramified outside of $P$ and $2$. (This is where $\Delta$ being a $P$ power is used, since the condition is $P\mid \nu_\ell(\Delta)$ for unramified at $\ell$.)

    Hence the representation is finite/ finite flat at all primes except $2$.

    \item There are two parts in the statement of Ribet's Theorem $(i),(ii)$: $(i)$ for the prime factors of $N$ (noting it is square free) and $(ii)$ for the prime $P$ dividing $N$. 

    We can remove these from the level provided the representation is finite at these primes.

    Hence we are left with level 2.

\end{itemize}

\subsection{Unramified at a prime and Trace of Frobenius}

\subsubsection{Galois Theory Extended aka Algebraic Number Theory}

I will start moving this stuff to main section 

\begin{definition}[Frobenius Element]
    In a finite field $\mathbb F_{p^k}$, the Frobenius element of the Galois group $\operatorname{Gal}(\mathbb F_{p^k}/\mathbb F_p)$ is defined to be $\operatorname{Frob}_p:\mathbb F_{p^k}\rightarrow\mathbb F_{p^k}=x\mapsto x^p$.
\end{definition}

The finite field $\mathbb F_{p^k}$ has characteristic $p$, i.e. $p=\underbrace{1+1+\dots+1}_{p\text{ times}}=0$. Therefore, for any $a,b\in\mathbb F_{p^k}$,
$$\operatorname{Frob}_p(a+b)=(a+b)^p=\sum_{i=0}^p\binom{p}{i}a^ib^{p-i}$$
and since for all $i=1,2,3,\dots,p-1$, $p\mid\binom{p}{i}$, then those terms disappear and this equals
$$\binom{p}{p}a^p+\binom{p}{0}b^p=a^p+b^p=\operatorname{Frob}(a)+\operatorname{Frob}(b)$$

Furthermore,
$$\operatorname{Frob}(ab)=(ab)^p=a^pb^p=\operatorname{Frob}(a)\operatorname{Frob}(b)$$
$$\operatorname{Frob}(1)=1^p=1$$

Thus, $\operatorname{Frob}$ is indeed an automorphism of $\mathbb F_{p^k}$. Furthermore, by Fermat's Little Theorem we know that $\operatorname{Frob}$ fixes $\mathbb F_p$, so it is part of the Galois group $\operatorname{Gal}(\mathbb F_{p^k}/\mathbb F_p)$.

\begin{definition}[Algebraic Integers]
    The algebraic integers $\mathbb A$ are the set of complex numbers which are roots of monic polynomials with integer coefficients.
\end{definition}
The algebraic integers form a ring, in particular the sum, product, and difference of two algebraic integers are also algebraic integers. All roots of monic polynomials with algebraic integers as coefficients are also algebraic integers. Furthermore, $\mathbb A\cap\mathbb Q=\mathbb Z$.

\begin{definition}[Number Field]
    A number field is a finite degree extension of $\mathbb Q$, i.e. it forms a $\mathbb Q$-vector space of finite dimension.
\end{definition}

Any finite degree field extension is necessarily algebraic, meaning every number in it is algebraic (every element of it is a root of some polynomial with coefficients in the base field), so a number field is always a subfield of $\overline{\mathbb Q}$.

\begin{definition}
    For a number field $K$, its ring of integers $\mathcal O_K$ is $K\cap\mathbb A$.
\end{definition}

Being the intersection of two rings, the ring of integers of any number field $K$ is a ring. It must contain $\mathbb Z$.

\begin{lemma}\label{scaletoint}
Any algebraic number can be scaled by some non-zero integer to become an algebraic integer.
\end{lemma}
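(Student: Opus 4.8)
The statement to prove is Lemma~\ref{scaletoint}: any algebraic number can be scaled by some non-zero integer to become an algebraic integer.

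\medskip

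The plan is to start from the definition. Let $\alpha$ be an algebraic number; then by definition $\alpha$ is a root of some nonzero polynomial with rational coefficients, and clearing denominators we may assume it is a root of a polynomial with integer coefficients, say
$$c_n\alpha^n + c_{n-1}\alpha^{n-1} + \cdots + c_1\alpha + c_0 = 0$$
with $c_i\in\Z$ and $c_n\neq 0$. The idea is that the obstruction to $\alpha$ being an algebraic integer is exactly the leading coefficient $c_n$, so multiplying $\alpha$ by $c_n$ should fix it. I would set $\beta = c_n\alpha$ and look for a monic integer polynomial killing $\beta$.

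\medskip

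The key computational step: multiply the displayed equation by $c_n^{n-1}$ and regroup the powers of $\alpha$ so that each term becomes a power of $\beta = c_n\alpha$. Explicitly,
$$c_n^{n}\alpha^n + c_{n-1}c_n^{n-1}\alpha^{n-1} + \cdots + c_1 c_n^{n-1}\alpha + c_0 c_n^{n-1} = 0,$$
and then $c_n^k\alpha^k = (c_n\alpha)^k c_n^{\,0}$ for the top term while $c_{n-k}c_n^{n-1}\alpha^{n-k} = c_{n-k}c_n^{k-1}(c_n\alpha)^{n-k}$, so this reads
$$\beta^n + c_{n-1}\beta^{n-1} + c_{n-2}c_n\beta^{n-2} + \cdots + c_1 c_n^{n-2}\beta + c_0 c_n^{n-1} = 0.$$
This is a monic polynomial in $\beta$ with integer coefficients, so $\beta = c_n\alpha$ is an algebraic integer, and $c_n$ is a nonzero integer, which is exactly what was claimed.

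\medskip

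I do not expect any real obstacle here; the only thing to be careful about is bookkeeping the exponents of $c_n$ so that every coefficient genuinely lands in $\Z$ and the polynomial is genuinely monic. One should also note the degenerate case where $\alpha = 0$, which is trivially already an algebraic integer (take the scaling integer to be $1$), so the argument above can assume $\alpha\neq 0$ without loss of generality, though it in fact works regardless.
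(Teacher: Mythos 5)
Your proof is correct and follows essentially the same route as the paper's: clear denominators to get an integer polynomial, multiply by $c_n^{n-1}$, and regroup powers so that $c_n\alpha$ satisfies a monic integer polynomial. The exponent bookkeeping checks out, so nothing further is needed.
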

\begin{proof}
    Let the algebraic number be $\alpha$, such that it is a root of a polynomial with integer coefficients:
    $$a_n\alpha^n+a_{n-1}\alpha^{n-1}+\dots+a_0=0$$
    Then multiply the equation by $a_n^{n-1}$
    $$a_n^n\alpha^n+a_{n-1}a_n^{n-1}\alpha^{n-1}+a_{n-2}a_n^{n-1}\alpha^{n-2}+\dots+a_0a_n^{n-1}$$
    $$(a_n\alpha)^n+a_{n-1}(a_n\alpha)^{n-1}+a_{n-2}a_n(a_n\alpha)^{n-2}+\dots+a_0a_n^{n-1}$$
    so indeed, $a_n\alpha$ is a root of a monic polynomial with integer coefficients and is hence an algebraic integer.
\end{proof}

\begin{lemma}
    The minimal polynomial over $\mathbb Q$ of an algebraic integer is monic with integer coefficients.
\end{lemma}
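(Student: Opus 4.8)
The plan is to exploit the two facts just established about algebraic integers: that $\mathbb{A}$ is a ring (closed under sums, products and differences) and that $\mathbb{A}\cap\mathbb{Q}=\mathbb{Z}$. Let $\alpha\in\mathbb{A}$, so by definition $\alpha$ is a root of some monic $f\in\mathbb{Z}[x]$, and let $m\in\mathbb{Q}[x]$ be the minimal polynomial of $\alpha$, taken monic. First I would record the standard fact that $m$ divides $f$ in $\mathbb{Q}[x]$: applying the division algorithm in $\mathbb{Q}[x]$ write $f=qm+r$ with $\deg r<\deg m$; evaluating at $\alpha$ gives $r(\alpha)=0$, so minimality of $\deg m$ forces $r=0$.

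Next, since $\overline{\mathbb{Q}}$ is algebraically closed, $m$ factors completely there as $m(x)=\prod_{i=1}^{n}(x-\alpha_i)$ with $\alpha_i\in\overline{\mathbb{Q}}$. Because $m\mid f$, each $\alpha_i$ is also a root of the monic integer polynomial $f$, hence each $\alpha_i$ is itself an algebraic integer. Now the coefficients of $m$ are, up to sign, the elementary symmetric polynomials in $\alpha_1,\dots,\alpha_n$, i.e. finite sums of finite products of the $\alpha_i$. Since $\mathbb{A}$ is a ring, these coefficients lie in $\mathbb{A}$. But $m\in\mathbb{Q}[x]$, so its coefficients also lie in $\mathbb{Q}$. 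Therefore each coefficient lies in $\mathbb{A}\cap\mathbb{Q}=\mathbb{Z}$, and $m$ is monic by construction, which is exactly the claim.

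I do not expect a serious obstacle here; the only points needing a word of care are the divisibility $m\mid f$ (handled by the division algorithm argument above) and the assertion that the roots of $m$ are algebraic integers, which is immediate once one notes they are among the roots of $f$. An alternative route, which I would mention only as a remark, is to clear denominators and invoke Gauss's lemma: writing $m=\tfrac1a m_0$ and $f/m=\tfrac1b g_0$ with $m_0,g_0$ primitive integer polynomials, one gets $ab\,f=m_0g_0$; comparing contents (the product of primitive polynomials is primitive, and $f$ monic is primitive) forces $ab=1$, so $m=m_0\in\mathbb{Z}[x]$. The symmetric-function argument is cleaner given the tools already set up in the text, so I would present that as the main proof.
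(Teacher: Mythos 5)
Your proof is correct, but it takes a genuinely different route from the paper's. The paper fixes a monic integer polynomial $f$ of \emph{minimal degree} vanishing at $\alpha$, notes that the minimal polynomial $g$ over $\Q$ divides $f$, and argues that $\deg g<\deg f$ would make $f$ reducible over $\Q$, hence (by Gauss's lemma) reducible over $\Z$ into lower-degree monic factors, one of which still vanishes at $\alpha$ --- contradicting minimality; so $g=f$. Your main argument instead passes through the roots: since $m\mid f$, every root of $m$ is a root of the monic integer polynomial $f$ and is therefore an algebraic integer, so the coefficients of $m$, being elementary symmetric functions of these roots, lie in $\A\cap\Q=\Z$. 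This cleanly avoids Gauss's lemma, at the price of leaning on the ring structure of $\A$ and on $\A\cap\Q=\Z$ --- both asserted without proof in the text just before the lemma, so the reliance is legitimate in context, though one should note that the ring property of $\A$ is itself usually established by a symmetric-function or module-finiteness argument of comparable depth to Gauss's lemma. Your parenthetical alternative (clearing denominators and comparing contents) is essentially the paper's approach in a different dress. Both proofs are sound; yours is arguably the more self-contained given the facts the paper has already put on the table, while the paper's is the more standard textbook derivation.
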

\begin{proof}
    If $f\in\mathbb Z[x]$ were a minimal monic polynomial with the algebraic integer as a root, and $g\in\mathbb Q[x]$ were the minimal polynomial, then $g\mid f$. If $\deg g<\deg f$, then that means $f$ is reducible over the rationals, so by Gauss' Lemma it is reducible over the integers. This means $f$ can be factored into two lower degree monic polynomials with integer coefficients, which is a contradiction to minimality. Hence $\deg g=\deg f$, so $g=f$ up to multiplication by a constant factor so $f$ is the minimal polynomial.
\end{proof}

\begin{definition}
    Let $K$ be a number field, which is a degree $n$ extension of $\mathbb Q$, so $K\cong\mathbb Q^n$ as $\mathbb Q$-vector spaces. For a number $s\in K$, define $\operatorname{Tr}_{K/\mathbb Q}(s)\in\mathbb Q$ to be the trace of the linear map $x\mapsto sx:K\rightarrow K$ when considered as an $n\times n$ matrix.
\end{definition}
Note that this trace is independent of which basis you choose to convert the linear map into a matrix, as different bases are related by a linear map and trace is invariant under conjugacy.

\begin{lemma}
    If $K$ is a number field of degree $d$ and $s\in K$, where the minimal polynomial of $s$ over $\mathbb Q$ is $a_nx^n+a_{n-1}x^{n-1}+\dots+a_0$ where $a_i\in\mathbb Z$ by multiplying out the denominators, then $\operatorname{Tr}_{K/\mathbb Q}(s)=-\frac{a_{n-1}}{a_n}\frac{d}{n}$.
\end{lemma}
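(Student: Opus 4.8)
The plan is to reduce the computation to the subfield $\mathbb{Q}(s)\subseteq K$ and then evaluate a companion matrix. First I would recall that $\mathbb{Q}(s)$ is a subfield of $K$ with $[\mathbb{Q}(s):\mathbb{Q}] = n$ (the degree of the minimal polynomial of $s$), so by the tower law $n \mid d$ and $K$ is a $\mathbb{Q}(s)$-vector space of dimension $d/n$.

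Next I would compute $\operatorname{Tr}_{\mathbb{Q}(s)/\mathbb{Q}}(s)$ directly. Using the $\mathbb{Q}$-basis $1, s, s^2, \dots, s^{n-1}$ of $\mathbb{Q}(s)$, the matrix of the multiplication-by-$s$ map $x \mapsto sx$ sends $s^i \mapsto s^{i+1}$ for $i < n-1$ and $s^{n-1}\mapsto s^n = -\tfrac{a_{n-1}}{a_n}s^{n-1} - \dots - \tfrac{a_0}{a_n}$, where I have divided the minimal-polynomial relation by $a_n$. This is exactly the companion matrix of the monic polynomial $x^n + \tfrac{a_{n-1}}{a_n}x^{n-1} + \dots + \tfrac{a_0}{a_n}$; its only nonzero diagonal entry is $-\tfrac{a_{n-1}}{a_n}$ (the columns corresponding to $s^0,\dots,s^{n-2}$ contribute $0$ on the diagonal). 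Hence $\operatorname{Tr}_{\mathbb{Q}(s)/\mathbb{Q}}(s) = -\tfrac{a_{n-1}}{a_n}$.

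Then I would pass from $\mathbb{Q}(s)$ up to $K$. Fixing a $\mathbb{Q}(s)$-basis $e_1,\dots,e_{d/n}$ of $K$, the family $\{\, e_j s^i : 1 \le j \le d/n,\ 0 \le i \le n-1 \,\}$ is a $\mathbb{Q}$-basis of $K$. The map $x \mapsto sx$ preserves, for each fixed $j$, the $\mathbb{Q}$-span of $\{e_j, e_j s, \dots, e_j s^{n-1}\}$ and acts on it by the same companion matrix as above. So in this adapted basis the matrix of multiplication by $s$ over $\mathbb{Q}$ is block diagonal with $d/n$ identical blocks, giving $\operatorname{Tr}_{K/\mathbb{Q}}(s) = \tfrac{d}{n}\,\operatorname{Tr}_{\mathbb{Q}(s)/\mathbb{Q}}(s) = -\tfrac{a_{n-1}}{a_n}\cdot\tfrac{d}{n}$, as claimed.

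The one step deserving care is the tower identity $\operatorname{Tr}_{K/\mathbb{Q}}(s) = [K:\mathbb{Q}(s)]\,\operatorname{Tr}_{\mathbb{Q}(s)/\mathbb{Q}}(s)$ for $s$ lying in the subfield $\mathbb{Q}(s)$; this is precisely what the adapted-basis / block-diagonal argument establishes, the only input being that $\{e_j s^i\}$ is genuinely a $\mathbb{Q}$-basis (from $\{e_j\}$ a $\mathbb{Q}(s)$-basis of $K$ and $\{s^i\}$ a $\mathbb{Q}$-basis of $\mathbb{Q}(s)$). Everything else is finite-dimensional linear algebra, with no subtleties.
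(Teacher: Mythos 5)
Your proposal is correct and follows essentially the same route as the paper: both use the tower $\mathbb{Q}\subset\mathbb{Q}(s)\subset K$, the adapted $\mathbb{Q}$-basis $\{e_j s^i\}$ of $K$, and the observation that the only nonzero diagonal entries of the multiplication-by-$s$ matrix come from the columns indexed by $s^{n-1}$, each contributing $-a_{n-1}/a_n$. The only cosmetic difference is that you first compute the trace of one companion-matrix block and multiply by the number $d/n$ of blocks, whereas the paper sums the $m=d/n$ diagonal contributions directly in the full basis — the same computation.
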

\begin{proof}
    We have the minimal polynomial of degree $n$, meaning that $1,s,s^2,\dots,s^{n-1}$ forms a basis of the field $\mathbb Q[s]$ over $\mathbb Q$, which is also a subfield of the whole $K$. Then let $b_1,b_2,\dots,b_m$ be a basis of $K$ viewed as a $\mathbb Q[s]$ vector space. Using the tower law for the tower
    $$\mathbb Q\subset\mathbb Q[s]\subset K$$
    we get that the $b_is^j$'s form a basis for $K$ as a $\mathbb Q$-vector space, where $i=1,\dots,m$ and $j=0,\dots,n-1$. (This also means that $mn=d$.) We will evaluate the trace of $x\mapsto sx$ with respect to this basis.

    $b_is^j\mapsto b_is^{j+1}$, which for $j=0,\dots,n-2$ means that the term on the diagonal of the matrix corresponding to $b_is^j$ is $0$. When $j=n-1$, then applying the minimal polynomial,
    $$b_is^{n-1}\mapsto b_is^n=-\frac{b_i}{a_n}(a_{n-1}s^{n-1}+a_{n-2}s^{n-2}+\dots+a_0)=\sum_{j=0}^{n-1}-\frac{a_j}{a_n}b_is^j$$
    and therefore taking the coefficient of $b_is^{n-1}$ we get that the diagonal term is $-\frac{a_{n-1}}{a_n}$. There are $m$ of these elements of the basis with $j=n-1$, so in total the trace is $-\frac{a_{n-1}}{a_n}\times m=-\frac{a_{n-1}}{a_n}\frac{d}{n}$. Note that $\frac{d}{n}$ is then an integer.
\end{proof}
\begin{cor}\label{traceint}
    If in the above, $s\in\mathcal O_K$, i.e. $s$ is an algebraic integer, then the minimal polynomial for $s$ with integer coefficients actually is monic, so $\operatorname{Tr}_{K/\mathbb{Q}}(s)=-a_{n-1}\frac{d}{n}\in\mathbb Z$.
\end{cor}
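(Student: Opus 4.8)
The plan is to deduce this directly from the two preceding lemmas, so the argument is essentially a one-line bookkeeping exercise. Since $s \in \mathcal{O}_K$ is an algebraic integer, the lemma on minimal polynomials of algebraic integers tells us that the minimal polynomial of $s$ over $\mathbb{Q}$ is monic with integer coefficients. Hence when we clear denominators as in the previous lemma, the resulting integer polynomial $a_n x^n + a_{n-1}x^{n-1} + \dots + a_0$ is, up to content $1$, already the monic one, i.e. we may take $a_n = 1$.

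Substituting $a_n = 1$ into the formula $\operatorname{Tr}_{K/\mathbb{Q}}(s) = -\frac{a_{n-1}}{a_n}\frac{d}{n}$ from the previous lemma yields $\operatorname{Tr}_{K/\mathbb{Q}}(s) = -a_{n-1}\frac{d}{n}$. To check this lies in $\mathbb{Z}$, I would recall from the proof of that lemma that $n = [\mathbb{Q}(s):\mathbb{Q}]$ divides $d = [K:\mathbb{Q}]$, since by the tower law $d = mn$ with $m = [K:\mathbb{Q}(s)]$. Thus $\frac{d}{n} = m$ is a positive integer, and $-a_{n-1}\frac{d}{n} = -a_{n-1}m \in \mathbb{Z}$ because $a_{n-1}\in\mathbb{Z}$.

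There is essentially no obstacle here; the only point requiring a moment's care is reconciling the two normalizations of the minimal polynomial in play — monic over $\mathbb{Q}$ versus primitive with integer coefficients — which is settled precisely by the monicity lemma (itself resting on Gauss's Lemma). So I would simply present the corollary as the immediate combination of those two lemmas, flagging the normalization point in a single clause.
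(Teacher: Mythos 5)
Your proposal is correct and matches the paper's intent exactly: the corollary is the immediate combination of the monicity lemma for algebraic integers (giving $a_n=1$) with the trace formula $\operatorname{Tr}_{K/\mathbb{Q}}(s)=-\frac{a_{n-1}}{a_n}\frac{d}{n}$, together with the observation already made at the end of that lemma's proof that $\frac{d}{n}=m$ is an integer by the tower law. Nothing further is needed.
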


\begin{theorem}
\label{ring-of-integers-free-module}
    The ring of integers $\mathcal O_K$ of a number field $K$ is a finitely generated $\mathbb Z$-module, in particular it is actually isomorphic to $\mathbb{Z}^n$ where $n$ is the degree of the extension $K/\mathbb Q$, and a basis for $\mathcal O_K$ as a $\mathbb Z$-module forms a basis for $K$ as a $\mathbb Q$-vector space.
\end{theorem}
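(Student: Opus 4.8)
The plan is to use the trace form $T(x,y)=\operatorname{Tr}_{K/\mathbb{Q}}(xy)$ to trap $\mathcal{O}_K$ between two free $\mathbb{Z}$-modules of rank $n=[K:\mathbb{Q}]$. First I would take any $\mathbb{Q}$-basis of $K$ and clear denominators: by Lemma \ref{scaletoint} each basis vector can be scaled by a nonzero integer to land in $\mathcal{O}_K$, so I may assume I have a basis $b_1,\dots,b_n$ of $K$ over $\mathbb{Q}$ with all $b_i\in\mathcal{O}_K$. Since $\mathcal{O}_K$ is a ring containing $\mathbb{Z}$, the submodule $\mathbb{Z}b_1+\cdots+\mathbb{Z}b_n$ lies inside $\mathcal{O}_K$, which gives the rank-$n$ free lower bound.

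Next I would check that $T$ is nondegenerate. The clean point is that $\operatorname{Tr}_{K/\mathbb{Q}}(1)$ is the trace of the identity map on the $n$-dimensional $\mathbb{Q}$-vector space $K$, hence equals $n$, which is nonzero because $\mathbb{Q}$ has characteristic zero. Therefore for any $x\in K\setminus\{0\}$ we get $T(x,x^{-1})=\operatorname{Tr}_{K/\mathbb{Q}}(1)=n\neq 0$, so $T$ is a nondegenerate symmetric $\mathbb{Q}$-bilinear form and there is a dual basis $b_1^\ast,\dots,b_n^\ast$ of $K$ with $T(b_i,b_j^\ast)=\delta_{ij}$. Now I would prove $\mathcal{O}_K\subseteq\mathbb{Z}b_1^\ast+\cdots+\mathbb{Z}b_n^\ast$: given $\alpha\in\mathcal{O}_K$, write $\alpha=\sum_j c_j b_j^\ast$ with $c_j\in\mathbb{Q}$; pairing against $b_i$ gives $c_i=T(\alpha,b_i)=\operatorname{Tr}_{K/\mathbb{Q}}(\alpha b_i)$, and since $\alpha b_i\in\mathcal{O}_K$ this is an integer by Corollary \ref{traceint}. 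So $\mathcal{O}_K$ is sandwiched between two free $\mathbb{Z}$-modules of rank $n$.

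To finish I would invoke the fact that a subgroup of a free abelian group of rank $n$ is itself free of rank at most $n$ (proved by induction on $n$); combined with the rank-$n$ free submodule found above, this forces $\mathcal{O}_K\cong\mathbb{Z}^n$. Finally, a $\mathbb{Z}$-basis of $\mathcal{O}_K$ consists of $n$ elements whose $\mathbb{Q}$-span contains $\mathbb{Q}b_1+\cdots+\mathbb{Q}b_n=K$, so it is automatically a $\mathbb{Q}$-basis of $K$. The main obstacle is not any deep idea but the one external input — that subgroups of $\mathbb{Z}^n$ are free of rank $\le n$ — which is not established in the excerpt and would need to be included or cited; a secondary care point is to phrase the dual-basis argument over $\mathbb{Q}$ and to make genuine use of nondegeneracy of $T$, which is exactly where characteristic zero enters.
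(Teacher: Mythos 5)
Your proposal is correct and follows essentially the same route as the paper's proof: both scale a $\Q$-basis of $K$ into $\mathcal{O}_K$ via Lemma \ref{scaletoint}, use integrality of $\operatorname{Tr}_{K/\Q}(\alpha b_i)$ (Corollary \ref{traceint}) together with nondegeneracy of the trace form (via $\operatorname{Tr}_{K/\Q}(1)=n\neq 0$) to embed $\mathcal{O}_K$ into a free $\Z$-module of rank $n$, and then invoke the structure of submodules of $\Z^n$. Your explicit dual basis is just a repackaging of the paper's coordinate map $x\mapsto(\operatorname{Tr}_{K/\Q}(xx_1),\dots,\operatorname{Tr}_{K/\Q}(xx_n))$, and you correctly flag the one input both arguments leave unproved — that a subgroup of $\Z^n$ is free of rank at most $n$ — which the paper also defers.
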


\begin{proof}
Let a basis for $K$ as a $\mathbb Q$-vector space be $x_1,x_2,\dots,x_n$. By scaling these by integers (Lemma \ref{scaletoint}), WLOG the elements of the basis are algebraic integers and hence in $\mathcal O_K$. Then consider the map from $K\rightarrow\mathbb Z^n$
$$x\mapsto(\operatorname{Tr}_{K/\mathbb Q}(xx_1),\operatorname{Tr}_{K/\mathbb Q}(xx_2),\dots,\operatorname{Tr}_{K/\mathbb Q}(xx_n))$$
This is in fact a linear map: if $x,y\in K$ and $z\in\mathbb Q$, then 

{\tiny
\begin{align*}
x+zy&\mapsto(\operatorname{Tr}_{K/\mathbb Q}((x+zy)x_1),\operatorname{Tr}_{K/\mathbb Q}((x+zy)x_2),\dots,\operatorname{Tr}_{K/\mathbb Q}((x+zy)x_n))\\
&=(\operatorname{Tr}_{K/\mathbb Q}(xx_1+zyx_1),\operatorname{Tr}_{K/\mathbb Q}(xx_2+zyx_2),\dots,\operatorname{Tr}_{K/\mathbb Q}(xx_n+zyx_n))\\
&=(\operatorname{Tr}_{K/\mathbb Q}(xx_1)+z\operatorname{Tr}_{K/\mathbb Q}(yx_1),\operatorname{Tr}_{K/\mathbb Q}(xx_2)+z\operatorname{Tr}_{K/\mathbb Q}(yx_2),\dots,\operatorname{Tr}_{K/\mathbb Q}(xx_n)+z\operatorname{Tr}_{K/\mathbb Q}(yx_n))\\
&=(\operatorname{Tr}_{K/\mathbb Q}(xx_1),\operatorname{Tr}_{K/\mathbb Q}(xx_2),\dots,\operatorname{Tr}_{K/\mathbb Q}(xx_n))+z(\operatorname{Tr}_{K/\mathbb Q}(yx_1),\operatorname{Tr}_{K/\mathbb Q}(yx_2),\dots,\operatorname{Tr}_{K/\mathbb Q}(yx_n))
\end{align*}
}

If there were some non-zero number $w\in K$ that gets mapped to the zero vector, then we know that $\operatorname{Tr}_{K/\mathbb Q}(wx_i)=0$ for each $i$. Then using the basis to express $w^{-1}=\sum_{i=1}^na_ix_i$ where $a_i\in\mathbb Q$,
\begin{align*}
\operatorname{Tr}_{K/\mathbb Q}(ww^{-1})=\operatorname{Tr}_{K/\mathbb Q}(1)=n&=\operatorname{Tr}_{K/\mathbb Q}\left(w\sum_{i=1}^na_ix_i\right)\\&=\operatorname{Tr}_{K/\mathbb Q}\left(\sum_{i=1}^nwa_ix_i\right)\\&=\sum_{i=1}^n\operatorname{Tr}_{K/\mathbb Q}(wa_ix_i)\\&=\sum_{i=1}^na_i\operatorname{Tr}_{K/\mathbb Q}(wx_i)=0
\end{align*}
so $n=0$ which is a contradiction. Therefore this map doesn't send non-zero to zero, so it's an injective linear map. Furthermore, when $x\in \mathcal O_K$, then $xx_i\in\mathcal O_K$ for all $i$, meaning that $\operatorname{Tr}_{K/\mathbb Q}(xx_i)\in\mathbb Z$ (Corollary \ref{traceint}), so the map actually bijects $\mathcal O_K$ with the image of $\mathcal O_K$ under the map, which forms a submodule of $\mathbb Z^n$. Any submodule of $\mathbb Z^n$ is isomorphic to $\mathbb Z^k$ for some $k\leq n$ (I need to prove this but I'm too lazy so I'll do it later), so indeed $\mathcal O_K$ is isomorphic to $\mathbb Z^k$.
\newline\newline
Now to prove that $k$ is actually the degree of $K/\mathbb Q$. We show that the set of generators $g_1,g_2,\dots,g_k\in\mathcal O_K\cong\mathbb Z^k$ forms a basis for the $\mathbb Q$-vector space $K$. For any number $r\in K$, we can scale it by some non-zero integer $t$ to get $tr\in\mathcal O_K$ (Lemma \ref{scaletoint}), which can then be expressed in terms of the generators
$$tr=\sum_{i=0}^kb_ig_i$$
where $b_i\in\mathbb Z$. Then
$$r=\sum_{i=0}^k\frac{b_i}{t}g_i$$
where $\frac{b_i}{t}\in\mathbb Q$. Therefore $g_1,g_2,\dots,g_k$ span $K$, so $k\geq n$. However we already have $k\leq n$, so $k=n$ and $g_1,g_2,\dots,g_n$ form a basis of $K$.
\end{proof}

Consider prime ideals $\mathfrak{p}\subset\mathcal{O}_K$. Intersecting $\mathfrak p\cap\mathbb Z$ produces a prime ideal of $\mathbb Z$. This is either the zero ideal or set of integer multiples of some prime number $p$. However, if $\mathfrak p$ is non-zero, i.e. it contains some $s\in\mathcal O_K\subset K,s\neq0$, since $K$ is a field we know that $s^{-1}\in K$, so now we can scale it by some integer $t$ to get an algebraic integer $ts^{-1}\in\mathcal O_K$, and hence $ts^{-1}s=t\in\mathfrak p\cap\mathbb Z$. Therefore $\mathfrak p\cap\mathbb Z$ is non-zero and indeed it is the set of multiples of $p$. We then say that $\mathfrak p$ "lies over $p$".
\newline\newline
Now this means that if we take the quotient $\mathcal O_K/\mathfrak p$, then the quotient map $\mathcal O_K\rightarrow\mathcal O_K/\mathfrak p$ takes $\mathbb Z$ to a subring of $\mathcal O_K/\mathfrak p$ isomorphic to $\mathbb F_p$ (since $\mathfrak p\cap\mathbb Z=p\mathbb Z$ and $\mathbb Z/p\mathbb Z=\mathbb F_p$), so we can say that $\mathbb F_p\subset\mathcal O_K/\mathfrak p$.

\begin{lemma}\label{Opfield}
    If we have a number field $K$, and a prime ideal $\mathfrak p$ of $\mathcal O_K$ lying over prime number $p$, then $\mathcal O_K/\mathfrak p$ is isomorphic to $\mathbb F_{p^k}$ for some positive integer $k$.
\end{lemma}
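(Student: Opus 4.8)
The plan is to deduce everything from the structural fact (Theorem~\ref{ring-of-integers-free-module}) that $\mathcal{O}_K \cong \Z^n$ as a $\Z$-module, where $n = [K:\Q]$, together with the observation already recorded in the text that $\mathfrak{p} \cap \Z = p\Z$.

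First I would note that $\mathcal{O}_K/\mathfrak{p}$ is a commutative ring, and since $\mathfrak{p}$ is a prime ideal it is a nonzero integral domain (nonzero because a prime ideal is proper). Next, because $p \in \mathfrak{p}$, multiplication by $p$ is identically zero on $\mathcal{O}_K/\mathfrak{p}$; hence the copy of $\Z/p\Z = \F_p$ sitting inside it via the quotient map makes $\mathcal{O}_K/\mathfrak{p}$ into an $\F_p$-algebra, in particular an $\F_p$-vector space. Since $\mathcal{O}_K$ is generated by $n$ elements as a $\Z$-module, its quotient is spanned over $\F_p$ by the images of those same $n$ elements, so it is a finite-dimensional $\F_p$-vector space, say of dimension $k$ with $1 \le k \le n$. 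Therefore $\mathcal{O}_K/\mathfrak{p}$ is a finite ring with exactly $p^k$ elements.

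Then I would invoke the classical fact that a finite integral domain is a field: for any nonzero $a$ in such a ring, the map $x \mapsto ax$ is injective (no zero divisors) and hence surjective on the finite underlying set, so $1$ lies in its image and $a$ is invertible. Thus $\mathcal{O}_K/\mathfrak{p}$ is a field with $p^k$ elements, and by the classification of finite fields stated earlier, any field of order $p^k$ is isomorphic to $\F_{p^k}$, which completes the argument.

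There is no serious obstacle once Theorem~\ref{ring-of-integers-free-module} is available; the points needing a little care are (i) checking $\mathcal{O}_K/\mathfrak{p}$ is nonzero with prime subfield exactly $\F_p$, both immediate from $\mathfrak{p}$ being a proper prime ideal with $\mathfrak{p}\cap\Z = p\Z$, and (ii) the passage from ``finite integral domain'' to ``field,'' which is the only genuinely non-formal step and the one I would state explicitly.
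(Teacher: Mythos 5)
Your proof is correct, and its overall skeleton matches the paper's: both arguments use Theorem~\ref{ring-of-integers-free-module} to see that the images of a $\Z$-basis of $\mathcal O_K$ span $\mathcal O_K/\mathfrak p$ over $\F_p$, so the quotient is a finite-dimensional $\F_p$-vector space, and both then reduce to showing that this finite ring is a field. Where you diverge is in that last step. The paper argues element by element: a nonzero $y$ satisfies a polynomial relation over $\F_p$ by linear dependence of $1,y,y^2,\dots$, its minimal polynomial has nonzero constant term (this is where primeness of $\mathfrak p$ enters for the paper, somewhat implicitly, since dividing out a factor of $x$ requires $y$ not to be a zero divisor), and then B\'ezout in $\F_p[x]$ produces an explicit inverse $b(y)$. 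You instead invoke the classical pigeonhole fact that a finite integral domain is a field, using primeness of $\mathfrak p$ up front to get the integral domain and injectivity-implies-surjectivity of $x\mapsto ax$ to get inverses. Your route is shorter and makes the role of the primeness hypothesis more transparent; the paper's route is more constructive, actually exhibiting the inverse as a polynomial in $y$, and as a by-product shows every element of the residue field is algebraic over $\F_p$ with explicit minimal polynomial. Both are complete; your explicit flagging of the two delicate points (the quotient being a nonzero $\F_p$-algebra, and the finite-domain-is-a-field step) is exactly the right place to put the care.
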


\begin{proof}
By \ref{ring-of-integers-free-module}, let $x_1,x_2,\dots,x_n$ be a basis for $\mathcal O_K$ as a $\mathbb Z$-module. Let $\varphi:\mathcal O_K\rightarrow\mathcal O_K/\mathfrak p$ be the quotient map (which is a surjective ring homomorphism). Then any element in $\mathcal O_K/\mathfrak p$ can be expressed as $\varphi(h)$ for some $h\in\mathcal O_K$. Then using the basis, for some $m_1,m_2,\dots,m_n\in\mathbb Z$,
$$\varphi(h)=\varphi(\sum_{i=1}^nm_ix_i)=\sum_{i=1}^n\varphi(m_ix_i)=\sum_{i=1}^n\varphi(m_i)\varphi(x_i)$$
and since $\varphi(m_i)\in\mathbb F_p$, then $\varphi(x_1),\varphi(x_2),\dots,\varphi(x_n)$ form a finite spanning set of $\mathcal O_K/\mathfrak p$ as a $\mathbb F_p$-vector space, so $\mathcal O_K/\mathfrak p$ has finite dimension. Then for any non-zero element $y\in\mathcal O_K/\mathfrak p$, if you consider the set ${1,y,y^2,\dots,y^{\operatorname{dim}_{\mathbb F_p}\mathcal O_K/\mathfrak p+1}}$, this set must be linearly dependent, which forms a polynomial in $\mathbb F_p[y]$ that evaluates to zero. Hence $y$ is algebraic over $\mathbb F_p$, so let $f\in\mathbb F_p[x]$ be the minimal polynomial of $y$ over $\mathbb F_p$. So $f$ has no factor of $x$, since otherwise you could divide it by $x$ and get a lower degree polynomial with $y\neq0$ as a root. Using Bezout's lemma on the polynomials $x,f(x)$, there exists polynomials $a,b$ with coefficients in $\mathbb F_p$ such that
$$a(x)f(x)+b(x)x=\gcd(f(x),x)=1$$
$$a(y)f(y)+b(y)y=1$$
since $f(y)=0$, then
$$b(y)y=1$$
so $b(y)$ is the multiplicative inverse of $y$. Hence $\mathcal O_K/\mathfrak p$ is a field, and since it's a finite-dimensional vector space over $\mathbb F_p$, then it must be $\mathbb F_{p^k}$ for some $k\in\mathbb Z^+$.
\end{proof}
This also shows the prime ideals are maximal.

Now let $G_{\Q}$ be the absolute Galois group $\operatorname{Gal}(\overline{\mathbb{Q}}/\mathbb{Q})$.

\begin{theorem}
    For any number field $K$, the ring of integers $\mathcal O_K$ is invariant under $\gal(K,\Q)$.
\end{theorem}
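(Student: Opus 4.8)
The plan is to unwind the definition $\mathcal{O}_K = K \cap \A$, where $\A$ denotes the ring of algebraic integers, and to check that each $\sigma \in \gal(K/\Q)$ preserves both of the two sets being intersected. So first I would fix such a $\sigma$ together with an element $\alpha \in \mathcal{O}_K$, and aim to show $\sigma(\alpha) \in \mathcal{O}_K$, after which bijectivity follows since $\sigma$ is invertible and the same applies to $\sigma^{-1}$.

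That $\sigma(\alpha) \in K$ is immediate, since $\sigma$ is by definition an automorphism of $K$ and in particular maps $K$ into itself. Hence the only real content is that $\sigma(\alpha)$ is again an algebraic integer. Because $\alpha \in \A$, there is a monic $f(x) = x^n + c_{n-1}x^{n-1} + \cdots + c_0$ with $c_i \in \Z$ and $f(\alpha) = 0$. Since $\sigma$ is a ring homomorphism fixing $\Q$ pointwise, hence fixing $\Z$, the argument that $\sigma$ commutes with evaluation of a polynomial with fixed coefficients (as in the lemma $Q(\sigma(P)) = \sigma(Q(P))$ recorded earlier) gives
$$\sigma(\alpha)^n + c_{n-1}\sigma(\alpha)^{n-1} + \cdots + c_0 = \sigma\big(f(\alpha)\big) = \sigma(0) = 0.$$
Thus $\sigma(\alpha)$ is a root of the very same monic integer polynomial $f$, so $\sigma(\alpha) \in \A$, and therefore $\sigma(\alpha) \in K \cap \A = \mathcal{O}_K$.

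There is essentially no hard step: the whole proof rests on the single observation that a field automorphism fixing $\Q$ permutes the roots of monic integer polynomials. The only point that warrants a word of care is the framing of the statement itself — the notation $\gal(K/\Q)$ presumes $K/\Q$ is Galois, but the identical argument applies verbatim to any $\Q$-algebra embedding $K \hookrightarrow \overline{\Q}$, showing more generally that every Galois conjugate of an algebraic integer is an algebraic integer; this is the form in which the fact gets used later, when $G_\Q$ is made to act on rings of integers.
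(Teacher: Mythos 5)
Your proposal is correct and matches the paper's own proof essentially verbatim: both reduce to showing $\sigma(\A)\subset\A$ via the observation that applying $\sigma$ to $f(\alpha)=0$ for a monic $f\in\Z[x]$ yields $f(\sigma(\alpha))=0$, with $\sigma(K)\subset K$ being immediate. Your closing remark that the argument applies to any embedding $K\hookrightarrow\overline{\Q}$, not just automorphisms of a Galois $K$, is a sensible extra observation but does not change the substance.
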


\begin{proof} $\Gal(K,\Q)K\subset K$ by definition, so it suffices to show $\gal(K,\Q)\mathbb{A}\subset\mathbb{A}$. Let $\alpha\in\A$, the $f(\alpha)=0$ for some monic $f\in\Z[x]$. If $\sigma\in\gal(K,\Q)$, then 
$$0=\sigma(0)=\sigma(f(\alpha))=f(\sigma\alpha).$$
Hence $\sigma\alpha\in\A$.
\end{proof}

$\Gal(K,\Q)$ also acts on the prime ideals of $\mathcal{O}_K$: if $ab\in\sigma(\mathfrak{p})$, then $\sigma^{-1}(ab)\in\mathfrak{p}$. Since $\sigma{-1}(ab)=\sigma^{-1}(a)\sigma^{-1}(b)\in\mathfrak{p}$, we have $\sigma^{-1}(a)\in\mathfrak{p}$ or $\sigma^{-1}(b)\in\mathfrak{p}$. Hence $a\in\sigma{\mathfrak p}$ or $b\in \sigma\mathfrak{p}$. Hence $\sigma{\mathfrak{p}}$ is a prime ideal. 
Generally the preimage of a prime ideal under a ring homormorphism is a prime ideal. Proof that this action is transitive is omitted in current version.

We want $\Gal(K,\Q)$ to act not just on the ring of integers but also on the quotient by $\mathfrak{p}$, which we know is a finite field by Lemma \ref{Opfield}. This naturally restricts us to the following subgroup:

\begin{definition} The decomposition group $D_{\mathfrak{p}}$ is the stabiliser of $\mathfrak{p}$ in the action of $\gal(K,\Q)$ on the prime ideals of $\mathcal{O}_K$. (spec $O_K$)

$$D_{\mathfrak{p}}=\{\sigma\in \Gal(K,\Q)\,|\, \sigma(\mathfrak{p})=\mathfrak{p}\}$$
    
\end{definition}
\begin{definition}
    
Define the inertia group as
$$I_{\mathfrak{p}}=\{\sigma\in \Gal(K,\Q)\,|\, \sigma(\alpha)=\alpha \mod \mathfrak{p} \forall \alpha\in\mathcal{O}_K\}\subset D_{\mathfrak{p}}.$$

\end{definition}

Restricting to $D$ gives us what we want:

\begin{thm}There is a (natural) group isomorphism
$$\varphi: D_{\mathfrak{p}}/I_{\mathfrak{p}}\to \Gal(\mathcal{O}_K/\mathfrak{p},\F_p)$$
   
\end{thm}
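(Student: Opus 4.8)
The plan is to construct $\varphi$ explicitly, check it is a well-defined injective homomorphism with kernel $I_{\mathfrak p}$, and then obtain surjectivity by a counting argument based on the structure of the residue field. Throughout I would assume $K/\Q$ is Galois (harmless in characteristic zero, since one may pass to the normal closure), so that $\Gal(K,\Q)$ has order $[K:\Q]$ and permutes the finitely many primes of $\mathcal O_K$ lying over a fixed prime $p$ (using that it permutes prime ideals, as shown above, and fixes $p\in\Z$).

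\textbf{Construction and injectivity.} For $\sigma\in D_{\mathfrak p}$ one has $\sigma(\mathfrak p)=\mathfrak p$, so $\sigma$ induces a well-defined map $\bar\sigma:\mathcal O_K/\mathfrak p\to\mathcal O_K/\mathfrak p$, $\alpha+\mathfrak p\mapsto\sigma(\alpha)+\mathfrak p$; it is a ring homomorphism because $\sigma$ is, and bijective because $\sigma^{-1}\in D_{\mathfrak p}$ induces a two-sided inverse. Since $\sigma$ fixes $\Q$ it fixes $\Z$, hence $\bar\sigma$ fixes the image of $\Z$ in $\mathcal O_K/\mathfrak p$, which is $\F_p$; thus $\bar\sigma\in\Gal(\mathcal O_K/\mathfrak p,\F_p)$. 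The assignment $\sigma\mapsto\bar\sigma$ is manifestly a group homomorphism $D_{\mathfrak p}\to\Gal(\mathcal O_K/\mathfrak p,\F_p)$, and $\bar\sigma=\mathrm{id}$ says precisely $\sigma(\alpha)\equiv\alpha\pmod{\mathfrak p}$ for all $\alpha\in\mathcal O_K$, i.e. $\sigma\in I_{\mathfrak p}$. So the kernel is exactly $I_{\mathfrak p}$ (in particular $I_{\mathfrak p}\trianglelefteq D_{\mathfrak p}$) and we get an injection $\varphi:D_{\mathfrak p}/I_{\mathfrak p}\hookrightarrow\Gal(\mathcal O_K/\mathfrak p,\F_p)$.

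\textbf{Surjectivity.} By Lemma \ref{Opfield}, $L:=\mathcal O_K/\mathfrak p\cong\F_{p^k}$, so $\Gal(L/\F_p)$ is cyclic of order $k$; it suffices to exhibit a preimage of a generator. Let $\mathfrak p=\mathfrak p_1,\dots,\mathfrak p_r$ be the (distinct maximal) primes over $p$. I would choose $\theta\in L$ with $L=\F_p(\theta)$ and, by the Chinese Remainder Theorem, lift it to $t\in\mathcal O_K$ with $t\equiv\theta\pmod{\mathfrak p}$ and $t\in\mathfrak p_j$ for $j\geq2$. Then form $F(x)=\prod_{\sigma\in\Gal(K,\Q)}(x-\sigma(t))$: its coefficients are symmetric in the $\sigma(t)$, hence $\Gal(K,\Q)$-fixed and rational, hence integers since $\Z=\A\cap\Q$, so $F\in\Z[x]$ and $\bar F(x)=\prod_\sigma(x-\overline{\sigma(t)})\in\F_p[x]$. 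Here $\overline{\sigma(t)}\neq0$ iff $t\notin\sigma^{-1}(\mathfrak p)$; since $\Gal(K,\Q)$ permutes $\{\mathfrak p_1,\dots,\mathfrak p_r\}$ and $t$ lies in every $\mathfrak p_j$ with $j\geq2$ but not in $\mathfrak p_1$, this holds exactly when $\sigma\in D_{\mathfrak p}$, and then $\overline{\sigma(t)}=\bar\sigma(\theta)$. Thus the nonzero roots of $\bar F$ are precisely the $\bar\sigma(\theta)$ with $\sigma\in D_{\mathfrak p}$. The minimal polynomial $g$ of $\theta$ over $\F_p$ divides $\bar F$, is irreducible of degree $[L:\F_p]=k$, hence has $k$ distinct roots, all nonzero, all among $\{\bar\sigma(\theta):\sigma\in D_{\mathfrak p}\}$. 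So the image of $\varphi$ has at least $k=|\Gal(L/\F_p)|$ elements, forcing $\varphi$ onto; combined with injectivity, $\varphi$ is an isomorphism.

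\textbf{Expected obstacle.} The first half is formal; the real content is surjectivity. The genuinely delicate points are: (i) that $\Gal(K,\Q)$ acts transitively (or at least permutes) the primes over $p$ — a fact the excerpt defers — which is what lets one match nonzero residues with $D_{\mathfrak p}$; (ii) arranging the CRT lift $t$ so that $\bar F$ detects only the decomposition group; and (iii) the finite-field bookkeeping that an irreducible $g$ of degree $k$ contributes $k$ distinct nonzero conjugates. One must also keep the Galois hypothesis on $K/\Q$ in view, as it underlies both $|\Gal(K,\Q)|=[K:\Q]$ and the rationality of the coefficients of $F$.
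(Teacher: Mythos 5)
Your construction of $\varphi$, the verification that the kernel is exactly $I_{\mathfrak p}$, and the resulting injectivity coincide with what the paper does. The substantive difference is surjectivity: the paper explicitly writes ``Surjectivity omitted in current version,'' whereas you supply a complete argument, so your proposal actually closes a gap the paper leaves open. Your surjectivity proof is the standard one and is correct: pick a primitive element $\theta$ of $L=\mathcal O_K/\mathfrak p$ over $\F_p$ (take it in $L^\times$ so it is nonzero), use CRT on the finitely many pairwise comaximal maximal ideals over $p$ to lift it to $t\in\mathcal O_K$ vanishing in every other prime over $p$, and reduce $F(x)=\prod_{\sigma}(x-\sigma(t))\in\Z[x]$ modulo $\mathfrak p$; the nonzero roots of $\bar F$ are exactly the $\bar\sigma(\theta)$ with $\sigma\in D_{\mathfrak p}$, and since the minimal polynomial of $\theta$ is irreducible of degree $k=[L:\F_p]$ with $k$ distinct nonzero roots dividing $\bar F$, every element of the cyclic group $\Gal(L/\F_p)$ is realised (an automorphism of $L$ over $\F_p$ being determined by its value on $\theta$). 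Two small remarks: contrary to your list of delicate points, you do not actually need transitivity of the Galois action on the primes over $p$ — only that each $\sigma^{-1}(\mathfrak p)$ is again a prime over $p$, which the paper has already established — and the hypothesis that $K/\Q$ is Galois, which you flag, is implicit in the paper's setup since $D_{\mathfrak p}$ and $I_{\mathfrak p}$ are defined as subgroups of $\Gal(K,\Q)$ and the surrounding discussion restricts the absolute Galois group to finite Galois subextensions.
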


\begin{proof}
We take the action of $\Gal(K,\Q)$ on $\mathcal{O}_K$ and restrict it to $D_{\mathfrak{p}}$. This then gives an action on the quotient $\mathcal{O}_K/\mathfrak{p}$ by field automorphisms. Hence we have a group homomorphism
$$D_{\mathfrak{p}}\to \Gal(\mathcal{O}_K/\mathfrak{p},\F_p),$$
whose kernel (preimage of identity) is $I_{\mathfrak{p}}$. Hence we can factor through the quotient by the kernel to get an injective group homomorphism
$$\varphi: D_{\mathfrak{p}}/I_{\mathfrak{p}}\to \Gal(\mathcal{O}_K/\mathfrak{p},\F_p).$$
Surjectivity omitted in current version.
\end{proof}

Since $\Frob_p\in \Gal(\mathcal{O}_K/\mathfrak{p},\F_p)$, we can pull it back to define $\Frob_{\mathfrak{p}}\in D_{\mathfrak{p}}/I_{\mathfrak{p}}$:

\begin{definition}Define $\Frob_{\mathfrak{p}}\in D_{\mathfrak{p}}/I_{\mathfrak{p}}$ by
    $$\Frob_{\mathfrak{p}}=\varphi^{-1}(\Frob_p)$$
\end{definition}

To ultimately get something in the representation of the absolute Galois group, there are many hurdles of well-defined-ness to overcome. Here is a major one:

\begin{definition} A representation $\rho$ of $\Gal(K,\Q)$ is unramified at $\mathfrak{p}$ if $\rho(I_{\mathfrak{p}})=0$.
\end{definition}
This means that as far as $\rho$ is concerned, $\Frob_{\mathfrak{p}}$ resides in $\Gal(K,\Q)$.

In this current version, it remains to be checked that the Frobenius element is well-defined for $p$ rather than $\mathfrak{p}$ up to conjugacy and that having these for $K$ will lead to something in the absolute Galois group.

If you have read this far and want us to post the revised version of this bit, please contact us to speed us up.

\section{Appendix 0: Outlines}

 \subsection{Base Cases for Fermat's Last Theorem}

\subsubsection{Pythagorean Triples: $n=2$} The problem of finding all integer Pythagorean triples is not usually considered as part of Fermat's Last Theorem but it is used to prove $n=4$ and is interesting on its own.

\begin{theorem}[Pythagorean Triples]\label{pythagdio}
     For all Pythagorean triples $(x,y,z)$ satisfying $x^2+y^2=z^2$ where $\gcd(x,y)=1$, there exists $a,b\in\mathbb{Z}$ with $\gcd(a,b)=1$ and $a,b$ of opposite parity such that $x=2ab, y=a^2-b^2, z=a^2+b^2.$ 
\end{theorem}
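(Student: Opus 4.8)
\subsection*{Proof proposal}

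The plan is to follow the classical descent-by-factorisation argument. First I would pin down the parities: since $\gcd(x,y)=1$, they are not both even; and they cannot both be odd, because then $x^2+y^2\equiv 2\pmod 4$ whereas $z^2\equiv 0$ or $1\pmod 4$. So exactly one of $x,y$ is even, and after possibly swapping names I may assume $x$ is even and $y$ is odd, which forces $z$ odd. I would also record the easy fact that $\gcd(y,z)=1$: any prime dividing both would divide $x^2=z^2-y^2$, hence divide $x$, contradicting $\gcd(x,y)=1$.

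Next I would factor $x^2=z^2-y^2=(z-y)(z+y)$. Because $y$ and $z$ are both odd, $z-y$ and $z+y$ are both even, so write $z-y=2u$ and $z+y=2v$ with $u,v\in\Z$; then $x$ even gives $(x/2)^2=uv$. I would check $\gcd(u,v)=1$: a common divisor of $u$ and $v$ divides $u+v=z$ and $v-u=y$, and $\gcd(y,z)=1$. The key step is then to invoke unique factorisation in $\Z$: since $uv$ is a perfect square and $u,v$ are coprime, each of $u$ and $v$ is (up to sign, which I can fix by noting $v=(z+y)/2>0$ and likewise arranging $u\ge 0$) a perfect square, say $u=b^2$, $v=a^2$.

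Finally I would unwind the substitutions: $z=u+v=a^2+b^2$, $y=v-u=a^2-b^2$, and $x^2=4uv=4a^2b^2$ so $x=2ab$ (adjusting the sign of $a$ or $b$ if needed). Coprimality $\gcd(a,b)=1$ follows since any common factor of $a,b$ divides the coprime pair $u,v$; and opposite parity follows because $a^2+b^2=z$ is odd. The one place requiring genuine care — the main obstacle — is the ``coprime factors of a square are squares'' lemma, which rests on unique prime factorisation in $\Z$; everything else is bookkeeping with parities and signs, and I would state that lemma explicitly before using it.
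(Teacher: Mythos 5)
Your proposal is correct and follows the standard factorisation-and-descent argument; in fact it is more complete than the paper, which only records the coprimality reduction and the mod-$4$ parity argument before deferring the rest to a ``Video Proof Link.'' Your factorisation $x^2=(z-y)(z+y)$, the coprimality of $u=(z-y)/2$ and $v=(z+y)/2$, and the explicit appeal to the ``coprime factors of a perfect square are squares'' lemma supply exactly the missing steps, and your sign and parity bookkeeping at the end is sound.
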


If $x$ and $y$ share greatest common divisor $d$, then $d$ divides $z$. So we can reduce the equation by dividing $x,y,z$ by $d$ to make all terms coprime. If $x,y$ are both odd, then  $x^2+y^2=z^2\equiv2 $ (mod $4$), which is impossible. So $x$ and $y$ have opposite parity.

Video Proof Link:

\subsubsection{$n=4$}

  Based on exposition in \cite{HW}.  
\begin{thm}\label{n4}
    There are no non-zero integer solutions to $x^4+y^4=z^4$. 

\end{thm}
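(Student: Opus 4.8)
The plan is to prove the stronger statement that $x^4 + y^4 = z^2$ has no solution in positive integers; this suffices, because any solution of $x^4+y^4=z^4$ yields the solution $(x,y,z^2)$ of the stronger equation. I would argue by Fermat's method of infinite descent: assume a solution in positive integers exists, choose one for which $z$ is as small as possible, and then produce another positive solution with strictly smaller $z$, a contradiction.

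First I would arrange that $\gcd(x,y)=1$. If a prime $p$ divides both $x$ and $y$, then $p^4 \mid z^2$, so $p^2 \mid z$, and $(x/p,\,y/p,\,z/p^2)$ is a smaller solution, contradicting minimality. Thus $(x^2, y^2, z)$ is a primitive Pythagorean triple; swapping $x$ and $y$ if necessary, take $x$ odd and $y$ even. By Theorem \ref{pythagdio} there are coprime $a > b > 0$ of opposite parity with $y^2 = 2ab$, $x^2 = a^2 - b^2$, $z = a^2 + b^2$. Since $x^2 + b^2 = a^2$ with $x$ odd and $\gcd(a,b)=1$, the triple $(x,b,a)$ is again primitive with $b$ even, so a second application of Theorem \ref{pythagdio} gives coprime $c > d > 0$ of opposite parity with $b = 2cd$, $x = c^2 - d^2$, $a = c^2 + d^2$.

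Now $y^2 = 2ab = 4acd$, hence $(y/2)^2 = acd$. The key arithmetic step is to observe that $a$, $c$, $d$ are pairwise coprime: $\gcd(c,d)=1$ by construction, and any prime dividing $a = c^2+d^2$ together with $c$ (or with $d$) would also divide $d$ (or $c$), which is impossible. A product of pairwise coprime positive integers that is a perfect square forces each factor to be a square, so $a = e^2$, $c = f^2$, $d = g^2$ with $e,f,g > 0$ (positivity since $b = 2cd > 0$ gives $c,d \geq 1$). Substituting into $a = c^2 + d^2$ yields $e^2 = f^4 + g^4$, a new positive solution, and $e \le e^2 = a \le a^2 < a^2 + b^2 = z$, so $e < z$---a solution smaller than the minimal one, giving the desired contradiction.

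The hard part lies not in the ideas but in keeping the bookkeeping honest: at each stage one must track which leg of the Pythagorean triple is even, and one must verify every coprimality claim so that Theorem \ref{pythagdio} genuinely applies and so that unique factorization really forces $a$, $c$, and $d$ to be individual squares. Once those checks are in place, the remaining manipulations are routine algebra.
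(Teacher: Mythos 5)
Your proof is correct and is essentially the same argument as the paper's: both prove the stronger statement that $x^4+y^4=z^2$ has no positive solutions by infinite descent, applying the Pythagorean parametrization (Theorem \ref{pythagdio}) twice and using that a square equal to a product of coprime factors forces each factor to be a square. The only difference is bookkeeping --- the paper extracts squares from $(x/2)^2=ac$ between the two applications of the parametrization, while you apply the parametrization twice first and then extract squares from the three-fold coprime product $(y/2)^2=acd$ --- but the descent and the contradiction are identical.
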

We show this as a corollary of:

\begin{thm}
    There are no integer solutions to $x^4+y^4=Z^2$.

\end{thm}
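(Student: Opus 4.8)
The plan is to run Fermat's method of infinite descent. Suppose for contradiction that positive integers $x,y,Z$ satisfy $x^4+y^4=Z^2$ with $xy\neq0$, and choose such a solution with $Z$ minimal. First I would reduce to $\gcd(x,y)=1$: if a prime $q$ divided both $x$ and $y$, then $q^4\mid Z^2$ forces $q^2\mid Z$, and $(x/q,\,y/q,\,Z/q^2)$ would be a smaller solution. With $x,y$ coprime they are not both even, and they cannot both be odd since the sum of two odd fourth powers is $2\bmod 4$ and hence not a square; so exactly one of them is even, say $x$.

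Next I would apply the parametrisation of primitive Pythagorean triples (Theorem \ref{pythagdio}) to $(x^2,y^2,Z)$, which (after adjusting signs so everything is positive) gives coprime integers $a,b$ of opposite parity with $x^2=2ab$, $y^2=a^2-b^2$, $Z=a^2+b^2$. From $y^2+b^2=a^2$ we obtain a second primitive Pythagorean triple, and since $y$ is odd the even leg must be $b$, so Theorem \ref{pythagdio} again yields coprime $m,n$ of opposite parity with $b=2mn$, $y=m^2-n^2$, $a=m^2+n^2$.

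Substituting into $x^2=2ab$ gives $x^2=4mn(m^2+n^2)$, i.e. $(x/2)^2=mn(m^2+n^2)$. Using $\gcd(m,n)=1$ one checks that $m$, $n$ and $m^2+n^2$ are pairwise coprime, so since their product is a perfect square each factor must be a perfect square: $m=p^2$, $n=q^2$, $m^2+n^2=r^2$. Then $p^4+q^4=r^2$ is a new solution, and because $x\neq0$ we have $b\neq0$, hence $b\geq1$ and $r\leq r^2=a\leq a^2<a^2+b^2=Z$, so $r<Z$ --- contradicting minimality. This proves the displayed statement, and Theorem \ref{n4} follows immediately: any solution of $x^4+y^4=z^4$ produces the solution $(x,y,z^2)$ of $x^4+y^4=Z^2$.

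The step I expect to be the real obstacle is the parity-and-coprimality bookkeeping: one must correctly identify which leg of each Pythagorean triple is the even one, justify carefully that $m$, $n$ and $m^2+n^2$ are pairwise coprime (so that a product being a perfect square forces each factor to be one), and --- crucially for the descent to be strict --- confirm that $b$ cannot vanish, which is exactly what upgrades $r\leq Z$ to $r<Z$.
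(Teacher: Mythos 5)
Your proof is correct and follows essentially the same double-descent argument as the paper: reduce to a primitive solution, apply the Pythagorean parametrisation twice, and extract a strictly smaller nonzero solution. The only difference is a cosmetic reordering --- you parametrise $y^2+b^2=a^2$ first and then factor $(x/2)^2=mn(m^2+n^2)$ into pairwise coprime squares, whereas the paper first factors $x^2=2ab$ (writing $b=2c$, $a=d^2$, $c=f^2$) and then parametrises $y^2+(2f^2)^2=(d^2)^2$ --- and your bookkeeping, including the check that $b\neq0$ makes the descent strict, is sound.
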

\begin{proof}
    
Suppose $u$ is the smallest positive integer such that $x^4+y^4=u^2$ and $x,y>0$.  Then $\gcd(x,y)=1$, otherwise there is $d>1$ such that $d\mid x$ and $d\mid y$, so $d^4\mid x^4+y^4=u^2$ and hence $d^2\mid u$, so then $\left(\frac{x}{d}\right)^4+\left(\frac{y}{d}\right)^4=\left(\frac{u}{d^2}\right)^2$ where $\frac xd,\frac yd,\frac u{d^2}$ are integers and $\frac u{d^2}<u$, so the minimality of $u$ would be contradicted.
\newline\newline
Then we rewrite the equation as $(x^2)^2+(y^2)^2=u^2$ where $\gcd(x^2,y^2)=1$. We may now apply Theorem \ref{pythagdio}, that is our knowledge of the Pythagorean triples, to get that WLOG assuming $x^2$ is the even and $y^2$ is odd, there exists positive integers $a,b$ with $\gcd(a,b)=1$ such that $x^2=2ab$,  $y^2=a^2-b^2$ and $u=a^2+b^2$.
\newline\newline
As $y^2$ is odd, if $a$ is even, then $b$ would be odd, so $y^2=a^2-b^2\equiv0-1 \equiv -1$ (mod $4$), which is impossible. Hence, $a$ is odd and $b$ is even. Letting $b=2c$, we get $(x/2)^2=ac$, where since $x$ is even,  $x/2$ is an integer. As $gcd(a,c)=1$, we see that $a$ and $c$ are perfect squares. Letting $a=d^2$ and $c=f^2$ with $d,f>0$, we have $y^2=a^2-b^2=d^4-4f^4$ i.e. $y^2+(2f^2)^2=(d^2)^2$. As $2f^2=b$ which is coprime with $y$ (otherwise $a$ and $b$ would not be coprime), we may again apply Pythagoras' general result, obtaining $d^2=l^2+m^2$ and $f^2=lm$ with $l,m>0$ and $gcd(l,m)=1$. 
\newline\newline
From $y^2=lm$ and $gcd(l,m)=1$, we get as before that $l=s^2$ and $m=t^2$ for some positive integers $s,t$. Then from the first equation, $s^4+t^4=d^2$. But $0<d \leq a^2<a^2+b^2=u$ contradicts the assumption that $u$ is the minimal value whose square is the sum of two fourth powers. Thus, there are no non-zero solutions for $n=4$.

\end{proof}

\subsubsection{$n=3$}\label{n3} Based on Landau's proof as presented in \cite{HW}.
We are trying to find non-trivial integral solutions to the equation $x^3+y^3=z^3$. Let $\rho=e^{2\pi i/3}$ and consider the ring of integers adjoined with $\rho$, $k(\rho)$. Then, $x^3+y^3=(x+y)(x+\rho y)(x+\rho^2y)=z^3$. Also, define $\lambda=1-\rho$. Before we begin the proof, we have some definitions and lemmas to cover first.

\begin{definition} [Rational Prime]
    A rational prime is a prime over $\mathbb{Z}$. 
\end{definition}

\begin{definition} [Norm]
    The norm of $\alpha$ denoted by $N(\alpha)$ satisfies $N(\alpha)=\alpha\bar{\alpha}$ where $\bar{\alpha}$ is the complex conjugate of $\alpha$.
\end{definition}

\begin{lemma}
    The norm of $\alpha\in k(\rho)$ is always an integer.
\end{lemma}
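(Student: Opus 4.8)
The plan is to reduce the norm to a manifestly integral expression by using the normal form for Eisenstein integers. First I would record that $\rho=e^{2\pi i/3}$ is a primitive cube root of unity, so $1+\rho+\rho^2=0$, which gives both $\rho+\rho^2=-1$ and $\rho^2=-1-\rho$. The second identity lets one reduce any power of $\rho$, so every $\alpha\in k(\rho)=\Z[\rho]$ can be written as $\alpha=a+b\rho$ with $a,b\in\Z$; this rests only on $\rho$ satisfying the monic integer polynomial $x^2+x+1$, and it is the one place I would be slightly careful to state cleanly (that adjoining $\rho$ introduces no denominators and that $\{1,\rho\}$ spans the ring over $\Z$).

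Then the computation is immediate. Since $|\rho|=1$ we have $\bar\rho=\rho^{-1}=\rho^2$, so $\bar\alpha=a+b\rho^2$ and
$$N(\alpha)=\alpha\bar\alpha=(a+b\rho)(a+b\rho^2)=a^2+ab(\rho+\rho^2)+b^2\rho^3=a^2-ab+b^2,$$
using $\rho+\rho^2=-1$ and $\rho^3=1$. Since $a,b\in\Z$ this is an integer, and in fact a non-negative one because $a^2-ab+b^2=(a-\tfrac b2)^2+\tfrac34 b^2$, so the lemma follows.

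There is no genuine obstacle here; the only point worth isolating is why $\{1,\rho\}$ is a $\Z$-basis of $k(\rho)$, after which the single displayed line finishes the argument. It is natural to record this now since the quadratic form $a^2-ab+b^2$ is precisely the norm form that recurs throughout the $n=3$ argument, and its integrality (and positivity) will be used to run descent-style comparisons of sizes later on.
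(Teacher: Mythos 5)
Your proof is correct and follows essentially the same route as the paper's: reduce $\alpha$ to the form $a+b\rho$ with $a,b\in\Z$ via the monic relation $\rho^2+\rho+1=0$, then compute $N(\alpha)=a^2-ab+b^2\in\Z$. The added remark on non-negativity of the norm form is a harmless bonus not present in the paper's version.
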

\begin{proof}
Since $\alpha\in k(\rho)$, then there exists a polynomial $f\in\mathbb Z[x]$ such that $f(\rho)=\alpha$. $\rho^2+\rho+1=0$, so by monic polynomial division if we express $f(x)=(x^2+x+1)g(x)+bx+a$, we can express $\alpha=f(\rho)=(\rho^2+\rho+1)g(\rho)+b\rho+a=a+b\rho$ where $a,b\in\mathbb Z$. Then $N(\alpha)=\alpha\overline{\alpha}=(a+b\rho)\overline{(a+b\rho)}=(a+b\rho)(a+b\overline\rho)=a^2+b^2\rho\overline\rho+ab(\rho+\overline(\rho)=a^2+b^2-ab\in\mathbb Z$.
\end{proof}

\begin{lemma}
    If $\alpha\in k(\rho)$, then $\overline \alpha\in k(\rho)$.
\end{lemma}
\begin{proof}
    Firstly, $\overline\rho=\rho^2\in k(\rho)$. Then if $\alpha\in k(\rho)$ then there is some polynomial $f\in\mathbb Z[x]$ such that $\alpha =f(\rho)$. Therefore $\overline\alpha=\overline{f(\rho)}=f(\overline\rho)\in k(\rho)$.
\end{proof}

\begin{definition} [Unity]
    A unity in $k(\rho)$ is a number with norm $1$.
\end{definition}

Note that all unities have reciprocals in $k(\rho)$, namely if $u$ is a unity then $u^{-1}=\overline{u}\in k(\rho)$.
    
\begin{definition} [Associate]
    Two numbers $\alpha,\beta\in k(\rho)$ are associates if $\alpha=u\beta$ for some unity $u$.
\end{definition}
Note that this definition is symmetric, i.e. if $\alpha=u\beta$, then $\beta=u^{-1}\alpha$, where $u^{-1}$ is also a unity.

\begin{lemma} \label{Norm Prime}
    A number in $k(\rho)$ is prime if its norm is a rational prime.
\end{lemma}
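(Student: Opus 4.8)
The plan is to show that if $N(\alpha)$ is a rational prime $p$, then any factorisation $\alpha=\beta\gamma$ in $k(\rho)$ is forced to be trivial, i.e.\ one of $\beta,\gamma$ is a unity. The two ingredients I would set up first are: (i) the norm is \emph{multiplicative}, $N(\beta\gamma)=N(\beta)N(\gamma)$, which follows immediately since $\overline{\beta\gamma}=\bar\beta\,\bar\gamma$ (using the earlier lemma that conjugation preserves $k(\rho)$), so $N(\beta\gamma)=\beta\gamma\overline{\beta\gamma}=(\beta\bar\beta)(\gamma\bar\gamma)$; and (ii) the norm takes \emph{non-negative integer} values, which follows by writing any element as $a+b\rho$ with $a,b\in\Z$ (as in the proof that $N(\alpha)\in\Z$) and computing $N(a+b\rho)=a^2-ab+b^2=\left(a-\tfrac b2\right)^2+\tfrac34 b^2\geq 0$.

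With these in hand, suppose $N(\alpha)=p$ and write $\alpha=\beta\gamma$ with $\beta,\gamma\in k(\rho)$. Then $p=N(\alpha)=N(\beta)N(\gamma)$ is a product of two non-negative integers, and since $p$ is a rational prime, one factor must be $1$; say $N(\beta)=1$. By definition $\beta$ is then a unity, so $\alpha=\beta\gamma$ exhibits $\gamma$ as an associate of $\alpha$, and the factorisation is trivial. I would also remark that $\alpha$ itself is neither zero nor a unity, since $N(\alpha)=p\neq 0,1$, so $\alpha$ is a genuine non-unit admitting only trivial factorisations; hence $\alpha$ is prime (irreducible) in $k(\rho)$.

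I do not expect a serious obstacle here — the argument is entirely formal once multiplicativity and non-negativity of the norm are recorded. The only point requiring a little care is making sure the intended meaning of ``prime'' in $k(\rho)$ is ``irreducible'' (no nontrivial factorisation), which is what the above establishes; the statement is phrased as an ``if'' precisely because the converse fails (for instance a rational prime such as $2$ or $3$ may itself factor in $k(\rho)$, or have reducible norm), so I would not attempt to prove any converse. Finally I would note in passing that this lemma is exactly what is needed to identify $\lambda=1-\rho$ as prime, since $N(\lambda)=N(1-\rho)=1-(1)(-1)+1=3$ is a rational prime, which will be used in the $n=3$ argument that follows.
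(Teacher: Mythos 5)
Your proposal is correct and follows essentially the same route as the paper: both arguments rest on multiplicativity of the norm, so that $N(\alpha)=p$ forces one factor in $\alpha=\beta\gamma$ to have norm $1$ and hence be a unity. You supply slightly more detail than the paper does (explicit proofs of multiplicativity and of non-negativity of $N(a+b\rho)=a^2-ab+b^2$, which justify the paper's bare assertion that the norms of non-unities are integers greater than $1$), but the underlying argument is the same.
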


\begin{proof}
    Suppose for the sake of contradiction that we have $\alpha,\beta, \gamma \in k(\rho)$ satisfying $\alpha=\beta\gamma$ where neither $\beta$ nor $\gamma$ are unities, and $N(\alpha)=p$ where $p$ is a rational prime. By the multiplicative condition of the norm, $N(\alpha)=N(\beta)N(\gamma)=p$. But $N(\beta)$ and $N(\gamma)$ are positive integers greater than 1, contradicting the primacy of $p$. 
\end{proof}

\begin{lemma}
    $\lambda$ is prime. 
\end{lemma}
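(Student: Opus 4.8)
The plan is to reduce this immediately to the norm computation, since we already have Lemma \ref{Norm Prime} at our disposal: a number in $k(\rho)$ whose norm is a rational prime must itself be prime. So the entire task is to show that $N(\lambda) = 3$, and then invoke that lemma.

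First I would write $\lambda = 1 - \rho$ in the standard form $a + b\rho$ with $a, b \in \mathbb{Z}$, which here is simply $a = 1$, $b = -1$. Then I would apply the formula established in the proof that norms in $k(\rho)$ are integers, namely $N(a + b\rho) = a^2 + b^2 - ab$, to get
$$N(\lambda) = 1^2 + (-1)^2 - (1)(-1) = 1 + 1 + 1 = 3.$$
As a sanity check one can also compute this directly from $\lambda = 1 - \rho$ and $\bar\rho = \rho^2$: $N(\lambda) = \lambda\bar\lambda = (1-\rho)(1-\rho^2) = 1 - \rho - \rho^2 + \rho^3 = 1 - (\rho + \rho^2) + 1 = 1 - (-1) + 1 = 3$, using $\rho^2 + \rho + 1 = 0$ and $\rho^3 = 1$.

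Since $3$ is a rational prime, Lemma \ref{Norm Prime} applies directly and we conclude $\lambda$ is prime in $k(\rho)$. There is no real obstacle here — the only thing to be a little careful about is that Lemma \ref{Norm Prime} gives a sufficient (not necessary) condition for primality, but that is exactly the direction we need, so a one-line norm computation finishes the argument. (It is worth noting for later use in the $n=3$ proof that $3 = -\rho^2\lambda^2$ up to a unit, i.e. $3$ and $\lambda^2$ are associates, but that identity is not needed to establish primality of $\lambda$ itself.)
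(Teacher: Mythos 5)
Your proposal is correct and follows essentially the same route as the paper: compute $N(\lambda)=N(1-\rho)=(1-\rho)(1-\rho^2)=3$ and invoke the lemma that a norm equal to a rational prime forces primality. The extra check via the formula $N(a+b\rho)=a^2+b^2-ab$ is a harmless redundancy.
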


\begin{proof}
    $N(\lambda)=N(1-\rho)=(1-\rho)(1-\rho^2)= 1+\rho^3-(\rho+\rho^2)=3$ which is a rational prime. By \ref{Norm Prime}, $\lambda$ is prime.
\end{proof}

\begin{lemma}\label{rho congruent}
    The integers of $k(\rho)$ are congruent to $0, \pm1$ (mod $\lambda)$
\end{lemma}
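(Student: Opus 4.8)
The plan is to reduce an arbitrary integer of $k(\rho)$ modulo $\lambda$ by exploiting the single relation $\rho = 1 - \lambda$, and then to cut the resulting rational integer down using $\lambda \mid 3$. First I would recall from the norm lemma above that every $\alpha \in k(\rho)$ can be written as $\alpha = a + b\rho$ with $a,b \in \mathbb{Z}$. Since $\lambda = 1 - \rho$, we have $\rho \equiv 1 \pmod{\lambda}$, and hence
$$\alpha = a + b\rho \equiv a + b \pmod{\lambda},$$
so every integer of $k(\rho)$ is congruent modulo $\lambda$ to an ordinary rational integer.

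Next I would show $\lambda \mid 3$. Using $1 + \rho + \rho^2 = 0$, so $\rho^2 = -1-\rho$, one computes
$$\lambda^2 = (1-\rho)^2 = 1 - 2\rho + \rho^2 = -3\rho,$$
whence $3 = -\rho^{-1}\lambda^2 = -\rho^2\lambda^2$, exhibiting $3$ as a multiple of $\lambda$ (indeed of $\lambda^2$). Therefore the rational integer $a+b$ is congruent modulo $\lambda$ to its residue $r \in \{0,1,2\}$ modulo $3$; since $2 \equiv -1 \pmod{3}$, every integer of $k(\rho)$ is congruent modulo $\lambda$ to one of $0$, $1$, $-1$.

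Finally I would check these three representatives are genuinely distinct modulo $\lambda$, which also shows the list is irredundant: if $\lambda$ divided $1$ then $N(\lambda) = 3$ would divide $N(1) = 1$, and if $\lambda$ divided $2$ then $3$ would divide $N(2) = 4$, both impossible; so $0, 1, -1$ lie in three different residue classes (consistent with $|k(\rho)/(\lambda)| = N(\lambda) = 3$). The argument is essentially bookkeeping with $a+b\rho$; the only point that needs a small computation is the relation $\lambda \mid 3$ via $\lambda^2 = -3\rho$, and after that there is no real obstacle.
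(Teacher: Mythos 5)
Your argument is correct and follows essentially the same route as the paper's: write $\alpha=a+b\rho$, use $\rho\equiv 1\pmod{\lambda}$ to reduce to the rational integer $a+b$, and then invoke $\lambda\mid 3$ to land in $\{0,\pm1\}$. The only differences are that you explicitly justify $\lambda\mid 3$ via $\lambda^2=-3\rho$ (which the paper defers to a later lemma) and add the harmless extra check that the three residues are distinct.
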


\begin{proof}
    For any integer $\omega \in k(\rho)$, we have $a,b\in \mathbb{Z}$ such that $\omega=a+b\rho=a+b-b\lambda\equiv a+b$ (mod $\lambda$). Since $\lambda\mid3$, and $a+b \equiv0,\pm1$ (mod $3$), $\omega\equiv0, \pm1$ (mod $\lambda)$ as required.
\end{proof}
\begin{lemma}\label{omega divide}
    For  $\omega\in k(\rho)$, if $\lambda$ does not divide $\omega$ then $\omega^3\equiv\pm1$ (mod $\lambda^4)$
\end{lemma}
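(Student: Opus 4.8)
\textbf{Proof proposal for Lemma \ref{omega divide}.}

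The plan is to work modulo $\lambda^4$, exploiting that $\lambda^4$ is associate to $9$ (since $N(\lambda)=3$ gives $\lambda\bar\lambda=3$ up to a unit, so $\lambda^2$ is associate to $3$ and $\lambda^4$ to $9$). By Lemma \ref{rho congruent}, if $\lambda\nmid\omega$ then $\omega\equiv\pm1\pmod\lambda$; replacing $\omega$ by $-\omega$ if necessary (which only changes $\omega^3$ by a sign and so is harmless for the conclusion $\omega^3\equiv\pm1$), I may assume $\omega\equiv1\pmod\lambda$. Write $\omega=1+\lambda\mu$ for some integer $\mu\in k(\rho)$.

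Next I would factor
$$\omega^3-1=(\omega-1)(\omega-\rho)(\omega-\rho^2).$$
Since $\omega-1=\lambda\mu$, and $\rho=1-\lambda$, I rewrite $\omega-\rho=(\omega-1)+(1-\rho)=\lambda\mu+\lambda=\lambda(\mu+1)$, and similarly $\omega-\rho^2=(\omega-1)+(1-\rho^2)=\lambda\mu+(1-\rho)(1+\rho)=\lambda\mu+\lambda(1+\rho)=\lambda(\mu+1+\rho)$. Hence
$$\omega^3-1=\lambda^3\,\mu(\mu+1)(\mu+1+\rho).$$
So it suffices to show $\lambda\mid\mu(\mu+1)(\mu+1+\rho)$. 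Now apply Lemma \ref{rho congruent} to $\mu$: modulo $\lambda$, $\mu$ is congruent to $0$, $1$, or $-1$. If $\mu\equiv0$, the first factor is divisible by $\lambda$; if $\mu\equiv-1$, the second factor is; if $\mu\equiv1$, then $\mu+1+\rho\equiv 2+\rho=2+(1-\lambda)\equiv 3\equiv0\pmod\lambda$ since $\lambda\mid 3$. In every case $\lambda$ divides the product, giving $\lambda^4\mid\omega^3-1$, i.e. $\omega^3\equiv1\pmod{\lambda^4}$. Tracking the sign adjustment from the first paragraph, a general $\omega$ with $\lambda\nmid\omega$ satisfies $\omega^3\equiv\pm1\pmod{\lambda^4}$.

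The main obstacle is bookkeeping rather than depth: making sure the congruence $\rho\equiv1\pmod\lambda$ and the identities $1-\rho=\lambda$, $1-\rho^2=\lambda(1+\rho)$ are used cleanly so that all three linear factors of $\omega^3-1$ visibly acquire a factor of $\lambda$, and then checking the three residue cases for $\mu$ — in particular the case $\mu\equiv1$, where one needs the extra input $\lambda\mid 3$ to kill $2+\rho$. One should also confirm that the sign normalization is legitimate, i.e. that $\lambda\nmid\omega\iff\lambda\nmid(-\omega)$, which is immediate since $\lambda\nmid -1$.
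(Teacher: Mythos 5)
Your argument is correct and follows essentially the same route as the paper's proof: normalize the sign so that $\omega\equiv1\pmod\lambda$, factor $\omega^3-1=(\omega-1)(\omega-\rho)(\omega-\rho^2)=\lambda^3\mu(\mu+1)(\mu+1+\rho)$ using $1-\rho=\lambda$ and $1-\rho^2=\lambda(1+\rho)$, and then show one of the three factors is divisible by $\lambda$ via the residues $0,\pm1$ of $\mu$. The paper phrases the last step by rewriting $\mu+1+\rho$ as $\mu-\rho^2\equiv\mu-1$, whereas you handle the case $\mu\equiv1$ directly with $2+\rho\equiv3\equiv0\pmod\lambda$; these are equivalent.
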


\begin{proof}
    By \ref{rho congruent}, $\omega\equiv0,\pm1$ (mod $\lambda$), and since $\lambda$ does not divide $\omega$, $\omega\equiv\pm1$ (mod $\lambda$). We may therefore choose $\alpha=\pm\omega$ so that $\alpha \equiv1$ (mod $\lambda$), i.e $\alpha=1+\beta\lambda$. Then, 
    \begin{align*}
\pm(\omega^3\mp1)=\alpha^3-1&=(\alpha-1)(\alpha-\rho)(\alpha-\rho^2)\\
&=\beta\lambda(\beta\lambda+1-\rho)(\beta\lambda+1-\rho^2)\\
&=\beta\lambda(\beta\lambda+\lambda)(\beta\lambda+(1-\rho)(1+\rho))\\
&=\beta\lambda(\beta\lambda+\lambda)(\beta\lambda+\lambda(1+\rho))\\
&=\lambda^3\beta(\beta+1)(\beta+1+\rho)\\
&=\lambda^3\beta(\beta+1)(\beta-\rho^2)
    \end{align*}
    as $-\rho^2=\rho+1$. We know that $\rho^2=\rho^2-1+1=(\rho-1)(\rho+1)+1=-\lambda(\rho+1)+1\equiv1$ (mod $\lambda$), which implies that $\beta(\beta+1)(\beta-\rho^2)\equiv \beta(\beta+1)(\beta-1)$ (mod $\lambda$). One of these factors must be divisible by $\lambda$ by \ref{rho congruent}, and so $\pm(\omega^3\mp1)\equiv0$ (mod $\lambda^4$) and hence $\omega^3\equiv\pm1$ (mod $\lambda^4$).
\end{proof}

\begin{lemma}\label{three assoc}
    $3$ is an associate of $\lambda^2$
\end{lemma}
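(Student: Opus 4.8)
The statement to prove is that $3$ is an associate of $\lambda^2$, where $\lambda = 1 - \rho$ and $\rho = e^{2\pi i/3}$.

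\medskip

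The plan is to compute $\lambda^2$ explicitly and exhibit a unity $u$ in $k(\rho)$ such that $3 = u\lambda^2$. First I would expand $\lambda^2 = (1-\rho)^2 = 1 - 2\rho + \rho^2$. Using the defining relation $\rho^2 + \rho + 1 = 0$, i.e. $\rho^2 = -1 - \rho$, this becomes $\lambda^2 = 1 - 2\rho + (-1-\rho) = -3\rho$. Hence $3 = -\rho^{-1}\lambda^2 = -\rho^2 \lambda^2$, since $\rho^{-1} = \rho^2$ (as $\rho^3 = 1$).

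\medskip

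It then remains to check that $-\rho^2$ is a unity in $k(\rho)$, which amounts to showing $N(-\rho^2) = 1$. Since the norm is multiplicative and $N(-1) = 1$, $N(\rho) = \rho\bar\rho = \rho\rho^2 = \rho^3 = 1$, we get $N(-\rho^2) = N(\rho)^2 = 1$. Therefore $-\rho^2$ is a unity and $3 = (-\rho^2)\lambda^2$, so $3$ and $\lambda^2$ are associates as claimed.

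\medskip

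There is essentially no main obstacle here; the only thing to be careful about is the bookkeeping with powers of $\rho$ (using $\rho^3 = 1$ and $\rho^2 = -1-\rho$ consistently) and recalling from the earlier lemma that a number of norm $1$ is by definition a unity with reciprocal $\bar u \in k(\rho)$. So this is a short direct computation rather than anything requiring a strategy.
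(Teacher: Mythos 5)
Your proof is correct and follows the same route as the paper, which simply computes $\lambda^2 = 1-2\rho+\rho^2 = -3\rho$ and leaves the rest implicit. You have merely filled in the (routine) verification that $-\rho^2$ is a unity, which is a reasonable amount of extra detail.
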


\begin{proof}
    $\lambda^2=1-2\rho+\rho^2=-3\rho$
\end{proof}
\begin{flushleft}
   Now suppose there exists $a,b,c \in k(\rho)$ with $abc\neq0$ such that $a^3+b^3+c^3=0$.  
\end{flushleft}

\begin{lemma}
   If $a^3+b^3+c^3=0$ then one of $a,b,c$ is divisible by $\lambda$
\end{lemma}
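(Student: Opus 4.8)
The plan is a short proof by contradiction, leaning entirely on Lemma \ref{omega divide} together with the fact (Lemma \ref{three assoc}) that $3$ is an associate of $\lambda^2$. Suppose, for contradiction, that $\lambda$ divides none of $a$, $b$, $c$. Then Lemma \ref{omega divide} applies to each of them, giving $a^3 \equiv \pm 1$, $b^3 \equiv \pm 1$, $c^3 \equiv \pm 1 \pmod{\lambda^4}$, where the signs are independent. Adding, I would conclude
$$0 = a^3 + b^3 + c^3 \equiv \pm 1 \pm 1 \pm 1 \pmod{\lambda^4},$$
so that $\lambda^4$ divides one of $\pm 1$ or $\pm 3$.

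The next step is to rule both of these out. For $\pm 1$: since $\lambda$ is prime with $N(\lambda) = 3$, any associate relation or divisibility $\lambda^4 \mid 1$ would force $N(\lambda^4) = 81$ to divide $N(1) = 1$, which is absurd; so $\lambda^4 \nmid 1$. For $\pm 3$: by Lemma \ref{three assoc} we may write $3 = u\lambda^2$ for some unity $u$, whence $\lambda^4 \mid 3$ would give $\lambda^2 \mid u$, contradicting $N(u) = 1$ (as $N(\lambda^2) = 9 \nmid 1$). Thus neither case is possible, and the assumption that $\lambda$ divides none of $a, b, c$ is untenable; hence $\lambda$ divides at least one of them.

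I do not anticipate a serious obstacle here: the only mild subtlety is making sure the sign bookkeeping in "$\omega^3 \equiv \pm 1$" is handled correctly — the signs attached to $a^3, b^3, c^3$ need not agree, but every one of the eight sign patterns yields a sum in $\{\pm 1, \pm 3\}$, and the norm argument kills all of them uniformly, so no case analysis beyond that is needed. Everything else is a direct appeal to results already established above (primality of $\lambda$, multiplicativity of the norm, and Lemmas \ref{omega divide} and \ref{three assoc}).
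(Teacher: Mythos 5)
Your proposal is correct and follows essentially the same route as the paper: assume $\lambda$ divides none of $a,b,c$, apply Lemma \ref{omega divide} to get $0\equiv\pm1\pm1\pm1\pmod{\lambda^4}$, and rule out both $\pm1\equiv0$ and $\pm3\equiv0$ using the primality of $\lambda$ and the fact that $3$ is an associate of $\lambda^2$. Your norm computations merely make explicit the justifications the paper states in one line, so there is nothing substantively different here.
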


\begin{proof}
    Suppose the proposition is false. Then $0=a^3+b^3+c^3\equiv\pm1\pm1\pm1$ (mod $\lambda^4$) by \ref{omega divide}, and so $\pm1\equiv0$ or $\pm3\equiv0$ (mod $\lambda^4$). The first is impossible as $\lambda$ is not a unity, and the second impossible because 3 is an associate of $\lambda^2$ (\ref{three assoc}) and hence $\lambda^4$ cannot divide $3$. This contradiction yields the result.
    \end{proof}

    Therefore, we may assume without loss of generality that $\lambda\mid c$ and that $c=\lambda^n\gamma$ for $n\geq1$ and $\lambda$ does not divide $\gamma$. As gcd$(a,b,c)=1$, $\lambda$ does not divide $ab$. Our equation has now become $a^3+b^3+\lambda^{3n}\gamma^3=0$. Indeed, if we have some unity $\epsilon$, it is more convenient to prove $a^3+b^3+\epsilon\lambda^{3n}\gamma^3=0$ has no solutions.

    \begin{lemma}
        If $a,b,\gamma$ satisfy the above conditions, then $n\geq2$
    \end{lemma}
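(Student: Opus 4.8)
The plan is to work modulo $\lambda^4$ and exploit Lemma \ref{omega divide}. Since $\lambda$ divides neither $a$ nor $b$ (as $\gcd(a,b,c)=1$ and $\lambda\mid c$), that lemma gives $a^3\equiv\pm1$ and $b^3\equiv\pm1\pmod{\lambda^4}$, so
$$a^3+b^3\equiv 0\ \text{or}\ \pm2\pmod{\lambda^4}.$$
I would then rule out the $\pm2$ case. Note $N(2)=4$ is coprime to $N(\lambda)=3$, so $\lambda\nmid 2$; if $a^3+b^3\equiv\pm2\pmod{\lambda^4}$ then in particular $\lambda\nmid(a^3+b^3)$. But the defining equation $a^3+b^3=-\epsilon\lambda^{3n}\gamma^3$ with $n\geq1$ forces $\lambda^{3n}\mid a^3+b^3$ and hence $\lambda\mid a^3+b^3$, a contradiction. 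Therefore we must be in the case $a^3+b^3\equiv0\pmod{\lambda^4}$ (equivalently, the signs in $a^3\equiv\pm1$, $b^3\equiv\pm1$ are opposite).

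From $\lambda^4\mid a^3+b^3=-\epsilon\lambda^{3n}\gamma^3$, and using that $\lambda$ is prime with $\lambda\nmid\epsilon$ (a unit) and $\lambda\nmid\gamma$, I conclude $\lambda^4\mid\lambda^{3n}$, i.e. $3n\geq4$, which gives $n\geq2$ since $n$ is an integer.

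I do not expect a serious obstacle here: the argument is essentially a congruence count once Lemma \ref{omega divide} is in hand. The only points requiring a little care are (i) checking $\lambda\nmid 2$, which follows from the multiplicativity of the norm, and (ii) making sure the unit $\epsilon$ and the factor $\gamma^3$ genuinely contribute no power of $\lambda$, which is exactly the hypothesis $\lambda\nmid\gamma$ together with the fact that units have norm $1$. If one prefers to avoid invoking $N(2)$, an alternative for step (i) is to observe directly from Lemma \ref{rho congruent} that $2\equiv -1\pmod\lambda$, hence $2$ is a unit mod $\lambda$ and in particular not divisible by $\lambda$.
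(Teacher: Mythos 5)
Your argument is correct and is essentially the paper's own proof: reduce $a^3+b^3=-\epsilon\lambda^{3n}\gamma^3$ modulo $\lambda^4$ using Lemma \ref{omega divide}, rule out the $\pm2$ case because $\lambda\nmid 2$, and then read off $3n\geq4$ from the fact that $\epsilon\gamma^3$ is coprime to $\lambda$. The extra care you take in justifying $\lambda\nmid 2$ (via norms, or via $2\equiv-1\pmod\lambda$) is a welcome elaboration of a step the paper states without proof.
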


    \begin{proof}
        $-\epsilon\lambda^{3n}\gamma^3=a^3+b^3\equiv \pm1\pm1$ (mod $\lambda^4$). The signs cannot be the same, for $-\epsilon\lambda^{3n}\gamma^3\equiv \pm2$ (mod $\lambda^4$) is impossible as $\lambda$ does not divide $2$. Then $-\epsilon\lambda^{3n}\gamma^3\equiv 0$ (mod $\lambda^4$) implies $n\geq2$ as gcd$(\epsilon\gamma^3, \lambda)=1$.
    \end{proof}

    \begin{lemma}\label{final n=3}
        If $a^3+b^3+\lambda^{3n}\gamma^3=0$ holds for some $n=m>1$, then it holds for $n=m-1$.
    \end{lemma}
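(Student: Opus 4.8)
The plan is to carry out Fermat's classical infinite descent inside $k(\rho)$, working (as the preceding remarks suggest) with the slightly more flexible equation $a^3+b^3+\epsilon\lambda^{3m}\gamma^3=0$, where $\epsilon$ is a unity, $m>1$, and $\lambda\nmid ab\gamma$. First I would factor the left two cubes as
$$a^3+b^3=(a+b)(a+\rho b)(a+\rho^2 b),$$
and record the linear identity $(a+b)+\rho(a+\rho b)+\rho^2(a+\rho^2 b)=0$ (which holds since $1+\rho+\rho^2=0$ and $\rho^4=\rho$); this identity will be the engine that drops the exponent by one at the end.

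Next I would pin down how $\lambda$ is distributed among the three factors. Any two of them differ by a multiple of $\lambda$, e.g. $(a+\rho b)-(a+b)=-\lambda b$, so by Lemma~\ref{rho congruent} and primality of $\lambda$ the fact that $\lambda$ divides the product forces $\lambda$ to divide all three. A short gcd computation shows that a common divisor of two factors divides both $\lambda a$ and $\lambda b$, hence divides $\lambda$ because $\gcd(a,b)=1$; thus the three factors are pairwise coprime apart from a single shared power of $\lambda$. Consequently exactly one of them — and after replacing $b$ by $\rho b$ or $\rho^2 b$, which cyclically permutes the factors and leaves the equation unchanged, we may take it to be $a+b$ — is divisible by $\lambda^{3m-2}$ while the other two are divisible by $\lambda$ exactly. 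Writing $a+b=\lambda^{3m-2}\mu_0$, $a+\rho b=\lambda\mu_1$, $a+\rho^2 b=\lambda\mu_2$ with the $\mu_i$ pairwise coprime and prime to $\lambda$ (and nonzero, since a vanishing factor would force $\gamma=0$), the equation gives $\mu_0\mu_1\mu_2=-\epsilon\gamma^3$; by unique factorisation in $k(\rho)$, pairwise coprimality makes each $\mu_i$ a unity times a cube, $\mu_i=\epsilon_i\kappa_i^3$, with $\lambda\nmid\kappa_i$.

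Substituting into the linear identity and dividing by $\lambda$ gives, for suitable unities, an equation $\epsilon_1'\kappa_1^3+\epsilon_2'\kappa_2^3+\epsilon_0'\lambda^{3(m-1)}\kappa_0^3=0$. Dividing through by $\epsilon_1'$ reduces this to $\kappa_1^3+\eta\kappa_2^3+\eta'\lambda^{3(m-1)}\kappa_0^3=0$ with $\eta,\eta'$ unities, and the crux is to show $\eta=\pm1$. Here Lemma~\ref{omega divide} does the work: since $\lambda\nmid\kappa_1\kappa_2$ and $m-1\ge1$, reducing modulo $\lambda^3$ gives $\pm1\pm\eta\equiv0\pmod{\lambda^3}$, so $\lambda^3\mid\eta\mp1$; checking the six units $\pm1,\pm\rho,\pm\rho^2$ and using that $1-\rho=\lambda$ has norm $3$ (so $\lambda^2\nmid1-\rho$) while $1+\rho=-\rho^2$ is a unit, the only possibilities are $\eta=\pm1$. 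Absorbing the sign of $\eta$ into $\kappa_2$ and relabelling, we obtain $a'^3+b'^3+\epsilon'\lambda^{3(m-1)}\gamma'^3=0$ with $\gamma'=\kappa_0\ne0$, $\lambda\nmid a'b'\gamma'$, and $\gcd(a',b')=1$, which is exactly the assertion at $n=m-1$.

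The main obstacle I expect is this final unit bookkeeping: the descent only closes because the stray unity $\eta$ is forced to be $\pm1$, and establishing that cleanly requires marrying the cube-congruence of Lemma~\ref{omega divide} with an explicit pass over all units of $k(\rho)$ together with the behaviour of $\lambda$ under those units. A secondary point needing care is the step ``product of pairwise coprime elements is a unity times a cube $\Rightarrow$ each factor is a unity times a cube,'' which rests on $k(\rho)$ being a unique factorisation domain; I would invoke this as a standard fact rather than prove it in place.
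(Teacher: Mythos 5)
Your proposal is correct and follows essentially the same route as the paper: factor $a^3+b^3$ over $k(\rho)$, use the linear identity $1+\rho+\rho^2=0$, distribute the powers of $\lambda$ among the three pairwise-coprime factors, extract cubes by unique factorisation, and pin the stray unit down to $\pm1$ via the cube congruence of Lemma~\ref{omega divide}. The only cosmetic difference is that you reduce modulo $\lambda^3$ where the paper works modulo $\lambda^2$; both suffice.
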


    The implication of this Lemma is that if our equation holds for any $n$, by the method of descent it will hold for $n=1$ which is impossible by Lemma 8.7. This will give the desired contradiction for the proof.

    \begin{proof}
        Substituting $n=m$, we get $$-\epsilon\lambda^{3m}\gamma^3=(a+b)(a+\rho b)(a+\rho^2b).$$ The differences of the factors on the right are $b\lambda, \rho b\lambda,\rho^2b\lambda$ which are each associates of $b\lambda$ and maximally divisible by $\lambda$ once (as $\lambda$ does not divide $b$). As $m>1$, $3m>3$, therefore exactly one of the factors must be divisible by $\lambda^2$, while the other two must be divisible by $\lambda$ but not $\lambda^2$. Assume without loss of generality that this factor is $a+b$ for if it were another factor, we could replace $b$ with one of its associates. 
       \newline\newline 
        We may now write $$a+b=\lambda^{3m-2}\kappa_1, a+b\rho=\lambda\kappa_2, a+b\rho^2=\lambda\kappa_3$$ where $\lambda$ does not divide $\kappa_i$. If $\delta\mid \kappa_2$ and $\delta\mid \kappa_3$, then $\delta$ divides $\kappa_2-\kappa_3=b\rho$ and $\rho\kappa_3-\rho^2\kappa_2=\rho a$, contradicting the co-prime condition of $a$ and $b$. Similar logic may be applied to $\lambda^{3m-3}\kappa_1, \kappa_2$ and $\lambda^{3m-3}\kappa_1, \kappa_3$ to obtain that gcd$(\kappa_1,\kappa_2,\kappa_3)=1$. Substituting into the equation, we get $-\epsilon\gamma^3=\kappa_1\kappa_2\kappa_3$ which implies that each of $\kappa_1,\kappa_2,\kappa_3$ is an associate of a cube.
        \newline\newline
        We now write $$a+b=\lambda^{3m-2}\kappa_1=\epsilon_1\lambda^{3m-2}\theta^3, a+b\rho=\epsilon_2\lambda\phi^3, a+b\rho^2=\epsilon_3\lambda\psi^3$$ where gcd$(\theta,\phi,\psi)=1$ and $\epsilon_1,\epsilon_2,\epsilon_3$ are unities. 
        \newline\newline
        Then, \begin{align*}
0 &= (1 + \rho + \rho^2)(a + b) \\
  &= a + b + \rho(a + b\rho) + \rho^2(a + b\rho^2) \\
  &= \epsilon_1 \lambda^{3m - 2} \theta^3 + \epsilon_2 \lambda \phi^3 + \epsilon_3 \lambda \psi^3
\end{align*} which we will simplify to $$\phi^3+\epsilon_4\psi^3+\epsilon_5\lambda^{3m-3}\theta^3$$ where $\epsilon_4=\epsilon_3\rho/\epsilon_2$ and $\epsilon_5=\epsilon_1/\rho\epsilon_2$ and hence they are both unities. 
\newline\newline
As $m\geq2$, $$
\phi^3 + \epsilon_4 \psi^3 \equiv 0 \pmod{\lambda^2}.
$$
But neither term is divisible by $\lambda$, so by \ref{omega divide}, $\phi^3\equiv\pm1$ (mod $\lambda^2$) and $\psi^3\equiv\pm1$ (mod $\lambda^2$). 
\newline\newline
Hence $$
\pm 1 \pm \epsilon_4 \equiv 0 \pmod{\lambda^2}.
$$
 But $\epsilon_4$ is $\pm1,\pm\rho,\pm\rho^2$. However, neither $\pm1\pm\rho$ or $\pm1\pm\rho^2$ is divisible by $\lambda^2$ since each is an associate of 1 or $\lambda$. Hence, $\epsilon_4=\pm1$. If $\epsilon_4=1$, then we have proved \ref{final n=3}. If $\epsilon_4=-1$, we may replace $\psi$ with $-\psi$. This result concludes the proof.
    \end{proof}

This approach was explored generally by Kummer but it was found that the kind of unique factorisation needed, which $n=3$ possessed, was not present for all primes. Those primes for which there was unique factorisation were called regular primes and Kummer was able to prove the theorem for these primes. It is an open problem whether or not there are infinitely many regular primes.

\subsubsection{WLOG $n$ is a Prime at Least 5} 
\begin{theorem}
    If Fermat's Last Theorem is proved true when $n$ is an odd prime or $4$ and $a,b,c$ are pairwise coprime, it must also be true in general.
\end{theorem}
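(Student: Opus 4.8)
The plan is to show that any hypothetical nonzero integer solution of $x^n+y^n=z^n$ with $n>2$ can be massaged into a nonzero solution of Fermat's equation whose exponent is either $4$ or an odd prime and whose three variables are pairwise coprime --- which is precisely the configuration we are assuming cannot exist.

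First I would reduce the exponent. Since $n>2$, either $n$ is a power of $2$, and then $n\geq 4$ so $4\mid n$, or $n$ has an odd prime factor $p$. In the first case, writing $n=4m$, a solution $a^n+b^n=c^n$ yields $(a^m)^4+(b^m)^4=(c^m)^4$, a nonzero solution for exponent $4$. In the second case, writing $n=pr$, the same solution yields $(a^r)^p+(b^r)^p=(c^r)^p$, a nonzero solution for the odd prime exponent $p$. Either way we have landed on an exponent $N\in\{4\}\cup\{\text{odd primes}\}$.

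Next I would enforce pairwise coprimality. Suppose a prime $q$ divides two of the three terms of the new solution; from $a^N+b^N=c^N$ it then divides the third as well, so $q^N$ divides each of $a^N,b^N,c^N$, and cancelling it replaces $(a,b,c)$ by $(a/q,\,b/q,\,c/q)$, still a nonzero solution with exponent $N$. This strictly decreases $|abc|$ (by a factor $q^3>1$), so the process terminates and leaves a nonzero solution with $a,b,c$ pairwise coprime. Such a solution is excluded by hypothesis, a contradiction; hence Fermat's Last Theorem holds for every $n>2$.

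There is no genuinely hard step here; the only points requiring a little care are in the exponent split. One must observe that "$n>2$ together with $n$ a power of $2$" already forces $4\mid n$, so it is the $n=4$ case --- and not some $n=2$ case --- that is invoked, and that when $n$ has an odd prime factor we descend directly onto that prime, so, in contrast with the sharper reduction proved earlier, we do not even need the $n=3$ case as an input. The descent in the coprimality step is finite exactly because each cancellation divides the positive integer $|abc|$ by $q^3\geq 8$.
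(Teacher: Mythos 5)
Your proposal is correct and follows essentially the same route as the paper: split off the exponent-$4$ case when $n$ is a power of $2$, descend onto an odd prime factor otherwise, and then strip common prime factors to achieve pairwise coprimality (the paper divides by $\gcd(a,b,c)$ in one step where you iterate prime by prime, but this is the same argument). Your observation that the termination of the cancellation step deserves a word, and that with the hypothesis stated as ``odd prime or $4$'' the $n=3$ base case is not needed as an input, are both accurate refinements of what the paper writes.
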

\begin{proof}
    We first show that no $n>2$ solution exists. We will do two cases: $n$ is a power of $2$ and $n$ is not a power of $2$: 
    \newline
   \begin{enumerate}
       \item Suppose that $n=2^k$ satisfies Fermat's equation for some $k\in \Z_{\geq2}$. We have $$a^{2^k}+b^{2^k}=c^{2^k}$$ $$\iff (a^{2^{k-2}})^4+(b^{2^{k-2}})^4=(c^{2^{k-2}})^4.$$ But the case $n=4$ has no solutions, a proof of which can be found in the appendix?.
       \item Now suppose that $n=pr$ satisfies Fermat's equation for some $r\in \Z^+$ and odd prime $p$. Then $$a^{pr}+b^{pr}=c^{pr}$$ $$\iff (a^r)^p+(b^r)^p=(c^r)^p$$ which is only true if $n=p$ satisfies Fermat's equation. 
   \end{enumerate} 

    Now we will show that if no pairwise coprime solutions exist then no non-zero solutions exist in $(a,b,c)$. Fix the value of $n>2$ and assume for the sake of contradiction that a non-zero solution $(a,b,c)$ exists while no pairwise coprime solution exists. Firstly, the following holds as if any number divides 2 of $a,b,c$ it must divide the third: $$gcd(a,b,c)=gcd(a,b)=gcd(a,c)=gcd(b,c).$$ 
    Hence dividing both sides of the equation by $gcd(a,b,c)$ yields a new solution $${(\frac{a}{gcd(a,b,c)})}^n+{(\frac{b}{gcd(a,b,c)})}^n={(\frac{c}{gcd(a,b,c)})}^n$$ where $({\frac{a}{gcd(a,b,c)}},{\frac{b}{gcd(a,b,c)}},{\frac{c}{gcd(a,b,c)}})$ are pairwise coprime. But since we have assumed that no pairwise coprime solutions exist this obtains the desired contradiction.

\end{proof}


\subsection{The ABC Conjecture}\label{abc}
Define $\rad(n)$ to be the product of the distinct prime divisors of $n$.

\begin{conj}[The ABC Conjecture] For every $\epsilon>0$, there exists $K_{\epsilon}$ such that for any integers $a,b,c$ satisfying $a+b=c$, we have

$$c\leq K_{\epsilon}\rad(abc)^{1+\epsilon}$$
\end{conj}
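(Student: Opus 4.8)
A proof proposal here has to open with a confession: the ABC conjecture is, at the time of writing, an open problem, so what follows is a sketch of the terrain one would have to cross rather than a genuine argument. The first thing I would record is why the shape of the statement cannot be simplified. The exponent $1+\epsilon$ cannot be replaced by $1$: taking $c = 3^{2^k}$, $b = c-1$, $a = 1$, one checks that $2^{k+2}$ divides $b$, so $\rad(abc)$ is at most a constant times $b/2^{k+2}$, and hence $c/\rad(abc)$ is unbounded in $k$. This kills the naive hope of an inequality $c \le K\,\rad(abc)$ and shows the $\epsilon$ is essential. The second thing I would record is how far the \emph{unconditional} state of the art is from the target: the Stewart--Tijdeman and Stewart--Yu bounds, obtained from Baker's theory of linear forms in logarithms, give only $c \le \exp\!\big(K_\epsilon\,\rad(abc)^{1/3+\epsilon}\big)$, exponential in a power of the radical. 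So the gap between what is known and what is conjectured is enormous, and Baker's method is provably incapable of closing it.

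For an actual attack I would not work with the elementary statement directly but with one of its known reformulations. Up to changing $\epsilon$, the conjecture is equivalent to Szpiro's conjecture comparing the conductor and the minimal discriminant of an elliptic curve, and the dictionary between them is exactly the Frey-type curve $y^2 = x(x-a)(x+b)$ already introduced in this article: its discriminant is $(abc)^2$ up to a bounded power of $2$, and its conductor is $\rad(abc)$ up to the same, by the semistability computation we carried out. So the plan would be to translate ABC into a height inequality for rational points on elliptic curves and then try to prove \emph{that}, either via an effective form of the Mordell/Vojta height inequality in the spirit of Elkies, or via Mochizuki's inter-universal Teichm\"uller theory, which claims to establish precisely such an inequality.

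The hard part --- in fact the \emph{only} hard part, and the reason this statement sits in an appendix rather than the main text --- is that no step of this program currently goes through in a form the mathematical community has accepted: Baker's method is too weak by a provable margin, the effective Mordell route has not been carried out, and Mochizuki's proof has not been validated by independent experts (a specific obstruction was raised by Scholze and Stix). Consequently, the realistic content of this ``proposal'' is the conditional payoff, which is what the remainder of this appendix will extract: \emph{assuming} ABC, a hypothetical solution $x^n+y^n=z^n$ with $x,y,z$ positive and coprime gives $a=x^n$, $b=y^n$, $c=z^n$ with $\rad(abc)=\rad(xyz)\le xyz < z^3 = c^{3/n}$, so the conjecture forces $c \le K_\epsilon\, c^{3(1+\epsilon)/n}$, and for $n$ large enough the exponent on the right drops below $1$, bounding $c$ independently of $n$ and thereby proving Fermat's Last Theorem for all but finitely many exponents.
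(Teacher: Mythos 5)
You are right that the ABC conjecture is an open problem and admits no proof; the paper likewise states it as a conjecture without proof and only records the conditional consequence for Fermat's Last Theorem. Your derivation of that consequence ($\rad(abc)=\rad(xyz)\le xyz<z^3=c^{3/n}$, hence $c\le K_\epsilon c^{3(1+\epsilon)/n}$ and $c$ is bounded once $3(1+\epsilon)/n<1$) is exactly the paper's argument, which simply fixes $\epsilon=1/2$ to obtain $z^{n-4.5}\le K_{1/2}$.
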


If this were true, then Fermat's Last Theorem is true for sufficiently large $n$.

Proof: Suppose $x^n+y^n=z^n$ for non-zero coprime integers $x,y,z$. Then there exists $K_{0.5}$ such that

$$z^n\leq K_{0.5}\rad(x^ny^nz^n)^{1.5}=K_{0.5}\rad(xyz)^{1.5}\leq K_{0.5}(xyz)^{1.5}\leq K_{0.5}z^{4.5}$$
$$z^{n-4.5}\leq K_{\epsilon}$$

There are further conjectures giving an explicit bound on $K$ but without knowing $K$, it would not be known how many lower cases need to be considered. However if $K$ were also to be known to be $1$ or $2$ say then Fermat's Last Theorem would be a direct corollary.

\section{Appendix I: Elliptic Curves}

\subsubsection{$j$-invariant}
Unfortunately the $j$-invariant is zero for fields of characteristic $2$ or $3$ so in those cases it may be better to define it as simply $$\frac{c_4^3}{\Delta}.$$

\begin{theorem}
    When considering minimal Weierstrass equations, an elliptic curve $E$ has good reduction modulo $p$ if and only if the discriminant is non-zero modulo $p$. Otherwise, $E$ has bad reduction, having multiplicative reduction if $p$ does not divide $c_4$ and additive reduction if $p\mid c_4$.
\end{theorem}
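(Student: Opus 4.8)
The plan is to take a minimal Weierstrass equation for $E$, reduce it modulo $p$, and study the reduced curve $\widetilde E$ over $\mathbb{F}_p$. The first equivalence — good reduction iff $p\nmid\Delta$ — is essentially Definition \ref{reduction} once one knows that $\widetilde E$ is nonsingular exactly when its discriminant $\widetilde\Delta$ is nonzero in $\mathbb{F}_p$; this is the same statement that $\Delta\neq 0$ detects nonsingularity, now applied over $\mathbb{F}_p$ instead of $\mathbb{Q}$, and minimality is what guarantees that this coincides with the intrinsic notion of good reduction for $E$ (which I would cite from the theory of minimal models rather than reprove). So the real work is the dichotomy \emph{within} the bad case $p\mid\Delta$.

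Assume $p\mid\Delta$, so $\widetilde E$ is singular. First I would recall that a Weierstrass curve over any field has at most one singular point; hence $\widetilde E$ has a unique singular point $S$, and since $S$ must be fixed by the Frobenius of $\overline{\mathbb{F}_p}$, it is defined over $\mathbb{F}_p$. Lifting the coordinates of $S$ to $\mathbb{Z}_p$ and applying the translation that moves $S$ to the origin — an admissible change of variables with integer translation part, hence preserving minimality and leaving $b_2,b_4,c_4$ unchanged modulo $p$ — I may assume the reduced equation is $y^2+a_1xy+a_3y=x^3+a_2x^2+a_4x+a_6$ with the origin singular. The conditions ``the origin lies on the curve'' and ``both partial derivatives vanish there'' force $a_3\equiv a_4\equiv a_6\equiv 0\pmod p$, so over $\mathbb{F}_p$
$$\widetilde E:\qquad y^2+a_1xy-a_2x^2=x^3 .$$

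Now the type of the singularity at the origin is read off from the degree-two part $Q(x,y)=y^2+a_1xy-a_2x^2$, the tangent cone: the singularity is a node (two distinct tangent lines, hence multiplicative reduction) precisely when $Q$ is a separable binary quadratic form, and a cusp (one tangent line, hence additive reduction) precisely when $Q$ is a perfect square. Viewing $Q$ as a quadratic in $y/x$, separability is equivalent to the discriminant $a_1^2+4a_2$ being nonzero in $\mathbb{F}_p$; one checks by direct inspection of when $t^2+a_1t-a_2$ has a repeated root that this criterion stays correct in characteristics $2$ and $3$. But $a_1^2+4a_2=b_2$, and since $a_3\equiv a_4\equiv 0$ we have $b_4=a_1a_3+2a_4\equiv 0\pmod p$, whence
$$c_4=b_2^2-24b_4\equiv b_2^2\pmod p .$$
Therefore $p\mid c_4\iff p\mid b_2\iff Q$ is a perfect square $\iff\widetilde E$ has a cusp, i.e.\ additive reduction; and $p\nmid c_4$ forces a node, i.e.\ multiplicative reduction. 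This completes the dichotomy.

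The main obstacle I anticipate is not any single computation but two ``soft'' inputs I would want to isolate carefully: first, that for a \emph{minimal} equation the naive condition $p\nmid\Delta$ is genuinely equivalent to $E$ having good reduction as an intrinsic property (this can fail for a non-minimal equation), which I would invoke from standard references; and second, the bookkeeping showing that translating the singular point to the origin can be performed over $\mathbb{F}_p$ and lifted to an admissible transformation leaving $c_4\bmod p$ untouched. Once these are granted, the argument is the elementary tangent-cone analysis above together with the identity $c_4\equiv b_2^2\pmod p$, which is the crux linking the geometry (node versus cusp) to the arithmetic condition on $c_4$. I would also note that for $p\neq 2,3$ one may instead pass to a short Weierstrass form $y^2=(x-r_1)(x-r_2)(x-r_3)$ over $\mathbb{F}_p$ and match directly with Definition \ref{reduction}'s ``two roots coincide'' versus ``three roots coincide'', giving a second, more concrete route in the case most relevant to the rest of the paper.
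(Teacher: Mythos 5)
Your proof is correct, and it takes a genuinely different route from the one in the paper. The paper's proof restricts at the outset to $p\neq2,3$, passes to the short form $y^2=x^3+Ax+B$ with $A=-4c_4$, and argues directly from the identity $\Delta=-16(4A^3+27B^2)$: if $p\mid c_4$ and $p\mid\Delta$ then $p\mid A$ and hence $p\mid B$, so the reduction is $y^2=x^3$ with a triple root (additive); conversely a triple root of $x^3+Ax+B$ over $\F_p$ forces $A\equiv B\equiv0$, so $p\nmid c_4$ leaves only a double root (multiplicative). This is exactly the ``second, more concrete route'' you mention in your last sentence, and it matches Definition \ref{reduction}'s root-counting phrasing directly. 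Your main argument — translating the unique singular point to the origin, reading the node/cusp dichotomy off the tangent cone $y^2+a_1xy-a_2x^2$, and closing with $c_4\equiv b_2^2\pmod p$ — is more work but buys uniform validity in characteristics $2$ and $3$, where the paper's definition of additive/multiplicative reduction via roots of a cubic in $x$ does not even directly apply; it is also the standard argument in Silverman. Two small remarks: you are right to flag the minimality input as a ``soft'' citation — the paper silently assumes it too; and note that this paper normalises $c_4=(b_2^2-24b_4)/12$ rather than the usual $b_2^2-24b_4$, so your congruence should read $12c_4\equiv b_2^2\pmod p$ in the paper's notation — harmless for $p\geq5$, but it means the statement's $c_4$-criterion is only literally meaningful in the regime the paper's proof covers anyway.
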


\begin{proof}
    When $E$ is non-singular after being reduced modulo $p$ then it has good reduction at $p$. There are no repeated roots when $E$ is non-singular, so the discriminant becomes nonzero. For the other direction, if $\Delta\neq 0$, there are no repeated roots and so the curve is non-singular and has good reduction.
\newline\newline
    When $p\mid \Delta$, $E$ has bed reduction at $p$, and there is a root of multiplicity at least $2$. We use the change of variables to turn the minimal Weierstrass equation into the form $y^2=x^3+Ax+B$, as denoted above where $A=-4c_4$, when the characteristic is not $2$ or $3$. If $p\mid c_4$, then $p\mid A$ as $p\neq 2,3$. 
    But $$\Delta=-16(4A^3+27B^2)$$ so if $p\mid A,\Delta$ and $p\neq 2,3$, then $p\mid B$. But then we can reduce $E$ modulo $p$ again to obtain that $y^2=x^3$ which has a triple root and implies additive reduction. This logic is \textit{if and only if}, so if $p$ does not divide $c_4$, it does not have a triple root, and therefore has a double root. This implies multiplicative reduction.
\end{proof}

\subsection{Elliptic Curve as an Abelian Group}

\begin{proposition}
    The addition defined on an elliptic curve is associative. (From Lemma 3.1)  that is $(P+Q)+R= P+(Q+R)$ for all $P$, $Q$, $R$ on the elliptic curve.
\end{proposition}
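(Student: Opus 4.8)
The plan is to prove associativity of the chord-and-tangent addition on an elliptic curve $E$ given by $y^2 = x^3 + Ax + B$. The cleanest elementary route is the one based on a counting argument for intersections of cubics, sometimes called the "nine points" or Cayley--Bacharach approach, so that is what I would carry out. The main idea is that the group law is entirely governed by when triples of points are collinear (with the usual conventions about the point at infinity $\mathcal{O}$ and tangent lines), together with the fact that $-R$ is the reflection of $R$ across the $x$-axis; so associativity reduces to a statement about lines and cubics in the projective plane.

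First I would set up notation: write $\ell(P,Q)$ for the line through $P$ and $Q$ (the tangent at $P$ when $P=Q$), and recall that $P+Q = -(P * Q)$ where $P*Q$ is the third intersection of $\ell(P,Q)$ with $E$. To show $(P+Q)+R = P+(Q+R)$, it suffices to show that the third point of $\ell(P+Q, R)$ on $E$ equals the third point of $\ell(P, Q+R)$ on $E$, i.e. that $(P+Q)*R = P*(Q+R)$. I would then consider two cubic curves: $C_1$, the union of the three lines $\ell(P,Q)$, $\ell(R, P+Q)$ — wait, more carefully — $\ell(P,Q)$, $\ell(\mathcal{O}, P*Q)$ (the vertical through $P*Q$), and $\ell(R, Q+R)$; and $C_2$, the union of $\ell(Q,R)$, $\ell(\mathcal{O}, Q*R)$, and $\ell(P, Q+R)$. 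Each $C_i$ is a degree-3 plane curve, and one checks that each of them passes through eight common points: $\mathcal{O}$, $P$, $Q$, $R$, $P*Q$, $P+Q$, $Q*R$, $Q+R$. These eight points, together with $E$ (also a cubic through all eight), are the input to the Cayley--Bacharach theorem: if two cubics meet in nine points and a third cubic passes through eight of them, it passes through the ninth. Applying this with the third cubic being $E$ forces the ninth intersection point of $C_1$ and $C_2$ to lie on $E$; identifying that ninth point on both sides gives $(P+Q)*R = P*(Q+R)$, hence associativity.

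The main obstacle, and the reason this is "a lengthy proof" as the excerpt says, is handling degeneracies: the eight points may fail to be distinct (e.g. when some of $P,Q,R$ coincide, or when $P = -Q$, or when a tangency occurs), the three lines making up $C_i$ may coincide or be concurrent, and one must make sense of multiplicities. A fully rigorous treatment of Cayley--Bacharach with multiplicities is itself nontrivial, so in practice I would first prove associativity on a dense open subset where everything is in "general position" (all relevant points distinct, no unexpected tangencies), and then extend to all of $E \times E \times E$ by a continuity/Zariski-density argument: the set where $(P+Q)+R = P+(Q+R)$ is closed, and it contains a dense open set, so it is everything. This density argument works over $\mathbb{C}$ directly, and over $\mathbb{Q}$ one can either embed in $\mathbb{C}$ or argue that the group-law formulas are rational functions, so an identity holding on a Zariski-dense set holds identically.

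An alternative I would keep in reserve, in case a reader wants something more self-contained and less reliant on Cayley--Bacharach, is the brute-force route: use the explicit rational formulas for the coordinates of $P+Q$ (extending the $x$-coordinate formula already given in the excerpt, plus the corresponding $y$-coordinate formula) and verify the identity $(P+Q)+R = P+(Q+R)$ as an identity of rational functions in the coordinates of $P, Q, R$. This is conceptually trivial but computationally enormous, and again requires separately treating the cases involving $\mathcal{O}$ and the vertical-line cases; a computer algebra check is the honest way to do it. Since the excerpt defers this proof to the appendix "for those interested," I would present the Cayley--Bacharach argument as the main line, state the density reduction carefully, and remark that the formula-bashing verification is available as a completely elementary but unenlightening substitute.
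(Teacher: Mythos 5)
Your strategy is exactly the one the paper uses: handle the cases involving $\mathcal{O}$ and vertical lines separately, and in the generic case realise the two candidate sums as the ninth intersection point of two totally degenerate cubics (each a union of three lines) through eight common points, then invoke Cayley--Bacharach with $E$ as the third cubic. Your closing remarks on degeneracies (reducing to general position and extending by a density/continuity argument, or falling back on the explicit rational formulas) are a reasonable supplement to what the paper does, which only gestures at multiplicities.

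However, the specific cubics you wrote down do not do the job: the step ``one checks that each of them passes through the eight points $\mathcal{O}, P, Q, R, P*Q, P+Q, Q*R, Q+R$'' fails for your $C_1$ and $C_2$. With $C_1=\ell(P,Q)\cup\ell(\mathcal{O},P*Q)\cup\ell(R,Q+R)$ the point $Q*R$ is never hit, and with $C_2=\ell(Q,R)\cup\ell(\mathcal{O},Q*R)\cup\ell(P,Q+R)$ neither $P*Q$ nor $P+Q$ is hit; moreover the third line of $C_1$ must be $\ell(P+Q,R)$, since its role is to produce $(P+Q)*R$ as the ninth point. The correct pairing swaps your two verticals: take $C_1=\ell(P,Q)\cup\ell(\mathcal{O},Q*R)\cup\ell(P+Q,R)$ and $C_2=\ell(Q,R)\cup\ell(\mathcal{O},P*Q)\cup\ell(P,Q+R)$. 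Then each union passes through all eight listed points, the ninth common point of $C_1$ and $C_2$ is $\ell(P+Q,R)\cap\ell(P,Q+R)$, and Cayley--Bacharach forces it onto $E$, identifying it as both $(P+Q)*R$ and $P*(Q+R)$. With that correction your argument coincides with the paper's.
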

 \begin{proof}This can be done just using Cartesian coordinates, but the algebra is very long so we will do another proof.
\newline\newline
    Case 1 ($P$ is $\mathcal{O}$): 
     $(\mathcal{O} +Q)+R=Q+R=\mathcal{O}+(Q+R).$  A similar proof works if any of $P,Q,R$ are $\mathcal{O}.$
    \newline\newline
    Case 2 ($PQ$ is a vertical line):  This case implies that $P+Q=\mathcal{O}$. It's required to prove now that $\mathcal{O} +R= P+(Q+R)$ if $P+Q=\mathcal{O}.$ 
    We already know $P+Q=\mathcal{O} \implies -P=Q$. Let the line $QR$ be $l$ and note the elliptic curve is called $E$. Then $l  \cap E = (Q,R, -(Q+R)).$ Reflect $l$ in the $x$-axis to $l'$, $l' \cap E=(-Q, -R, Q+R)$. But $-P=Q$ so we have $l' \cap E=(P, -R, Q+R)$. This means that $R=P+(Q+R)$ as required.
    \newline\newline
    Case 3: Let $A,B,C$ be three points on $E$ such that no three of them are the point at infinity and $A+B, B+C$ are not infinity (not vertical lines). We will now show the associativity condition for this case.
    \begin{proof}
     Define $AB \cap E =D, BC \cap E = Q$, and $D$ and $Q$ reflected over the $x$-axis to $G$ and $R$ respectively. Define $L=GC \cap AR$. To prove associativity it suffices to prove that $L$ lies on $E$.
    
    \includegraphics[scale=0.3]{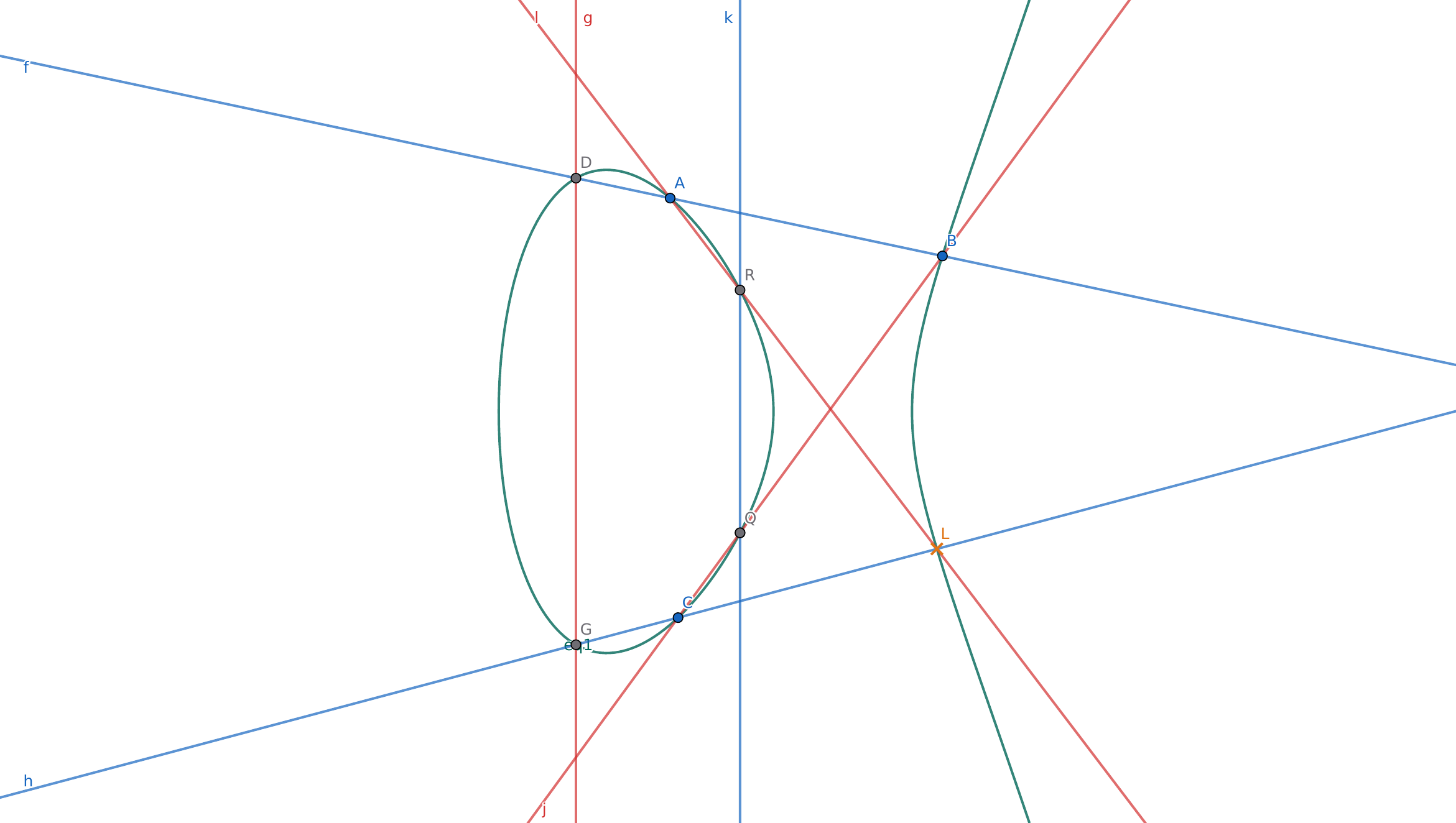}

    Now consider the cubic defined by the product of the equations of the three blue lines, let this be the cubic $C_1$, and the cubic defined by the product of the equations of the three red lines, let this cubic be $C_2$. Both these cubics pass through the 8 points on the above diagram (if some pair of the points are the same then multiplicity is counted) including the point at infinity (intersection point of $DG$ and $RQ$). If any of these points coincide then they contribute to the multiplicity of the intersections of the cubics at those points. Now by the Cayley-Bacharach theorem since $E$ passes through the seven points $D,G,A,R,Q,G,B$ and the point at infinity it also must pass through the $9$th point $L$ as it passing through $8$ of the intersection points of $C_1$ and $C_2$ means it must also pass through the $9$th. This proves associativity.
\end{proof}
    \end{proof}

\subsection{Generalised Definition of the Conductor $N_E$}

\subsubsection{Cusp and Node Singularities}
Before defining the conductor more generally, we take a moment to look at the graph below to see what singularities might look like. This part is not relevant to our discussion, but may help the reader get a more intuitive understanding of the reduction.
\newline\newline
     In the diagram, we call the point where the blue curve intersects itself a node, and the origin a cusp of the red curve. More explicitly, a point is a node if there are two tangents to the curve at that point, where the tangents are of different gradients.
    \newline\newline
A cusp on an algebraic curve is a singular point where the curve sharply changes direction and is not smooth. Near a cusp, the curve behaves like the graph of $y^2=x^3$, approaching a single tangent line from both sides with opposite orientation. The limit of the curve’s direction as one approaches the cusp from either side exists but points in opposite directions, meaning the curve bends back on itself. At this point, the curve is not differentiable and has no well-defined tangent.

\includegraphics[width=\textwidth]{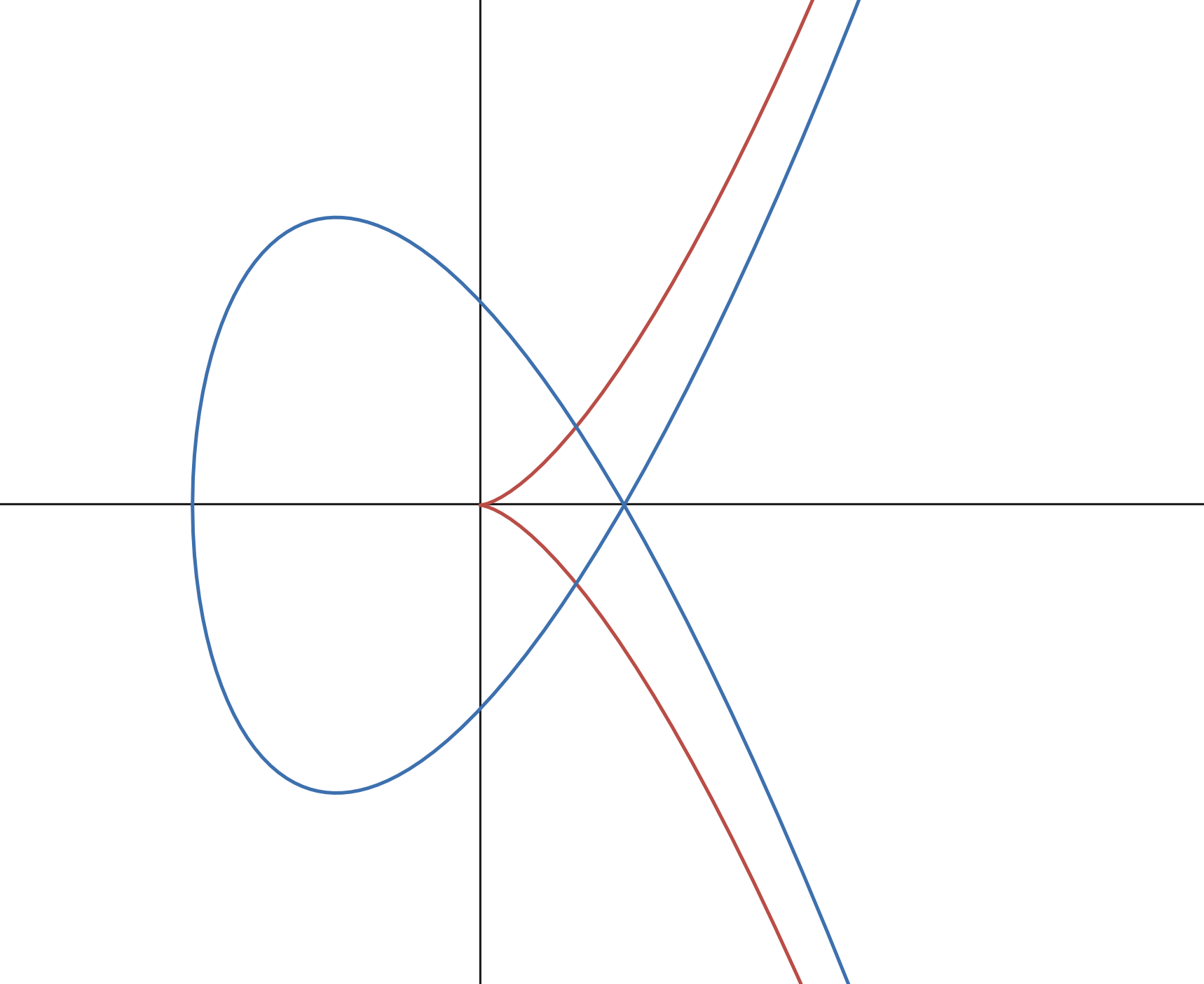}




The terminology of reduction, good, bad, additive and multiplicative are defined in the outline Definition \ref{reduction}.

\begin{itemize}
    \item For  multiplicative reduction, we get the analogue of nodal singularities.
    \item For additive reduction, we get analogue of  cusps.
    \item For good reduction, we get the analogue of smooth or non-singular.
\end{itemize}

\begin{definition}
    For each prime $p\in \mathbb{Z}$ we define $f_p$ as: 

  $f_p=$  
    $\begin{cases}
$0, if $E$ has good reduction at $p\\
$1,  if $E$ has multiplicative reduction at $p\\
$2, if $E$ has additive reduction at $p\neq2,3\\
$2+$\delta_p$, if $E$ has additive reduction at $p=2,3$
   $\end{cases}$
\newline\newline
   The definition of $\delta_p$ is quite complicated, and because of the semistable condition, all $f_p$ are either $0$ or $1$. So we will omit the definition here.  
\end{definition}

\begin{definition} [Conductor]\label{conductor}
We define the conductor 
$$N_{E/\mathbb{Q}} = \prod_{p\,\text{prime}} p^{f_p}.$$

\end{definition}
If the elliptic curve is semistable, all $f_p$ are either $0$ or $1$. Therefore, $N_{E/\mathbb{Q}}$ is a product of distinct primes and we recover the definition of conductor for semistable elliptic curves in the outline.


\section{Appendix II: Modular Forms}

\begin{definition} [Special Linear Group]
The special linear group of degree $n$ on a ring $R$, notated $\SL_n(R)$, is the group formed by the $n\times n$ matrices with entries in $R$ of determinant $1$ under the group operation of matrix multiplication.
\end{definition}

For a short proof that $\SL_n(R)$ is indeed a group refer to the appendix. For our discussions, $R$ will often be $\mathbb{Z}$ the ring of integers and $n=2$.

\begin{definition} [Projective Space]
    For a field $F$, $\boldsymbol{P}_n(F)$ denotes $(F^{n+1}\setminus\{0\})/\sim$ where $\sim$ is the equivalence relation defined by $v_1\sim v_2\iff v_1=\lambda v_2$.
\end{definition}
Generally people refer to a point in the projective space using the coordinates of a representative element of the equivalence class. 

\begin{definition} [Riemann Sphere]
The Riemann sphere, denoted $\hat{\mathbb{C}}$ is $\boldsymbol{P}_1(\mathbb{C})$
\end{definition}

\begin{lemma}
SL$_n(R)$ forms a group under matrix multiplication.
\end{lemma}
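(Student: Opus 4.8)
The plan is to verify the three group axioms (closure, existence of identity, existence of inverses) for the set $\SL_n(R)$ under matrix multiplication, using the multiplicativity of the determinant, which for $2\times 2$ matrices was established earlier in the excerpt (and which holds in the same form for $n\times n$ matrices over any commutative ring). Associativity comes for free: matrix multiplication is associative over any ring, since it is built out of the associative and distributive operations of $R$.

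First I would check closure. If $X, Y \in \SL_n(R)$, then $\det(X) = \det(Y) = 1$, so by the multiplicativity of the determinant, $\det(XY) = \det(X)\det(Y) = 1$, and $XY$ has entries in $R$ since matrix multiplication only uses addition and multiplication in $R$. Hence $XY \in \SL_n(R)$. Next I would exhibit the identity: the identity matrix $I$ has entries in $R$ (using $0, 1 \in R$), satisfies $XI = X = IX$ for all $X$, and has $\det(I) = 1$, so $I \in \SL_n(R)$.

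The one step requiring a little care is the existence of inverses, because over a general ring not every matrix of nonzero determinant is invertible over that ring. Here the point is precisely that $\det(X) = 1$: the adjugate formula gives $X \cdot \operatorname{adj}(X) = \det(X) I = I$, and $\operatorname{adj}(X)$ has entries that are polynomial expressions (signed minors) in the entries of $X$, hence lie in $R$. So $X^{-1} = \operatorname{adj}(X) \in \Mat_n(R)$, and applying multiplicativity of the determinant once more, $\det(X)\det(X^{-1}) = \det(I) = 1$ forces $\det(X^{-1}) = 1$, so $X^{-1} \in \SL_n(R)$. For the $n=2$ case of interest one can be fully explicit: the inverse of $\Mabcd$ with $ad - bc = 1$ is $\begin{pmatrix} d & -b \\ -c & a \end{pmatrix}$, which visibly has entries in $R$ and determinant $ad-bc = 1$.

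The main obstacle, such as it is, is purely expository: making sure the reader sees why $\det = 1$ (rather than merely $\det$ being a unit in $R$) is what guarantees the inverse stays inside $\SL_n(R)$ — this is the subtlety that distinguishes $\SL_n(R)$ over a ring from the more familiar field case. Once that is flagged, the proof is a routine verification, and I would keep it short, citing the determinant multiplicativity result proved earlier and, for concreteness, writing out the $2\times 2$ inverse explicitly.
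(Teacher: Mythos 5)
Your proposal is correct and follows essentially the same route as the paper: closure and $\det(X^{-1})=1$ via multiplicativity of the determinant, and inverses via the adjugate, whose entries are signed minors and hence lie in $R$ (the paper just writes out the adjugate and verifies $AB=I$ by Laplace expansion rather than citing the formula). Your remark that $\det=1$ — not merely $\det$ nonzero — is what keeps the inverse in $\Mat_n(R)$ over a general ring is a worthwhile expository addition but not a different argument.
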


\begin{proof}
It is clear that multiplying $n\times n$ matrices of determinant $1$ will produce itself an $n\times n$ matrix of determinant $1$. This proves closure under the operation. Also, the fact that matrix multiplication is associative proves associativity. The identity matrix given by $1$'s on the main diagonal and all other entries being $0$ is an element of SL$_n(R)$ (and SL$_2(\mathbb{Z})$). Finally, the inverse of a matrix $A$ of determinant $1$ can be found as $B$ where
$$b_{ij}=(-1)^{i+j}\det(A_{ji})$$
where $A_{ji}$ is $A$ with the $j$th row and the $i$th column removed, so then the entry in the $i$-th row and $j$-th entry of $AB$ is given by
$$\sum_{k=1}^na_{ik}b_{kj}=\sum_{k=1}^n(-1)^{i+j}a_{ik}\det(A_{jk})$$
When $i=j$, this evaluates to the determinant of the original matrix expanded along row $j$, so it equals $1$, and when $i\neq j$, this evaluates to the determinant of the original matrix expanded along row $j$ except row $j$ has been replaced with row $i$, so now two of the rows are equal and hence the determinant is $0$. So $AB$ is indeed the identity matrix. A similar argument can be made to show $BA$ is the identity matrix. $B$ has entries in $R$ and $\det(B)=\det(A)\det(B)=\det(AB)=\det(I)=1$. Thus $B\in\operatorname{SL}_n(R)$ and so $B$ is indeed an inverse of $A$.
\end{proof}

\begin{proposition}
    The Mobius transformation defines a group action of $\SLZ$ on $\mathcal{H}$.
\end{proposition}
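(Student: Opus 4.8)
The plan is to verify directly that the Möbius transformation map $\SLZ \times \mathcal{H} \to \mathcal{H}$ satisfies the two axioms of a group action, namely compatibility with the group operation and the identity acting trivially, while also checking that the action is well-defined in the sense that it genuinely lands in $\mathcal{H}$ rather than escaping to the real line or the lower half-plane. Since we already have (from the Appendix) a proof that $\SL_2(\C)$, or at least $\GL_2(\C)$, acts on the Riemann sphere $\hat{\C}$ by Möbius transformations, the group-theoretic identities $(\gamma_1\gamma_2)z = \gamma_1(\gamma_2 z)$ and $Iz = z$ come for free by restriction: $\SLZ$ is a subgroup of $\SL_2(\C)$, and $\mathcal{H} \subset \hat{\C}$, so the only genuinely new thing to check is that $\mathcal{H}$ is preserved.

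First I would take $\gamma = \Mabcd \in \SLZ$ and $z \in \mathcal{H}$, and compute the imaginary part of $\gamma z = \frac{az+b}{cz+d}$. The standard computation is
$$\operatorname{Im}\left(\frac{az+b}{cz+d}\right) = \operatorname{Im}\left(\frac{(az+b)(c\bar{z}+d)}{|cz+d|^2}\right) = \frac{(ad-bc)\operatorname{Im}(z)}{|cz+d|^2} = \frac{\operatorname{Im}(z)}{|cz+d|^2},$$
using $ad-bc=1$. Since $\operatorname{Im}(z) > 0$ and $|cz+d|^2 > 0$ (note $cz+d \neq 0$ because if $cz+d=0$ with $z$ non-real then $c=d=0$, contradicting $ad-bc=1$), we get $\operatorname{Im}(\gamma z) > 0$, so $\gamma z \in \mathcal{H}$. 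This shows the map is well-defined.

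Then I would invoke the Appendix result that $\SL_2(\C)$ (hence $\GL_2(\C)$) acts on $\hat{\C}$ via Möbius transformations: for $\gamma_1, \gamma_2 \in \SLZ$ and $z \in \mathcal{H}$, the associativity identity $(\gamma_1 \gamma_2) z = \gamma_1 (\gamma_2 z)$ holds as an identity of Möbius transformations on $\hat{\C}$, and both sides lie in $\mathcal{H}$ by the previous paragraph applied twice; similarly the identity matrix induces the identity transformation. Hence the restriction of the $\SL_2(\C)$-action to the subgroup $\SLZ$ and the invariant subset $\mathcal{H}$ is a group action. I do not expect any serious obstacle here — the content is entirely the imaginary-part computation above — but the one point worth stating carefully is why $cz+d \neq 0$ for $z \in \mathcal{H}$, since otherwise the formula $\gamma z$ would be undefined; this follows since $c,d$ are not both zero and $z \notin \R$.
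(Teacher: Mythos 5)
Your proposal is correct and follows essentially the same route as the paper's Appendix proof: a direct computation showing $\operatorname{Im}(\gamma z)=\operatorname{Im}(z)/|cz+d|^2>0$ so that $\mathcal{H}$ is preserved, followed by deducing the identity and compatibility axioms from the linear action of matrices on $\C^2$ (equivalently, the Möbius action on $\hat{\C}$) by restriction. Your version is in fact slightly cleaner, since you keep $z$ as a single complex variable and state explicitly why $cz+d\neq 0$, where the paper expands into real and imaginary parts.
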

\begin{proof}
First we make sure that the Mobius transformations given by elements of $\SLZ$ are actually closed under $\mathcal{H}$. In the case that $c\neq0$, $-\frac{d}{c}$ is real so it is not in $\mathcal{H}$. Therefore in all cases,
\begin{align}
  \Mabcd(x+yi)&=\frac{a(x+yi)+b}{c(x+yi)+d}\\
&=\frac{(ax+b)+ayi}{(cx+d)+cyi}\\  
&=\frac{((ax+b)+ayi)((cx+d)-cyi)}{((cx+d)+cyi)((cx+d)-cyi)}\\
&=\frac{(acx^2+adx+bcx+bd+acy^2)+(ac+ad-ac-bc)i}{(cx+d)^2+c^2y^2}\\
&=\frac{(acx^2+adx+bcx+bd+acy^2)+(ad-bc)i}{(cx+d)^2+c^2y^2}
\end{align}
The imaginary part of this is $$\frac{ad-bc}{(cx+d)^2+c^2y^2}=\frac{1}{(cx+d)^2+c^2y^2}$$ since the numerator is just the determinant of the matrix, and all matrices in $\SLZ$ have determinant $1$. The denominator is positive, since it is equal to $|(cx+d)+cyi|^2$. which we know is non-zero due to $-\frac{d}{c}\notin\mathcal{H}$. Thus, the imaginary part is positive and the result is in the upper half-plane.
\newline\newline
To prove that this is a group action, notice that since the identity matrix fixes $\mathbb{C}^2$, it fixes $\hat{\mathbb{C}}$, so the Mobius transformation associated with the identity matrix is the identity function on $\mathcal{H}$. Also, since matrix multiplication is associative, if $M,N\in\SLZ$ and $Z\in\mathbb{C}^2$, then $M(NZ)=(MN)Z$ treating $Z$ as a $1\times2$ matrix, meaning that after taking the quotient on $\mathbb{C}^2$ to get $\hat{\mathbb{C}}$ and further restricting the domain to $\mathcal{H}$, it is still true that $M(Nz)=(MN)z$. This completes the proof.

\end{proof}

\begin{prop}$\wp$ is periodic with respect to $\mathcal L$
\end{prop}
\begin{proof}
    From the initial remarks we see that periodic essentially boils down to reordering the summands, which we know still converges. Another proof is in the Appendix.
\end{proof}

\begin{theorem} \label{extended-argument-principle}
    If $U$ is a simply connected open subset of $\mathbb C$, $f$ is a holomorphic function on $U\setminus\{z_1,z_2,\dots,z_n\}$ where $z_1,z_2,\dots,z_n$ are its poles, and $\gamma$ is a simple rectifiable closed curve with winding number $1$ around every point of its interior, and there are a finite number of roots inside the curve, then the integral
    $$\int_\gamma \frac{zf'(z)}{f(z)}dz$$
    is equal to $2\pi i$ times the difference between the sum of the roots and the sum of the poles inside the curve, counting multiplicity.
\end{theorem}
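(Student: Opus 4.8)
The plan is to identify the singularities of the integrand explicitly and then reduce everything to Cauchy's Theorem together with the elementary integral $\int_\gamma \frac{dz}{z-a} = 2\pi i$ for $a$ in the interior of $\gamma$ (valid because $\gamma$ has winding number $1$ about each interior point; this is Cauchy's integral formula applied to the constant function $1$). Note that the residue theorem is not stated in the excerpt, so I would route the argument through Cauchy's Theorem directly rather than invoke it.

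First I would record the local structure of the logarithmic derivative. At a zero $w$ of $f$ of order $m$, write $f(z) = (z-w)^m g(z)$ with $g$ holomorphic and $g(w) \neq 0$ near $w$; then $\frac{f'(z)}{f(z)} = \frac{m}{z-w} + \frac{g'(z)}{g(z)}$, so
$$\frac{z f'(z)}{f(z)} = \frac{mz}{z-w} + \frac{z g'(z)}{g(z)} = m + \frac{mw}{z-w} + (\text{holomorphic near } w),$$
and hence the principal part of $\frac{zf'(z)}{f(z)}$ at $w$ is $\frac{mw}{z-w}$. Similarly, at a pole $v$ of $f$ of order $k$, writing $f(z) = (z-v)^{-k} h(z)$ with $h$ holomorphic and nonvanishing near $v$, the principal part is $\frac{-kv}{z-v}$. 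Since $f$ is holomorphic on $U$ away from the $n$ poles, and there are only finitely many zeros inside $\gamma$ by hypothesis, the integrand fails to be holomorphic at only finitely many points of the interior of $\gamma$; call the zeros there $w_1,\dots,w_p$ with orders $m_i$ and the poles there $v_1,\dots,v_q$ with orders $k_j$, and observe that $\gamma$ passes through none of them (else the integral is undefined).

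Next I would subtract off all the principal parts at once. Set
$$G(z) = \frac{z f'(z)}{f(z)} - \sum_{i=1}^{p} \frac{m_i w_i}{z - w_i} + \sum_{j=1}^{q} \frac{k_j v_j}{z - v_j}.$$
By the local computations, each singularity of $\frac{zf'(z)}{f(z)}$ inside $\gamma$ has been cancelled, so $G$ extends holomorphically across every $w_i$ and $v_j$; thus $G$ is holomorphic on a simply connected open set containing $\gamma$ together with its interior. Cauchy's Theorem then gives $\int_\gamma G(z)\,dz = 0$. On the other hand, for each interior point $w_i$ (winding number $1$) we have $\int_\gamma \frac{dz}{z - w_i} = 2\pi i$, and likewise for each $v_j$, so rearranging yields
$$\int_\gamma \frac{z f'(z)}{f(z)}\,dz = 2\pi i\left( \sum_{i=1}^{p} m_i w_i - \sum_{j=1}^{q} k_j v_j \right),$$
which is exactly $2\pi i$ times the difference between the sum of the zeros and the sum of the poles inside $\gamma$, counting multiplicity.

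I expect the only genuine subtlety — the "main obstacle" — to be the topological bookkeeping: confirming that after removing the principal parts one really lands inside a simply connected domain on which Cauchy's Theorem (as stated, for the boundary of a simply connected region) applies, given only that $\gamma$ is a rectifiable simple closed curve with winding number $1$ about its interior, and checking that the interior contains only finitely many zeros and poles of $f$ (zeros finite by hypothesis, poles at most $n$) with none on $\gamma$. Everything else — the residue computations and the evaluation $\int_\gamma \frac{dz}{z-a} = 2\pi i$ — is routine and follows from material already developed in the excerpt.
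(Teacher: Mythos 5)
Your proposal is correct, and the heart of it --- the local computation showing that near a zero $w$ of order $m$ (resp.\ a pole $v$ of order $k$) the function $\frac{zf'(z)}{f(z)}$ equals $\frac{mw}{z-w}$ (resp.\ $\frac{-kv}{z-v}$) plus something holomorphic --- is exactly the computation in the paper's proof, which writes $f(z)=(z-z_0)^k g(z)$ and extracts $\frac{kz_0}{z-z_0}+k+\frac{zg'(z)}{g(z)}$. Where you diverge is in the globalization step: the paper simply invokes the residue theorem at that point, whereas you subtract all the principal parts to form a function $G$ that is holomorphic on a simply connected neighbourhood of $\gamma$ and its interior, kill $\int_\gamma G$ by Cauchy's Theorem, and evaluate the remaining terms via $\int_\gamma \frac{dz}{z-a}=2\pi i$. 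Your route is slightly longer but more self-contained relative to the toolkit the paper actually sets up (the residue theorem is never stated among its complex-analysis preliminaries, while Cauchy's Theorem is), and it amounts to proving the relevant special case of the residue theorem on the fly; the paper's route is shorter at the cost of citing an unstated result. The one point you rightly flag as needing care --- that the finitely many zeros and poles inside $\gamma$, none lying on $\gamma$ itself, leave a simply connected open set on which $G$ is holomorphic --- is a genuine but standard piece of bookkeeping (zeros of the nonzero holomorphic $f$ cannot accumulate on the compact set $\gamma$), and the paper glosses over the analogous point when it applies the residue theorem.
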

\begin{proof}
    For any root $z_0$ of $f$ inside the curve, if it has multiplicity $k$ then $f(z)=(z-z_0)^kg(z)$ where $g(z_0)\neq 0$. Then $f'(z)=k(z-z_0)^{k-1}g(z)+(z-z_0)^kg'(z)$. If $z_0$ is instead a pole inside the curve, the same is true except now $g$ is defined at the pole unlike $f$, and $k$ is the negative of the multiplicity of the pole. Then
    $$\frac{zf'(z)}{f(z)}=\frac{z(k(z-z_0)^{k-1}g(z)+(z-z_0)^kg'(z))}{(z-z_0)^kg(z)}=\frac{kz_0}{z-z_0}+k+\frac{zg'(z)}{g(z)}$$
    Since the zeroes of $f$ are isolated, there exists a small disk around $z_0$ where $g(z)\neq 0$, so $k+\frac{zg'(z)}{g(z)}$ is holomorphic in that disk. Hence, integrating around $z_0$ in that disk, we get the residue of $g$ at $z_0$ is $kz_0$. Applying the residue theorem gives the result.
\end{proof}

\subsubsection{Level 1 Forms}
eg Eisenstein series, comes from Weierstrass $\wp$
put these examples at the right places.

$$G_4=1+240\sum_{n=0}^\infty \sigma_3(n)q^n$$
weight 4
$$G_6=1-504\sum_{n=0}^\infty \sigma_5(n)q^n$$
weight 6

\begin{theorem} The space of modular forms is the graded ring
    $\C[G_4,G_6]$
\end{theorem}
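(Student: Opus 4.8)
The plan is to show that the $\C$-algebra homomorphism $\C[G_4,G_6]\to M_*$ (the ring of level-1 modular forms), sending the abstract generators to the Eisenstein series $G_4,G_6$, is a graded isomorphism. Surjectivity and injectivity will be handled separately, and the key computational input is the dimension of the space $M_k$ of weight-$k$ modular forms, obtained by a valence (dimension) formula that itself comes from the argument principle applied to a fundamental domain for $\SLZ$ acting on $\mathcal H$.

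First I would establish the \emph{valence formula}: for a nonzero weakly modular form $f$ of weight $k$ that is holomorphic on $\mathcal H$ and at $\infty$, integrating $\frac{1}{2\pi i}\frac{f'}{f}$ around the boundary of the standard fundamental domain (with the usual small detours around the elliptic points $i$, $\rho=e^{2\pi i/3}$ and the cusp, exactly as in the $\Gamma_0(2)$ computation already carried out in the paper) yields
\[
\nu_\infty(f)+\tfrac12\nu_i(f)+\tfrac13\nu_\rho(f)+\sum_{P\neq i,\rho,\infty}\nu_P(f)=\frac{k}{12}.
\]
From this, the standard consequences follow: $M_k=0$ for $k<0$ and for $k=2$; $M_0=\C$ (constants only); $M_k=0$ for odd $k$ (already noted in the excerpt via the lattice-point negation argument); and $\dim M_k\le 1$ for $k=4,6,8,10,14$ while $\dim M_k\le \dim M_{k-12}+1$ in general, because multiplication by the cusp form $\Delta$ (weight $12$, with $\nu_\infty(\Delta)=1$, nonvanishing on $\mathcal H$ — provable from $\Delta=\frac{1}{1728}(g_2^3-27g_3^2)$ where $g_2=60G_4$, $g_3=140G_6$, using that the cubic $4x^3-g_2x-g_3$ has distinct roots, a fact proved in the uniformisation theorem of the excerpt) gives an exact sequence $0\to M_{k-12}\xrightarrow{\Delta} M_k\to \C$ where the last map is evaluation of the constant term at $\infty$.

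Next, \emph{surjectivity}: since $G_4\in M_4$ and $G_6\in M_6$ are nonzero, $G_4^a G_6^b$ for $4a+6b=k$ spans $M_k$ when $k\in\{0,4,6,8,10,14\}$ by the dimension bound, and $G_4^a G_6^b$ is visibly nonzero at $\infty$ (its constant term is a product of nonzero constant terms of $G_4,G_6$). An induction on $k$ then finishes: given $f\in M_k$, subtract a suitable scalar multiple of a monomial $G_4^aG_6^b$ of weight $k$ to kill the constant term, so the difference is $\Delta\cdot g$ with $g\in M_{k-12}$, and $g$ is a polynomial in $G_4,G_6$ by induction; note $\Delta$ itself is such a polynomial. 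For \emph{injectivity}, suppose $P(G_4,G_6)=0$ is a nontrivial polynomial relation; collecting homogeneous components (degree $4a+6b$) reduces to a homogeneous relation of some fixed weight $k$, and dividing through by the highest power of $G_6$ possible, one reduces to showing $G_4^3/G_6^2$ is not algebraic over $\C$ — equivalently that the $j$-function $j=1728 g_2^3/(g_2^3-27g_3^2)$ is non-constant, which is immediate since $j$ has a simple pole at $\infty$ (its $q$-expansion begins $q^{-1}+\cdots$).

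The main obstacle is the valence formula itself: one must set up the contour around the standard fundamental domain $\{|z|\ge 1,\ |\operatorname{Re}z|\le \tfrac12\}$, handle the identifications of its sides under $z\mapsto z+1$ and $z\mapsto -1/z$ (the latter contributing the $\frac{k}{12}$ via the $(cz+d)^k$ factor, analogously to the $\int \frac{kj'(\alpha)}{j(\alpha,z)}$ terms in the $\Gamma_0(2)$ argument already in the paper), and treat the elliptic fixed points $i$ (stabiliser of order $2$) and $\rho$ (stabiliser of order $3$), which force the $\tfrac12$ and $\tfrac13$ weightings on the orbifold arc contributions. Everything else — the $\Delta$-nonvanishing, the exact sequence, the inductions — is then routine given the complex-analytic machinery (argument principle, $q$-expansions) and the uniformisation results already developed in the excerpt.
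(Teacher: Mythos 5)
Your proposal is correct and is essentially the standard textbook proof; the paper itself never proves this theorem. It is stated as a black box in the main text, and the appendix only gestures at the key input (the valence formula, stated there as ``the number of zeros of a modular form of weight $k$ on a fundamental domain is $k/12$'' with a two-line non-proof), so your outline genuinely fills a gap rather than duplicating an argument. The chain you describe --- valence formula via contour integration around the fundamental domain with corrections at the elliptic points $i$ and $\rho$, the resulting dimension bounds $\dim M_k\le 1$ for $k\in\{0,4,6,8,10,14\}$ and $\dim M_k\le\dim M_{k-12}+1$ via multiplication by $\Delta$, surjectivity by killing the constant term with a monomial $G_4^aG_6^b$ and dividing by $\Delta$, and injectivity via non-constancy of $j$ --- is sound, and it correctly reuses machinery the paper has already built (the $\Gamma_0(2)$ contour argument is the model for the valence formula, and the uniformisation theorem gives the non-vanishing of $4x^3-g_2x-g_3$'s discriminant). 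Three small points to make explicit when writing it up. First, the injectivity reduction to a homogeneous relation silently uses that $M_*=\bigoplus_k M_k$ is a direct sum, i.e.\ that nonzero forms of distinct weights are linearly independent; this follows from the transformation law but should be stated. Second, for the exact sequence you need $\nu_\infty(\Delta)=1$ exactly (not merely $\ge 1$) so that $f/\Delta$ stays holomorphic at $\infty$; once the valence formula is available you get this for free (weight $12$ and $\nu_\infty\ge 1$ force $\nu_\infty=1$ and no zeros on $\mathcal H$), which makes the appeal to the distinct-roots fact redundant. Third, surjectivity for even $k\ge 4$ needs the elementary observation that $4a+6b=k$ is always solvable in non-negative integers, together with the nonvanishing of the constant terms of $G_4$ and $G_6$ at $\infty$ (they equal $2\zeta(4)$ and $2\zeta(6)$). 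The only real labour left is the valence formula itself, exactly as you identify.
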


See that the weights are all even and none of weight two

eg weight 12

$$\Delta=\frac{E_4^3-E_6^2}{1728}=q\prod(1-q^{-i})^{24}=\sum \tau(n)q^n$$

$\tau$ is known as the Ramanujan tau function

eg modular function

$$j=\frac{E_4^3}{\Delta}$$

this looks like discriminant and j invariant

\subsection{No Weight 2 Level 2 Cusp Forms}

Our original approach was just to mimic the proof in the level 1 case but we ran out of time to complete this.

\subsubsection{Fundamental Domain}

\begin{definition}
    A fundamental domain for the $\SLZ$-set $\mathcal{H}$ is a subset $D$ such that every point in $\mathcal{H}$ corresponds to a unique point in $D$ under the $\SLZ$ action.
\end{definition}

\begin{proposition}
    Let $D\subset\C$ be defined by the set of $\tau\in\mathcal{H}$ with real part between $1/2$ and $-1/2$, not inside the unit circle, with points on the boundary identified as follows: $z,z+1$ on the vertical lines, and $z\mapsto -1/z$ on the unit circle. Then this is a fundamental domain of $\mathcal{H}$ under $\SLZ$, i.e. you can reach all parts of $\mathcal{H}$ from $D$ using the group action of $\SLZ$.
\end{proposition}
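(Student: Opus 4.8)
The plan is to show that the set $D$ is a fundamental domain for the $\SLZ$-action on $\mathcal{H}$ by establishing two things: every point of $\mathcal{H}$ is $\SLZ$-equivalent to a point of $D$ (surjectivity), and no two distinct interior points of $D$ are equivalent (uniqueness, modulo the stated boundary identifications). I would work throughout with the two generators $T = \begin{pmatrix} 1&1\\0&1 \end{pmatrix}$ and $S = \begin{pmatrix} 0&-1\\1&0 \end{pmatrix}$, recalling from the level-$1$ discussion that these generate $\SLZ$. The basic quantitative tool is the identity $\operatorname{Im}(\gamma z) = \operatorname{Im}(z)/|cz+d|^2$ for $\gamma = \Mabcd$, which was already computed in the appendix proof that $\SLZ$ acts on $\mathcal{H}$.

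First I would prove surjectivity. Fix $z \in \mathcal{H}$. The key observation is that for a fixed $z$, the quantity $|cz+d|$ over integer pairs $(c,d) \ne (0,0)$ takes a positive minimum value, since $\{cz + d : c,d \in \Z\}$ is a lattice in $\C$ and hence discrete. Therefore among all $\gamma \in \SLZ$ the imaginary part $\operatorname{Im}(\gamma z)$ attains a maximum. Pick such a $\gamma$, and then post-compose with a power $T^n$ of $T$ (which does not change the imaginary part) to arrange that $\operatorname{Re}(T^n\gamma z) \in [-1/2, 1/2]$. Call the result $w$. I claim $|w| \ge 1$: if instead $|w| < 1$, then $\operatorname{Im}(Sw) = \operatorname{Im}(w)/|w|^2 > \operatorname{Im}(w)$, contradicting maximality of the imaginary part (since $Sw$ is again in the $\SLZ$-orbit of $z$). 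Hence $w \in D$, giving surjectivity.

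Next I would prove the uniqueness statement: if $z, z' \in D$ are distinct and $z' = \gamma z$ for some $\gamma = \Mabcd \in \SLZ$, then $z, z'$ lie on the boundary and are related by one of the stated identifications. Without loss of generality assume $\operatorname{Im}(z') \ge \operatorname{Im}(z)$, i.e. $|cz+d| \le 1$. Since $z \in D$ has $\operatorname{Im}(z) \ge \sqrt{3}/2$ (the minimum imaginary part on $D$, attained at the corners, following from $|z|\ge 1$ and $|\operatorname{Re} z|\le 1/2$), the inequality $|cz+d|\le 1$ forces $|c| \le 1$. The case $c = 0$ gives $\gamma = \pm T^n$, and since both $z, z'$ have real part in $[-1/2,1/2]$ this forces $n \in \{-1,0,1\}$ with $n = \pm 1$ only possible when both real parts equal $\mp 1/2$ — the vertical-line identification. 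The case $c = \pm 1$ reduces (absorbing a sign and powers of $T$) to analysing $|z + d| \le 1$ for $d \in \Z$, which pins $z$ to a corner or to the unit-circle arc, and a short case check recovers exactly the $z \mapsto -1/z$ identification on $|z|=1$.

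The main obstacle I anticipate is the boundary bookkeeping in the uniqueness argument: the corner points $\frac{-1+\sqrt{3}i}{2}$, $\frac{1+\sqrt{3}i}{2}$ and $i$ have nontrivial stabilisers in $\mathrm{PSL}_2(\Z)$ (orders $3$, $3$, $2$ respectively), so one must be careful to check that the identifications listed in the statement genuinely account for all coincidences without over- or under-identifying. Everything else is essentially the discreteness-of-lattice argument plus the imaginary-part computation, both already available in the excerpt; I would keep the case analysis for $c = \pm 1$ terse, organised by the finitely many admissible values of $d$, rather than grinding through each subcase in full.
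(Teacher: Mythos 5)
Your proposal is correct, but it takes a genuinely different route from the paper's proof, and it also proves more. The paper establishes only the ``reachability'' claim (the part after the ``i.e.'' in the statement) by an explicit algorithm: translate by powers of $\begin{pmatrix}1&\pm1\\0&1\end{pmatrix}$ until $|\operatorname{Re}(\tau)|\le 1/2$, and if $\tau$ then lies inside the unit circle apply $\begin{pmatrix}0&-1\\1&0\end{pmatrix}$, which strictly increases $\operatorname{Im}(\tau)$, and repeat; termination is justified by a (rather terse) finiteness remark about lattice points. You instead maximise $\operatorname{Im}(\gamma z)$ over the whole orbit at the outset, using discreteness of the lattice $\{cz+d : (c,d)\in\Z^2\}$ to guarantee the maximum exists, and then a single translation plus the observation $\operatorname{Im}(Sw)=\operatorname{Im}(w)/|w|^2$ lands you in $D$. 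The two arguments are essentially dual: yours buys a clean, non-iterative proof in which the termination issue is absorbed into the existence of the maximum, while the paper's version is constructive and makes the reduction algorithm explicit (at the cost of a termination argument that really ought to invoke the same discreteness fact you use — ``finitely many pairs $(c,d)$ with $|cz+d|<1$'' is the honest justification). Your second half, the uniqueness analysis with the case split on $c\in\{0,\pm1\}$ and the caveat about the stabilisers of $i$ and $\frac{\pm1+\sqrt{3}i}{2}$, has no counterpart in the paper at all; it is correct and is what one would need to justify calling $D$ a fundamental domain in the strict sense of the paper's own definition (unique representatives), rather than merely a set meeting every orbit, so it is a worthwhile addition rather than wasted effort.
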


\begin{proof}
First, we repeatedly apply the translation operation given by the matrix $\begin{pmatrix}
1 & \pm1 \\
0 & 1
\end{pmatrix}$ so that $\tau \mapsto \tau \pm1$ and replace this with $\tau$ until $|$Re$(\tau)|\leq1/2$ is satisfied. If $\tau$ is not in the unit circle, it is in the fundamental domain. Otherwise $|\tau|<1$, and Im$(-1/\tau))=$ Im $(-\bar{\tau}/|\tau|^2)>$ Im $(\tau)$. Hence, 
$\begin{pmatrix}
0 & -1 \\
1 & 0
\end{pmatrix}(\tau)=-1/\tau$ will strictly increase the imaginary component of $\tau$. As there are finitely many lattice points in the unit circle, this transformation will eventually take $\tau$ into the fundamental domain. 
\end{proof}
\subsubsection{Struggles With Showing That Something Does Not Exist}

The main issue we encountered in proving that no level 2 forms exist was a lack of clarity about what the precise statement was — that is, what exactly was supposed to *not exist*.
\newline\newline
Initially, we mistakenly thought the claim was that modular forms of weight 2 do not exist at all. But that, in fact, is a true theorem — namely, that non-zero modular forms of level 1 and weight 2 do not exist. However, this is not the statement we were trying to prove.
\newline\newline
The correct statement involves both weight 2 \textit{and} level 2. Furthermore, additional conditions such as being a cusp form or an eigenform created some confusion. We weren't always sure whether the non-existence claim applied just to level 2, weight 2 modular forms (i.e., those invariant under $\Gamma_0(2)$), or whether the hypothesis included the assumption that the form was a cusp form or even an eigenform.
\newline\newline
Our plan, ultimately, was to proceed in analogy with the level 1, weight 2 case — namely, to construct a fundamental domain and use integration techniques (e.g., computing the number of zeros) to derive a contradiction or to demonstrate the absence of such forms.
\newline\newline
Here is the sketch of the proof for level 1, we imagine level 2 is similar with a different fundamental domain.
\newline\newline
One question is how does cusp, eigenform,  coefficients in $\F_p$ affect this conclusion?

\begin{proposition}
    There are no weakly modular forms of weight $2$.
\end{proposition}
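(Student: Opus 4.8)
The plan is to mimic the contour-integration argument carried out above for $\Gamma_0(2)$ cusp forms, now on the standard fundamental domain $D$ for $\SLZ$ constructed above (real part in $[-\tfrac12,\tfrac12]$, outside the unit circle, with boundary identifications $z\sim z+1$ and $z\sim -1/z$). First I would assume for contradiction that $f\not\equiv 0$ is holomorphic on $\mathcal H$ and satisfies the weight-$2$ transformation law under $\SLZ$. One subtlety should be flagged: the statement needs the standing hypothesis that $f$ is also holomorphic at $\infty$ (equivalently, bounded as $\operatorname{Im}z\to\infty$), i.e. that $f$ is a genuine modular form rather than merely weakly modular. This cannot be dropped: the ratio of the weight-$14$ Eisenstein series by $\Delta$ is holomorphic on all of $\mathcal H$ (since $\Delta$ is non-vanishing there) and is $\SLZ$-invariant of weight $2$, so without control at the cusp the claim is false. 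With the hypothesis, the $q$-expansion of $f$ gives $\nu_\infty(f)\ge 0$, and since zeros of a non-zero holomorphic function are isolated (Theorem \ref{discretezero}) and $D$ truncated at a high line $\operatorname{Im}z=T$ is compact, $f$ has only finitely many zeros there; as in the $\Gamma_0(2)$ argument I can push the contour around any boundary zeros by small detours in zero-free neighbourhoods, using $\SLZ$-invariance of $f$ so that identified boundary pieces still avoid zeros.

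Next I would evaluate $\frac{1}{2\pi i}\oint_{\partial D}\frac{f'(z)}{f(z)}\,dz$, which by the argument principle (Theorem \ref{argprinciple}) counts the zeros of $f$ in the interior of $D$. I break $\partial D$ into the top segment at height $T$, the two vertical sides $\operatorname{Re}z=\pm\tfrac12$, the two unit-circle arcs, and tiny detour arcs around the corner points $\rho=e^{2\pi i/3}$, its translate $\rho+1$, and $i$. The vertical sides are swapped by $z\mapsto z+1$, under which $f$ is invariant, so their integrals cancel. The two unit arcs are swapped by $S=\begin{pmatrix}0&-1\\1&0\end{pmatrix}$, and since $f[S]_2=f$, Lemma \ref{alphaarg} with $k=2$ and $\alpha=S$ (so $j(S,z)=z$, $j'(S)=1$) shows the two arc integrals combine to leave a residual term $\frac{1}{2\pi i}\int\frac{2}{z}\,dz$ over one arc, exactly the analogue of the $\tfrac14$-term in the $\Gamma_0(2)$ computation; in the limit this tends to $\tfrac16$. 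The top segment contributes $-\nu_\infty(f)$ after the change of variables $q=e^{2\pi iz}$. The detour arc around $i$ is a half-turn and contributes $-\tfrac12\nu_i(f)$ in the limit (the stabiliser of $i$ has order $2$), and the detours around $\rho$ and $\rho+1$ together make a $\tfrac13$-turn and contribute $-\tfrac13\nu_\rho(f)$ (the stabiliser of $\rho$ has order $3$; and $\nu_{\rho+1}(f)=\nu_\rho(f)$ by periodicity of $f$).

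Collecting everything and letting $T\to\infty$ and all detour radii go to $0$ yields the valence identity
$$\#\{\text{zeros of }f\text{ in the interior of }D\}+\nu_\infty(f)+\tfrac12\nu_i(f)+\tfrac13\nu_\rho(f)=\tfrac{2}{12}=\tfrac16.$$
But the left-hand side has the form $m+\tfrac{a}{2}+\tfrac{b}{3}$ with $m,a,b$ non-negative integers, and the smallest positive value such an expression can take is $\tfrac13$; it can never equal $\tfrac16$. This contradiction forces $f\equiv 0$.

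The main obstacle is the bookkeeping at the two elliptic points $i$ and $\rho$: one must be careful that the boundary identifications make the detour arcs sweep $\pi$ at $i$ and $\tfrac{2\pi}{3}$ in total at $\rho,\rho+1$ (rather than $2\pi$), and that the residual $\int\frac{kj'(\alpha)}{j(\alpha,z)}\,dz$ terms coming from Lemma \ref{alphaarg} are attributed to the correct arcs so that they merge cleanly and produce the constant $\tfrac16$ — this is precisely what made the $\Gamma_0(2)$ computation lengthy, and the non-trivial stabilisers of $i$ and $\rho$ make it slightly more delicate here. A shorter route, if one is willing to quote it, is to invoke the valence ($k/12$) formula for level-$1$ modular forms directly and observe that $\tfrac16$ is not of the form $m+\tfrac a2+\tfrac b3$ with $m,a,b\ge 0$; but deriving that formula is exactly the computation sketched above, and the same pattern is what one would transplant, with a different fundamental domain, to the $\Gamma_0(2)$ case treated earlier.
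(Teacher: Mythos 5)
Your proposal takes exactly the approach the paper intends: the paper only gestures at the valence ($k/12$) computation (its ``proof'' consists of two one-line prompts and an admission that the authors ran out of time), and your contour argument around the standard fundamental domain, with the boundary cancellations from Lemma \ref{alphaarg} and the fractional contributions $\tfrac12\nu_i$ and $\tfrac13\nu_\rho$ at the elliptic points, supplies precisely the details it omits, ending with the correct observation that $m+\tfrac a2+\tfrac b3=\tfrac16$ has no solution in non-negative integers. Your correction of the hypothesis is also genuinely needed and worth recording: as literally stated (``weakly modular'') the proposition is false, since for instance $G_4^2G_6/\Delta$ is holomorphic on $\mathcal H$ (as $\Delta$ is non-vanishing there), transforms with weight $2$ under $\SLZ$, and fails only to be holomorphic at $\infty$ — so the statement must be about modular forms, i.e.\ must include boundedness as $\operatorname{Im}z\to\infty$, which is also what makes the $-\nu_\infty(f)\le 0$ contribution from the top segment legitimate in your computation.
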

(For this, one method says we need to know how to get zeros using integrals.)
\begin{theorem}
    The number of zeros of a modular form of weight $k$ on a fundamental domain of $\mathcal{H}$ is $k/12$. Note we count zeros on the boundary using a fraction depending on the angle when thinking of the zero as a vertex.
\end{theorem}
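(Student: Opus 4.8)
The plan is to prove the precise (``valence'') form of the statement. For a nonzero modular form $f$ of weight $k$ for $\SLZ$, write $\nu_P(f)$ for the order of vanishing of $f$ at $P\in\mathcal H$ and $\nu_\infty(f)$ for the order of vanishing of its $q$-expansion $g(q)$ at $q=0$; the claim is then
$$\nu_\infty(f)+\tfrac12\nu_i(f)+\tfrac13\nu_\rho(f)+\sum_{P}\nu_P(f)=\frac{k}{12},$$
the final sum running over $P$ in the fundamental domain $D$ with $P\neq i,\ \rho=e^{2\pi i/3}$ (boundary points counted once under the identifications of $D$). Decreeing that a zero at $i$ counts with the ``angle fraction'' $\pi/2\pi=\tfrac12$, a zero at the pair of corners $\rho\sim\rho+1$ with $\tfrac13$, and the cusp with weight $1$, this is exactly the assertion that $f$ has $k/12$ zeros on $D$. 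The method is to integrate $\tfrac1{2\pi i}\tfrac{f'}{f}$ around the boundary of a truncated copy of $D$ and apply the argument principle, Theorem \ref{argprinciple}, in the same spirit as the $\Gamma_0(2)$ computation already given in this paper.

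First I would choose $T$ large enough that $f$ has no zeros with $\operatorname{Im}(z)>T$: since $g$ is holomorphic and not identically zero at $q=0$, its zeros are isolated (Theorem \ref{discretezero}), so some punctured disc $0<|q|<\delta$, i.e.\ some half-plane $\operatorname{Im}(z)>T_0$, contains none. Take $D_T=\{z\in D:\operatorname{Im}(z)\le T\}$, which has only finitely many zeros of $f$, and let $\gamma$ be its counterclockwise boundary --- the arc of $|z|=1$ from $\rho$ through $i$ to $\rho+1$, up the line $\operatorname{Re}(z)=\tfrac12$, left along $\operatorname{Im}(z)=T$, down the line $\operatorname{Re}(z)=-\tfrac12$ --- modified by small circular indentations of radius $\varepsilon$ around $i$, $\rho$, $\rho+1$ and around any zeros lying on the edges, chosen (using isolatedness again) to enclose no other zeros. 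By the argument principle $\tfrac1{2\pi i}\oint_\gamma\tfrac{f'}{f}\,dz$ is the number of zeros strictly enclosed, which as $\varepsilon\to0$ equals $\sum_P\nu_P(f)$.

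Then I would evaluate $\oint_\gamma$ edge by edge. (i) The two vertical edges cancel, since $f(z+1)=f(z)$ makes $f'/f$ periodic and the edges carry opposite orientation under $z\mapsto z+1$; a zero on one vertical edge is matched by one on the other, and the two indentations cancel likewise. (ii) On the top edge, $q=e^{2\pi i z}$ converts the integral into $\tfrac1{2\pi i}\oint\tfrac{g'(q)}{g(q)}\,dq$ over a small clockwise loop about $q=0$, contributing $-\nu_\infty(f)$. (iii) On the bottom arc, split it at $i$ into $\gamma_1$ (from near $\rho$ to near $i$) and $\gamma_2$ (from near $i$ to near $\rho+1$). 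The element $s=\left(\begin{smallmatrix}0&-1\\1&0\end{smallmatrix}\right)\in\SLZ$ preserves $|z|=1$ with $s(\rho)=\rho+1$, $s(i)=i$ and $s(\gamma_1)=-\gamma_2$; since $f$ has level $1$, $f[s]_k=f$, and $j(s,z)=z$, $j'(s)=1$, so Lemma \ref{alphaarg} gives
$$-\int_{\gamma_2}\frac{f'}{f}\,dz=\int_{s(\gamma_1)}\frac{f'}{f}\,dz=\int_{\gamma_1}\frac{f'}{f}\,dz+\int_{\gamma_1}\frac{k}{z}\,dz,$$
hence $\int_{\gamma_1}\tfrac{f'}{f}+\int_{\gamma_2}\tfrac{f'}{f}=-\int_{\gamma_1}\tfrac{k}{z}\,dz=-k\!\int_{2\pi/3}^{\pi/2}\!i\,d\phi=\tfrac{ki\pi}{6}$, a net contribution of $\tfrac{k}{12}$ after dividing by $2\pi i$. (iv) Near $i$, writing $f(z)=(z-i)^{\nu_i(f)}h(z)$ with $h(i)\neq0$, the indentation is a semicircle (the vertex angle of $D$ at the smooth point $i$ being $\pi$) and contributes $-\tfrac12\nu_i(f)$; near $\rho$ and near $\rho+1$, where $D$ has interior angle $\tfrac\pi3$, the indentations contribute $-\tfrac16\nu_\rho(f)$ and $-\tfrac16\nu_{\rho+1}(f)$, which sum to $-\tfrac13\nu_\rho(f)$ by periodicity. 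Adding the six contributions and rearranging yields the displayed identity.

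The main obstacle is the boundary and limiting bookkeeping rather than any single hard idea: one must pair up zeros on the edges (left edge with right via $z\mapsto z+1$, interior arc points with their images under $z\mapsto-1/z$) and pick the indentations compatibly so these contributions really cancel or combine, check that the $O(\varepsilon)$ pieces coming from the holomorphic factors $h'/h$ on the shrinking indentations vanish, and argue that as $\varepsilon\to0$ the count of enclosed zeros converges to the intended weighted total over $D/\!\sim$. The arc identity via Lemma \ref{alphaarg} and the $q$-expansion step are short; everything else is careful contour manipulation of the kind already carried out for $\Gamma_0(2)$ in the preceding section.
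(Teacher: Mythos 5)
Your proposal is correct, and it supplies far more than the paper does: the paper's own ``proof'' of this theorem is only the two-line placeholder ``First see what happens where there are no zeros on the boundary. Now suppose there are zeros on the boundary,'' and the surrounding text admits the level-$1$ count was never completed. What you have written is the standard valence formula
$$\nu_\infty(f)+\tfrac12\nu_i(f)+\tfrac13\nu_\rho(f)+\sum_P\nu_P(f)=\tfrac{k}{12},$$
proved by integrating $\tfrac{1}{2\pi i}\tfrac{f'}{f}$ around the truncated, indented fundamental domain, and it is exactly the level-$1$ analogue of the $\Gamma_0(2)$ computation the paper does carry out in detail: the vertical edges cancel by periodicity, the top edge gives $-\nu_\infty(f)$ via the $q$-substitution, the two halves of the unit-circle arc combine through Lemma \ref{alphaarg} with $s=\left(\begin{smallmatrix}0&-1\\1&0\end{smallmatrix}\right)$ to produce the $k/12$, and the corner indentations give the fractional weights $\tfrac12$ at $i$ and $\tfrac16+\tfrac16=\tfrac13$ at $\rho\sim\rho+1$. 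Your arc computation checks out: $j(s,z)=z$, $j'(s)=1$, and $-k\int_{2\pi/3}^{\pi/2}i\,d\phi=\tfrac{ki\pi}{6}$, giving $\tfrac{k}{12}$ after dividing by $2\pi i$. Two small points to make explicit if you write this up: the statement needs $f\not\equiv 0$ (otherwise ``number of zeros'' is meaningless), and the indentations on the arc must be chosen in $s$-compatible pairs and those on the vertical edges in translation-compatible pairs, so that the claimed cancellations are exact before taking $\varepsilon\to 0$ --- you flag both of these yourself, and they are bookkeeping rather than gaps. This argument, once the limits are written out, genuinely completes what the paper leaves open and would also yield the corollary that no non-zero weight-$2$ level-$1$ forms exist, since $\tfrac{2}{12}=\tfrac16$ cannot be written as $a+\tfrac{b}{2}+\tfrac{c}{3}$ with $a,b,c$ non-negative integers.
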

\begin{proof}
    First see what happens where there are no zeros on the boundary.

    Now suppose there are zeros on the boundary.
\end{proof}

\begin{cor}
     There are no modular forms of weight $2$.
\end{cor}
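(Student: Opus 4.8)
The plan is simply to invoke the valence formula established in the preceding Theorem; the corollary is nothing more than the arithmetic observation that $\frac{k}{12}$ is not a legitimate zero-count when $k=2$. Concretely, suppose for contradiction that $f$ is a non-zero modular form of weight $2$ and level $1$. By that Theorem, the total number of zeros of $f$ on a fundamental domain $D$ of $\mathcal{H}$ equals $\frac{2}{12}=\frac{1}{6}$, where — following the remark attached to the Theorem — a zero at an elliptic (corner) point of $D$ is counted with a fractional weight determined by the angle of $D$ there: the point $i$ contributes with weight $\frac{1}{2}$, the point $\rho=e^{2\pi i/3}$ with weight $\frac{1}{3}$, while the zero at $\infty$ and all interior zeros count with weight $1$. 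Writing $\nu_\infty(f),\nu_i(f),\nu_\rho(f)\ge 0$ and $\nu_P(f)\ge 0$ for interior $P\in D$ for the respective orders of vanishing, this reads
\[
\nu_\infty(f)+\tfrac12\nu_i(f)+\tfrac13\nu_\rho(f)+\sum_{P}\nu_P(f)=\tfrac16 .
\]

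Next I would bound the elliptic contributions. Every term on the left is non-negative, so $\tfrac12\nu_i(f)\le\tfrac16$; since $\tfrac12>\tfrac16$ this forces $\nu_i(f)=0$. Likewise $\tfrac13\nu_\rho(f)\le\tfrac16$ and $\tfrac13>\tfrac16$ force $\nu_\rho(f)=0$. The identity therefore collapses to
\[
\nu_\infty(f)+\sum_{P}\nu_P(f)=\tfrac16 ,
\]
whose left-hand side is a non-negative integer. This is impossible, and the contradiction shows that no non-zero modular form of weight $2$ and level $1$ exists (the zero form, of course, trivially is one).

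The only real content lies in the Theorem being quoted, so the step most deserving of care is verifying that the fractional weights at the orbifold points of the fundamental domain are exactly $\frac12$ at $i$ and $\frac13$ at $\rho$ — these come from the half-turn and third-turn stabilisers of those points in $\SLZ$, and they are precisely what makes the arithmetic obstruction bite. Once those weights are in hand, the argument above is immediate; the same reasoning incidentally rules out non-zero level-$1$ forms of several other small weights and shows the weight-$0$ forms are constant. The analogous statement for $\Gamma_0(2)$ would be handled in the same spirit, replacing $\frac{k}{12}$ by the appropriate constant for the index-$3$ fundamental domain, but that is not needed here.
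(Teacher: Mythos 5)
Your proposal is correct and takes essentially the same route as the paper: both invoke the valence formula (total zero count $k/12$ on the fundamental domain) and observe that $k=2$ gives the impossible value $\tfrac16$. Your version usefully spells out the arithmetic the paper leaves implicit — that the fractional weights $\tfrac12$ at $i$ and $\tfrac13$ at $\rho$ both exceed $\tfrac16$, forcing those orders to vanish and reducing the identity to a non-negative integer equalling $\tfrac16$ — which is exactly the detail needed to make the paper's one-line conclusion rigorous.
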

\begin{proof}
If $k=2$, there are $1/6$ zeros in the fundamental domain.
    
\end{proof}

\subsection{No Non-Zero Weight 2 Level 2 $\Gamma_0(2)$ Cusp Form}

\subsubsection{Modular Forms for \( \Gamma_0(N) \)}

We found this formula for computing the dimension of $S_2(\Gamma_0(N))$ the space of cusp forms of weight 2 over $\Gamma_0(N)$ in \cite{wstein}. This book by W. Stein was particularly helpful in having hyperlinks that help navigate the text and logical dependencies.
\newline\newline
We then carry out a check of this in the case $N=2$.
\newline\newline
For any prime \( p \) and any positive integer \( N \), let \( v_p(N) \) be the maximal power of \( p \) that divides \( N \). Also, let (using Legendre symbols, with some abuse for $p=2$ evaluating to zero)
\[
\mu_0(N) = \prod_{p \mid N} \left(p^{v_p(N)} + p^{v_p(N)-1}\right),
\]
\[
\mu_{0,2}(N) = 
\begin{cases}
0 & \text{if } 4 \mid N, \\
\prod_{p \mid N} \left(1 + \left( \frac{-4}{p} \right) \right) & \text{otherwise},
\end{cases}
\]
\[
\mu_{0,3}(N) = 
\begin{cases}
0 & \text{if } 2 \mid N \text{ or } 9 \mid N, \\
\prod_{p \mid N} \left(1 + \left( \frac{-3}{p} \right) \right) & \text{otherwise},
\end{cases}
\]
\[
c_0(N) = \sum_{d \mid N} \varphi(\gcd(d, N/d)),
\]
\[
g_0(N) = 1 + \frac{\mu_0(N)}{12} - \frac{\mu_{0,2}(N)}{4} - \frac{\mu_{0,3}(N)}{3} - \frac{c_0(N)}{2}.
\]

Note that \( \mu_0(N) \) is the index of \( \Gamma_0(N) \) in \( \mathrm{SL}_2(\mathbb{Z}) \).

\medskip

\begin{prop}[Proposition 6.1\cite{wstein}]
    We have 
    $$\dim S_2(\Gamma_0(N)) = g_0(N)$$
\end{prop}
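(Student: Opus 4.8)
The plan is to interpret $\dim S_2(\Gamma_0(N))$ geometrically and then reduce everything to the Riemann--Hurwitz formula for a covering of compact Riemann surfaces. The first step is to form the \emph{modular curve} $X_0(N) = \Gamma_0(N)\backslash\mathcal{H}^{*}$, where $\mathcal{H}^{*} = \mathcal{H}\cup\mathbb{P}^1(\mathbb{Q})$ adjoins the cusps. Using that $\Gamma_0(N)$ has finite index in $\SLZ$ (so it has finitely many cusps and elliptic points), one equips $\mathcal{H}^{*}/\Gamma_0(N)$ with charts: the identity at ordinary points, $w\mapsto w^{e}$ at an elliptic point of order $e\in\{2,3\}$, and $q=e^{2\pi i z/h}$ at a cusp of width $h$; this makes $X_0(N)$ a compact connected Riemann surface, whose genus $g$ is the invariant we ultimately want.

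The second step is the linear dictionary $f\mapsto\omega_f:=f(z)\,dz$. Since $f$ is weight-$2$ invariant under $\Gamma_0(N)$, $\omega_f$ descends to a well-defined meromorphic differential on $X_0(N)$; it is holomorphic away from cusps and elliptic points because $f$ is. A short local check finishes the holomorphy: at an elliptic point of order $e$ one has $dz = e^{-1}w^{1-e}\,dw$, but $f$ vanishes there to high enough order for $f\,dz$ to stay holomorphic; at a cusp, $dz = \tfrac{h}{2\pi i}\tfrac{dq}{q}$, so the pole of $dq/q$ is cancelled by precisely the \emph{cusp} condition $A_0=0$ (this is why ``cusp form'', not just ``modular form of weight $2$'', is the right hypothesis — a non-cuspidal form would give a differential with simple poles at the cusps). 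The map is then a $\mathbb{C}$-linear isomorphism $S_2(\Gamma_0(N))\xrightarrow{\ \sim\ }\Omega^1(X_0(N))$, and by Riemann--Roch (equivalently, the Hodge decomposition on a compact Riemann surface) $\dim_{\mathbb{C}}\Omega^1(X_0(N)) = g$. So it remains to prove $g = g_0(N)$.

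The third step computes $g$ via the natural map $\phi\colon X_0(N)\to X(1) = \SLZ\backslash\mathcal{H}^{*}$, which the $j$-invariant identifies with $\mathbb{P}^1(\mathbb{C})$; its degree is $\mu_0(N) = [\SLZ:\pm\Gamma_0(N)]$. The map $\phi$ is unramified except above the three distinguished points of $X(1)$: the order-$2$ elliptic point (image of $i$), the order-$3$ elliptic point (image of $\rho = e^{2\pi i/3}$), and the cusp $\infty$. Above $i$ each point of $X_0(N)$ is either one of the $\mu_{0,2}(N)$ elliptic points of order $2$ (ramification index $1$) or has index $2$, so the fibre contributes $\tfrac12(\mu_0(N)-\mu_{0,2}(N))$ to $\sum_P(e_P-1)$; above $\rho$ the fibre contributes $\tfrac23(\mu_0(N)-\mu_{0,3}(N))$; and the $c_0(N)$ cusps above $\infty$ contribute $\mu_0(N)-c_0(N)$. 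Substituting into $2g-2 = \mu_0(N)(2\cdot 0 - 2) + \sum_P(e_P-1)$ and collecting the coefficient $-2+\tfrac12+\tfrac23+1 = \tfrac16$ of $\mu_0(N)$ yields $g = 1 + \tfrac{\mu_0(N)}{12} - \tfrac{\mu_{0,2}(N)}{4} - \tfrac{\mu_{0,3}(N)}{3} - \tfrac{c_0(N)}{2} = g_0(N)$, as claimed.

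The main obstacle is the arithmetic bookkeeping hidden in the last step: one must independently derive the closed forms for $\mu_0(N)$ (the index, via counting cosets through the reduction $\SLZ\to\mathrm{SL}_2(\mathbb{Z}/N\mathbb{Z})$, giving the product $\prod_{p\mid N}(p^{v_p(N)}+p^{v_p(N)-1})$), for $\mu_{0,2}(N)$ and $\mu_{0,3}(N)$ (numbers of elliptic points, obtained by counting $\Gamma_0(N)$-conjugacy classes of elements of order $2$ and $3$; this is a local-at-each-$p$ computation of when $x^2+1$ or $x^2+x+1$ splits modulo $p$, producing the Legendre symbols $\left(\tfrac{-4}{p}\right)$ and $\left(\tfrac{-3}{p}\right)$, with the vanishing for $4\mid N$ or $9\mid N$ coming from the congruence having no solution there), and for $c_0(N) = \sum_{d\mid N}\varphi(\gcd(d,N/d))$ (the $\Gamma_0(N)$-orbits on $\mathbb{P}^1(\mathbb{Q})$, parametrised by a divisor $d\mid N$ together with a residue modulo $\gcd(d,N/d)$). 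A secondary subtlety is carrying out the elliptic-point order-of-vanishing estimate for $\omega_f$ cleanly in the orbifold chart, and matching the piecewise definitions in the statement with the fixed-point counts.
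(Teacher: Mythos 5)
Your outline is correct, but be aware of how it relates to the paper: the paper does not prove this proposition at all — it is imported verbatim from Stein's book \cite{wstein} and used as a black box to evaluate $g_0(2)=0$ in the next theorem. What you have written is essentially the standard derivation that underlies that citation (as in Shimura or Diamond--Shurman): identify $S_2(\Gamma_0(N))$ with holomorphic differentials on the compactified modular curve $X_0(N)$, so the dimension is the genus, and then compute the genus by Riemann--Hurwitz for the degree-$\mu_0(N)$ map $X_0(N)\to X(1)\cong\mathbb{P}^1$, with ramification only above $j(i)$, $j(\rho)$ and $\infty$; your fibre contributions $\tfrac12(\mu_0-\mu_{0,2})$, $\tfrac23(\mu_0-\mu_{0,3})$, $\mu_0-c_0$ and the resulting $g=1+\tfrac{\mu_0}{12}-\tfrac{\mu_{0,2}}{4}-\tfrac{\mu_{0,3}}{3}-\tfrac{c_0}{2}$ are all right. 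Two points worth making explicit if you expand this: first, the vanishing of $f$ at an elliptic point is not an additional hypothesis but is forced by weight-$2$ invariance under the stabiliser — at a fixed point $z_0$ of an order-$e$ elliptic element $\gamma$ one gets $f(z_0)=(cz_0+d)^2f(z_0)$ with $(cz_0+d)^2$ a nontrivial $e$-th root of unity, and in the local rotation coordinate only exponents congruent to $e-1$ mod $e$ survive, which is exactly the order of vanishing needed for $f\,dz$ to descend holomorphically; second, the degree of the covering is the index in $\mathrm{PSL}_2(\mathbb{Z})$, and this equals $\mu_0(N)=[\SLZ:\Gamma_0(N)]$ only because $-I\in\Gamma_0(N)$ (for a congruence subgroup without $-I$ one must halve). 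The genuinely unfinished part, which you correctly flag, is the four counting formulas for $\mu_0(N)$, $\mu_{0,2}(N)$, $\mu_{0,3}(N)$ and $c_0(N)$; these are standard but not short, and the cusp count $c_0(N)=\sum_{d\mid N}\varphi(\gcd(d,N/d))$ in particular needs a careful orbit computation. For the application actually made in the paper ($N=2$) one could alternatively avoid the general formula entirely, as the paper's main text does with its explicit contour argument for $\Gamma_0(2)$.
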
 

\begin{thm} There are no non-zero cusp forms of weight 2 and level 2 over $\Gamma_0(2)$. That is, in the notation just used,
$$S_2(\Gamma_0(2))=\{0\}.$$

\end{thm}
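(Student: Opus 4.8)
The plan is to evaluate the formula from Proposition 6.1 of \cite{wstein}, namely $\dim S_2(\Gamma_0(N)) = g_0(N)$, at $N=2$ and check that $g_0(2)=0$. Since dimensions are non-negative integers, $g_0(2)=0$ forces $S_2(\Gamma_0(2))=\{0\}$, which is exactly the claim. So the entire proof reduces to a finite arithmetic computation of the five quantities $\mu_0(2)$, $\mu_{0,2}(2)$, $\mu_{0,3}(2)$, $c_0(2)$, and then assembling $g_0(2)$.

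First I would compute each ingredient. The only prime dividing $N=2$ is $p=2$, and $v_2(2)=1$. Then
$$\mu_0(2) = 2^{1} + 2^{0} = 3.$$
For $\mu_{0,2}(2)$: we do not have $4\mid 2$, so $\mu_{0,2}(2) = \prod_{p\mid 2}\left(1 + \left(\tfrac{-4}{p}\right)\right) = 1 + \left(\tfrac{-4}{2}\right)$, and with the stated convention that the Legendre-type symbol at $p=2$ evaluates to zero, this gives $\mu_{0,2}(2) = 1$. For $\mu_{0,3}(2)$: since $2\mid 2$, the first case of the definition applies and $\mu_{0,3}(2) = 0$. For the cusp count,
$$c_0(2) = \sum_{d\mid 2}\varphi\!\left(\gcd(d, 2/d)\right) = \varphi(\gcd(1,2)) + \varphi(\gcd(2,1)) = \varphi(1) + \varphi(1) = 2.$$
Assembling these,
$$g_0(2) = 1 + \frac{\mu_0(2)}{12} - \frac{\mu_{0,2}(2)}{4} - \frac{\mu_{0,3}(2)}{3} - \frac{c_0(2)}{2} = 1 + \frac{3}{12} - \frac{1}{4} - \frac{0}{3} - \frac{2}{2} = 1 + \frac14 - \frac14 - 0 - 1 = 0.$$
Hence $\dim S_2(\Gamma_0(2)) = 0$, so the space is trivial.

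The main obstacle here is not the computation itself, which is routine, but pinning down the conventions in the formula precisely enough that the evaluation is unambiguous — in particular the "abuse for $p=2$" in the Legendre symbol appearing in $\mu_{0,2}$, and making sure the product in $\mu_{0,2}$ is read as ranging over primes $p\mid N$ (so a single factor when $N=2$) rather than over something else. A secondary sanity check worth including is the independent verification already carried out earlier in the paper via the argument-principle computation over the fundamental domain attached to $\Gamma_0(2)$, which derived the contradiction $(\text{\# roots}) + \nu_\infty(f) + \tfrac12\nu_{(1+i)/2}(f) + \nu_0(f) = \tfrac14$ together with $\nu_\infty(f)\geq 1$ for a cusp form; that gives a self-contained proof of non-existence not relying on \cite{wstein}, and the agreement of the two methods is reassuring. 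I would present the dimension-formula computation as the clean proof and cite the earlier argument as a cross-check.
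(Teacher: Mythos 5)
Your computation is correct and follows exactly the same route as the paper's own proof: evaluating each of $\mu_0(2)$, $\mu_{0,2}(2)$, $\mu_{0,3}(2)$, and $c_0(2)$ with the same values ($3$, $1$, $0$, $2$ respectively) and assembling $g_0(2)=0$. Your remark that the argument-principle computation in the main text serves as an independent cross-check is a sensible observation, but the core argument is identical to the paper's.
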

\begin{proof}

We compute:
\[
g_0(2) = 1 + \frac{\mu_0(2)}{12} - \frac{\mu_{0,2}(2)}{4} - \frac{\mu_{0,3}(2)}{3} - \frac{c_0(2)}{2}.
\]

We evaluate each term:

\begin{align*}
\mu_0(2) &= 2^1 + 2^0 = 2 + 1 = 3, \\
\mu_{0,2}(2) &= \prod_{p \mid 2} \left(1 + \left( \frac{-4}{p} \right) \right) = 1 + \left( \frac{-4}{2} \right) = 1 + 0 = 1, \\
\mu_{0,3}(2) &= 0 \quad \text{(since } 2 \mid 2), \\
c_0(2) &= \varphi(\gcd(1,2)) + \varphi(\gcd(2,1)) = \varphi(1) + \varphi(1) = 1 + 1 = 2.
\end{align*}

Thus:
\[
g_0(2) = 1 + \frac{3}{12} - \frac{1}{4} - 0 - \frac{2}{2} = 1 + 0.25 - 0.25 - 1 = 0.
\]

Hence for \( k = 2 \), \( N = 2 \), the space \( S_2(\Gamma_0(2)) \) has dimension 0.

\end{proof}

\subsection{Modular Form for Frey's Curve}
It might be weird to talk about a curve that doesn't exist, but we had at some point tried to look at explicit ways to reach a contradiction.
\newline\newline
Here the students arrived at this formula using Legendre symbols, without realising this was also a general formula given in many text books.
\newline\newline
When RHS is $0$ modulo $p$, then $y=0$ (mod $p$) is the only solution. When RHS is a non-zero quadratic residue, then $y$ has two solutions. When RHS is a non-quadratic residue, then $y$ has no solutions. Thus, if we consider the Legendre symbol $\left( \frac{x(x-a^P)(x+b^P)}{p} \right) + 1$, this will give the number of solutions for a particular $x$. Then, \[
\sum_{x = 0}^{p - 1} [\left( \frac{x(x-a^P)(x+b^P)}{p} \right) +1]
\]  gives the number of solutions to the equation modulo $p$. We now have a  formula for the coefficients of the modular form, namely 
$$a_p = -\sum_{x = 0}^{p - 1} \left( \frac{x(x-a^P)(x+b^P)}{p}\right).$$

Richard Borcherds Modular forms course
\begin{verbatim}
https://youtu.be/f8pjt9ivjjc?si=UKD7bQyvuuff4dau
\end{verbatim}

\include{appendices}

\subsection{Fourier Series}
\begin{definition}[Inner Product Space]
    An inner product space is a vector space $V$ over either $\mathbb{R}$ or $\mathbb{C}$ together with an inner product, a function taking 2 vectors to a scalar. The inner product must satisfy the following conditions:

1. The Inner Product is linear with the first vector in the pair: 
    
    $\langle c_1 u + c_2 v, w \rangle = c_1 \langle u, w \rangle + c_2 \langle v, w \rangle$

2. We have conjugate symmetry: $\langle u, v \rangle = \overline{\langle v, u \rangle}$.
Note that this reduces to symmetry over $\mathbb{R}$. (Note: this implies that $\langle v,v\rangle=\overline{\langle v,v\rangle}$ for all $v\in V$, meaning $\langle v,v\rangle\in\mathbb{R}$.

3. $\langle v,v \rangle > 0$ for $v\neq 0$

4. $\langle 0,0\rangle=0$

For square-integrable functions $f(x)$ and $g(x)$ in a domain [a,b], 

we define the inner product as
$\langle f, g \rangle = \int_{a}^{b} f(x) \overline{g(x)} \, dx$

\end{definition}

We can define a norm for the inner product space $|v|=\sqrt{\langle v,v\rangle}$. Using the Cauchy-Schwarz inequality, we can prove the triangle inequality for any inner product space with distance function $d(v,w)=|v-w|=\sqrt{\langle v-w, v-w\rangle}$, i.e. $|a-b|+|b-c|\geq|a-c|$. Equipped with this we can now define

\begin{definition}[Convergent Sequence]
    A convergent sequence is a sequence of vectors $\{v_i\}_{i=1}^\infty$ in the inner product space for which there exists a vector $\ell$ such that for all $\varepsilon>0$, there exists $N\in\mathbb{Z}^+$ such that for all $n>N$, $|v_n-\ell|<\varepsilon$. 
\end{definition}

Notice that if a sequence is convergent, then the $\ell$, known as the limit, is unique: If $\ell_1,\ell_2$ both work as $\ell$ for the same sequence, then for all $\varepsilon>0$, there exists $n,N$ with $n>N$ such that 
$$|\ell_1-\ell_2|\leq|\ell_1-v_n|+|v_n-\ell_2|=|\ell_1-v_n|+|\ell_2-v_n|<2\varepsilon$$
so $|\ell_1-\ell_2|=0$ and $\ell_1=\ell_2$.

\begin{definition} [Cauchy Sequence]
    A Cauchy sequence is a sequence of vectors $\{v_i\}_{i=1}^\infty$ in the inner product space where for all $\varepsilon>0$, there exists $N\in\mathbb{Z}^+$ such that for all $m,n>N$, $|v_m-v_n|<\varepsilon$. 
\end{definition}

\begin{lemma}
All convergent sequences are Cauchy.
\end{lemma}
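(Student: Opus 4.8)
The statement to prove is that every convergent sequence in an inner product space is a Cauchy sequence. The plan is to unpack both definitions and run the standard $\varepsilon/2$ argument. First I would fix a convergent sequence $\{v_i\}_{i=1}^\infty$ with limit $\ell$, and fix an arbitrary $\varepsilon > 0$. Applying the definition of convergence to $\varepsilon/2$, I obtain an $N \in \mathbb{Z}^+$ such that for all $n > N$ we have $|v_n - \ell| < \varepsilon/2$.

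\textbf{Key step.} Now take any $m, n > N$. The goal is to bound $|v_m - v_n|$. Here I would invoke the triangle inequality, which the excerpt already grants us for the distance function $d(v,w) = |v-w|$ coming from the inner product norm (it was noted just before the definition of convergent sequence that Cauchy-Schwarz yields the triangle inequality). Writing $v_m - v_n = (v_m - \ell) + (\ell - v_n)$, the triangle inequality gives
$$|v_m - v_n| \leq |v_m - \ell| + |\ell - v_n| = |v_m - \ell| + |v_n - \ell| < \frac{\varepsilon}{2} + \frac{\varepsilon}{2} = \varepsilon,$$
where I used symmetry of the norm, $|\ell - v_n| = |v_n - \ell|$, which itself follows from $|{-w}| = |w|$ (a consequence of the scaling behaviour of the norm, or directly from $\langle -w, -w\rangle = \langle w, w\rangle$). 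Since $\varepsilon > 0$ was arbitrary and $N$ depends only on $\varepsilon$, this is exactly the Cauchy condition.

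\textbf{Main obstacle.} Honestly there is no serious obstacle here; this is a routine $\varepsilon/2$ argument. The only thing to be slightly careful about is making sure every ingredient used has actually been established earlier in the text: the triangle inequality for the norm (granted via Cauchy-Schwarz in the preceding paragraph), the fact that $|-w| = |w|$, and the uniqueness of the limit (not actually needed, but reassuring). If one wanted to be fully self-contained one might also note that the inner-product norm genuinely satisfies $|w| \geq 0$ with $|w| = 0 \iff w = 0$, which follows from axioms 3 and 4 of the inner product, though for this particular implication only the triangle inequality is required. So the write-up is essentially the three-line display above wrapped in "fix $\varepsilon$, produce $N$, conclude."
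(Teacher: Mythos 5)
Your proof is correct and is essentially identical to the paper's: both fix $\varepsilon>0$, apply convergence with $\varepsilon/2$ to get $N$, and use the triangle inequality $|v_m-v_n|\leq|v_m-\ell|+|v_n-\ell|<\varepsilon$. No differences worth noting beyond your extra (harmless) care about $|-w|=|w|$.
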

\begin{proof}
  If a sequence $\{v_i\}_{i=1}^\infty$ is convergent, then it has a limit $\ell$. For all $\varepsilon>0$, let $\varepsilon_1=\frac{\varepsilon}{2}>0$, so then there exists some $N$ so that for all $m,n>N$, $|v_m-\ell|<\varepsilon_1$ and $|v_n-\ell|<\varepsilon_1$. Thus $|v_m-v_n|\leq|v_m-\ell|+|v_n-\ell|<2\varepsilon_1=\varepsilon$. Therefore it is Cauchy.
\end{proof}

Now we may finally define Hilbert space.
\begin{definition} [Hilbert Space]
    A Hilbert space is an inner product space that is "complete" in the sense that all Cauchy sequences are convergent.
\end{definition}

Thus in Hilbert spaces, a sequence is convergent if and only if it is Cauchy.

\begin{lemma}
The basis Fourier vectors are orthonormal.
\end{lemma}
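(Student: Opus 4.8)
The statement to prove is that the Fourier basis vectors $e_n(x) = e^{2\pi i n x}$ (on an interval of length $1$, say $[0,1]$, with the inner product $\langle f,g\rangle = \int_0^1 f(x)\overline{g(x)}\,dx$) form an orthonormal set. The plan is to verify the two defining conditions of orthonormality directly: that each vector has norm $1$, and that distinct vectors are orthogonal. Both reduce to a single computation of $\langle e_m, e_n\rangle = \int_0^1 e^{2\pi i m x} e^{-2\pi i n x}\,dx = \int_0^1 e^{2\pi i (m-n) x}\,dx$.

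First I would handle the case $m = n$: the integrand is the constant function $1$, so $\langle e_n,e_n\rangle = \int_0^1 1\,dx = 1$, giving $|e_n| = 1$. Then I would handle the case $m \neq n$: writing $k = m - n \in \mathbb{Z}\setminus\{0\}$, we have
$$\int_0^1 e^{2\pi i k x}\,dx = \left[\frac{e^{2\pi i k x}}{2\pi i k}\right]_0^1 = \frac{e^{2\pi i k} - 1}{2\pi i k} = \frac{1 - 1}{2\pi i k} = 0,$$
using $e^{2\pi i k} = 1$ for integer $k$. Hence $\langle e_m,e_n\rangle = 0$ when $m\neq n$, which is exactly orthogonality. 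Combining the two cases, $\langle e_m,e_n\rangle = \delta_{mn}$, so the family is orthonormal.

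There is essentially no obstacle here — the only things to be slightly careful about are the normalization conventions (the length of the interval and whether a factor like $\tfrac{1}{2\pi}$ or $\tfrac{1}{\sqrt{2\pi}}$ is folded into the inner product or the vectors), since these must be chosen consistently for the norms to come out as $1$; and the fact that the antiderivative step requires $k\neq 0$, which is why the computation genuinely splits into the two cases above. The periodicity identity $e^{2\pi i k} = 1$ for $k\in\mathbb{Z}$ is the one small input that makes the off-diagonal terms vanish. Everything else is a routine evaluation of an elementary integral, so I would present it compactly rather than belabour it.
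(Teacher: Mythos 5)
Your proposal is correct and follows essentially the same route as the paper: compute $\langle e_m,e_n\rangle=\int_0^1 e^{2\pi i(m-n)x}\,dx$, which is $1$ when $m=n$ and $0$ otherwise. Your version is slightly more complete in that you actually evaluate the antiderivative and invoke $e^{2\pi i k}=1$, whereas the paper simply asserts the off-diagonal integral vanishes.
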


\begin{proof}
The basis Fourier vectors are in the form $\{ e^{2\pi*inx} \mid n \in \mathbb{Z}$ \} on the interval [0,1] for $x$. It is clear that the norm is 1. Consider the inner product of two vectors $e^{2\pi*inx}$ and $e^{2\pi*imx}$.

We have $\langle e^{2\pi*inx},e^{2\pi*imx}\rangle = \int_{0}^{1} e^{2\pi*inx}\overline{e^{2\pi*imx}} \, dx.$ 

If $n=m$, this reduces to $\int_{0}^{1} 1 \, dx = 1.$

If $n \neq  m$, then we have $\int_{0}^{1} e^{2\pi*ix(n-m)} \, dx = 0$. 

Thus, all distinct Fourier vectors are orthogonal, and hence orthonormal.
\end{proof}

\begin{definition} [Square-Integrable Function]
 A function \( f: \mathbb{C} \to \mathbb{C} \) is said to be \textit{square-integrable} on an interval \( I \) if:

\[
\int_I |f(x)|^2 \, dx < \infty
\]

The set of all square-integrable functions on \( I \) is denoted by \( L^2(I) \):

\[
L^2(I) = \{ f : I \to \mathbb{C} \mid \int_I |f(x)|^2 \, dx < \infty \}
\]

https://planetmath.org/ProofThatLpSpacesAreComplete
\end{definition}

\section{Appendix III: Galois representations}

There was another description that looked interesting and seems very similar but I am not sure exactly the difference or purpose:

Consider the finite field extension
$$\Q\subset \Q(E[\ell])$$
and take its Galois group

This is apparently known to be $\gl_2(\F_\ell)$ and then we have a natural map from $\galq$ to this Galois group giving us a $\F_\ell$ representation.

The determinant of Frobenius gives (idk) the eigenvalues of the so called diamond operators

















Playlist of useful videos

\begin{verbatim}
    https://www.youtube.com/playlist?list=PLC6DvfglS0p9EB6QyCtueJv3MOdgwWE_B
\end{verbatim}

Fermat conference.playlist
\begin{verbatim}
https://youtube.com/playlist?list=PL_jwwOG0kPgPNUaODd32I59XGZZAXJ7oj&si=mXYMEzDFjHy8lS1D    
\end{verbatim}


\begin{thebibliography}{10}



\bibitem{DDT}
{\sc H. Darmon and F. Diamond and R. Taylor}
(2000)
{\em An Introduction to Theory of Numbers 6th Edition}, \\
   Oxford at the Clarendon Press (1960).
\bibitem{HW}  
   {\sc G. H.~Hardy and E. M.~Wright}, 
   {\em An Introduction to Theory of Numbers 6th Edition}, \\
   Oxford at the Clarendon Press (1960).
\bibitem{Serre}  
   {\sc J.P.~Serre}, 
   {\em Sur les représentations modulaires de degré 2 de $Gal(\bar\Q/\Q)$}, \\
  Duke Math. J. 54(1): 179-230 (1987). \\
  DOI: 10.1215/S0012-7094-87-05413-5
\bibitem{Ribet}  
   {\sc K.~Ribet}, 
   {\em On modular representations of Gal(Q/Q) arising from modular forms}. Inventiones Mathematicae. 100 (2): 431–476.  (1990) doi:10.1007/BF01231195. MR 1047143. S2CID 120614740., \\
\bibitem{}  
   {\sc F. Diamond and J. Shurman}, 
   {\em A First Course in Modular Forms}, \\
   Springer (2005).
\bibitem{Wiles}  
   {\sc A. J. Wiles }, 
   {\em Modular elliptic curves and Fermat's Last Theorem}, \\
   Annals of Mathematics, 141 (1995).

 \bibitem{Frey} (1982), {\sc G.~Frey}, Rationale Punkte auf Fermatkurven und getwisteten Modulkurven" [Rational points on Fermat curves and twisted modular curves], J. Reine Angew. Math. (in German), 1982 (331): 185–191, doi:10.1515/crll.1982.331.185, MR 0647382, S2CID 118263144
   \bibitem{Junior} 
{\sc M. ~Sun}, 
{\em Maths Olympiad theory building and problem solving from Junior to Intermediate},
Dr Michael Sun's School of Maths (2023),
\bibitem{Inter}
{\sc M. ~Sun}, 
{\em Maths Olympiad theory building and problem solving from Intermediate to Senior},
Dr Michael Sun's School of Maths (2023).
\bibitem{Senior}
{\sc M. ~Sun}, 
{\em Maths Olympiad theory building and problem solving from Senior to Professional},
Dr Michael Sun's School of Maths (2023).
 
\bibitem{HSM}  
   {\sc M.~Sun}, 
   {\em Higher School Maths}, 
   Dr Michael Sun's School of Maths (2018).

\bibitem{Serre Arithmetic}
{\sc J.P.~Serre}, 
   {\em A Course in Arithmetic}, 
   Springer (2012).

\bibitem{Serre Letter}
{\sc J.P.~Serre} (1985),
{\em Lettre a J.-F. Mestre},
In K. Ribet (Ed.), {\em Current Trends in Arithmetical Algebraic Geometry} (pp. 263-268). American Mathematical Society. 


\bibitem{Silverman}
{\sc J.H.~Silverman}, 
   {\em The Arithmetic of Elliptic Curves}, 
   Springer (2013).

\bibitem{Intro to Theory of Numbers}  
   {\sc E.M. Wright and G.H. Hardy}, 
   {\em An Introduction to the Theory of Numbers}, 
   Oxford  (2008).


\bibitem{MF & FLT}  
   {\sc G. Cornell and J.H. Silverman and G. Stevens}, 
   {\em Modular Forms and Fermat's Last Theorem}, 
   Springer  (1997).

  \bibitem{BCDT}
  {\sc Breuil,  Conrad,  Diamond, Taylor} (2001), "On the modularity of elliptic curves over Q: wild 3-adic exercises", Journal of the American Mathematical Society, 14 (4): 843–939, doi:10.1090/S0894-0347-01-00370-8, ISSN 0894-0347, MR 1839918

\bibitem{SMC}
    Khare, Chandrashekhar (2006), "Serre's modularity conjecture: The level one case", Duke Mathematical Journal, 134 (3): 557–589, doi:10.1215/S0012-7094-06-13434-8.
 Khare, Chandrashekhar; Wintenberger, Jean-Pierre (2009), "Serre's modularity conjecture (I)", Inventiones Mathematicae, 178 (3): 485–504, Bibcode:2009InMat.178..485K, CiteSeerX 10.1.1.518.4611, doi:10.1007/s00222-009-0205-7 and Khare, Chandrashekhar; Wintenberger, Jean-Pierre (2009), "Serre's modularity conjecture (II)", Inventiones Mathematicae, 178 (3): 505–586, Bibcode:2009InMat.178..505K, CiteSeerX 10.1.1.228.8022, doi:10.1007/s00222-009-0206-6.
 Khare, Chandrashekhar; Wintenberger, Jean-Pierre (2009), "On Serre's reciprocity conjecture for 2-dimensional mod p representations of Gal(Q/Q)", Annals of Mathematics, 169 (1): 229–253, doi:10.4007/annals.2009.169.229.

 Khare, Chandrashekhar (2006), "Serre's modularity conjecture: The level one case", Duke Mathematical Journal, 134 (3): 557–589, doi:10.1215/S0012-7094-06-13434-8.
 Khare, Chandrashekhar; Wintenberger, Jean-Pierre (2009), "Serre's modularity conjecture (I)", Inventiones Mathematicae, 178 (3): 485–504, Bibcode:2009InMat.178..485K, CiteSeerX 10.1.1.518.4611, doi:10.1007/s00222-009-0205-7 and Khare, Chandrashekhar; Wintenberger, Jean-Pierre (2009), "Serre's modularity conjecture (II)", Inventiones Mathematicae, 178 (3): 505–586, Bibcode:2009InMat.178..505K, CiteSeerX 10.1.1.228.8022, doi:10.1007/s00222-009-0206-6

\bibitem{wstein}
{\sc William Stein}
Modular Forms, a Computational Approach
  University of Washington, Seattle, WA
\end{thebibliography}
\end{document}